\def\cF{{\cal F}}
\def\cH{{\cal H}}
\def\eps{\epsilon}
\def\rk{\bm{\mathrm{rk}}}
\def\crk{\bm{\mathrm{crk}}}
\def\cE{{\cal E}}
\def\cC{{\cal C}}
\def\cL{{\cal L}}
\def\prt{T}
\def\b1{{\bf 1}}
\def\dimension{d}
\DeclareMathOperator{\Var}{Var}
\def\cond{\phi}
\title{Optimal Trickle-Down Theorems for Path Complexes via $\cC$-Lorentzian Polynomials\\
with Applications to Sampling and Log-Concave Sequences}
\author[1]{Jonathan Leake}
\affil{\small University of Waterloo, \textsf{jonathan.leake@uwaterloo.ca}}
\author[2]{Kasper Lindberg}
\affil{\small ETH Z{\"u}rich, \textsf{kasper.lindberg@inf.ethz.ch}}
\author[3]{Shayan Oveis Gharan}
 \affil{\small University of Washington,  \textsf{shayan@cs.washington.edu}}
\begin{document}
\maketitle
\begin{abstract}
    Let $X$ be  a $d$-partite $d$-dimensional simplicial complex with parts $T_1,\dots,T_d$ and let $\mu$ be a distribution on the facets of $X$. Informally, we say $(X,\mu)$ is a path complex if for any $i<j<k$ and $F \in T_i,G \in T_j, K\in T_k$, we have $\P_\mu[F,K | G]=\P_\mu[F|G]\cdot\P_\mu[K|G].$
    We develop a new machinery with $\cC$-Lorentzian polynomials to show that if all links of $X$ of co-dimension 2 have spectral expansion at most $1/2$, then $X$ is a $1/2$-local spectral expander. 
    We then prove that one can derive fast-mixing results and log-concavity statements for top-link spectral expanders.
    
    We use our machinery to prove fast mixing results for sampling maximal flags of flats of distributive lattices (a.k.a. linear extensions of posets) subject to external fields, and to sample maximal flags of flats of ``typical'' modular lattices. We also use it to re-prove the Heron-Rota-Welsh conjecture and to prove a conjecture of Chan and Pak which gives a generalization of Stanley's log-concavity theorem. Lastly, we use it to prove near optimal trickle-down theorems for ``sparse complexes'' such as constructions by Lubotzky-Samuels-Vishne, Kaufman-Oppenheim, and O'Donnell-Pratt. 
\end{abstract}
\newpage
\tableofcontents
\newpage
\section{Introduction}
A simplicial complex $X$ on a finite ground set $[n]=\{1,\dots,n\}$
is a downwards closed collection of subsets of $[n]$, i.e., if $\tau\in X$ and $\sigma \subset \tau$, then $\sigma \in X$. 
The elements of $X$ are called {\bf faces}, and the maximal faces are called {\bf facets}. 
We say that a face $\tau$ is of dimension $k$ if $|\tau| = k$ and write $\dimfn(\tau) = k$\footnote{Note that this differs from the typical topological definition of dimension for faces of a simplicial complex.}. 
A simplicial complex $X$ is a {\bf pure} $\dimension$-dimensional complex if every facet  has dimension $\dimension$. 

Given a  $\dimension$-dimensional complex $X$,
for any   $0 \leq i \leq \dimension$,    define  $X(i) = \{\tau \in X : \dimfn(\tau) = i\}$.  Moreover, the co-dimension of a face $\tau \in X$ is defined as $\codim (\tau)  = \dimension - \dimfn (\tau)$. For a face $\tau \in X$, define the  {\bf link} of $\tau$ as the simplicial complex $X_{\tau} = \{\sigma \setminus \tau : \sigma \in X, \sigma \supset \tau\}$. 
\par
A  $\dimension$-partite  complex is a $\dimension$-dimensional complex such that $X(1)$ can be (uniquely) partitioned into sets $\prt_1\cup \dots \cup \prt_{\dimension}$ such that for  every facet  $\tau \in X (\dimension)$,  we have $|\tau \cap \prt_i|=1$ for all $i \in [\dimension]$. Let $\mu$ be a probability distribution on facets of $X$. We denote this by $(X,\mu)$. 

Given a complex $(X,\mu)$, the (weighted)  1-skeleton of $X$, $G_\varnothing$, is the weighted graph with vertex set $X(1)$ and edge set $X(2)$ where the weight of an edge $$w(F,G)=\P_{\sigma\sim\mu}[F,G\in\sigma].$$
We let $A_\varnothing$ be the adjacency matrix of this graph, and $P_\varnothing$ be the transition probability matrix of the simple random walk. More generally, for any face $\tau\in X$ with $\codim(\tau)\geq 2$, we let $A_\tau, P_\tau$ be the adjacency matrix and the random walk matrix of the 1-skeleton of link of $\tau$. Note that for any $\tau \in X$, $\mu$ induces a probability distribution, $\mu_{|\tau}$ on the facets of $X_\tau$ where the probability of a facet $\sigma'$ is $\P_{\sigma\sim\mu}[\sigma'\in\sigma | \tau\in \sigma]$. Lastly, we say $X$ is a {\bf connected} complex if for every $\tau$ of co-dimension at least 2, the 1-skeleton of $X_\tau$ is a connected graph.

\begin{definition}[(Top-link) Spectral Expanders]
We say that $(X,\mu)$ is an $\alpha$-local spectral expander if for any link $\tau$,
 $\lambda_2(P_\tau)\leq \alpha$.
 
    We say that $(X,\mu)$ is a $\alpha$-top-link spectral expander if for any $\tau\in X$ with $\codim(\tau)=2$, $\lambda_2(P_\tau)\leq \alpha$.
\end{definition}

One of the surprising theorems in the theory of local spectral expanders is the trickle-down theorem.
\begin{theorem}[Oppenheim, \cite{Opp18}]\label{thm:oppenheim}
If $(X,\mu)$ is a connected $d$-dimensional $\frac{1-\eps}{d}$-top-link spectral expander then for any $\sigma$ with $\codim(\sigma)=k$, $\lambda_2(P_\sigma)\leq \frac{1-\eps}{d-(k-2)(1-\eps)}$. In particular, $X$ is a $\frac{1-\eps}{\eps d}$-local spectral expander.
\end{theorem}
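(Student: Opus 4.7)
The plan is to induct on the codimension $k$ of $\sigma$. The base case $k=2$ is exactly the hypothesis that $X$ is a $\frac{1-\eps}{d}$-top-link spectral expander, since for such $\sigma$ the claimed bound $\frac{1-\eps}{d-(k-2)(1-\eps)}$ reduces to $\frac{1-\eps}{d}$. For the inductive step, fix $\sigma$ with $\codim(\sigma)=k\geq 3$ and consider the link $X_\sigma$. This is itself a connected pure complex (with induced distribution $\mu_{|\sigma}$), and its vertex-links are precisely the codimension-$(k-1)$ links $X_{\sigma\cup\{v\}}$ of $X$, which by the inductive hypothesis satisfy
\[\lambda_2(P_{\sigma\cup\{v\}})\;\leq\;\lambda_{k-1}\;:=\;\frac{1-\eps}{d-(k-3)(1-\eps)}.\]

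The key technical input I would use is Oppenheim's single-step trickle-down lemma: for any connected weighted complex $(Y,\mu)$ with $\lambda_2(P_v)\leq\lambda<1$ for every vertex $v\in Y(1)$, one has $\lambda_2(P_\varnothing)\leq \lambda/(1-\lambda)$. The standard proof is a spectral computation based on an algebraic identity that writes the symmetrized walk matrix $\tilde P_\varnothing:=\Pi^{1/2}P_\varnothing\Pi^{-1/2}$ as a $\pi(v)$-weighted average of the zero-extensions of the symmetrized vertex-link walks $\tilde P_v$, plus a rank-one correction aligned with the Perron direction $\Pi^{1/2}\mathbf{1}$ and an identity shift. Restricting to the orthogonal complement of the Perron direction annihilates the rank-one piece; the average of link walks is then controlled by $\lambda$ via Courant--Fischer, and rearrangement yields the $\lambda/(1-\lambda)$ bound. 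Connectivity of $Y$ is used to guarantee that the Perron eigenspace is one-dimensional, so that this decomposition indeed captures the entire second eigenvalue.

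Applying this lemma to $Y=X_\sigma$ with $\lambda=\lambda_{k-1}$ gives
\[\lambda_2(P_\sigma)\;\leq\;\frac{\lambda_{k-1}}{1-\lambda_{k-1}}\;=\;\frac{1-\eps}{d-(k-3)(1-\eps)-(1-\eps)}\;=\;\frac{1-\eps}{d-(k-2)(1-\eps)},\]
closing the induction. The ``in particular'' statement follows by taking $k=d$, so $\sigma=\varnothing$, where the bound becomes $\frac{1-\eps}{2+(d-2)\eps}$; this is at most $\frac{1-\eps}{\eps d}$ since $2+(d-2)\eps\geq \eps d$ whenever $\eps\leq 1$.

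The main obstacle is the single-step trickle-down lemma itself: identifying the correct decomposition of $\tilde P_\varnothing$ in terms of the $\tilde P_v$'s and correctly handling the Perron-direction correction. The outer induction on codimension is then purely algebraic bookkeeping once the recursion $\lambda_k\leq\lambda_{k-1}/(1-\lambda_{k-1})$ is in hand.
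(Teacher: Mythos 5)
This theorem is not proved in the paper at all --- it is quoted from Oppenheim \cite{Opp18} --- so there is no internal proof to compare against; your proposal is essentially the standard argument from Oppenheim's paper. Your outer structure is right and the arithmetic checks out: the base case $k=2$ is the top-link hypothesis, the recursion $\lambda_k \leq \lambda_{k-1}/(1-\lambda_{k-1})$ turns $\frac{1-\eps}{d-(k-3)(1-\eps)}$ into $\frac{1-\eps}{d-(k-2)(1-\eps)}$, connectivity of every link of co-dimension at least $2$ (the paper's definition of connected) licenses each application of the one-step lemma, and the worst case $k=d$ gives $\frac{1-\eps}{2+(d-2)\eps} \leq \frac{1-\eps}{\eps d}$. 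The one soft spot is your sketch of the one-step descent lemma itself: the mechanism is not a single global rank-one correction that vanishes on the complement of the Perron direction --- if that were all, you would conclude $\lambda_2(P_\sigma)\leq \lambda$, which is false in general. The correct argument is the quadratic-form identity $\E_{v\sim \pi}\langle P_{\sigma\cup\{v\}} f_v, f_v\rangle = \langle P_\sigma f, f\rangle$ together with the per-link bound $\langle P_{\sigma\cup\{v\}} f_v, f_v\rangle \leq \lambda \|f_v\|^2 + (1-\lambda)\langle f_v, \mathbf{1}\rangle^2$; the per-link constant components do not disappear but aggregate to $(1-\lambda)\|P_\sigma f\|^2$, so an eigenfunction with eigenvalue $\mu<1$ orthogonal to constants satisfies $\mu \leq \lambda + (1-\lambda)\mu^2$, whence $\mu \leq \lambda/(1-\lambda)$ (connectivity excludes $\mu=1$). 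With that lemma stated and proved correctly, your induction is complete and matches the known proof.
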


Let $X$ be a $d$-partite complex with probability distribution $\mu:X(d)\to\R_{> 0}$ on its facets. We say $(X,\mu)$ is a {\bf path} complex if the parts (perhaps after renaming) satisfy the following {\bf conditional independence} property:
For any $1\leq i<k-1\leq d-1$ and $F\in T_i,K\in T_k$ and any face $\tau$ such that  $\tau\cap \bigcup_{j=i+1}^{k-1} T_j \neq \varnothing$ we have
$$ \P_{\sigma\sim\mu}[F,K\in\sigma|\tau\subset\sigma] = \P_{\sigma\sim\mu}[F\in\sigma|\tau\subset\sigma] \cdot \P_{\sigma\sim\mu}[K\in\sigma|\tau\subset\sigma]. $$
We also define a weaker notion: we say $(X,\mu)$ is a {\bf top-link path} complex if the above condition is satisfied whenever $\codim(\tau) = 2$.

Observe that any link of a path complex is also a path complex, and any link of a top-link path complex is a top-link path complex.


The trickle-down theorem was recently improved for path complexes (and more generally for partite complexes):
\begin{theorem}[\cite{AO23}]
    There is a universal constant $c>0$ such that if $(X,\mu)$ is a connected $d$-partite top-link path complex which is a $c$-top-link expander, then for any $\tau\in X$ we have $\lambda_2(P_\tau)\leq \frac{O(c)}{\codim(\tau)}$.
\end{theorem}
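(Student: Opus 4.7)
I would proceed by upward induction on $\codim(\tau)$. The base case $\codim(\tau)=2$ is exactly the top-link expansion hypothesis. For the inductive step, I would start from the Oppenheim-style local-to-global identity underlying Theorem~\ref{thm:oppenheim}: for a face $\sigma$ with $\codim(\sigma)=k\ge 3$, the walk matrix $P_\sigma$ admits a decomposition of the form
$$ P_\sigma \;=\; \frac{1}{k-1}\sum_{v\in X_\sigma(1)} \pi_\sigma(v)\,\widehat P_{\sigma\cup v} \;+\; (\text{rank-one correction}), $$
where $\pi_\sigma(v)$ is the stationary mass of $v$ in the link and $\widehat P_{\sigma\cup v}$ is the walk on $X_{\sigma\cup v}$ padded by identities so as to act on $X_\sigma(1)$. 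Simply plugging the inductive hypothesis into each term is useless, because each $\widehat P_{\sigma\cup v}$ retains several eigenvalues equal to $1$; Oppenheim's PSD argument for handling this requires the top-link bound to scale like $1/d$, whereas we only have the absolute constant $c$.

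The new ingredient is to use the (top-link) path structure to decouple the link. Writing the parts missing from $\sigma$ as $T_{j_1},\dots,T_{j_k}$ with $j_1<\dots<j_k$ and picking a ``middle'' index $m\in\{2,\dots,k-1\}$, the recursive conditional-independence property implies that after conditioning on any vertex $v\in T_{j_m}$, the induced measure $\mu_{|\sigma\cup v}$ splits as a product of its marginals on the ``left'' parts $T_{j_1},\dots,T_{j_{m-1}}$ and the ``right'' parts $T_{j_{m+1}},\dots,T_{j_k}$. At the level of the walk matrix, this product form should force $\widehat P_{\sigma\cup v}$ to be block-diagonal up to its rank-one stationary component, with blocks whose co-dimensions are $m-1$ and $k-m$. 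Applying the inductive hypothesis to each block then yields a non-trivial spectral bound on $\widehat P_{\sigma\cup v}$ beyond the unavoidable eigenvalue $1$.

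Substituting this strengthened per-vertex bound back into the Oppenheim identity and averaging over both $v$ and the choice of middle index $m$ (with an appropriate weighting), I would aim to close the induction via a recursion of the form $\alpha_k \le \alpha_{k-1}\bigl(1-\Omega(1/k)\bigr) + O(c/k)$, where $\alpha_k$ denotes the desired upper bound on $\lambda_2(P_\sigma)$ at $\codim(\sigma)=k$; such a recursion iterates to $\alpha_k = O(c)/k$ as required.

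The hardest step will be making the spectral block decomposition of $\widehat P_{\sigma\cup v}$ rigorous: conditional independence is a statement about $\mu_{|\sigma\cup v}$, while the adjacency matrix $A_{\sigma\cup v}$ does carry nonzero entries between left and right vertices coming from codim-$1$ faces meeting both sides, and one must argue that those entries only influence the top eigenvector. A secondary technical issue is that the rank-one corrections accumulated across $k$ inductive steps must be absorbed into the $O(\cdot)$ constant; this is presumably why the theorem states $O(c)$ and not $c$ itself.
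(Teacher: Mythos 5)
This statement is quoted from \cite{AO23}; the present paper does not prove it, so there is no internal proof to compare you against. The closest results proved here, \cref{thm:main} and \cref{thm:main-s}, go by a completely different, non-inductive route: construct $\cC$-Lorentzian polynomials via the $\bm\alpha,\bm\beta$/$\bm\pi$ machinery, read off eigenvalue bounds for the consecutive bipartite walks $P_{k,k+1}$ (\cref{thm:eig-bounds-main-v1}), multiply these along parts (\cref{lem:dpartiteeigproduct}), and average (\cref{lem:dpartiteeigenvalues}). Your Oppenheim-style induction must therefore stand on its own, and as sketched it does not.

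Two concrete gaps. First, the decoupling step is not available under the stated hypothesis: a \emph{top-link} path complex guarantees the conditional-independence identity only when the conditioning face has codimension exactly $2$, whereas you condition on $\sigma\cup v$ of codimension $k-1\geq 2$ (typically much larger) and assert that $\mu_{|\sigma\cup v}$ factors into left and right blocks. That is the full path-complex property, a strictly stronger assumption than the theorem makes. Moreover, even granting the product structure, the walk $\widehat P_{\sigma\cup v}$ is not ``block-diagonal up to its rank-one stationary component'': under a product measure every left vertex is joined to every right vertex with weight equal to a product of marginals, so the cross-block part is a nonzero rank-one matrix whose spectral effect must be argued, not assumed; this is exactly the work the paper has to do for non-link-contiguous codimension-$2$ faces inside \cref{thm:quadraticcheck}, and you correctly flag it as unresolved. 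Second, the closing arithmetic is wrong: the recursion $\alpha_k \le \alpha_{k-1}\left(1-\tfrac{\gamma}{k}\right) + \tfrac{Bc}{k}$ with fixed constants $\gamma,B$ has fixed point of order $Bc/\gamma$, so iterating from $\alpha_2\le c$ yields only $\alpha_k = O(c)$, not the claimed $O(c)/k$; to obtain $1/k$ decay you would need either a per-step contraction factor bounded away from $1$ or an additive error of order $c/k^2$, neither of which the sketch produces. Since the unmodified Oppenheim step actually degrades ($\lambda \mapsto \lambda/(1-\lambda)$, cf.\ \cref{thm:oppenheim}), the entire burden of producing any contraction rests on the unjustified decoupling step, and even if it were justified the stated recursion would not give the theorem.
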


Our main result is to obtain the ``optimal'' trickle-down constant for path complexes.

\begin{theorem}[Trickle-down Theorem for Path Complexes]\label{thm:main}
    If $(X,\mu)$ is a connected $d$-partite top-link path complex which is a $1/2$-top-link spectral expander then $X$ is a $1/2$-local spectral expander. 
\end{theorem}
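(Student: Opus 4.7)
As signaled by the paper's title and abstract, the plan is to encode $(X,\mu)$ as a multivariate generating polynomial and show that the $1/2$-top-link hypothesis, combined with the path-complex conditional independence, promotes it to a $\cC$-Lorentzian polynomial. A direct link-by-link Oppenheim-style trickle-down inevitably loses a factor of order $1/\codim(\tau)$ per level, so to obtain a dimension-independent constant $1/2$ one cannot iterate link by link; instead one needs a single global structural statement about the generating polynomial from which $\lambda_2(P_\tau)\leq 1/2$ drops out at every link simultaneously.

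\textbf{Setup.} Associate to $(X,\mu)$ the generating polynomial
\[ f_\mu(z)\;=\;\sum_{\sigma\in X(d)} \mu(\sigma) \prod_{v\in\sigma} z_v, \]
so that partial differentiation in the variables of a face $\tau$ yields (up to normalization) the link polynomial $f_{\mu_{|\tau}}$. Under the standard dictionary identifying random walks on top links with normalized Hessians of generating polynomials, the condition $\lambda_2(P_\tau)\leq 1/2$ at a co-dimension~$2$ link is equivalent to a signature-$(1,m-1)$ (i.e.\ Lorentzian) condition on the Hessian of $f_{\mu_{|\tau}}$, taken with respect to a cone $\cC\subseteq\R^n$ adapted to the partite decomposition $T_1,\dots,T_d$.

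\textbf{Core step.} The heart of the argument is to show that $f_\mu$ itself is $\cC$-Lorentzian. The $1/2$-top-link expansion hypothesis supplies the base case: every co-dimension $2$ Hessian is cone-Lorentzian by assumption. The path complex property serves as the structural glue that lifts this local condition to the whole polynomial: conditional independence forces $f_\mu$, once restricted by fixing intermediate parts, to factor multiplicatively across non-adjacent parts, and this factorization is compatible with a product-type cone $\cC$. Consequently, the Lorentzian signature propagates to higher-order derivatives without the constant degrading, with connectedness invoked to rule out reducible configurations. Once $f_\mu$ is $\cC$-Lorentzian, closure of this class under partial differentiation transfers the property to every link polynomial $f_{\mu_{|\tau}}$, and unwinding the dictionary gives $\lambda_2(P_\tau)\leq 1/2$ at every link $\tau$.

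\textbf{Main obstacle.} The hardest ingredient is setting up $\cC$ so that (a)~the $1/2$-top-link hypothesis is exactly the cone-Lorentzian base condition and (b)~the path-complex conditional independence is precisely the combinatorial property under which this base condition lifts globally with the optimal constant. In essence, one needs a $\cC$-Lorentzian analogue of the Hessian rank-one propagation behind ordinary ultra-log-concavity, routed through conditional independence in such a way that no factor of the co-dimension is accrued---this is where the novel machinery of $\cC$-Lorentzian polynomials announced in the title does the genuine work, replacing the lossy recursion at the heart of Oppenheim's argument.
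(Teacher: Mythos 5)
Your high-level plan (build a cone-Lorentzian polynomial, verify quadratics, use connectivity, propagate to links) matches the paper's philosophy, but two of its load-bearing steps are missing or wrong as stated. First, you work with the plain multilinear generating polynomial $f_\mu$ and assert that $\lambda_2(P_\tau)\le 1/2$ at a co-dimension-$2$ link is ``equivalent to a signature-$(1,m-1)$ condition on the Hessian of $f_{\mu_{|\tau}}$, taken with respect to a cone $\cC$.'' This cannot work: the inertia of a Hessian is not a cone-relative notion (in the $\cC$-Lorentzian definition the cone only selects the directional derivatives $\nabla_{\bm v_1}\cdots\nabla_{\bm v_{d-2}}$, while the one-positive-eigenvalue requirement is on the full Hessian matrix), and for the multilinear $f_\mu$ the co-dimension-$2$ Hessian is exactly the weighted adjacency matrix $A_\tau$ with zero diagonal, so ``one positive eigenvalue'' there encodes $\lambda_2(P_\tau)\le 0$, not $\le 1/2$. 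The paper's essential move is to replace $f_\mu$ by a different, non-multilinear polynomial $p_\varnothing$ built recursively from commutative $\bm\pi$-maps (coming from $\bm\alpha,\bm\beta$ data as in \cref{prop:pi-maps-from-alpha-beta} and \cref{cor:alpha-beta-rank}); its extra monomials have negative coefficients, so the co-dimension-$2$ Hessians become $A_\tau+D$ with a negative diagonal $D$ (\cref{lem:quadratic-entries}, \cref{thm:quadraticcheck}), and it is precisely this shift, via \cref{lem:eigenvaluebipartite}, that makes ``at most one positive eigenvalue'' equivalent to $1/2$-top-link expansion. Your proposal has no mechanism producing that diagonal shift.

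Second, your concluding step ``closure under differentiation transfers the property to every link polynomial, and unwinding the dictionary gives $\lambda_2(P_\tau)\le 1/2$'' skips the other substantial half of the argument. Even once $p_\varnothing$ is $\cC_\varnothing$-Lorentzian, one cannot read off $A_\varnothing$ (or any full link adjacency matrix) from a Hessian of $p$; applying the directional derivatives $\nabla_\beta^{k-1}\nabla_\alpha^{d-k-1}$ only yields eigenvalue bounds for the bipartite walks between \emph{consecutive} parts, $\lambda_2(P_{k,k+1})\le\sqrt{\tfrac{k(d-k)}{(k+1)(d-k+1)}}$ (\cref{thm:eig-bounds-main-v1}). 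To get from there to $\lambda_2(P_\varnothing)\le 1/2$ the paper needs the conditional-independence product inequality $\lambda_2(P_{i,k})\le\lambda_2(P_{i,j})\lambda_2(P_{j,k})$ (\cref{lem:dpartiteeigproduct}) to control all pairs $i<j$, and then the $d$-partite assembly lemma (\cref{lem:dpartiteeigenvalues}), where the constant $1/2$ emerges from the averaging computation $\E_{j\neq i}\,m_i(j)=\tfrac12$; the same bound then applies to every link because links of top-link path complexes are again top-link path complexes. Neither this two-stage extraction nor any substitute for it appears in your sketch, so the optimal constant $1/2$ is asserted rather than derived.
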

Our proof uses the Hereditary Lorentzian polynomial machinery of \cite{BL23}. Most surprisingly the proof unifies and generalizes two different types of trickle-down theorem: one in combinatorics/Hodge theory, and the Oppenheim trickledown theorem cited above. These machineries has been used to prove  the log-concavity of the coefficients of characteristic polynomial of matroids \cite{BL21} and to analyze mixing time of Markov chains to sample edge colorings of graphs \cite{ALO22}.

We prove the optimality of our theorem by showing that the constant $1/2$ is tight.

\begin{restatable}[Lower Bound]{theorem}{THMlowerbound}\label{thm:lower-bound}
For any $\eps>0$ and (even) $d$, there is a $d$-partite path complex $(X,\mu)$ that is a $\frac{1+\eps}{2+\eps}$-top-link spectral expander  but $ \lambda_2(P_\varnothing)\geq 1-\frac{4}{\eps(1+\eps)d}$.
\end{restatable}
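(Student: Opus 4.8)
The plan is to construct an explicit path complex exhibiting the claimed behavior, and the natural candidate is a ``path-like'' or ``line graph'' structure built from a sequence of bipartite pieces glued along a line. Concretely, I would take $d$ parts $T_1,\dots,T_d$ where consecutive pairs $(T_i,T_{i+1})$ carry a bipartite expander-like relation designed so that each co-dimension-$2$ link (which by the path property only ``sees'' two of the parts, and whose $1$-skeleton is governed by a single transition step) has $\lambda_2$ exactly equal to the target value $\frac{1+\eps}{2+\eps}$, while the global $1$-skeleton $G_\varnothing$ behaves like the random walk on a path of length $\sim d$ and therefore has spectral gap $O(1/d^2)$ — wait, that would be too strong; instead I want the $1$-skeleton to behave like a \emph{weighted} path where the edge weights decay geometrically so that the relaxation time is only $\Theta(d)$, giving $\lambda_2(P_\varnothing) \geq 1 - \Theta(1/d)$, matching $1 - \frac{4}{\eps(1+\eps)d}$ up to constants. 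The key design principle is that in a path complex, the correlation between $T_1$ and $T_d$ factors through the intermediate parts, so marginalizing $\mu$ onto the pair $(T_i, T_{i+1})$ and onto $T_i$ alone gives a Markov-chain-on-a-line structure, and I get to choose the one-step operators essentially freely subject to the conditional independence constraint.

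First I would set up the local building block: for a parameter $p \in (0,1)$ choose a $2\times 2$ (or small) stochastic structure on each consecutive pair so that the random walk on the $1$-skeleton of a co-dimension-$2$ link has second eigenvalue exactly $\frac{1+\eps}{2+\eps}$ — this pins down $p$ as a function of $\eps$. Next I would verify the conditional independence / path property holds for the global $\mu$ obtained by ``chaining'' these blocks (this is automatic for a Markov-chain construction: condition on a middle part and the two sides become independent by the Markov property), and check $d$-partiteness and connectedness. Then the analytic heart is to lower bound $\lambda_2(P_\varnothing)$: I would exhibit an explicit test function $f$ on $X(1)$ — morally the ``position along the path'' coordinate, or a smoothed version of it — and show via the variational characterization $\lambda_2(P_\varnothing) = 1 - \min_f \frac{\mathcal{E}(f,f)}{\Var(f)}$ that the Dirichlet form is small relative to the variance. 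Because each step mixes only like $\frac{1+\eps}{2+\eps}$ (bounded away from $1$ but the chain has $\Theta(d)$ ``stages''), a linear test function will have Dirichlet energy $\Theta(1/d)$ times its variance, after correctly accounting for the stationary weights; tuning the geometric decay of the weights (or the block parameters) converts the constant in front of $1/d$ into exactly $\frac{4}{\eps(1+\eps)}$.

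The main obstacle I anticipate is the bookkeeping that simultaneously achieves \emph{exactly} $\frac{1+\eps}{2+\eps}$ in every co-dimension-$2$ link and the precise constant $\frac{4}{\eps(1+\eps)}$ in the lower bound on $\lambda_2(P_\varnothing)$ — these are two quantitative targets controlled by a small number of free parameters, so I would need to check that the system is not overdetermined, likely by working out the eigenvalues of the relevant small transfer operators in closed form (a birth-death chain on $\{1,\dots,d\}$ with carefully chosen transition probabilities, whose spectrum is classical). A secondary subtlety is ensuring that \emph{all} co-dimension-$2$ links — not just the ``generic'' ones — have $\lambda_2 \le \frac{1+\eps}{2+\eps}$, including links near the ends of the path where the structure degenerates; I would handle the boundary by either adding dummy parts/faces or by checking directly that the smaller boundary links only have \emph{smaller} $\lambda_2$. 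Once the birth-death chain's second eigenvalue and eigenvector are identified explicitly, the lower bound on $\lambda_2(P_\varnothing)$ follows by plugging the eigenvector into the Rayleigh quotient (in fact with equality), and the parity restriction on $d$ in the statement suggests the construction pairs up parts two-at-a-time, which is consistent with a block-of-size-$2$ design.
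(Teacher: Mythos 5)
There is a genuine gap, and it sits precisely at the analytic heart of your plan: the lower bound on $\lambda_2(P_\varnothing)$. You assert that the $1$-skeleton $G_\varnothing$ ``behaves like the random walk on a (weighted) path of length $\sim d$'' and propose the ``position along the path'' coordinate as a test function. But in a $d$-partite path complex the $1$-skeleton has edges between \emph{every} pair of parts $T_i,T_j$, not just consecutive ones, so it is nothing like a path graph; and any test function that depends only on the part index $i$ lies in the span of the trivial eigenfunctions $\{\mathbf{1}_{T_i}\}_i$, on which $P_\varnothing$ has eigenvalues $1$ and $-\tfrac{1}{d-1}$ (this is exactly the decomposition used in the paper's \cref{lem:dpartiteeigenvalues}). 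Such a function therefore gives a Rayleigh quotient near $-\tfrac{1}{d-1}$, not near $1$, and cannot witness $\lambda_2(P_\varnothing)\geq 1-O(1/d)$. To get $\lambda_2$ close to $1$ the test function (or cut) must separate vertices \emph{within} each part, i.e.\ exploit a bottleneck in the state space that persists across all $\binom{d}{2}$ pairs of stages simultaneously; you never supply the bridge from your birth--death picture (a statement about consecutive-stage transitions) to this global object. The remaining quantitative claims --- that a small number of block parameters can be tuned so that \emph{every} co-dimension-$2$ link has $\lambda_2$ exactly $\tfrac{1+\eps}{2+\eps}$ while the global constant comes out to exactly $\tfrac{4}{\eps(1+\eps)}$ --- are stated as hopes (``tuning \ldots converts the constant'') and not verified, and you yourself flag that the system may be overdetermined; as written this is a program, not a proof.

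For comparison, the paper's construction is in the same spirit as your instinct (it is essentially a two-track ``switching'' structure, a close relative of a two-state chain observed at $d$ stages): a distributive lattice on flats $F_1<\cdots<F_d$ and $G_1<\cdots<G_d$ with exactly $d+1$ maximal flags $\tau(i)$, indexed by the rank $i$ at which the flag switches from the $F$-side to the $G$-side, weighted by an external field so that $\mu(\tau(i))\propto(1+\eps)^{|d/2-i|}$. The top-link bound $\tfrac{1+\eps}{2+\eps}$ is checked by an explicit computation on the worst co-dimension-$2$ link (a $4$-vertex bipartite graph), and the lower bound on $\lambda_2(P_\varnothing)$ is obtained not from a Rayleigh quotient on a path but from a conductance estimate: the cut $S=\{F_1,\dots,F_d\}$ versus $\overline{S}=\{G_1,\dots,G_d\}$ (a \emph{within-part} separation, exactly the kind your part-index coordinate misses) has conductance at most $\tfrac{2}{\eps(1+\eps)d}$ because the flag $\tau(i)$ contributes only $i(d-i)$ of its $\binom{d}{2}$ pairs across the cut and the weights concentrate near $i\in\{0,d\}$; Cheeger's inequality (\cref{thm:Cheeger}) then yields $\lambda_2(P_\varnothing)\geq 1-\tfrac{4}{\eps(1+\eps)d}$. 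If you want to salvage your approach, replace the part-index test function by the indicator of the two tracks (or states) and bound either the conductance of that cut or its Rayleigh quotient directly on $G_\varnothing$, summing the cross-pair mass over all pairs $i<j$ as the paper does.
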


For a $d$-partite path complex $(X,\mu)$ the down-up Markov chain is defined as follows: Given a facet $\sigma=(F_1,\dots,F_d)\in X(d)$ where $F_i\in T_i$, choose $i\in [d]$ uniformly at random, and for any $F\in T_i$ such that $\sigma'=\{F_1,\dots,F_{i-1},F,F_{i+1},\dots,F_d\}\in X(d)$ move to $\sigma'$ w.p. proportional to $\mu(\sigma')$. We let $P^\vee_d$ be the transition probability matrix of this chain.

It is not hard to see (assuming connectivity of $X$) that $\mu$ is the unique stationary distribution of the chain, i.e., $\mu^\top P^\vee_d=\mu^\top$. So, to generate samples from $\mu$, we just need to study its mixing time.
For a Markov chain with transition probability matrix $P^\vee_d$ and stationary distribution $\mu$ 
the {\bf total variation mixing time} started at a state $\tau$ is defined as follows:
	\[ t_{\tau}(\epsilon)=\min\{t\in \Z_{\geqslant 0}: \norm{{P^\vee_d}^t(\tau,\cdot)-\mu}_1 \leqslant \epsilon\},\]

\begin{theorem}[Main Algorithmic]\label{thm:mainalg}
    If $(X,\mu)$ is a connected $d$-partite path complex which is a $1/2$-top-link spectral expander then  the spectral gap of the  down-up walk $P^\vee_d$  is at least $\frac{4}{d(d+1)^2}$. 
    So, for any state $\tau\in X(d)$, 
    $$ t_{\tau}(\eps)\leq \frac{d(d+1)^2}{4}\cdot \log \frac{1}{\eps\cdot\mu(\tau)}$$
\end{theorem}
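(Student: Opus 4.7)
The plan is to combine Theorem~\ref{thm:main} with a standard local-to-global principle for the down-up walk. Theorem~\ref{thm:main} immediately gives that $X$ is a $1/2$-local spectral expander, and since top-link path complexes are closed under link-taking (as observed right after the definition of a path complex), the $1/2$-local bound in fact holds inside every link of $X$ as well; this is the only consequence of the path-complex hypothesis I will use directly.

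Next, I would invoke a local-to-global theorem (Alev--Lau, Kaufman--Oppenheim) converting per-link spectral expansion into a spectral gap bound for $P^\vee_d$ of the rough form
$$ 1 - \lambda_2(P^\vee_d) \;\ge\; \frac{1}{d}\prod_{k=2}^{d}\bigl(1 - \overline{\lambda}_k\bigr), $$
where $\overline{\lambda}_k \ge \lambda_2(P_\tau)$ uniformly over $\tau$ with $\codim(\tau) = k$. A naive substitution of the uniform bound $\overline{\lambda}_k \le 1/2$ yields only the exponentially weak estimate $\tfrac{1}{d}(1/2)^{d-1}$. To recover the polynomial rate $\tfrac{4}{d(d+1)^2}$ one must extract a sharper, co-dimension-dependent trickle-down from the proof of Theorem~\ref{thm:main}: the $\cC$-Lorentzian machinery, combined with the recursive path-complex structure, should yield an Oppenheim-style refinement of the form $\overline{\lambda}_k \le \tfrac{k-1}{k+1}$ (or a similarly shaped bound), whose substitution into the product telescopes to a quantity of order $1/(d+1)^2$. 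Combined with the $1/d$ prefactor this produces the claimed spectral gap.

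Finally, the mixing time bound $t_\tau(\eps) \le \tfrac{d(d+1)^2}{4}\log\bigl(1/(\eps\mu(\tau))\bigr)$ follows from the standard Poincar\'e inequality $t_\tau(\eps) \le (1-\lambda_2(P^\vee_d))^{-1}\log\bigl(1/(\eps\mu(\tau))\bigr)$ applied with the spectral gap proved in the previous step.

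The main obstacle is the sharper, co-dimension-dependent trickle-down needed to avoid the exponential loss in the local-to-global step. Theorem~\ref{thm:main} as stated only delivers the uniform $1/2$-local bound, which is insufficient on its own; getting the polynomial rate requires unpacking the $\cC$-Lorentzian argument behind Theorem~\ref{thm:main} and using the self-similarity of path complexes under link-taking together with the conditional-independence structure to obtain a refined trickle-down whose telescoping substitution produces precisely the $4/(d(d+1)^2)$ constant.
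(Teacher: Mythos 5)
Your proposal correctly reduces to Theorem~\ref{thm:main} plus a local-to-global step, and the final Poincar\'e-to-mixing-time deduction is fine, but the central quantitative step is missing and, as stated, does not work. Feeding the uniform bound $\lambda_2(P_\sigma)\le 1/2$ into the Alev--Lau/Kaufman--Oppenheim product formula indeed only gives a gap of order $2^{-d}/d$, and the fix you propose --- a co-dimension-dependent refinement extracted from the proof of Theorem~\ref{thm:main} --- is not something the paper proves, nor does it follow from the $\cC$-Lorentzian machinery as you suggest: Theorem~\ref{thm:main} (billed as optimal) delivers only the constant $1/2$ for every link, since a link of a $1/2$-top-link path complex is again just a $1/2$-top-link path complex, so re-applying the theorem inside links never improves with co-dimension. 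Moreover the specific bound you write, $\overline{\lambda}_k \le \tfrac{k-1}{k+1}$, makes the product $\prod_{k=2}^{d}\bigl(1-\overline{\lambda}_k\bigr)=\prod_{k=2}^{d}\tfrac{2}{k+1}$ super-exponentially small rather than of order $1/(d+1)^2$; you presumably intended $1-\overline{\lambda}_k\ge\tfrac{k-1}{k+1}$, i.e.\ an Oppenheim-style decay $\overline{\lambda}_k\le\tfrac{2}{k+1}$, but no such decay is available here, and your proposal offers no argument for it beyond the hope that the machinery ``should yield'' it. So the proof has a genuine gap exactly at the step you yourself flag as the main obstacle.

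The paper closes this gap by a different mechanism, which is why Theorem~\ref{thm:mainalg} requires a genuine path complex and not merely a top-link path complex. Rather than seeking better per-link eigenvalue bounds, the proof of Theorem~\ref{thm:localtoglobal} uses the conditional-independence property directly: conditioning on a single coordinate $F_i$ makes $\mu_{F_i}$ a product of its left part ($T_1,\dots,T_{i-1}$) and right part ($T_{i+1},\dots,T_d$), so one can apply the factorization of variance for product measures (Lemma~\ref{lem:varfactorization}) and induct on the two independent halves, while the $1/2$ bound is only ever invoked at the top level of each link via Theorem~\ref{thm:eigPemptytoVar}. The induction produces the inequality $\Var_\mu(f)\le\sum_i i(d+1-i)\,\E\,\Var$ over co-dimension-one conditionings, and $\max_i i(d+1-i)\le (d+1)^2/4$ gives the gap $\tfrac{4}{d(d+1)^2}$. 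If you want to salvage your outline, you should replace the hoped-for refined trickle-down with this variance-factorization induction; using only ``links of path complexes are path complexes,'' as you do, discards precisely the product structure that makes the polynomial rate possible.
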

See \cref{sec:markovmixing} for the relation between the spectral gap and the mixing time.
We remark that the bound $\Omega(1/d^3)$ on the spectral gap is tight (up to constants). A tight example can be constructed by a distributive lattice (see the following section). 
Note also that this is the main theorem where we need the simplicial complex to be a path complex, and not just a top-link path complex.

To put the above result in perspective, this shows that similar to a recent long line of works which bound mixing time by proving spectral independence, the $1/2$-top-link path complexes are "weakly" spectrally independent as $\lambda_2(P_\varnothing)$ is bounded away from 1. However, unlike most recent results \cite{ALO24,CLV21,AJKPV22}, $\lambda_2(P_\varnothing)$ is only a constant as opposed to $O(1/d)$. Such a bound in general is not enough to bound mixing time of the down-up walk. Nonetheless, as we explain in \cref{sec:localtoglobalpath}, for path complexes such a bound is all one needs. In fact, unlike most recent works, one may not be able to obtain a (near) linear time mixing time bounds by establishing entropic independence for path complexes. This is because as we see in the next section there are families $d$-dimensional $1/2$-top-link path expanders for which the down-up walk mixes in $\Omega(d^3\log d)$-steps. 

\subsection{Applications to Sampling and Counting}
\paragraph{Lattices.} A lattice is a Poset with a partial order $<$ and meet and join operations $\wedge,\vee$. We call the elements of the lattice {\bf flats}. We say the lattice $\cL$ has a minimum and maximum elements if there are flats $\hat{0},\hat{1}$ such that $\hat{0}\leq F$ and $F\leq {\hat 1}$ for all $F\in \cL$. A proper flat is flat not equal to $\hat{0}$ or $\hat{1}$. All lattices that we consider in this paper have minimum and maximum elements. We say $G$ covers $F$, i.e.,  $F\prec G$, if $F<G$ but  there is no $K\in \cL$ such that $F<K<G$.

We say $\cL$ is ranked/graded if it can be equipped with rank function $\rk:\cL\to\mathbb{Z}_{\geq 0}$ such that $F\leq G$ implies $\rk(F)\leq \rk(G)$ and $F\prec G$ implies $\rk(G)=\rk(F)+1$.
Given a lattice $\cL$ a chain, also called a flag of flats, is a set of proper flats $(F_1, F_2, \dots, F_\ell)$ for some $\ell\leq d$ satisfying $F_1<\dots<F_\ell$. If $\cL$ is of rank $d+1$, i.e., $\rk(\hat{1})=d+1$, a maximal chain $(F_1,\dots,F_d)$ satisfies $\hat{0}<F_1<\dots <F_d<\hat{1}$.

Given a ranked lattice $\cL$, one can associate a $d$-partite complex $X$ to $\cL$: We let $X(1)$ be the set of proper flats of $\cL$ and we let faces of $X$ correspond to all (unordered) flags of flats of $\cL$.
The following fact is immediate:
\begin{fact}
    For any ranked lattice $\cL$, and the uniform distribution $\mu$ over the maximal flags of flats of $\cL$ or any of its external fields, $\psi*\mu$, $(X,\mu)$ and $(X,\psi*\mu)$ are path partite complexes.

    Recall that given $\psi:\cL\to \R_{>0}$, the probability of a maximal flag of flat $\hat{0}< F_1 < \dots < F_d<\hat{1}$ in the external field $\mu*\psi(\{F_1,\dots,F_d\})\propto \prod_{i=1}^d \psi(F_i).$ 
\end{fact}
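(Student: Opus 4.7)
The plan is to reduce both claims (for $\mu$ and for $\psi * \mu$) to a single factorization identity for the weighted counts of maximal chains of $\cL$, from which the conditional independence property follows by cancellation. First, $d$-partiteness is immediate: set $T_i := \{F \in \cL : \rk(F) = i\}$, so any maximal chain $\hat{0} \prec F_1 \prec \dots \prec F_d \prec \hat{1}$ contains exactly one flat from each $T_i$ since $\cL$ is ranked. The substance of the statement is thus the conditional independence property.

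The main tool is a factorization over ``gaps.'' For a face $\tau = \{G_1 < \dots < G_m\}$ of $X$, set $G_0 := \hat{0}$ and $G_{m+1} := \hat{1}$. For any $A < B$ in $\cL$, define the weighted count of maximal chains of the open interval $(A,B)$ by
$$W_\psi(A,B) := \sum_{A \prec H_1 \prec \cdots \prec H_{r-1} \prec B} \prod_{j=1}^{r-1} \psi(H_j),$$
with the convention $W_\psi(A,B) = 1$ if $A \prec B$. Since any maximal chain of $\cL$ containing $\tau$ is obtained by independently choosing a maximal chain inside each interval $(G_\ell, G_{\ell+1})$ for $\ell = 0, \dots, m$, the $\psi$-weighted sum over facets $\sigma \supseteq \tau$ factors as $\prod_{j=1}^{m}\psi(G_j) \cdot \prod_{\ell=0}^{m} W_\psi(G_\ell, G_{\ell+1})$. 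The uniform case is the special case $\psi \equiv 1$.

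Given $F \in T_i$, $K \in T_k$ with $i < k-1$, and $\tau$ containing some flat $G^*$ with $i < \rk(G^*) < k$, the flat $F$ can appear in a chain extending $\tau$ only if it fits into the unique gap $(G_\ell, G_{\ell+1})$ of $\tau$ with $\rk(G_\ell) < i < \rk(G_{\ell+1})$, and likewise $K$ fits into some gap $(G_p, G_{p+1})$; the existence of $G^*$ forces $\ell < p$. Applying the factorization above to both numerator and denominator of
$$\frac{\P[F, K \in \sigma \mid \tau \subset \sigma]}{\P[F \in \sigma \mid \tau \subset \sigma] \cdot \P[K \in \sigma \mid \tau \subset \sigma]},$$
all gap factors except those at indices $\ell$ and $p$ cancel, and since inserting $F$ only perturbs the $\ell$-th factor and inserting $K$ only the $p$-th, the remaining terms match pairwise. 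Hence the ratio equals $1$. The only obstacle is minor bookkeeping --- e.g., corner cases where $F$ is not comparable to the endpoints of its rank-gap, in which case both the joint and the corresponding marginal probability vanish and the identity holds trivially. No substantive mathematical difficulty arises.
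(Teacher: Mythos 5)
Your proof is correct: the gap-factorization of the $\psi$-weighted chain count over the intervals cut out by $\tau$ (valid because a maximal chain of a ranked lattice containing $\tau$ is exactly an independent choice of saturated chain in each interval $[G_\ell,G_{\ell+1}]$, and the weight is multiplicative over flats) makes the conditional law given $\tau\subset\sigma$ a product measure across gaps, and the hypothesis that $\tau$ meets some rank strictly between $i$ and $k$ forces $F$ and $K$ into distinct gaps, which yields the conditional independence identity after the cancellation you describe. The paper states this as an immediate fact with no written proof, and your argument is precisely the intended justification; the only additional trivial case worth recording is when rank $i$ or $k$ is already occupied by an element of $\tau$, where the relevant conditional probabilities are $0$ or $1$ and the identity holds vacuously.
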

We remark that this fact holds more generally for any ranked Poset.

\subsubsection{Distributive Lattices}
\begin{definition}[Distributive Lattices]
A lattice $(\cL,\vee,\wedge)$ is distributive if for any three flats $F,G,K$ we have $F \wedge (G \vee K) = (F \wedge G) \vee (F \wedge K)$.
\end{definition}

Given a Poset $(P,<)$ with $|P|=d+1$ elements, 
a set $T \subseteq P$ is a downset of $P$ if, for any $i \in T$ and $j < i$, then $j \in T$.

A linear extension of $P$ is a bijection $\ell:P\to[d+1]$ such that for any $a,b\in P$ with $a<b$ we have $f(a)<f(b)$.

Birkhoff's representation theorem states that any finite distributive lattice is isomorphic to downsets of a Poset $P$. Consequently, maximal chains of $\cL$ are in bijection with linear extensions of $P$. In particular, the maximal chain corresponding to a linear extension $\ell$ is 
$$(\{\ell^{-1}(1)\}, \{\ell^{-1}(1),\ell^{-1}(2)\},\dots, \{\ell^{-1}(1),\dots,\ell^{-1}(d)\}).$$

The problem of sampling/counting the number of linear extensions of a Poset has been intensely studied around two decades ago with two general methods: (i) Counting the number of linear extensions by computing the volume of the order polytope of $P$ \cite{DFK91}, (ii) Sampling a uniformly random linear extension by running the Karzanov-Khachiyan's chain \cite{KK91}. Here we focus on the the second. To bound the mixing time of the Markov chain, the first method of choice was to use the isoperimetric inequality of the order polytope \cite{KK91} and the second successful approach was the (path) coupling method \cite{FW97,BD99,Hub14}.

Here, we prove the spectral independence framework of \cite{ALO24} can be used to bound the mixing time of the chain. In fact, we prove a stronger statement:
We say a weight function $\psi:P\to\R_{> 0}$ is {\bf order-reversing} if for any $i\leq j$ we have $\psi(i)\geq \psi(j)$. Given such a $\psi$, the weight of a linear extension $\ell:P\to [d+1]$ is $\psi(\ell):=\prod_{i\in P} \psi(i)^{d+1-\ell(i)}$.


Let $\mu$ be the uniform distribution over all maximal flag of flats.
We also define the weight of a flat $F\in \cL$ as $\psi(F):=\prod_{i\in F}\psi(i)$.
It follows that the weight of a maximal chain $\psi*\mu(F_1,\dots,F_d)$ is the same as the weight of its corresponding linear extension (up to a normalizing constant).
Let $(X,\psi* \mu)$ be the corresponding path complex. It turns out that the down-up Markov chain is essentially the same as the Karzanov-Khachiyan's  \cite{KK91}.

\begin{theorem}\label{thm:distributivelattice}
    Given a $(d+1)$-dimensional distributive lattice $\cL$ with the corresponding Poset $P$ and an order-reversing weight function $\psi:P\to\R_{> 0}$, 
    then, the path complex $(X,\psi*\mu)$ is a $1/2$-top-link expander.
    
    Consequently, $(X,\mu)$ is a $1/2$-local spectral expander  and the down-up walk on facets of  $(X,\mu)$ started
    at the chain with largest weight 
    mixes in time $O(d^{4}\log d)$.
\end{theorem}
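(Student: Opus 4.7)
The plan is to first verify the $1/2$-top-link spectral expansion of $(X, \psi * \mu)$ (specializing $\psi \equiv 1$ gives the same for $(X, \mu)$), and then to invoke \cref{thm:main} and \cref{thm:mainalg} to derive the local spectral expansion and mixing time of $(X, \mu)$.

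To verify the top-link condition, I would fix a codim-$2$ face $\tau$, which omits flats in exactly two parts $T_i, T_k$ with $i < k$, and split into cases. If $k > i+1$, then $\tau$ contains a flat in some part $T_j$ with $i < j < k$, so the path-complex property of $(X, \psi*\mu)$ gives that the two missing flats are conditionally independent given $\tau$. Hence the link's $1$-skeleton is a weighted complete bipartite graph with product edge weights, on which the walk satisfies $\lambda_2(P_\tau) = 0$. If $k = i+1$, the link is determined by the rank-$3$ interval $[F_{i-1}, F_{i+2}]$ of $\cL$ together with the restriction of $\psi$ to $F_{i+2} \setminus F_{i-1}$. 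By Birkhoff's theorem this interval is isomorphic to the downset lattice of the induced $3$-element sub-poset $Q$ of $P$, and I would split further into the five isomorphism types of $Q$.

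For the chain, V, and $\Lambda$ posets, the link has at most two choices in one of the parts, so by direct inspection $\lambda_2(P_\tau) \leq 0$. For the disjoint union $\{a < b\} \sqcup \{c\}$, a direct computation on the reduced matrix $P_{12} P_{21}$ gives $\lambda_2(P_\tau)^2 = \psi(b)\psi(c)/((\psi(b)+\psi(c))(\psi(a)+\psi(c)))$, which is at most $1/4$ once one uses the order-reversing inequality $\psi(a) \geq \psi(b)$ (from $a < b$ in $P$) together with AM-GM. The antichain case is the main obstacle: the link's $1$-skeleton is a weighted $6$-cycle, and the reduced walk $M = P_{12}P_{21}$ on singletons is a $3 \times 3$ positive semidefinite stochastic matrix for which I must show $\lambda_2(M) \leq 1/4$. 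This reduces to a symmetric polynomial inequality in $\psi(a), \psi(b), \psi(c)$ (equivalent to $\mathrm{Tr}(M) \leq 5/4 + 4\det(M)$), which is tight when the three weights are equal. The cleanest way to establish it is likely to exhibit a $\cC$-Lorentzian polynomial associated with the link and apply the general spectral bound developed in the main part of the paper, though a direct algebraic verification also works.

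Given the $1/2$-top-link property of $(X, \mu)$, \cref{thm:main} yields that $(X, \mu)$ is a $1/2$-local spectral expander, and \cref{thm:mainalg} gives a spectral gap of at least $4/(d(d+1)^2)$ for the down-up walk. Since the maximal chains of $\cL$ correspond to linear extensions of $P$, of which there are at most $(d+1)!$, every chain $\tau_0$ satisfies $\mu(\tau_0) \geq 1/(d+1)!$, so $\log(1/\mu(\tau_0)) = O(d \log d)$ and the mixing-time bound from \cref{thm:mainalg} becomes $O(d^4 \log d)$.
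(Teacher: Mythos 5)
Your overall route is the same as the paper's: reduce the top-link check to contiguous codimension-$2$ links (the non-contiguous ones being handled by conditional independence exactly as in \cref{thm:quadraticcheck}), identify each such link with the rank-$3$ interval $[\tau_{i-1},\tau_{i+2}]$, enumerate the five $3$-element sub-posets, and then feed the result into \cref{thm:main} and \cref{thm:mainalg}, with the $\mu(\tau)\geq 1/(d+1)!$ bound giving $O(d^4\log d)$. Your treatment of the chain/V/$\Lambda$ cases and of the $\{a<b\}\sqcup\{c\}$ case is correct, and in the latter your closed form $\lambda_2(P_\tau)^2=\psi(b)\psi(c)/((\psi(b)+\psi(c))(\psi(a)+\psi(c)))$, bounded by $1/4$ via order-reversal and AM--GM, is in fact a cleaner computation than the paper's $4\times 4$ determinant analysis. (One small omission: \cref{thm:main} and \cref{thm:mainalg} also require connectivity of $X$, which the paper supplies via \cref{cor:latticeconnected}; you never verify it, though it is easy.)

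The genuine gap is the antichain (Boolean $B_3$) case, which you correctly identify as the crux but do not prove. First, your proposed "cleanest way" -- exhibit a $\cC$-Lorentzian polynomial for the link and apply the general spectral bound -- is circular: for a codimension-$2$ link the polynomial $p_\sigma$ \emph{is} the quadratic, and its $\cC$-Lorentzianness is by definition the one-positive-eigenvalue condition on its Hessian, i.e.\ (via \cref{lem:eigenvaluebipartite}) exactly the bound $\lambda_2(P_\sigma)\leq 1/2$ you are trying to establish; the quadratic check is the base case of the whole machinery (\cref{thm:quadraticcheck}), so the machinery cannot supply it. Second, your fallback reformulation is not equivalent as claimed: with eigenvalues $1\geq\lambda_2\geq\lambda_3\geq 0$ of the stochastic PSD matrix $M=P_{12}P_{21}$, one has $\tfrac54+4\det(M)-\mathrm{Tr}(M)=\tfrac14(1-4\lambda_2)(1-4\lambda_3)$, so the trace--determinant inequality only says that $\lambda_2,\lambda_3$ lie on the same side of $1/4$; proving it does not rule out $\lambda_2\geq\lambda_3>1/4$ without an additional argument (e.g.\ a separate bound on $\lambda_3$, Cauchy interlacing, or a sign-of-determinant/continuity argument). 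The paper closes exactly this case by computing $\det(-D_\sigma/2+A_\sigma)<0$ for the $6\times6$ Hessian and the sign of a $5\times 5$ principal minor, then combining AM--GM with \cref{lem:AB} to force exactly one positive eigenvalue for all positive weights; some argument of this kind (or a completed direct algebraic verification of $\lambda_2(M)\leq 1/4$, which is tight at $\psi(a)=\psi(b)=\psi(c)$) is required and is missing from your proposal.
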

For a concrete application, suppose we partition $P$ into two sets $P=A\cup B$ such that there is no $a\in A, b\in B$ where $b<a$; think of the set $A$ as the set that we want to {\em prioritize} in our linear extension. Then, for $c>1$, $\psi(a)=c,\psi(b)=1$ for all $a\in A$ and $b\in B$ is order-reversing. Then, by the above theorem we can sample a linear extension $\ell$ of $P$ with weight proportional to $c^{\sum_{a\in A} -\ell(a)}$. In other words, the more the elements of $A$ arrive late in the extension $\ell$ the less likely we are to choose $\ell$.

Given a (distributive) lattice $\cL$ and $F<G\in \cL$ the {\bf interval sub-lattice} $\cL_{[F,G]}$ is defined as the lattice consisting of all flats $K$ such that $F\leq K\leq G$. It follows that any interval sub-lattice of a distributive lattice is also a distributive lattice. With this and the conditional independence property of path complexes, to prove the above theorem it is enough to show that the (path) complex associated to a distributive lattice of rank 3 is a $1/2$-spectral expander (see \cref{thm:quadraticcheck}).

\subsubsection{Modular Lattices}
\begin{definition}[Modular Lattices]
    A ranked lattice $\cL$ is modular if for any two flats $F,G\in \cL$ we have 
    $$ \rk(F)+\rk(G)=\rk(F\vee G)+\rk(F\wedge G).$$
\end{definition}


\begin{definition}[Typical Modular Lattices]
Given a modular lattice $\cL$, we say $\cL$ is typical if it satisfies the following:
    For any interval sub-lattice $\cL_{[F,G]}$ with $\rk(G)=\rk(F)+3$, the bipartite graph $G=({\cal F}_1,{\cal F}_2,E)$ between rank 1 and rank 2 flats of $\cL_{[F,G]}$ either has no vertex of degree 1 in the ${\cal F}_1$ side or no vertex of degree 1 in the ${\cal F}_2$ side, or exactly one degree 1 vertex in both of ${\cal F}_1,{\cal F}_2$ sides.
\end{definition}
It is not hard to see that any distributive lattice is modular. 
\begin{theorem}\label{thm:modularlattice}
    Let $X$ be the complex of all chains of a modular lattice and 
    let $\mu$ be the uniform distribution over the facets of $X$, i.e., maximal chains of  $\cL$.
    The path complex $(X,\mu)$ is a $1/2$-top link expander iff $\cL$ is typical. Consequently,   the down-up walk on the maximal chains of a typical modular lattice $\cL$ mixes in time $O(d^{3}\log |X(d)|)$.
\end{theorem}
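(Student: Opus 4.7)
The plan is to reduce the top-link expansion condition to analyzing the atom--coatom bipartite incidence graph of each rank-$3$ interval sub-lattice of $\cL$, then use modularity to classify these graphs, and finally compute the resulting spectra case-by-case. For the reduction: each codim-$2$ face $\sigma$ of $X$ is a chain of length $d-2$ missing two ranks; if the missing ranks are non-adjacent the link's $1$-skeleton factors as a complete bipartite graph (with $\lambda_2 = 0$), and if adjacent (forming a three-rank window $\cL_{[F, G]}$) the $1$-skeleton is the atom--coatom incidence graph $\Gamma_{[F, G]}$ with uniform edge weights induced by $\mu$. Thus the $1/2$-top-link property is equivalent to $\lambda_2(\Gamma_{[F, G]}) \le 1/2$ for every rank-$3$ interval of $\cL$.

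For the structure, modularity forces $\rk(a \vee b) = 2$ for distinct atoms, so $a \vee b$ is the unique coatom containing both, and dually any two distinct coatoms meet at a unique atom. The key structural lemma I would establish is: if some atom $b$ has degree $1$ with unique coatom $C_0$, then $a \vee b = C_0$ for every other atom $a$, so $C_0$ contains every atom; moreover, any coatom $c' \ne C_0$ containing two distinct atoms $a, a'$ would violate the uniqueness of $a \vee a'$, so every $c' \ne C_0$ is deg-$1$. Combined with the dual statement and (the dual of) de Bruijn--Erd\H{o}s, this yields four disjoint structural cases for $\Gamma_{[F,G]}$: (i) no deg-$1$ vertex on either side --- a finite projective plane or a near-pencil of order $\ge 3$; (ii)--(iii) deg-$1$ on only one side --- a star $K_{1,n}$; and (iv) deg-$1$ on both sides --- either the single edge, or (when both sides have $\ge 2$ vertices) a ``universal'' coatom $C_0$ joined to all atoms together with $m \ge 1$ pendant deg-$1$ coatoms each attached to a single non-deg-$1$ atom and $k \ge 1$ deg-$1$ atoms hanging off $C_0$.

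For the spectrum, the identity $AA^T = J + D_1 - I$ (a direct consequence of the uniqueness statements) makes the spectrum of the squared walk $P^{(2)} := P^2|_{\cF_1}$ on atoms explicit. Cases (i)--(iii) are always typical: projective planes of order $q$ give $\lambda_2(P^{(2)}) = q/(q+1)^2 \le 1/4$, near-pencils give $\lambda_2(P^{(2)}) = 1/4$ exactly, stars give $\lambda_2(P) \in \{0, -1\}$, and the degenerate single-edge sub-case of (iv) gives $\lambda_2(P) = -1$. In the non-trivial sub-case of (iv), by the permutation symmetry among the $k$ deg-$1$ atoms and among the pendant coatoms sharing each attached atom, the spectrum of $P^{(2)}$ decomposes into a small symmetric block plus trivial eigenspaces, and a direct calculation shows $\lambda_2(P^{(2)}) \le 1/4$ iff $(m, k) = (1, 1)$ with the pendant coatom attached to an atom distinct from the unique deg-$1$ atom --- the $P_4$ configuration, precisely the typical sub-case of (iv). For each non-typical sub-case an explicit Rayleigh-quotient test function realizes ratio $> 1/4$: e.g., $u(\alpha_1) = +1,\ u(\alpha_2) = -1$ on two distinct attached atoms (when they exist) gives ratio $1/2$, while an ``attached atom versus uniform deg-$1$ atoms'' ansatz gives ratio $km/((1+m)(1+k))$, which exceeds $1/4$ whenever $(m, k) \ne (1, 1)$. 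The main obstacle is the case bookkeeping in (iv); the individual computations are routine.

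Finally, when $\cL$ is typical, \cref{thm:main} upgrades the $1/2$-top-link property to full $1/2$-local spectral expansion, and then \cref{thm:mainalg} yields a spectral gap of $\Omega(1/d^3)$ for the down-up walk on maximal chains of $\cL$, hence the desired mixing time $O(d^3 \log|X(d)|)$ from any starting state under the uniform distribution.
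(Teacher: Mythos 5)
Your proposal follows essentially the same route as the paper: reduce the top-link condition to the atom--coatom incidence graphs of rank-$3$ intervals, observe via modularity that these have the unique-neighbor property, classify them by their degree-$1$ structure (stars, the $P_4$/star-of-stars configurations, near-pencils, projective planes) so that the admissible cases coincide exactly with ``typical,'' compute the spectra case by case, and then invoke \cref{thm:main} and \cref{thm:mainalg} for the mixing bound. The only step you leave implicit that the paper handles explicitly is verifying connectivity of all links (needed as a hypothesis of \cref{thm:main} and \cref{thm:mainalg}), which the paper gets from semimodularity via \cref{cor:latticeconnected}; otherwise the differences (de Bruijn--Erd\H{o}s packaging of the no-degree-$1$ case, working with $\lambda_2(P^2)\le 1/4$ instead of the one-positive-eigenvalue condition on $-D/2+A$) are cosmetic.
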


\subsection{Applications to Log-Concave Sequences}\label{sec:introcolor}
\paragraph{Colored (Top-Link) Path Complexes.}
Given a (top-link) path complex $(X,\mu)$ with types $T_1,\dots, T_d$, for $\{F,G\}\in X(2)$, we write $F<G$ if $F\in T_i,G\in T_j$ and $i<j$. Otherwise we write $G<F$. If $\{F,G\}\notin X(2)$ we say they are incomparable. We also assume $\hat{0}<F<\hat{1}$ for any $F\in X(1)$.

We color elements of $X(1)$ with a strictly order-preserving map.
Let $\phi:X(1)\cup \{\hat{0},\hat{1}\} \to\R_{>0}$ be a  strictly order-preserving map, i.e., $F<G$ implies $\phi(F)<\phi(G)$. We call the triplet $(X,\mu,\phi)$ a colored path complex.


\begin{definition}
    We say a colored top-link path complex $(X,\mu,\phi)$ is a top-link expander if, for any $\tau$ of $\codim(\tau)=2$ where $\tau\cap T_i=\tau\cap T_{i+1}=\varnothing$ for some $i\in [d-1]$ we have 
    $$ A_\tau\preceq D_{\phi,\tau} +\mathbf{ww}^T,$$
    for some real vector $\mathbf{w}$ and  as before $A_\tau$ is the (weighted) adjacency matrix of $G_\tau$, and $D_{\phi,\tau}$ is the diagonal matrix where the $(F,F)$ entry is $\sum_{G: F<G} \frac{\phi(L)-\phi(G)}{\phi(L)-\phi(F)}\mu(\tau\cup\{F,G\})$ for any $F\in T_i$ and the $(G,G)$-entry is $\sum_{F:F<G} \frac{\phi(F)-\phi(K)}{\phi(G)-\phi(K)}\mu(\tau\cup\{F,G\})$. Here $K, L$ are the elements in $\tau$ satisfying $K \prec F$ and $G \prec L$ for all $F \in T_i$ and $G \in T_j$.
\end{definition}
For example, if for $F\in T_i$ we set $\phi(F)=i$, then the top-link expansion of $(X,\mu,\phi)$ is the same as $1/2$-top link expansion of $(X,\mu)$. We call this the trivial coloring.

We prove the following log-concavity theorem for colored top-link path complexes:
\begin{theorem}[Log-Concavity theorem for Colored Top-Link Path Complexes] \label{thm:log-concavity-colored-complex}
    For any connected $d$-dimensional colored top-link path complex $(X,\mu,\phi)$, if it is a top-link expander then the sequence $c_0,c_1,\dots,c_d$ is log-concave, where
    \[
        c_k = \frac{1}{\phi(\hat{1}) - \phi(\hat{0})} \sum_{\tau \in X(d)} \mu(\tau) \cdot \frac{\prod_{i=0}^d (\phi(\tau_{i+1}) - \phi(\tau_i))}{\prod_{i=1}^k (\phi(\tau_i) - \phi(\tau_0)) \prod_{i=k+1}^d (\phi(\tau_{d+1}) - \phi(\tau_i))},
    \]
    where $\tau_0 = \hat{0}$ and $\tau_{d+1} = \hat{1}$.
\end{theorem}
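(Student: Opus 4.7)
The plan is to encode the colored top-link path complex $(X,\mu,\phi)$ as a multi-affine polynomial and then deduce log-concavity by showing that polynomial is (a specialization of) a Lorentzian polynomial. Concretely, I would introduce one variable $z_F$ for each proper flat $F\in X(1)$ together with $z_{\hat{0}},z_{\hat{1}}$ and define
\[
P_{X,\mu,\phi}(z) \;=\; \sum_{\tau\in X(d)} \mu(\tau)\,\prod_{i=0}^{d}\frac{1}{\phi(\tau_{i+1})-\phi(\tau_i)}\,\prod_{F\in\tau\cup\{\hat 0,\hat 1\}} z_F,
\]
or a close variant adapted to the conventions of \cite{BL23}. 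The bivariate specialization obtained by replacing $z_F$ with $\phi(\hat{1}) x + (\phi(\hat{1})-\phi(F))y$ for $F$ in ``lower'' positions and the symmetric substitution in the upper positions is designed so that its $\binom{d}{k}x^k y^{d-k}$ coefficient, after clearing the factor $1/(\phi(\hat 1)-\phi(\hat 0))$, matches the stated formula for $c_k$.

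Next I would show that $P_{X,\mu,\phi}$ is $\mathcal{C}$-Lorentzian using the hereditary principle central to this paper. The hereditary machinery reduces the global Lorentzian check to a local quadratic check at every link $\tau$ of codimension exactly $2$: one must verify that the Hessian of the link-polynomial is positive semidefinite with a single positive eigenvalue. A direct unravelling of the definition identifies this Hessian (up to diagonal conjugation by $\phi$-weights) with the matrix $D_{\phi,\tau}+\mathbf{ww}^{\!\top}-A_\tau$, so the quadratic check becomes exactly
\[
A_\tau \;\preceq\; D_{\phi,\tau} + \mathbf{w}\mathbf{w}^{\!\top},
\]
which is the hypothesis that $(X,\mu,\phi)$ is a top-link expander. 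Together with the conditional-independence property of top-link path complexes, which guarantees that the polynomial factors appropriately at each link, this yields $\mathcal{C}$-Lorentzianness of $P_{X,\mu,\phi}$.

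Finally, $\mathcal{C}$-Lorentzianness is preserved under nonnegative linear changes of variables and specializations in $\mathcal{C}$, so the bivariate specialization $Z_\phi(x,y)=\sum_k c_k\binom{d}{k} x^k y^{d-k}$ (with the chosen normalization) remains Lorentzian. The classical fact that the coefficient sequence of a bivariate Lorentzian polynomial is ultra-log-concave then gives $c_{k}^{2}\ge c_{k-1}c_{k+1}$ for all $k$.

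The main obstacle is the first step: pinning down exactly which polynomial has Hessian-at-codimension-2 matching the asymmetric diagonal $D_{\phi,\tau}$, where the $F$-side entries carry weights $\tfrac{\phi(L)-\phi(G)}{\phi(L)-\phi(F)}$ while the $G$-side entries carry $\tfrac{\phi(F)-\phi(K)}{\phi(G)-\phi(K)}$. This asymmetry forces the polynomial to carry different $\phi$-weighted prefactors for ``upper'' and ``lower'' derivatives, and matching these to the $c_k$ formula — which mixes factors $\phi(\tau_i)-\phi(\hat 0)$ and $\phi(\hat 1)-\phi(\tau_i)$ asymmetrically at position $k$ — is the computational core of the argument. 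Once the polynomial is identified and the local quadratic form is matched with the top-link expansion hypothesis, the hereditary Lorentzian framework of \cite{BL23} and the standard Lorentzian $\Rightarrow$ log-concave implication complete the proof.
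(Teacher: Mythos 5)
There is a genuine gap, and it sits exactly where you flag the ``main obstacle'': the polynomial you propose cannot do the job. Your $P_{X,\mu,\phi}$ is multi-affine (each $z_F$ appears to degree one), so every diagonal entry of every Hessian of every derivative vanishes identically; in particular, for a codimension-2 link $\tau$ the relevant Hessian is just the weighted adjacency matrix $A_\tau$ with zero diagonal (up to scaling). No diagonal conjugation $PAP$ can create the nonzero diagonal $-D_{\phi,\tau}$, so the quadratic check for your polynomial is that $A_\tau$ itself has at most one positive eigenvalue --- essentially $0$-top-link expansion --- which is far stronger than the hypothesis $A_\tau \preceq D_{\phi,\tau}+\mathbf{w}\mathbf{w}^{\top}$ and fails already for distributive lattices. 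In the paper the diagonal term arises precisely because the polynomial is \emph{not} the facet-generating polynomial: it is the recursively defined $p_\sigma$ of \cref{def:polys-X}, built from the $\bm\pi$-maps determined by $\alpha_K^L(F)=\frac{\phi(F)-\phi(K)}{\phi(L)-\phi(K)}$, $\beta_K^L(F)=\frac{\phi(L)-\phi(F)}{\phi(L)-\phi(K)}$ (\cref{cor:alpha-beta-v1}, \cref{prop:pi-maps-from-alpha-beta}); the twisted evaluation $p_{\sigma\cup\{F\}}(\pi_{\sigma+F}(\bm t))$ makes $\partial_{t_F}^2 p_\sigma\neq 0$ and produces exactly the diagonal $D$ of \cref{lem:quadratic-entries}, which is what lets \cref{thm:quadraticcheck} convert the top-link expander hypothesis into Lorentzianness. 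One must also verify non-emptiness of the cone (\cref{lem:cone-non-empty-v1}) and the degenerate non-link-contiguous codimension-2 cases via conditional independence --- steps your sketch does not supply.

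The second gap is the coefficient identification. Substituting $z_F\mapsto a(F)x+b(F)y$ into a multilinear polynomial gives, for the $x^ky^{d-k}$ coefficient, a sum over \emph{all} $k$-element subsets of each facet, whereas the stated $c_k$ splits each chain by \emph{position}: the first $k$ flats contribute $\phi(\tau_i)-\phi(\hat 0)$ and the last $d-k$ contribute $\phi(\hat 1)-\phi(\tau_i)$. No single substitution into your $P_{X,\mu,\phi}$ reproduces this. The paper gets it from $d!\,p_\varnothing(\beta x+\alpha)=\sum_k\binom{d}{k}c_k x^k$ together with the mixed-derivatives computation (\cref{lem:general-mixed-derivs-expression}, \cref{cor:log-concavity-alpha-beta}): because of the $\pi$-twisting, $\nabla_\alpha$ and $\nabla_\beta$ peel facets off the top and bottom of the chain respectively (only contiguous faces survive), which is precisely what enforces the positional split. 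So while your high-level blueprint (quadratic check at codimension $2$ matching the expander hypothesis, then restriction to two cone directions and ultra-log-concavity) is the right shape and is the paper's shape, the concrete polynomial and substitution you chose do not satisfy either of the two key identities your argument needs; constructing the correct twisted polynomial is the substantive content, not a finishing detail.
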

Note that if $\phi$ is the trivial coloring,  the above theorem is trivial; it just corresponds to the log-concavity of the binomial coefficients: namely the log-concavity of the sequence $\binom{d}{0}, \binom{d}{1},\dots,\binom{d}{d}$.

In \cref{sec:coloredpathcomplexes} we give the following applications of the above theorem:
\paragraph{A  proof of the Heron-Rota-Welsh conjecture.} Let $M=([n], {\cal I})$ be a matroid over $[n]$ of rank $d+1$. Recall that a set $F\subseteq [n]$ is a {\bf flat} if $F=\textup{span}(F)$. A {\bf flag} of flats of $M$ is a sequence $F_1\subsetneq F_2\subsetneq \dots\subsetneq F_\ell$. A maximal of flat of flat is a flag of flat of length $d$.
We let $X$ be the simplicial complex of all flag of proper flats of $M$ and $\mu$ be the uniform distribution over facets of $X$, i.e., all maximal flag of flats of $M$. We prove the following lemma.
\begin{lemma}\label{lem:toplinkexpansionmatroid}
    For $\phi(F)=|F|$, the colored path complex $(X,\mu,\phi)$ is a top-link expander.
\end{lemma}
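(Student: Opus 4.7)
The plan is to localize the check to a rank-$3$ interval of the lattice of flats of $M$, rewrite the desired matrix inequality as a Hessian-type bound on a bivariate polynomial associated to that interval, and then invoke the Hodge-Riemann relations for matroids (in their Lorentzian polynomial formulation \cite{BL21}) through the hereditary $\cC$-Lorentzian machinery of \cite{BL23}.

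\textbf{Step 1 (Localization).} Fix a codim-$2$ face $\tau$ with $\tau \cap T_i = \tau \cap T_{i+1} = \varnothing$, and let $K,L \in \tau$ be the uniquely determined neighbors of the two missing parts (with $K=\hat{0}$ if $i=1$ and $L=\hat{1}$ if $i=d-1$). Since $(X,\mu)$ is the flag-of-flats path complex of $M$, the restricted distribution $\mu_{|\tau}$ on the $1$-skeleton of the link $X_\tau$ is, up to a global scalar, the uniform measure on pairs $(F,G)$ with $K \prec F \prec G \prec L$ in the rank-$3$ interval matroid $N := (M|_L)/K$. Moreover the diagonal entries of $D_{\phi,\tau}$ depend on $\phi$ only through the ``local'' cardinalities $|F|-|K|$ and $|L|-|G|$, so the entire inequality $A_\tau \preceq D_{\phi,\tau} + \mathbf{w}\mathbf{w}^{\top}$ becomes an assertion internal to $N$ equipped with the affine-shifted cardinality coloring.

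\textbf{Step 2 (Spectral inequality from a Lorentzian polynomial).} The fractions $(|L|-|G|)/(|L|-|F|)$ on atom rows and $(|F|-|K|)/(|G|-|K|)$ on line rows are precisely the ``affine-coloring'' weights that arise from taking a mixed second-order derivative of the degree-$3$ Chow-theoretic volume polynomial of $N$, paired with the linear form determined by the cardinality coloring. The $\cC$-Lorentzian framework of \cite{BL23} is engineered so that if this degree-$3$ polynomial is $\cC$-Lorentzian, then its normalized Hessian is of the form $D + \mathbf{w}\mathbf{w}^{\top}$ with $D$ equal to the bipartite row/column sums reweighted by the affine coloring. Thus it suffices to exhibit a single Lorentzian polynomial on $N$ whose bivariate mixed derivative reproduces the bipartite incidence of atoms and lines weighted by $(|F|-|K|)(|L|-|G|)$.

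\textbf{Step 3 (Hodge-Riemann for matroids).} The required polynomial is, up to standard identifications, the Chow-theoretic volume polynomial of the rank-$3$ matroid $N$ paired with the hyperplane class induced by $\phi(F)=|F|$; it is Lorentzian by the matroid Hodge-Riemann theorem of Adiprasito-Huh-Katz, equivalently by its Lorentzian polynomial formulation in \cite{BL21}. The main obstacle is the algebraic bookkeeping in Step~2 -- carefully matching the specific fractions appearing in the definition of a colored top-link expander with the coefficients produced by differentiating this Chow-theoretic polynomial, and verifying that the rank-one correction $\mathbf{w}\mathbf{w}^\top$ is realizable over the reals by an appropriate choice of supporting linear form. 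Boundary cases ($K=\hat{0}$ or $L=\hat{1}$) are handled by extending $\phi$ via $\phi(\hat{0})=0$ and $\phi(\hat{1})=n+1$, which keeps $\phi$ strictly order-preserving and makes the affine-coloring framework apply uniformly.
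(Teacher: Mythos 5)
There is a genuine gap here: your proposal never actually verifies the spectral condition, and the step you defer as ``algebraic bookkeeping'' is the entire mathematical content of the lemma. Within this paper's architecture the logical flow is the opposite of what you propose: \cref{thm:conn+quad=Lor} and \cref{thm:quadraticcheck} \emph{derive} Lorentzianness of the higher-degree polynomials \emph{from} the codimension-2 (quadratic) check, so the top-link expander condition is exactly the base case that must be verified by hand. Asserting that the rank-3 interval's volume polynomial is $\cC$-Lorentzian ``by \cite{BL21}'' is essentially citing the statement to be proven (the quadratic case in \cite{BL21} is itself established by precisely this kind of elementary verification), and invoking the Adiprasito--Huh--Katz Hodge--Riemann relations instead makes the paper's intended re-proof of Heron--Rota--Welsh circular, while still leaving unchecked the identification between the specific ratios $\frac{\phi(L)-\phi(G)}{\phi(L)-\phi(F)}$, $\frac{\phi(F)-\phi(K)}{\phi(G)-\phi(K)}$ in $D_{\phi,\tau}$ and the Hessian of whatever polynomial you have in mind. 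A secondary issue: your boundary convention $\phi(\hat{1})=n+1$ changes the matrix $D_{\phi,\tau}$ for links with $i=1$ or $i=d-1$ (and breaks the partition identities below), so you would be proving the expander condition for a different coloring than the one in the lemma, which is also the one needed downstream in the $c_k$ computation for \cref{thm:Heron-Rota-Welsh}.

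For comparison, the paper's proof is a short direct computation requiring no Hodge theory: with $K=\tau_{k-1}$, $L=\tau_{k+2}$ and $\phi(F)=|F|$, the matroid partition property (the covers $G$ of $F$ inside $[F,L]$ partition $L\setminus F$, and likewise below $G$) gives $D_{\phi,\tau}(F,F)=\sum_{G:F\prec G\prec L}\frac{|L\setminus G|}{|L\setminus F|}=d_\tau(F)-1$ for $F\in T_k$ and $D_{\phi,\tau}(G,G)=\sum_{F:K\prec F\prec G}\frac{|F\setminus K|}{|G\setminus K|}=1$ for $G\in T_{k+1}$. One then checks the identity $A_\tau-D_{\phi,\tau}=J_k-\sum_{G\in X_\tau(1)\cap T_{k+1}}\bigl(\bm{e}_G-\sum_{F:K\prec F\prec G}\bm{e}_F\bigr)\bigl(\bm{e}_G-\sum_{F:K\prec F\prec G}\bm{e}_F\bigr)^{\top}$, where $J_k$ is the all-ones block on $T_k$; since $J_k$ has one positive eigenvalue and the subtracted terms are PSD rank-one matrices, $A_\tau-D_{\phi,\tau}$ has at most one positive eigenvalue, which is the colored top-link expander condition. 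If you want to salvage your route, you would have to carry out the matching in your Step~2 explicitly---at which point you would find you are reproving exactly this computation, and the appeal to Hodge--Riemann becomes unnecessary.
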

Then, we invoke \cref{thm:log-concavity-colored-complex} to give  prove the Heron-Rota-Walsh conjecture on the log-concavity of the coefficients of the characteristic polynomial of a matroid (see \cref{thm:Heron-Rota-Welsh}). We remark that although this proof is similar to \cite{BL21} in spirit, it is conceptually simpler and more direct. 

\paragraph{A generalization of the Stanley's log-concavity theorem.} Given a Poset $P$ with $d+1$ elements and the corresponding distributive lattice $\cL$ and the path complex $(X,\mu)$ where $\mu$ is the uniform distribution on maximal chains of flats of $\cL$. 
Chan and Pak raised the following conjecture which generalizes Stanley's log-concavity theorem (given by the $|A| = 1$ case).

\begin{conjecture}[{\cite[Conj 1.5]{CP22,CP23}}]\label{conj:CP}
    For any $A\subseteq P$, we have
$$ \P[\ell_{\min}(A)=k]^2 \geq \P[\ell_{\min}(A)=k-1]\cdot\P[\ell_{\min}(A)=k+1],$$
where $\ell$ is a u.r. linear extension of $P$ and $\ell_{\min}(A)=\min\{\ell(a): a\in A\}$.
\end{conjecture}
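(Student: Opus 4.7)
\textbf{The plan} is to apply \cref{thm:log-concavity-colored-complex} to a colored top-link path complex built from $P$ and $A$. I would take $(X,\mu)$ to be the chain complex of the distributive lattice $\cL$ of downsets of $P$ with $\mu$ uniform on maximal chains; by \cref{thm:distributivelattice} this is already a $1/2$-top-link expander under the trivial coloring $\phi(F)=|F|$. To make the coloring ``detect'' the set $A$, I would equip $\cL$ with a strictly order-preserving coloring tailored to $A$. A natural first candidate is the weighted rank $\phi_s(F)=1+|F\setminus A|+s\,|F\cap A|$ for a parameter $s>0$, which at $s=1$ reduces to the trivial coloring.

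\textbf{Top-link expansion and identification of $c_k$.} To verify that $(X,\mu,\phi_s)$ is a colored top-link expander, I would use the conditional independence property of path complexes to reduce the matrix inequality $A_\tau\preceq D_{\phi_s,\tau}+\mathbf{w}\mathbf{w}^T$ at each codimension-2 face $\tau$ to a check on the rank-3 interval above $\tau$. Since such intervals of a distributive lattice are themselves distributive of rank~3, this becomes a finite case analysis on their shapes together with the placement of $A$-elements among their rank-1 and rank-2 flats; the $s=1$ case is covered by \cref{thm:distributivelattice}, and the extension to general $s>0$ follows by a perturbation argument along the one-parameter family. With expansion in hand, I would compute $c_k$ via the formula in \cref{thm:log-concavity-colored-complex} and take the limit $s\to\infty$: direct telescoping shows that $c_k$ concentrates on chains whose first $A$-element occurs at position $k+1$, so that (up to a $k$-dependent positive factor) $c_k$ is proportional to $\P[\ell_{\min}(A)=k+1]$, at which point the log-concavity of $c_k$ would give the conjecture.

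\textbf{The main obstacle} is arranging for $c_k$ to be proportional to $\P[\ell_{\min}(A)=k+1]$ \emph{cleanly}, i.e., without an extraneous prefactor such as $\binom{d}{k}$ that would weaken the implied inequality. Indeed, the naive weighted-rank $\phi_s$ produces $c_k\propto\binom{d}{k}\,\P[\ell_{\min}(A)=k+1]$ in the $s\to\infty$ limit, and ordinary log-concavity of such a product does not imply log-concavity of $\P[\ell_{\min}(A)=k+1]$ itself (the ratio of binomials pulls the inequality in the wrong direction). Resolving this either requires a more refined coloring---reflecting finer order-theoretic structure of $(P,A)$, or involving an auxiliary enlargement of $\cL$ or a matroid on a ground set depending on $A$---or extracting from the Lorentzian-polynomial machinery underpinning \cref{thm:log-concavity-colored-complex} the stronger ultra-log-concave form of the conclusion, which is precisely what is needed to cancel the $\binom{d}{k}$ prefactor.
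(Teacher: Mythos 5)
There is a fundamental problem: the statement you are trying to prove is false as stated, and the paper's treatment of \cref{conj:CP} is precisely to \emph{disprove} it for arbitrary $A \subseteq P$. Take $P = \{a,b,c,d\}$ with the single chain $b < c < d$ and $a$ incomparable to everything, and $A = \{a,d\}$. The four linear extensions place $a$ in positions $1,2,3,4$, giving $\P[\ell_{\min}(A)=1] = \tfrac14$, $\P[\ell_{\min}(A)=2] = \tfrac14$, $\P[\ell_{\min}(A)=3] = \tfrac12$, $\P[\ell_{\min}(A)=4]=0$, so at $k=2$ one has $\left(\tfrac14\right)^2 < \tfrac14 \cdot \tfrac12$ and log-concavity fails. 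No choice of coloring or refinement of the Lorentzian machinery can rescue the argument; the paper salvages the statement only under the additional hypothesis that $A$ is $P$-consistent (\cref{thm:P-consistent-ext-Stanley}), a hypothesis violated by this example since $a$ is comparable to neither $c$ nor $d$ in the triple $c \prec d$ with $a,d \in A$, $c \notin A$.

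The place where your outline breaks concretely is the claimed verification of top-link expansion for the coloring $\phi_s$ (equivalently the paper's $\phi_M(F) = \I[F\cap A\neq\varnothing]\cdot M + |F|$) for \emph{arbitrary} $A$: the paper explicitly observes that $(X,\mu,\phi_M)$ need not be a colored top-link expander, and the failing rank-$3$ interval is exactly the fourth lattice in the paper's case analysis (the one coming from $b<c$ plus an incomparable element), once the $A$-elements sit in the ``inconsistent'' positions. Your perturbation argument is unsound for the same reason: the inequality $A_\tau \preceq D_{\phi_s,\tau} + \mathbf{w}\mathbf{w}^\top$ holding at $s=1$ is not an open-plus-connectedness condition that propagates along the family; it genuinely fails at large $s$ for such intervals unless $A$ is $P$-consistent (this is the content of \cref{lem:toplinkexpansioncolorlattice}, whose case analysis in Section 8.2 uses the $P$-consistency hypothesis to exclude the bad configuration). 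Your worry about the $\binom{d}{k}$ prefactor, by contrast, is not the real obstruction: the paper's computation with $\phi_M$ shows $c_k \to |\{\ell: \ell_{\min}(A)=k+1\}| / (k!(d-k)!)$, and the standard trick of appending long chains of length $N$ below and above $P$ and letting $N\to\infty$ removes the factorial weights, exactly as in the paper's derivation of Stanley's inequality. So the combinatorial normalization can be handled; what cannot be fixed is the expansion step, because the target inequality itself is false without a restriction on $A$.
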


Given $A\subseteq P$, consider the following coloring: Let $\phi_M(F)=\I[A\cap F\neq \varnothing]\cdot M+|F|$. We observe that the conclusion of \cref{thm:log-concavity-colored-complex} for $\phi_M$  implies the above conjecture when $M\to\infty$.

Unfortunately, $(X,\mu,\phi_M)$ is not necessarily a top link expander. Surprisingly, our counter example also gives a counter example to the above conjecture:
\begin{lemma}
    \cref{conj:CP} doesn't necessarily hold for $A\subseteq P$ arbitrary. 
\end{lemma}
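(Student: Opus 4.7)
The plan is to exhibit an explicit poset $P$ and subset $A\subseteq P$ for which the CP inequality
\[
\P[\ell_{\min}(A)=k]^2 \geq \P[\ell_{\min}(A)=k-1]\cdot\P[\ell_{\min}(A)=k+1]
\]
fails at some index $k$, and to verify the failure by direct enumeration of the linear extensions of $P$.

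To locate such a candidate efficiently, I would leverage the preceding remark that $(X,\mu,\phi_M)$ need not be a top-link expander. A routine computation with $\phi_M(F) = M\cdot \I[A\cap F\neq\varnothing] + |F|$ shows that as $M\to\infty$ each maximal chain contributes to $c_k$ only in the limit $k = \ell_{\min}(A) - 1$ and, after dividing by $\phi_M(\hat{1})-\phi_M(\hat{0}) \sim M$, gives $c_k \to \frac{1}{k!(d-k)!}\,\P[\ell_{\min}(A)=k+1]$. Hence log-concavity of $c_k$ is equivalent, up to a common normalization, to an ultra-log-concave version of CP. The rank-3 interval $[F,G]$ of the lattice of downsets $\cL$ at which the local matrix inequality $A_\tau \preceq D_{\phi_M,\tau} + \mathbf{w}\mathbf{w}^T$ breaks down is therefore the natural place to look for a genuine global counterexample.

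Concretely, I would identify a small subposet $Q$ whose lattice of downsets contains such a bad rank-3 interval, then embed $Q$ into minimal ambient posets $P$ (for instance, $Q$ together with a handful of incomparable elements, or attached to a short chain), and for each candidate pair $(P,A)$ enumerate the linear extensions of $P$, tabulate $\P[\ell_{\min}(A)=k]$ for every $k$, and check the three-term inequality. The first candidate that exhibits a strict violation at some index is then reported as the explicit counterexample.

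The main obstacle is that \cref{thm:log-concavity-colored-complex} gives only a sufficient condition for CP: the failure of the local expander inequality at a single rank-3 link need not survive the global averaging over all maximal chains, since linear extensions avoiding the bad interval contribute ``healthy'' terms that may mask the obstruction. Consequently some trial-and-error or a short computer search is required (over posets of size at most $10$, say). However, because the local obstruction is typically a sharp quadratic deficit in the coloring $\phi_M$, we expect a counterexample of modest size to exist and to be presentable explicitly by directly tabulating the relevant probabilities.
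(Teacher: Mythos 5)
There is a genuine gap: your proposal never actually proves the lemma. The statement is an existence claim — that a counterexample to \cref{conj:CP} exists — and its proof must exhibit a concrete poset $P$, subset $A$, and index $k$, and verify that the three-term inequality fails there. What you describe is a search strategy (use the failure of top-link expansion of $(X,\mu,\phi_M)$ at a rank-3 interval to guess candidates, then enumerate linear extensions by computer over posets of size at most $10$), together with the expectation that such a search will succeed. You yourself identify why this is not conclusive: the local obstruction at one rank-3 link need not survive the global averaging, since \cref{thm:log-concavity-colored-complex} only gives a sufficient condition for the CP inequality. So nothing in the proposal rules out that every candidate produced by your heuristic "heals" globally, and no counterexample is ever named or checked. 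A plan to look for a counterexample, however well motivated, does not establish that one exists.

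For comparison, the paper's proof is a single explicit example that your search would presumably have found immediately: take $P$ on four elements with the chain $b<c<d$ and $a$ incomparable to all of them, and $A=\{a,d\}$. There are four linear extensions (one for each position of $a$), and $\ell_{\min}(A)$ takes the values $1,2,3,3$ on them, so $\P[\ell_{\min}(A)=1]=\P[\ell_{\min}(A)=2]=\tfrac14$ and $\P[\ell_{\min}(A)=3]=\tfrac12$. At $k=2$ this gives $\left(\tfrac14\right)^2 < \tfrac14\cdot\tfrac12$, violating the inequality. To repair your write-up, you would need to carry out the final step you deferred: name such a pair $(P,A)$ explicitly and tabulate the probabilities, as the paper does.
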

\begin{proof}
Consider a Poset with $4$ elements, $b<c<d$ and an incomparable $a$. The conjecture fails for $A=\{a,d\}$ and $k=2$.     
\end{proof}

Furthermore, we show that if we limit the set $A$, we get a top-link expander and we prove the conjecture for such sets. 
\begin{definition}[$P$-consistent sets]
    We say a set $A\subseteq P$ is $P$-consistent if for any triple $a,b,c\in P$ with $b\prec c$ such that $a,c\in A, b\notin A$ we have $a$ is comparable to (at least) one of $b$ and $c$.
\end{definition}
\begin{lemma}\label{lem:toplinkexpansioncolorlattice}
For any function $\phi:P\to \R_{>0}$, and $\phi(F)=\sum_{a\in F} \phi(a)$, the colored path complex $(X,\mu,\phi)$ is a top-link expander.

In addition, for any $A\subseteq P$ that is $P$-consistent,
     and for any $M\in \R_{>0}$, $(X,\mu,\phi_M)$ is a top-link expander where $\phi_M(F)=\I[F\cap A\neq\emptyset]\cdot M+|F|$ 
\end{lemma}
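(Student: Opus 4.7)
The proof proceeds by reducing the top-link expander condition to a local check on rank-$3$ interval sub-lattices, and then verifying the matrix inequality $A_\tau \preceq D_{\phi,\tau} + \mathbf{ww}^T$ by case analysis. By the conditional-independence property of the path complex of a distributive lattice, the inequality at any co-dimension-$2$ link $\tau$ depends only on the bipartite graph between rank-$1$ and rank-$2$ flats of the rank-$3$ interval sub-lattice $\cL_{[K,L]}$, where $K, L$ are the neighboring flats of $\tau$ (taking $K=\hat{0}$ or $L=\hat{1}$ at the boundary). By Birkhoff's theorem $\cL_{[K,L]}$ is the downset lattice of the induced $3$-element sub-poset $Q := L\setminus K$ of $P$, and up to isomorphism there are only five such sub-posets: the antichain, the $3$-chain, the V ($x<y,\,x<z$), the $\Lambda$ ($x<z,\,y<z$), and chain-plus-singleton ($x<y$ with $z$ incomparable). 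In each case the bipartite graph has at most $3+3$ vertices and at most $6$ edges, so the matrix inequality reduces to a finite PSD check.

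For Part~1, with $\phi$ additive, the diagonal entries of $D_{\phi,\tau}$ depend only on the sub-poset structure and the weights $\phi(x)$ for $x\in Q$: if $F=K\cup\{x\}$ is a rank-$1$ flat then $\phi(F)-\phi(K)=\phi(x)$ and $\phi(L)-\phi(F)=\sum_{y\in Q\setminus\{x\}}\phi(y)$, and for a rank-$2$ flat $G=K\cup(Q\setminus\{z\})$ one has $\phi(L)-\phi(G)=\phi(z)$. I would then enumerate the flats and edges in each of the five poset classes and verify that the matrix $D_{\phi,\tau}-A_\tau$ has at most one negative eigenvalue, so that $A_\tau \preceq D_{\phi,\tau} + \mathbf{ww}^T$ holds with an appropriate rank-$1$ correction. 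This can also be phrased as a Hodge--Riemann-style inequality on the associated Lorentzian polynomial, in line with the machinery developed earlier in the paper.

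For Part~2, if $K\cap A\neq\emptyset$ then the $M$-shift in $\phi_M$ is uniform over the interval and the analysis reduces to Part~1. Otherwise, the $\phi_M$-values are determined by the pattern $Q\cap A$. Without $P$-consistency, a rank-$1$ flat $F=K\cup\{b\}$ with $b\notin A$ adjacent to a rank-$2$ flat $G=K\cup\{b,c\}$ with $c\in A$ would cause $(D_{\phi_M,\tau})_{F,F}$ to vanish as $M\to\infty$ while $(D_{\phi_M,\tau})_{G,G}$ remains bounded, creating a second positive eigenvalue in $A_\tau - D_{\phi_M,\tau}$. The $P$-consistency condition applied to the covering pair $b\prec c$ in $P$ (with $c\in A$, $b\notin A$) forces every third $a\in A$ to be comparable to $b$ or $c$; a case analysis on the surviving poset-and-pattern configurations shows that either the problematic bipartite pair is compensated by another edge, or the configuration does not arise, and the PSD condition holds with a pattern-dependent choice of $\mathbf{w}$.

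The main obstacle is the case analysis in Part~2: for each of the five poset classes and each $A$-pattern surviving $P$-consistency, one must track the $M$-dependence of the diagonal entries of $D_{\phi_M,\tau}$ and verify the PSD condition uniformly in $M$. The critical step is identifying the correct correction vector $\mathbf{w}$ in each case so that the $O(M)$-contributions to $D_{\phi_M,\tau} + \mathbf{ww}^T - A_\tau$ cancel, reducing the problem to a bounded residual handled as in Part~1.
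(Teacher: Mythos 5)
Your proposal follows essentially the same route as the paper: reduce (via the colored top-link definition and \cref{thm:quadraticcheck}) to the rank-$3$ interval sublattices, note by Birkhoff that there are exactly five isomorphism classes of $3$-element sub-posets, check the inertia condition on $A_\tau-D_{\phi,\tau}$ case by case, handle $K\cap A\neq\varnothing$ by reduction to Part~1, and use $P$-consistency to exclude the pattern $\{a,c\}\subseteq A$, $b\notin A$ in the chain-plus-singleton case (which is indeed the pattern that produces a second positive eigenvalue, matching the paper's counterexample to the Chan--Pak conjecture). Two corrections of emphasis, neither fatal: first, the verification is not literally a ``finite PSD check,'' since in Part~1 the matrices depend on the continuous weights $\phi(x)$, $x\in Q$, and in Part~2 on $M$; the paper handles this with the complete-multipartite eigenvalue fact for the chain, V and $\Lambda$ cases, a Descartes-rule sign analysis of a one-parameter family in the chain-plus-singleton case, the observation that the antichain matrix is weight-independent, and reductions to $M=0$ in Part~2, so some such argument valid over the whole parameter range is needed. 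Second, the step you flag as ``critical'' rests on a miscalculation: every entry of $D_{\phi_M,\tau}$ is a sum of ratios of the form $\frac{\phi_M(L)-\phi_M(G)}{\phi_M(L)-\phi_M(F)}\in(0,1)$, so the diagonal is bounded by the vertex degree uniformly in $M$ and there are no $O(M)$ contributions to cancel; moreover no correction vector $\mathbf{w}$ ever needs to be exhibited, since $A_\tau\preceq D_{\phi_M,\tau}+\mathbf{w}\mathbf{w}^\top$ for some $\mathbf{w}$ is equivalent to $A_\tau-D_{\phi_M,\tau}$ having at most one positive eigenvalue (by \cref{lem:AB} and the spectral decomposition). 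The only genuine issue uniform in $M$ is that the entries vary continuously with $M$, which the paper disposes of by comparison with the $M=0$ instance (and which in some cases requires a touch more care than pure Loewner monotonicity, e.g.\ the entry $-\tfrac{2}{M+2}$ in the Boolean case increases with $M$); you would also want to record the small observation that a relation $b<c$ visible inside the rank-$3$ interval is automatically a covering relation in $P$, so that the $P$-consistency hypothesis applies to it.
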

Having this, we invoke \cref{thm:log-concavity-colored-complex} to prove the following theorem:
\begin{theorem} \label{thm:P-consistent-ext-Stanley}
For any $P$-consistent $A\subseteq P$ and any $2\leq k<d$, we have
$$ \P[\ell_{\min}(A)=k]^2 \geq \P[\ell_{\min}(A)=k-1]\cdot\P[\ell_{\min}(A)=k+1],$$
\end{theorem}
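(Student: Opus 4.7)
The plan is to apply \cref{thm:log-concavity-colored-complex} to the colored path complex $(X, \mu, \phi_M)$ of the distributive lattice associated to $P$ with the coloring $\phi_M(F) := \I[F \cap A \neq \varnothing]\cdot M + |F|$, and then send $M \to \infty$. By \cref{lem:toplinkexpansioncolorlattice}, since $A$ is $P$-consistent and $M > 0$, the triple $(X, \mu, \phi_M)$ is a top-link expander, so \cref{thm:log-concavity-colored-complex} furnishes, for every $M > 0$, a log-concave sequence $\{c_k(M)\}_{k=0}^d$.

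The main step is an asymptotic analysis of $c_k(M)$ as $M \to \infty$. For a maximal chain $\tau$ (equivalently, a linear extension $\ell$) with $j := \ell_{\min}(A)$, one has $\phi_M(\tau_i) = i$ for $i < j$ and $\phi_M(\tau_i) = M + i$ for $i \geq j$, with $\phi_M(\hat{0}) = 0$ and $\phi_M(\hat{1}) = M + d + 1$. The numerator $\prod_{i=0}^d (\phi_M(\tau_{i+1}) - \phi_M(\tau_i))$ contains exactly one factor equal to $M + 1$ (at index $i = j - 1$) while all others equal $1$, so it equals $M+1$. A case analysis on $j$ versus $k+1$ shows that the denominator products $\prod_{i=1}^k \phi_M(\tau_i)$ and $\prod_{i=k+1}^d (\phi_M(\hat{1}) - \phi_M(\tau_i))$ contain factors of order $M$ whenever $j \neq k+1$, so those chains contribute $O(1/M)$ to $c_k(M)$; only chains with $\ell_{\min}(A) = k+1$ contribute to order $1$, and for them the denominator simplifies to $k!\,(d-k)!$. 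Combining with the normalizer $1/(\phi_M(\hat{1}) - \phi_M(\hat{0})) = 1/(M+d+1)$ cancelling the $M+1$ in the numerator, we obtain
\[
\lim_{M \to \infty} c_k(M) = \frac{\P[\ell_{\min}(A) = k+1]}{k!\,(d-k)!}.
\]

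Since log-concavity is preserved under pointwise limits, the limit sequence is log-concave. Writing $p_m := \P[\ell_{\min}(A) = m]$ and using that the log-concavity delivered by \cref{thm:log-concavity-colored-complex} via the $\cC$-Lorentzian framework in fact takes the Newton/ultra-log-concave form $(c_k/\binom{d}{k})^2 \geq (c_{k-1}/\binom{d}{k-1})(c_{k+1}/\binom{d}{k+1})$, the binomial factors in $c_k = p_{k+1}/(k!\,(d-k)!)$ cancel exactly to produce $p_{k+1}^2 \geq p_k p_{k+2}$; shifting the index by one gives the claimed bound for $2 \leq k < d$. The main obstacle is confirming this stronger Newton-type form: a raw log-concavity $c_k^2 \geq c_{k-1}c_{k+1}$ alone would only give $p_{k+1}^2 \geq \frac{k(d-k)}{(k+1)(d-k+1)} p_k p_{k+2}$, strictly weaker than desired, so the full Lorentzian strength of \cref{thm:log-concavity-colored-complex} is essential.
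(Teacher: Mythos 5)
Your reduction---coloring the distributive lattice with $\phi_M(F)=\I[F\cap A\neq\varnothing]\cdot M+|F|$, invoking \cref{lem:toplinkexpansioncolorlattice} and \cref{thm:log-concavity-colored-complex}, and computing $\lim_{M\to\infty}c_k(M)=\P[\ell_{\min}(A)=k+1]/(k!\,(d-k)!)$---is exactly the paper's first step, and your observation that raw log-concavity of $(c_k)$ only yields the weakened inequality with factor $\frac{k(d-k)}{(k+1)(d-k+1)}$ is also correct. The gap is your final claim that \cref{thm:log-concavity-colored-complex} ``in fact'' delivers the Newton-type form $(c_k/\binom{d}{k})^2\geq (c_{k-1}/\binom{d}{k-1})(c_{k+1}/\binom{d}{k+1})$. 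It does not. The Lorentzian input (\cref{lem:main-purpose-lemma}, part (2), used in \cref{cor:log-concavity-alpha-beta}) says that the coefficients $a_k$ of the univariate restriction $d!\,p_\varnothing(\beta\cdot x+\alpha)=\sum_k \binom{d}{k}c_k\,x^k$ are ultra log-concave, i.e.\ $a_k/\binom{d}{k}=c_k/d!$ is log-concave; that is precisely raw log-concavity of $(c_k)$ and nothing more. Your stronger form would amount to log-concavity of $a_k/\binom{d}{k}^2$, which fails already for the Lorentzian polynomial $(t_1+t_2)^d$ restricted along $\bm{u}=\bm{e}_1$, $\bm{w}=\bm{e}_2$: there $c_k$ is constant, so $c_k/\binom{d}{k}$ is strictly log-convex at interior indices. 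Hence the last step of your argument does not go through.

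The paper closes exactly this gap by a different device, the same one it uses for Stanley's inequality: append chains of length $N$ below and above all elements of $P$ to obtain $P'$ (this preserves $P$-consistency of $A$ and puts the linear extensions of $P'$ in bijection with those of $P$), apply the weak log-concavity statement to $P'$, where now $c'_{N+k}=|\{\ell\in L(P):\ell_{\min}(A)=k+1\}|/\big((N+k)!\,(N+d-k)!\big)$, and then let $N\to\infty$; the loss factor $\frac{(N+k)(N+d-k)}{(N+k+1)(N+d-k+1)}$ tends to $1$, which upgrades the weak inequality to the claimed $\P[\ell_{\min}(A)=k]^2\geq \P[\ell_{\min}(A)=k-1]\cdot\P[\ell_{\min}(A)=k+1]$. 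Replacing your final step by this limiting argument repairs the proof.
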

We remark that in the special case $A=\{a\}$ for some $a\in P$, the above theorem re-proves Stanley's log-concavity theorem \cite{Sta81}.

\subsection{Beyond $1/2$-top-link expansion}
Over the last decades, there has been numerous construction of ``sparse'' local-spectral expanders, see e.g., \cite{LSV05,KO18,OP22,DLW24}.
Recall that a $d$-dimensional complex $X$ is sparse if for every $F\in X(1)$, the ``degree'' of $F$, i.e., 
the number of facets that contain $F$ is $O_d(1)$ and is independent of $|X(1)|$.

 To bound the local-spectral expansion, these constructions often increase the degree to make sure that the complex is $\alpha$-top-link expander, for some $\alpha<1/d$, and then appeal to the trickledown theorem \cref{thm:oppenheim}.
 
It turns out that most of these constructions are top-link path complexes, e.g., \cite{LSV05, KO18, HS24, OP22}. In the next theorem, we show that if the top-link path complex $(X,\mu)$ is a $(1/2-\eps)$-top-link spectral expander, then for every face $\sigma$ of co-dimension $k$,  $\lambda_2(P_\sigma)\leq 1/(k-1)\sqrt{\eps}$. 
So, our theorems can be used to make these recent constructions even sparser and get nearly tight bounds on their local spectral expansion. 

Given an integer $k \geq 0$ and a parameter $s > 1$, we define the $s$-analog of $k$ and of $k!$ via
\[
    [k]_s = \frac{s^k - s^{-k}}{s - s^{-1}} \qquad \text{and} \qquad [k]_s! = \prod_{i=1}^k [i]_s.
\]
These numbers can be thought of as a ``symmetric'' version of the more standard $q$-analog of the non-negative integers (e.g., note that $[k]_s \to k$ as $s \to 1$).
\begin{theorem} \label{thm:main-s}
For any connected  $d$-dimensional top-link path complex $(X,\mu)$, if it is a $1/[2]_s$-top-link expander then for any $\sigma$ with $\codim(\sigma)=k$ we have
\[
    \lambda_2(P_\sigma) \leq \frac{2}{(k-1)(s^{\frac{k+1}{2}}+1)} \left(\frac{s^{\frac{k-1}{2}}-1}{s-1}\right).
\]
For a concrete bound, if $(X,\mu)$ is a $(1/2-\eps)$-top link expander then $\lambda_2(P_\sigma)\leq\frac{1-2\epsilon}{(k-1) \sqrt{\epsilon}}.$
\end{theorem}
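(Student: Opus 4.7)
The plan is to mirror the $\cC$-Lorentzian polynomial proof of Theorem \ref{thm:main} with a one-parameter family of cones $\cC_s$, so that $s=1$ recovers Theorem \ref{thm:main} (every $\alpha_k$ collapses to $1/2$) and $s>1$ produces the refined codim-$k$ bounds. Concretely, I would associate to $(X,\mu)$ a homogeneous generating polynomial $g_X$ in variables indexed by $X(1)$, stratified by the parts $T_1,\dots,T_d$, and define $\cC_s$ by inequalities rescaled by powers of $s^{1/2}$ indexed by part. The $1/[2]_s$-top-link expansion hypothesis should then be equivalent to $\cC_s$-Lorentzianity of the bivariate quadratic slice of $g_X$ at each codim-$2$ link; this base case is a direct $2\times 2$ Hessian-signature check and specializes at $s=1$ to the hypothesis of Theorem \ref{thm:main}.

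Next I would invoke the hereditary property of $\cC$-Lorentzian polynomials from \cite{BL23} to propagate from codim $2$ to every codim $k$. If $g_X$ is $\cC_s$-Lorentzian at codim $2$ and the top-link path-complex conditional-independence holds, then each directional derivative in a variable from a single part $T_i$ preserves $\cC_s$-Lorentzianity; iterating gives $\cC_s^{(k)}$-Lorentzianity of the codim-$k$ link polynomials for an iterated cone $\cC_s^{(k)}$ whose rescaling tracks the induction. This propagation step is structurally identical to the one used in Theorem \ref{thm:main}; the path-complex hypothesis is what lets us differentiate in one part at a time without losing Lorentzianity.

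Finally, at a codim-$k$ face $\sigma$ I would read off the spectral bound on $P_\sigma$ by evaluating the Hessian of the codim-$k$ link polynomial at a canonical point inside $\cC_s^{(k)}$; $\cC_s$-Lorentzianity forces a PSD inequality $A_\sigma \preceq D_\sigma + \mathbf{w w}^\top$, and the second-eigenvalue gap of this inequality yields exactly $\alpha_k$. The closed form $\alpha_k = \frac{2}{(k-1)(s^{(k+1)/2}+1)} \cdot \frac{s^{(k-1)/2}-1}{s-1}$ should emerge from a telescoping product of $s$-analog ratios across the $k$ nested Hessian restrictions, analogous to $[k]_s! = \prod_{i=1}^k [i]_s$, and collapses to $1/2$ at $s=1$. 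The concrete corollary follows by setting $1/[2]_s = 1/2-\eps$, so that $s-1 \asymp \sqrt{\eps}$, and then estimating $s^{(k-1)/2}-1 \leq s^{(k-1)/2}$ together with $s^{(k+1)/2}+1 \geq s^{(k+1)/2}$ to collapse $\alpha_k$ into $(1-2\eps)/[(k-1)\sqrt{\eps}]$. The main obstacle I expect is choosing the $s^{1/2}$-scaling of $\cC_s$ and the canonical evaluation point so that the $1/[2]_s$ hypothesis maps cleanly to $\cC_s$-Lorentzianity at codim $2$ while simultaneously producing precisely the claimed $\alpha_k$ at every codim $k$; lining up both endpoints of the induction, together with identifying the correct witness vector $\mathbf{w}$, is where the careful bookkeeping lies.
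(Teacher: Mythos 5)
Your overall strategy (the $s$-analog $\bm\alpha,\bm\beta$ rescaling of the cones, checking that the $1/[2]_s$-top-link hypothesis gives the quadratic/codim-2 Lorentzian condition, and then propagating Lorentzianity upward) matches the paper's framework, but there is a genuine gap at the step where you claim to ``read off'' $A_\sigma \preceq D_\sigma + \mathbf{w}\mathbf{w}^\top$ for a codim-$k$ link directly from the Hessian of the link polynomial and extract the closed-form $\alpha_k$ from nested Hessian restrictions. This is exactly what the construction cannot do: because the polynomials carry extra non-multilinear monomials, the only spectral information the Lorentzian machinery yields is for \emph{consecutive} parts. Concretely, the Hessian of $\nabla_\beta^{k-1}\nabla_\alpha^{d-k-1}p_\varnothing$ is supported on $T_k \cup T_{k+1}$ only (\cref{cor:eig-bound-alpha-beta}, \cref{thm:eig-bounds-main-v3}), giving $\lambda_2(P_{k,k+1}) \leq \sqrt{[k]_s[d-k]_s/([k+1]_s[d-k+1]_s)}$ and nothing directly about the full multipartite walk $P_\sigma$. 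The paper is explicit about this obstruction in its discussion: one cannot recover $A_\varnothing$ from the Hessian of $p$.

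To bridge from these bipartite block bounds to $\lambda_2(P_\sigma)$ you need two further ingredients that your proposal omits and that cannot be absorbed into the Lorentzian induction: (i) the conditional-independence property of path complexes, which makes the bipartite second eigenvalues submultiplicative across intermediate levels, $\lambda_2(P_{i,k}) \leq \lambda_2(P_{i,j})\lambda_2(P_{j,k})$ (\cref{lem:dpartiteeigproduct}) -- this is where the telescoping product you anticipate actually lives, and it uses the \emph{path} structure, not Lorentzianity; and (ii) a multipartite aggregation lemma (\cref{lem:dpartiteeigenvalues}, via \cref{lem:eigenvaluebipartite}) that converts the family of statements ``$P_{i,j}-M_{i,j}$ has at most one positive eigenvalue'' into $\lambda_2(P_\sigma) \leq \max_i \E_{j\neq i} m_i(j)$; the stated bound $\frac{2}{(k-1)(s^{(k+1)/2}+1)}\cdot\frac{s^{(k-1)/2}-1}{s-1}$ arises from summing the geometric-type ratios $m_i(j)$ over $j$ and a convexity estimate, not from a single PSD inequality at a canonical point of the cone. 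Two smaller points: non-emptiness of the cones for $s>1$ does not follow from the $s=1$ construction of \cref{lem:cone-non-empty-v1} and needs the monotonicity/cone-containment argument (\cref{lem:k-n-s-decreasing}, \cref{lem:cone-containment}); and the codim-2 check is not a $2\times 2$ computation but a one-positive-eigenvalue condition on the full contiguous bipartite link (with non-contiguous links handled by conditional independence in \cref{thm:quadraticcheck}). Your derivation of the concrete $\eps$-bound from $1/[2]_s = 1/2-\eps$ is fine and matches the paper.
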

The above theorem should be contrasted with \cref{thm:main} where we assumed the tight $1/2$-top link expansion. 
We remark
that as $s \to 1$ the bound approaches $\frac{1}{2}$ and as $s \to \infty$ the bound approaches $0$. Note that the above bound is a worst-case bound for $1/[2]_s$-top-link expanders; in fact better bounds can be achieved for $\lambda_2(P_\sigma)$ for specific family of partite/path complexes, as in \cite{GP19}.

\subsection{Discussion}
Over the last few years the method of log-concave polynomials was developed to relate four distant areas of math and computer science, namely (i) Hodge theory and log-concave sequences, (ii) Theory of high dimensional expanders, (iii) Geometry of polynomials and (iv) Analysis of Markov chains. We refer interested readers to the sequence of works \cite{AOV21, ALOV24, BH20, ALOV24-2}. 
During this time, several groups have exploited geometry of polynomials techniques to discover new families of log-concave sequences without appealing to algebraic geometric arguments and intuitions. In parallel, several groups of researchers have developed tools to analyze local spectral expansion of high-dimensional simplicial complexes, and used them to analyze long-standing open problems on mixing time of Markov chains (see e.g., \cite{ALO24,Liu21}).
With these and other important developments, these four areas proved to be closely related, at least in the specific case of
the uniform distribution over bases or independent sets of a matroid.
That said, unfortunately most of the tools are inherently limited to matroids: for example, it was shown that the support of any homogeneous log-concave polynomial must be the set of bases of a matroid. 
Thus it has been unclear how far the connections of the above four areas would extend.

In this paper, at a high-level, our goal is to prove a deeper and more general connection between these four areas. To do this, we study a family of Lorentzian/log-concave polynomials called $\cC$-Lorentzian (see \cref{def:C-Lorentzian}) which are only log-concave w.r.t. vectors in a given convex cone. This allows us to bypass the support constraint of log-concave polynomials studied in \cite{ALOV24,BH20}.

The main non-trivial part is how to construct these polynomials such that they can be used to derive bounds on local-spectral expansion or log-concave sequences w.r.t to a simplicial complex.
Our first technical contribution is the recursive definition of these  polynomials using a carefully chosen family of linear maps, $\bm{\pi}$, which map the cone of a polynomial to the cone of its partial derivatives. Our maps are motivated by the construction in \cite{BL23}, but they are generalized/simplified far beyond: we construct $\cC$-Lorentzian polynomials associated to several types of lattices and path complexes that we discussed in the preceding sections. \cref{thm:conn+quad=Lor} shows that to prove $p$ is $\cC$-Lorentzian, it is enough to check the quadratics. 

We note that all facets of the complex $(X,\mu)$ show up as the coefficients of multi-linear terms of our polynomial. But, the polynomial has many (non-linear) monomials with negative coefficients. These ``extra'' monomials weaken the necessary condition for quadratics to be Lorentzian: Even if the complex $(X,\mu)$ is only a $1/2$-top link expander (as opposed to being $0$-top link expander), the quadratics are still Lorentzian, and by induction our polynomial is $\cC$-Lorentzian. The drawback is that, unlike with log-concave polynomials, we cannot ``read off'' the adjacency matrix of the link of the empty-set, $A_\varnothing$ from the Hessian of $p$. Instead, we can only read off the adjacency matrix of the induced graph of $A_\varnothing$ between two types, $T_i,T_j$ after applying certain directional derivatives (see \cref{thm:eig-bounds-main-v1}). We then have to use properties of the (path) complex $(X,\mu)$ to derive bounds on the eigenvalues of $P_\varnothing$ (see \cref{lem:dpartiteeigenvalues}).

Putting these altogether, we prove/unify (new) mixing time bounds, as well as (new) log-concave sequences, all using a {\bf single unified machinery}.
In addition, we  prove significantly improved bounds on spectral expansion of (sparse) top-link path complexes.


We also note that although our techniques may seem limited to path complexes, in an up-coming manuscript 
we will extend our constructions to {\bf general partite complexes}.

We finish this section with several open problems that may be approachable using our techniques:
\begin{enumerate}
    \item Given a matroid $M$, consider the path complex $(X,\mu)$ where faces of $X$ corresponds to flags of flats of $M$ (as discussed above). For a facet $\sigma=F_1\subsetneq \dots \subsetneq F_d$, let $\mu(\sigma)\propto \prod_{i=1}^d \frac{|F_{i+1}|-|F_i|}{|F_{i+1}|}$. Can we use the machinery developed in this paper to prove the down-up walk Markov chain mixes in time polynomial in $d$? We remark that the partition function of $\mu$, is exactly the last coefficient of the characteristic polynomial of $M$. So such a result would solve several open problems in counting and sampling as discussed in \cite{ALO23}.
    \item Given a matroid $M$, it is conjectured that the sequence, $c_0,\dots,c_d$, where $c_k$ is the number of flats of rank $k$ is log-concave. Can we use \cref{thm:log-concavity-colored-complex} to prove this conjecture?
\end{enumerate}

\subsection{Acknowledgements}
We would like to thank Dorna Abdolazimi and Kuikui Liu for stimulus discussions in early stages of this project. We also thank Max Hopkins and Igor Pak for valuable comments to earlier versions of this manuscript.

The first author acknowledges the support of the Natural Sciences and Engineering Research Council of Canada (NSERC), [funding reference number RGPIN-2023-03726]. Cette recherche a \'et\'e partiellement financ\'ee par le Conseil de recherches en sciences naturelles et en g\'enie du Canada (CRSNG), [num\'ero de r\'ef\'erence RGPIN-2023-03726].
The second author's research is supported by funding from the European Research Council (ERC) under the European Union’s Horizon 2020 research and innovation programme (grant agreement No 815464).
The last author's research is supported by  an NSF grant  CCF-2203541, a Simons Investigator Award and a Lazowska Endowed Professorship in Computer Science \& Engineering.

\subsection{Organization}
 The rest of this manuscript is organized as follows: We will prove \cref{thm:mainalg} in \cref{sec:localtoglobalpath}.
In \cref{sec:lorentzianpolynomials} we define the general notion of commutative $\bm{\pi}$-maps and use it to prove \cref{thm:conn+quad=Lor}, which says: to prove that our polynomial $p$ is $\cC$-Lorentzian it is enough to check $p$ is irreducible and the quadratics are log-concave. In \cref{sec:lorentzianpolynomials-path} we define a general class of $\bm{\pi}$-maps for path complexes based on a choice of $\bm{\alpha},\bm{\beta}$ vectors. We then use this to prove a log-concavity statement (\cref{cor:log-concavity-alpha-beta}) and a bound on eigenvalues of (induced subgraphs of) $P_\varnothing$ for path complexes (\cref{cor:eig-bound-alpha-beta}) by invoking \cref{thm:conn+quad=Lor}.
In \cref{sec:coloredpathcomplexes} we further specialize $\bm{\alpha},\bm{\beta}$ for a colored path complex $(X,\mu,\phi)$ and use this together with \cref{cor:log-concavity-alpha-beta} to prove \cref{thm:log-concavity-colored-complex}. In \cref{sec:polys-v1} we specialize to the coloring $\phi(F)=i$ when $F\in T_i$ (for all $1\leq i\leq d$) and use that together with \cref{cor:eig-bound-alpha-beta} to prove \cref{thm:main}.
In \cref{sec:scolor} we specialize $\bm{\alpha},\bm{\beta}$ using the $s$-analog of the non-negative integers, and we use this to prove \cref{thm:main-s}.
In \cref{sec:quadraticcheck} we prove various path complexes are indeed top link expanders. In particular we prove \cref{thm:distributivelattice},\cref{thm:modularlattice,lem:toplinkexpansioncolorlattice}.
Finally, we prove \cref{thm:lower-bound} in \cref{sec:lowerbound}.
\section{Preliminaries}
\subsection{Notation}
Given a simplicial complex $X$, we use greek letters $\sigma,\tau,\omega$ to denote faces of $X$ and we use capital letters $F,G,H$ to denote vertices of the complex, i.e., elements of $X(1)$.
For a facet $\sigma\in X(d)$ and $i\in [d]$, we write
$$ \sigma_i:=\sigma\cap T_i$$
For two sets $A,B$ and a matrix $P\in \R^{A\times B}$ we write $P(a,b)$ to denote the $(a,b)$-entry of $P$ for $a\in A,b\in B$.

For a probability distribution $\mu$ on $X(d)$ and a face $\sigma\in X$ we let $\mu_\sigma$ to denote the conditional distribution $\mu | \sigma$. 
For $i_1,\dots,i_k\in [d]$ we let $\mu^{i_1,\dots,i_k}$ denote the marginal distribution of $\mu$ on $F_{i_1}\in T_{i_1},\dots,F_{i_k}\in T_{i_k}$.

\subsection{Linear Algebra}
\begin{fact}\label{lem:AB}The following facts are classical:
    \begin{itemize}
    \item  Cauchy's Interlacing Thm: For symmetric matrices $A,B\in \R^{n\times n}$ and $v\in \R^n$, if $A\preceq B+vv^T$, the $\lambda_2(A)\leq \lambda_1(B)$.
    \item The adjacency matrix of any (unweighted) complete multi-partite graph has exactly one positive eigenvalue.
    \item If a symmetric $A\in \R^{n\times n}$ has one positive eigenvalue, then for any $P\in \R^{k\times n}$ then $PAP^\top$ has at most one positive eignevalue. 
    \item For any two matrices $A\in \R^{n\times k},B\in \R^{k\times n}$ non-zero eigenvalues of $AB$ are the same as the non-zero eigenvalues of $BA$ with the same multiplicity. 
    \item Let $G$ be a weighted star, i.e., a weighted bipartite graph where one side has a single vertex and let $P$ be the transition probability matrix of the simple random walk on $G$. Then, $P$ has only two non-zero eigenvalues $1,-1$.
    \item Let $P$ be the transition probability matrix of the random walk in a bipartite graph with parts $A,B$ such that $|A|+|B|\geq 3$. Then, $P$ is a block (anti-diagonal) matrix with blocks $P_{A\to B},P_{B\to A}$. Furthermore,
    $$  \lambda_2(P)^2=\lambda_2(P_{A\to B}P_{B\to A}).$$
    Note that if $P_{A\to B}P_{B\to A}$ is a $1\times 1$ matrix we let its second eigenvalue to be 0. 
    \item Rayleigh Quotient: Let $A\in \R^{n\times n}$ be a matrix which is self-adjoint w.r.t., an inner-product $\langle .,.\rangle$. Then, $$\lambda_k(A)=\max_S \min_{v\in S} \frac{\langle Av,v\rangle}{\langle v,v\rangle},$$
    where the maximum is over all $k$-dimensional subspaces  $S$ in the Hilbert space defined by the inner-product.
    \item Eigenvalues of the adjacency matrix of a bipartite graph come in pairs $\lambda,-\lambda$.
    \end{itemize}
\end{fact}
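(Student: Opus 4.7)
The statement packages together several standard facts, so the plan is to justify each item individually using elementary linear algebra, and the main care is getting the edge cases (small matrices, degenerate spectra) right rather than overcoming any real technical obstacle.

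For the Cauchy-interlacing bullet, I would apply Weyl's monotonicity twice: $A \preceq B + vv^\top$ gives $\lambda_2(A) \le \lambda_2(B + vv^\top)$, and since $vv^\top$ is rank-one PSD the standard rank-one interlacing gives $\lambda_2(B + vv^\top) \le \lambda_1(B)$. For the complete multipartite adjacency $A$ with parts $P_1,\ldots,P_k$, I would write the quadratic form as $x^\top A x = (\mathbf{1}^\top x)^2 - \sum_j (\mathbf{1}_{P_j}^\top x)^2$ to read off signature $(1,\,\text{something})$, yielding exactly one positive eigenvalue. For the $PAP^\top$ bullet, I would spectrally decompose $A = \lambda uu^\top + N$ with $\lambda > 0$ and $N \preceq 0$, so $PAP^\top = \lambda (Pu)(Pu)^\top + P N P^\top$ with $PNP^\top \preceq 0$, and then Weyl's inequality applied to a rank-one plus negative semidefinite matrix gives at most one positive eigenvalue.

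For the $AB$ versus $BA$ bullet, the cleanest route is Sylvester's determinant identity $\det(\lambda I - AB)\lambda^{m} = \det(\lambda I - BA)\lambda^{n}$ (or equivalently the block determinant trick), which gives matching characteristic polynomials up to powers of $\lambda$, hence matching non-zero eigenvalues with multiplicity. For the weighted star, I would observe that the transition operator factors as ``contract to center, then spread to leaves'', so it has rank at most $2$; bipartiteness (fact 8) forces the two non-zero eigenvalues to come in a $\pm$ pair, and one of them is $1$ (the stationary eigenvalue), so they are $\pm 1$. For the bipartite random-walk bullet, I would write $P$ in block anti-diagonal form and observe that $P^2 = \begin{pmatrix} P_{A\to B}P_{B\to A} & 0 \\ 0 & P_{B\to A}P_{A\to B} \end{pmatrix}$ with the two diagonal blocks sharing non-zero spectrum by the previous bullet; combined with the $\pm$-pairing of eigenvalues of $P$, the non-zero eigenvalues of $P$ come as $\pm 1, \pm \lambda_2(P), \ldots$, so squaring and matching against $P_{A\to B} P_{B\to A}$ gives $\lambda_2(P)^2 = \lambda_2(P_{A\to B}P_{B\to A})$.

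For the Rayleigh/Courant--Fischer bullet, I would give the standard proof: the ``$\ge$'' direction uses $S$ equal to the span of the top $k$ eigenvectors of $A$, and the ``$\le$'' direction uses a dimension count to intersect an arbitrary $k$-dimensional $S$ with the span of the bottom $n-k+1$ eigenvectors. For the last bullet (bipartite $\pm$-pairing), I would use the sign-flip involution: write $A = \begin{pmatrix} 0 & B \\ B^\top & 0\end{pmatrix}$ and note that if $A\binom{x}{y} = \lambda \binom{x}{y}$, then $A\binom{x}{-y} = -\lambda \binom{x}{-y}$, giving an explicit bijection between eigenvectors for $\lambda$ and $-\lambda$. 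The only mild subtlety across the whole list is the $1\times 1$ edge case stated explicitly for the bipartite walk bullet, which is handled by the stated convention that $\lambda_2$ of a $1\times 1$ matrix is $0$; beyond that, each item reduces to a one- or two-line invocation of Weyl, Sylvester, or an explicit involution.
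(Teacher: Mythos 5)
Your proposal is correct, and there is nothing in the paper to compare it against: the paper states \cref{lem:AB} as a list of classical facts and gives no proof, and the arguments you supply (Weyl monotonicity plus rank-one interlacing, Sylvester's law/determinant identity, the block anti-diagonal structure and the sign-flip involution, Courant--Fischer) are exactly the standard ones these facts are usually attributed to, so your write-up would serve as a valid self-contained justification.

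Two small places are worth tightening if you write this out in full. For the bipartite random-walk bullet, the chain ``eigenvalues of $P$ come as $\pm 1,\pm\lambda_2(P),\dots$'' implicitly assumes $\lambda_2(P)\geq 0$; this is where the hypothesis $|A|+|B|\geq 3$ enters (the spectrum of $P$ is symmetric, so the number of positive and negative eigenvalues agree, and with dimension at least $3$ either there is a second positive eigenvalue or a zero eigenvalue, forcing $\lambda_2(P)\geq 0$), and you should also note that the blocks $P_{A\to B}P_{B\to A}$ and $P_{B\to A}P_{A\to B}$ have nonnegative spectrum (each is similar to a Gram matrix), so that after discarding the doubled zero eigenvalues the second largest eigenvalue of the block really is $\lambda_2(P)^2$ even in the degenerate cases covered by the $1\times 1$ convention. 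For the complete multipartite bullet, your signature argument gives ``at most one'' positive eigenvalue; ``exactly one'' additionally uses that the graph has at least two nonempty parts, so $A\neq 0$ has trace zero and hence some positive eigenvalue. Neither point is a real obstacle, just bookkeeping.
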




 Given a (weighted) graph $G=(V,E,w)$ with (weighted) adjacency matrix $A$; let $D$ be the diagonal matrix of vertex (weighted) degrees and let $P=D^{-1}A$ be the transition probability matrix of the simple random walk on $G$ and $\mu$ the stationary distribution of the walk, i.e., we have $\mu^T P =\mu$ and $P\bone=\bone$.
 For two functions $f,g:V\to\R$, let
  $$\langle f,g\rangle_\mu= \E_{x\sim\mu} f(x)g(x).$$ 
 It turns out that $P$ is self-adjoint  w.r.t. this inner product. So, by spectral theorem it has real eigenvalues with an orthonormal set of eigenfunctions. The largest eigenvalue is 1 corresponding to the all-ones eigenfunction.
 
 Given $f,g:V\to\R$ the Dirichlet form is defined as follows:
 $$ \cE_P(f,g) = \frac12\E_{x\sim\mu} \sum_y P(x,y) (f(x)-f(y))(g(x)-g(y))$$
 \begin{lemma}[Rayleigh Quotient]\label{lem:rayleigh}
     The second eigenvalue of $P$ is
     $$ \lambda_2(P)=1-\min_{f:\langle f,\bone\rangle_\mu=0} \frac{\cE_P(f,f)}{\langle f,f\rangle_\mu}$$ 
 \end{lemma}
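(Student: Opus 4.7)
The plan is to reduce this to the standard Courant–Fischer characterization of eigenvalues for the self-adjoint operator $P$ on the inner-product space $(\R^V, \langle\cdot,\cdot\rangle_\mu)$, by identifying the Dirichlet form $\cE_P(f,f)$ with $\langle f,f\rangle_\mu - \langle Pf,f\rangle_\mu$.

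First I would invoke the spectral theorem: since $P$ is self-adjoint with respect to $\langle\cdot,\cdot\rangle_\mu$ (reversibility), it has an orthonormal basis of real eigenfunctions $1 = \lambda_1 \geq \lambda_2 \geq \cdots \geq \lambda_{|V|}$ with top eigenfunction $\bone$. The min–max / Rayleigh principle stated in \cref{lem:AB} then gives
\[
\lambda_2(P) \;=\; \max_{\substack{f \neq 0 \\ \langle f,\bone\rangle_\mu = 0}} \frac{\langle Pf,f\rangle_\mu}{\langle f,f\rangle_\mu},
\]
since the orthogonal complement of $\bone$ in $(\R^V,\langle\cdot,\cdot\rangle_\mu)$ is precisely the span of the remaining eigenfunctions.

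Next I would expand $\cE_P(f,f)$ directly. Writing $(f(x)-f(y))^2 = f(x)^2 - 2 f(x)f(y) + f(y)^2$ and distributing the expectation gives three terms. The cross term is $-2\langle Pf,f\rangle_\mu$ by definition of $\langle\cdot,\cdot\rangle_\mu$. The first term equals $\langle f,f\rangle_\mu$ because $\sum_y P(x,y) = 1$. The third term also equals $\langle f,f\rangle_\mu$ by stationarity: $\sum_x \mu(x) P(x,y) = \mu(y)$, so $\E_{x\sim\mu}\sum_y P(x,y) f(y)^2 = \sum_y \mu(y) f(y)^2$. After the factor $\tfrac12$, this yields the key identity
\[
\cE_P(f,f) \;=\; \langle f,f\rangle_\mu - \langle Pf,f\rangle_\mu.
\]

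Finally, substituting this identity into the Rayleigh characterization of $\lambda_2(P)$ gives
\[
\lambda_2(P) \;=\; \max_{\langle f,\bone\rangle_\mu=0} \left(1 - \frac{\cE_P(f,f)}{\langle f,f\rangle_\mu}\right) \;=\; 1 - \min_{\langle f,\bone\rangle_\mu=0}\frac{\cE_P(f,f)}{\langle f,f\rangle_\mu},
\]
which is exactly the claim. The proof is essentially a direct computation plus a citation of the spectral theorem; there is no real obstacle, only the bookkeeping step of verifying self-adjointness of $P$ in the $\mu$-inner product (equivalent to the detailed balance $\mu(x)P(x,y) = \mu(y)P(y,x)$, which is built into the setup since $A$ is symmetric and $P = D^{-1}A$ with $\mu \propto \deg$).
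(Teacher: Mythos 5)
Your proof is correct: the identity $\cE_P(f,f)=\langle f,f\rangle_\mu-\langle Pf,f\rangle_\mu$ (using stochasticity and stationarity/reversibility) combined with the Courant--Fischer characterization of $\lambda_2$ for the $\mu$-self-adjoint operator $P$ is exactly the standard argument, and the paper itself treats this lemma as classical, relying on the same Rayleigh-quotient fact recorded in \cref{lem:AB}. No gaps; the only implicit point, that the minimum is taken over $f\neq 0$ orthogonal to $\bone$, is handled correctly by your restriction to the orthogonal complement of the top eigenfunction.
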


The following lemma is classical, see e.g., \cite{WX97,MOA11}.
\begin{lemma}\label{lem:PSDProd}
    Let $A,B$ be two PSD self-adjoint matrices w.r.t $\langle .,.\rangle$. Then, for all $k\geq 1,$
    $$ \prod_{i=1}^k \lambda_i(AB) \leq \prod_{i=1}^k \lambda_i(A)\lambda_i(B). $$
\end{lemma}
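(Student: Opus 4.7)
The plan is to reduce to a statement about operator norms of exterior powers, which is a clean route to this kind of log-majorization inequality. First I would pick an orthonormal basis for the underlying Hilbert space with respect to $\langle \cdot,\cdot\rangle$, so that self-adjointness reduces to the ordinary Hermitian setting and $A, B$ become honest PSD matrices. Then I would observe that while $AB$ itself need not be self-adjoint, the ``equal non-zero eigenvalues of $XY$ and $YX$'' fact already recorded in \cref{lem:AB} (applied with $X = A^{1/2}$ and $Y = BA^{1/2}$) shows that $AB$ has the same eigenvalues as the PSD matrix $A^{1/2} B A^{1/2}$. In particular $\lambda_i(AB) \geq 0$, and
\[
\prod_{i=1}^k \lambda_i(AB) = \prod_{i=1}^k \lambda_i\!\left(A^{1/2} B A^{1/2}\right).
\]

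Next I would invoke the classical identity, for any linear operator $M$ on the Hilbert space,
\[
\left\|\wedge^k M\right\|_{\mathrm{op}} = \prod_{i=1}^k s_i(M),
\]
where $s_i(M)$ denote the singular values and $\wedge^k M$ is the induced operator on the $k$-th exterior power. For PSD $M$ we have $s_i(M) = \lambda_i(M)$, so the right-hand side of the previous display equals $\|\wedge^k (A^{1/2} B A^{1/2})\|_{\mathrm{op}}$. By functoriality of exterior powers, this operator factors as $\wedge^k(A^{1/2}) \cdot \wedge^k(B) \cdot \wedge^k(A^{1/2})$, and submultiplicativity of the operator norm then gives
\[
\left\|\wedge^k\!\left(A^{1/2} B A^{1/2}\right)\right\|_{\mathrm{op}} \leq \left\|\wedge^k A^{1/2}\right\|^{2}_{\mathrm{op}} \cdot \left\|\wedge^k B\right\|_{\mathrm{op}} = \prod_{i=1}^k \lambda_i(A) \cdot \prod_{i=1}^k \lambda_i(B),
\]
using once more that for PSD operators singular values coincide with eigenvalues and that $s_i(A^{1/2})^2 = \lambda_i(A)$. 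Chaining these yields the claimed bound.

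The only non-routine step is the identification $\|\wedge^k M\|_{\mathrm{op}} = \prod_{i=1}^k s_i(M)$; I expect this to be the main place where care is needed, although it is entirely standard (it follows by diagonalizing $M^*M$ and computing the action of $\wedge^k M$ on the induced basis of $\wedge^k$ of the Hilbert space, where the corresponding diagonal entries are products of $k$ distinct singular values, the largest such product being $\prod_{i=1}^k s_i(M)$). Everything else is bookkeeping with the basic exterior-algebra identities and the observation that the problem can be moved from $AB$ onto the PSD conjugate $A^{1/2}BA^{1/2}$. Since the lemma is cited verbatim from \cite{WX97,MOA11}, one could alternatively simply appeal to those references, but the outline above is self-contained.
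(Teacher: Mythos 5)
Your proposal is correct. Note, though, that the paper does not prove this lemma at all: it is quoted as classical with a pointer to \cite{WX97,MOA11}, so there is no in-paper argument to compare against. Your exterior-power route is precisely the standard proof of this log-majorization inequality (it is essentially Weyl/Horn as presented in \cite{MOA11}): pass to an orthonormal basis for $\langle\cdot,\cdot\rangle$, replace $AB$ by the PSD matrix $A^{1/2}BA^{1/2}$ with the same spectrum, and use $\lVert\wedge^k M\rVert_{\mathrm{op}}=\prod_{i=1}^k s_i(M)$ together with functoriality and submultiplicativity. One small slip: with $X=A^{1/2}$ and $Y=BA^{1/2}$ you get $YX=BA$, not $AB$, so as written you only conclude that $BA$ and $A^{1/2}BA^{1/2}$ share their non-zero spectrum; either take $Y=A^{1/2}B$ instead (so $XY=AB$ and $YX=A^{1/2}BA^{1/2}$), or add one more application of the same fact from \cref{lem:AB} to pass from $AB$ to $BA$. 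It is also worth one sentence to note that since both matrices are $n\times n$ and all eigenvalues involved are non-negative, agreement of the non-zero spectra (with multiplicity) gives equality of the ordered top-$k$ products even when zeros appear among the top $k$. With those cosmetic fixes the argument is complete and self-contained, which is arguably a nicer state of affairs than the bare citation in the paper.
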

Recall that for any graph $G=(V,E)$ the simple random walk $P$ on $G$ is reversible w.r.t. stationary distribution $\mu$: $\mu(x)P(x,y)=\mu(y)P(y,x)$ for all $x,y\in V$.
\begin{lemma}\label{lem:muPXY}
    Let $G=(X,Y)$ be a bipartite graph and $P$ be the simple random walk matrix with stationary distribution $\mu$. Let $P_{X\to Y}$ be the sub-matrix of $P$ corresponding to transitions from $X\to Y$ and $P_{Y\to X}$ is defined similarly. Then, $\tilde{\mu}(x)=\frac{\mu(x)}{\sum_{z\in X} \mu(z)}$ for any $x\in X$ is the stationary distribution of $P_{X\to Y} P_{Y\to X}$.
\end{lemma}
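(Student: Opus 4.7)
The plan is to exploit the block anti-diagonal structure that $P$ inherits from bipartiteness, together with the stationarity equation $\mu^\top P = \mu^\top$. Because $G$ is bipartite with parts $X$ and $Y$, the matrix $P$ naturally decomposes as
\[
    P = \begin{pmatrix} 0 & P_{X\to Y} \\ P_{Y\to X} & 0 \end{pmatrix},
\]
so the identity $\mu^\top P = \mu^\top$, restricted to rows indexed by $Y$ and then by $X$, yields the two relations
\[
    \sum_{x \in X} \mu(x)\, P(x,y) = \mu(y) \quad \text{for all } y \in Y, \qquad \sum_{y \in Y} \mu(y)\, P(y,x') = \mu(x') \quad \text{for all } x' \in X.
\]

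Given these, the computation is immediate. First I would write, for any $x' \in X$,
\[
    \bigl(\tilde{\mu}^\top P_{X\to Y} P_{Y \to X}\bigr)(x') = \sum_{y \in Y} \left(\sum_{x \in X} \tilde{\mu}(x)\, P(x,y)\right) P(y,x'),
\]
then pull the normalizing constant $\bigl(\sum_{z \in X} \mu(z)\bigr)^{-1}$ out of the inner sum and apply the first relation above to replace $\sum_{x \in X} \mu(x)\, P(x,y)$ by $\mu(y)$. The remaining sum $\sum_{y \in Y} \mu(y)\, P(y,x')$ equals $\mu(x')$ by the second relation, so after dividing by $\sum_{z \in X} \mu(z)$ we recover exactly $\tilde{\mu}(x')$. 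This shows $\tilde{\mu}^\top P_{X\to Y} P_{Y \to X} = \tilde{\mu}^\top$, which is the claim.

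There is no real obstacle here; the only thing worth noting is that $\tilde{\mu}$ being a probability distribution on $X$ follows from the fact that $\sum_{x \in X} \mu(x) > 0$ whenever $G$ has at least one edge, which is implicit in the setup. Alternatively, one could derive the same statement from reversibility $\mu(x)P(x,y) = \mu(y)P(y,x)$ applied twice, but the two-line argument above is the most transparent and avoids invoking reversibility of $P$ beyond what the stationarity equation already provides.
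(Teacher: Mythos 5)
Your proof is correct, but it takes a different route from the paper. You verify stationarity directly: writing the global balance equation $\mu^\top P = \mu^\top$ in block form for the bipartite walk gives $\sum_{x\in X}\mu(x)P(x,y)=\mu(y)$ and $\sum_{y\in Y}\mu(y)P(y,x')=\mu(x')$, and composing these two identities immediately yields $\tilde{\mu}^\top P_{X\to Y}P_{Y\to X}=\tilde{\mu}^\top$. The paper instead checks the detailed balance condition $\tilde{\mu}(x)\,(P_{X\to Y}P_{Y\to X})(x,z)=\tilde{\mu}(z)\,(P_{X\to Y}P_{Y\to X})(z,x)$, using reversibility of $P$ twice. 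Your argument is more economical for the statement as written, since it only uses the stationarity equation and proves exactly the claimed conclusion. The paper's detailed-balance computation proves something strictly stronger, namely that the two-step chain $P_{X\to Y}P_{Y\to X}$ is \emph{reversible} with respect to $\tilde{\mu}$, i.e.\ self-adjoint in the $\langle\cdot,\cdot\rangle_{\tilde{\mu}}$ inner product; this extra conclusion is what is actually invoked later (e.g.\ in \cref{lem:dpartiteeigproduct}, where the operators $P^\vee_j$ and $P^\wedge_j$ are treated as self-adjoint PSD operators w.r.t.\ the marginal measure), so the paper's choice of detailed balance is not incidental. Your closing remark that reversibility "applied twice" would give an alternative derivation is exactly the paper's proof; if you intend the lemma to support the later spectral arguments, that is the version you would want to record.
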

\begin{proof} Let $Z=\sum_{z\in X}\mu(x)$.
    We just check the reversibility condition. For any $x,z\in X$ we have
    \begin{align*} \tilde{\mu}(x)(P_{X\to Y}P_{Y\to X})(x,z)  &= \frac{\mu(x)}{Z} \sum_{y\in Y} P(x,y)P(y,z)\\
    &=\frac{1}{Z}\sum_{y\in Y} \mu(y) P(y,x) P(y,z)\\
    &=\frac{1}{Z}\sum_{y\in Y} \mu(z) P(z,y)P(y,x)
    =\tilde{\mu}(z)(P_{X\to Y}P_{Y\to X})(z,x)
    \end{align*}
    where we used reversibility of $P$ twice. 
\end{proof}
\begin{lemma}\label{lem:allonesshift}
Let $P$ be the transition probability matrix of the simple random walk on a (weighted) graph $G$ with stationary distribution $\mu$. Then, 
$$ \lambda_1(P- \bone\mu^{\intercal}) = \max\{0,\lambda_2(P)\}.$$
\end{lemma}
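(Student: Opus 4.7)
The plan is to diagonalize both $P$ and $\mathbf{1}\mu^{\intercal}$ simultaneously in the inner product $\langle f,g\rangle_\mu = \E_{x\sim\mu} f(x)g(x)$, and read off the spectrum of the difference.

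First I would observe that $\mathbf{1}\mu^{\intercal}$ is precisely the orthogonal projection onto the span of the constant function $\mathbf{1}$ with respect to $\langle\cdot,\cdot\rangle_\mu$. Indeed, for any $f$,
\[
    (\mathbf{1}\mu^{\intercal} f)(x) = \sum_{y} \mu(y) f(y) = \langle f,\mathbf{1}\rangle_\mu,
\]
so $\mathbf{1}\mu^{\intercal} f = \langle f,\mathbf{1}\rangle_\mu \cdot \mathbf{1}$. In particular, $\mathbf{1}\mu^{\intercal}$ is itself self-adjoint with respect to $\langle\cdot,\cdot\rangle_\mu$, and hence so is $P - \mathbf{1}\mu^{\intercal}$, which therefore has real eigenvalues.

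Next I would invoke the spectral theorem for $P$ in the $\mu$-inner product: let $\mathbf{1}=f_1, f_2, \dots, f_n$ be an orthonormal basis of eigenfunctions with eigenvalues $1 = \lambda_1(P) \geq \lambda_2(P) \geq \cdots \geq \lambda_n(P)$. For $i \geq 2$ we have $\langle f_i, \mathbf{1}\rangle_\mu = 0$, so $\mathbf{1}\mu^{\intercal} f_i = 0$ and therefore $(P - \mathbf{1}\mu^{\intercal}) f_i = \lambda_i(P) f_i$. For $i=1$, we have $P\mathbf{1} = \mathbf{1}$ and $\mathbf{1}\mu^{\intercal}\mathbf{1} = \mathbf{1}$, so $(P-\mathbf{1}\mu^{\intercal})\mathbf{1} = 0$. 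Thus the $f_i$ remain an orthonormal eigenbasis of $P - \mathbf{1}\mu^{\intercal}$, with spectrum
\[
    \{0\} \cup \{\lambda_2(P), \lambda_3(P), \dots, \lambda_n(P)\}.
\]
Taking the maximum gives $\lambda_1(P - \mathbf{1}\mu^{\intercal}) = \max\{0, \lambda_2(P)\}$, as desired.

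There is no real obstacle here; the only point to be careful about is that $P$ need not be symmetric as a matrix, so one must work throughout in the $\mu$-inner product rather than the standard one. Once the self-adjointness of $\mathbf{1}\mu^{\intercal}$ under $\langle\cdot,\cdot\rangle_\mu$ is recognised, the rest is a one-line spectral calculation on the common eigenbasis.
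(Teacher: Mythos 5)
Your proof is correct and follows essentially the same route as the paper: identify $\bone\mu^{\intercal}$ as the $\mu$-orthogonal projection onto $\bone$, observe that $(P-\bone\mu^{\intercal})\bone = 0$ while every eigenfunction of $P$ that is $\mu$-orthogonal to $\bone$ keeps its eigenvalue, and read off the spectrum. Your write-up is just a slightly more explicit version of the paper's argument, with the self-adjointness and orthonormal eigenbasis spelled out.
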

\begin{proof}
	First notice, $ (P-\bone\mu^{\intercal})\bone = \bone - \bone = 0$. We claim that any other eigenfunction of $P$ is also an eigenfunction of $P-\bone\mu^{\intercal}$.  For any eigenfunction $f\in \R^V$ of $P$ where $\langle f,\bone \rangle_\mu=0$, we have,
	$$ (P-\bone\mu^{\intercal})f = Pf  -  \bone\langle \bone,f\rangle_\mu = \lambda f.$$
\end{proof}


For a weighted graph $G=(V,E,w)$ and a subset $S\subseteq V$ of vertices we let the {\bf conductance} of $S$, denoted by $\cond(S)$,  as 
	\[ \cond(S) \ = \ \frac{w(E(S,\overline{S}))}{\vol(S)} \ = \ \frac{\sum_{e\in E(S,\overline{S})} w(e)}{\sum_{v\in S} w(v)},\]
	where $\overline{S}:=V-S$, $E(S,\overline{S})=\{ \{u,v\}\in E:u\in S,v\notin S\}$ is the set of edges between $S$ and $\overline{S}$, $w(E(S,\overline{S}))$ is the sum of weights of these edges, and the volume $\vol(S)$ is the sum of the weighted degrees of the vertices in $S$. The conductance of $G$ is then
	\[ \cond(G)\ = \ \min_{S} \  \cond(S), \]
	where the minimum is taken over subsets $\varnothing \subsetneq S\subsetneq V$ for which $\vol(S)\leqslant \vol(\overline{S})$. 

	\begin{theorem}[Cheeger's Inequalities \cite{AM85,Alon86}]\label{thm:Cheeger}
		For any  weighted graph $G=(V,E,w)$, 
		\[ \frac{1-\lambda_2(P)}{2} \ \leqslant  \ \cond(G) \  \leqslant \  \sqrt{2\left(1-\lambda_2(P)\right)}, \]
	\end{theorem}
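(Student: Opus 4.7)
The plan is to prove the two inequalities separately, using the Rayleigh quotient characterization in \cref{lem:rayleigh}.

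For the lower bound $\frac{1-\lambda_2(P)}{2}\leq \cond(G)$, I will plug a two-valued test function into the Rayleigh quotient. For any $\varnothing\subsetneq S\subsetneq V$ with $\vol(S)\leq \vol(\overline{S})$, set $f_S := \vol(\overline{S})\bone_S - \vol(S)\bone_{\overline{S}}$. A direct computation shows that $\langle f_S,\bone\rangle_\mu = 0$ (so $f_S$ is an admissible test function), $\langle f_S,f_S\rangle_\mu = \vol(S)\vol(\overline{S})$, and $\cE_P(f_S,f_S) = w(E(S,\overline{S}))\cdot\vol(V)$. Applying \cref{lem:rayleigh} gives
\[
1-\lambda_2(P) \;\leq\; \frac{w(E(S,\overline{S}))\,\vol(V)}{\vol(S)\vol(\overline{S})} \;\leq\; \frac{2\,w(E(S,\overline{S}))}{\vol(S)} \;=\; 2\cond(S),
\]
where the middle inequality uses $\vol(\overline{S})\geq \vol(V)/2$. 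Minimizing over $S$ proves the claim.

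For the upper bound $\cond(G)\leq \sqrt{2(1-\lambda_2(P))}$ I will perform the classical Cheeger sweep on an eigenfunction of $\lambda_2(P)$. Let $f$ be such an eigenfunction and split $f = f_+ - f_-$ into its positive and negative parts (with disjoint supports). Without loss of generality $g := f_+$ satisfies $\vol(\supp g)\leq \vol(V)/2$ (else consider $f_-$), and using $Pf = \lambda_2(P) f$ together with the disjointness of the two supports one checks $\cE_P(g,g)\leq (1-\lambda_2(P))\langle g,g\rangle_\mu$. I then sweep over super-level sets of $g^2$: list vertices so that $g(v_1)\geq \cdots\geq g(v_n)\geq 0$ and set $S_i := \{v_1,\dots,v_i\}$. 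Combining the Cauchy--Schwarz estimate
\[
\cE_P(g,g)\cdot\!\!\sum_{\{x,y\}\in E}\! w(x,y)(g(x)+g(y))^2 \;\geq\; \Bigl(\sum_{\{x,y\}\in E}\! w(x,y)\bigl|g(x)^2-g(y)^2\bigr|\Bigr)^{2}
\]
with the elementary bound $\sum_{\{x,y\}} w(x,y)(g(x)+g(y))^2 \leq 2\vol(V)\,\langle g,g\rangle_\mu$ and the coarea identity that rewrites $\sum w(x,y)|g(x)^2-g(y)^2|$ as an integral of edge cuts $w(E(S_{i(t)},\overline{S_{i(t)}}))$ across the sweep, an averaging argument produces some index $i$ with $\vol(S_i)\leq \vol(V)/2$ and $\cond(S_i)^2\leq 2(1-\lambda_2(P))$.

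The main technical obstacle will be the Cauchy--Schwarz/coarea step in the sweep: this is where the square root in Cheeger's inequality originates, and it requires verifying that the selected level set $S_i$ sits in the ``small'' half of $V$ so that it is a legitimate competitor in the definition of $\cond(G)$. The lower bound, by contrast, is a routine Rayleigh quotient computation with a carefully chosen two-valued test function.
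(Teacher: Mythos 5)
The paper gives no proof of \cref{thm:Cheeger}; it is quoted as a classical result with citations, so there is no internal argument to compare against. Your proposal is the standard Alon--Milman/Cheeger proof, and it is essentially correct. The easy direction checks out exactly as you state: with the paper's conventions one has $\langle f_S,\bone\rangle_\mu=0$, $\langle f_S,f_S\rangle_\mu=\vol(S)\vol(\overline{S})$ and $\cE_P(f_S,f_S)=w(E(S,\overline{S}))\cdot\vol(V)$, and \cref{lem:rayleigh} then gives $1-\lambda_2(P)\le 2\cond(S)$ for every admissible $S$. The sweep direction is also structurally sound: the restriction inequality $\cE_P(g,g)\le(1-\lambda_2(P))\langle g,g\rangle_\mu$ for $g=f_+$ does follow from $Pf=\lambda_2(P) f$, nonnegativity of the entries of $P$, and disjointness of the supports, and since $\supp f_+$ and $\supp f_-$ are disjoint at least one of them has volume at most $\vol(V)/2$, so your WLOG is legitimate and the sweep sets $S_i\subseteq\supp g$ are admissible competitors.

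The one concrete flaw is that your displayed Cauchy--Schwarz inequality is mis-normalized. With the paper's definitions, $\cE_P(g,g)=\frac{1}{\vol(V)}\sum_{\{x,y\}\in E}w(x,y)(g(x)-g(y))^2$ because $\mu(x)P(x,y)=w(x,y)/\vol(V)$; hence the left-hand side of your display is short a factor of $\vol(V)$, and as written the two sides even scale differently under $w\mapsto c\,w$ (the Dirichlet form is scale-invariant while the other two quantities are not), so the inequality cannot hold in general. This is a bookkeeping slip rather than a gap in the idea: the correct form is $\vol(V)\,\cE_P(g,g)\cdot\sum_{\{x,y\}\in E}w(x,y)(g(x)+g(y))^2\ \ge\ \bigl(\sum_{\{x,y\}\in E}w(x,y)\,|g(x)^2-g(y)^2|\bigr)^2$, and combining it with your bound $\sum_{\{x,y\}\in E}w(x,y)(g(x)+g(y))^2\le 2\vol(V)\langle g,g\rangle_\mu$ and the coarea estimate $\sum_{\{x,y\}\in E}w(x,y)\,|g(x)^2-g(y)^2|\ \ge\ \min_i\cond(S_i)\cdot\vol(V)\langle g,g\rangle_\mu$, the factors of $\vol(V)$ cancel and you obtain $\min_i\cond(S_i)^2\le 2(1-\lambda_2(P))$ as desired. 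With that normalization repaired, your write-up is a complete and correct proof of the quoted theorem.
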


\begin{lemma}[One positive eigenvalue bound]\label{lem:oneposeigval}
    For $A \in \R^{n\times n}$ that is self-adjoint w.r.t. $\langle .,.\rangle_\mu$ and has one positive eigenvalue and for any vector $v \in \R^n$ such that $\langle v,Av\rangle_\mu > 0$, we have  for any $x\in \R^n$,
    $$\langle x,Ax\rangle_\mu \leq \frac{\langle x, Av\rangle^2_\mu}{\langle v,Av\rangle_\mu,}$$
\end{lemma}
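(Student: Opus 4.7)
The plan is to do an $A$-orthogonal projection trick followed by a dimension count. Given $x$ and $v$ with $\langle v, Av\rangle_\mu > 0$, I will set
\[
    y \;=\; x - \frac{\langle x, Av\rangle_\mu}{\langle v, Av\rangle_\mu}\, v.
\]
Using self-adjointness of $A$ and a direct expansion, one immediately checks the two key identities $\langle y, Av\rangle_\mu = 0$ (so $y$ is ``$A$-orthogonal'' to $v$) and
\[
    \langle y, Ay\rangle_\mu \;=\; \langle x, Ax\rangle_\mu \;-\; \frac{\langle x, Av\rangle_\mu^{\,2}}{\langle v, Av\rangle_\mu}.
\]
So the inequality to prove reduces to the single assertion $\langle y, Ay\rangle_\mu \leq 0$.

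To establish this, I argue by contradiction: suppose $\langle y, Ay\rangle_\mu > 0$. Since $\langle v, Av\rangle_\mu > 0$, the vectors $v$ and $y$ are linearly independent (otherwise $y$ would be a scalar multiple of $v$, contradicting $\langle y, Av\rangle_\mu = 0$). On the two-dimensional subspace $S = \textup{span}(v,y)$, for any vector $av+by$ we have, using $\langle y, Av\rangle_\mu = \langle v, Ay\rangle_\mu = 0$ from self-adjointness,
\[
    \langle av+by,\; A(av+by)\rangle_\mu \;=\; a^2 \langle v, Av\rangle_\mu + b^2 \langle y, Ay\rangle_\mu \;>\; 0
\]
whenever $(a,b) \neq (0,0)$. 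Hence the quadratic form $\langle \cdot, A\cdot\rangle_\mu$ is positive definite on a 2-dimensional subspace.

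By Sylvester's law of inertia (equivalently, by the Courant--Fischer min-max characterization applied in the inner product $\langle\cdot,\cdot\rangle_\mu$ with respect to which $A$ is self-adjoint), this forces $A$ to have at least two positive eigenvalues, contradicting the hypothesis. The main (and only non-routine) step is this dimension-counting contradiction; the rest is algebraic manipulation. Rearranging $\langle y, Ay\rangle_\mu \leq 0$ gives exactly the claimed bound.
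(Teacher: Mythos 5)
Your proof is correct. The completing-the-square identity $\langle y,Ay\rangle_\mu=\langle x,Ax\rangle_\mu-\langle x,Av\rangle_\mu^2/\langle v,Av\rangle_\mu$ with $\langle y,Av\rangle_\mu=0$ is a routine check, and the contradiction step is sound: if $\langle y,Ay\rangle_\mu>0$ then $y\neq 0$, and $y$ cannot be a nonzero multiple of $v$ since that would force $\langle y,Av\rangle_\mu\neq 0$; positive definiteness of the form on $\mathrm{span}(v,y)$ then gives $\lambda_2(A)>0$ by the Courant--Fischer characterization in the $\mu$-inner product (which the paper records in its preliminary facts), contradicting the one-positive-eigenvalue hypothesis. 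The only implicit assumption is that $\langle\cdot,\cdot\rangle_\mu$ is a genuine inner product, which is also presumed wherever the paper speaks of self-adjointness and Rayleigh quotients, so this is not a gap.

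Your route differs from the paper's in execution, though both exploit the same two-dimensional subspace. The paper passes to the symmetric matrix $DA$ (with $D$ the diagonal of $\mu$), compresses it by $B=[v,x]$ to the $2\times 2$ matrix $B^\top DAB$, notes that compressions preserve the property of having at most one positive eigenvalue, and concludes that the determinant $\langle v,Av\rangle_\mu\langle x,Ax\rangle_\mu-\langle x,Av\rangle_\mu^2$ is non-positive, which is the inequality directly. You instead complete the square and run the contrapositive via a dimension count: positive definiteness on a $2$-dimensional subspace would force two positive eigenvalues. What the paper's version buys is a one-line computation that leans on already-stated interlacing facts and never needs the projection vector $y$; what yours buys is that it stays entirely in the abstract inner-product setting (no auxiliary matrix $D$, no determinant), making the "reverse Cauchy--Schwarz for a form with one positive eigenvalue" mechanism more transparent. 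Either argument is acceptable.
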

\begin{proof}
Let $D$ be a diagonal matrix corresponding to $\mu$, i.e., $D(i,i)=\mu_i$. Note that $DA$ is symmetric (as it is self-adjoint w.r.t. ordinary inner product).
    Let $x \in \R^n$ be arbitrary and set $B \in \R^{n \times 2}$ to $B = \begin{bmatrix}v, x \end{bmatrix}$. Then $$B^T DAB = \begin{bmatrix} \langle v,Av\rangle_\mu & \langle v,Ax\rangle_\mu\\ \langle x,Av\rangle_\mu & \langle x,Ax\rangle_\mu\end{bmatrix}$$ is a $2\times 2$ matrix with the first diagonal entry being non-negative so it has at least one non-negative eigenvalue. By Cauchy interlacing \cref{lem:AB}, $BDAB^T$ has at most one positive eigenvalue. Thus, the product of the eigenvalues is non-positive and so the determinant is non-positive meaning $\langle v,Av\rangle_\mu \cdot \langle x,Ax\rangle_\mu - \langle x,Av\rangle_\mu^2 \leq 0.$ Rearranging we obtain lemma's statement. 
\end{proof}
\begin{lemma}[Eigenvalues of Bipartite graphs]\label{lem:eigenvaluebipartite}
Let $G=(X,Y)$ be a (weighted) bipartite graph, $A\in\R^{(X+Y)\times (X+Y)}$ be the adjacency matrix of $G$, $P$ be the transition probability matrix of the simple random walk on $G$, and $D$ be the diagonal matrix of vertex degrees, i.e., $P=D^{-1}A$. Let $S, \bar{S}\in \R^{(X+Y)\times (X+Y)}$ be diagonal matrices such that $S(x,x)=s_X, S(y,y)=s_Y$ for all $x\in X, y\in Y$ and $\bar{S} = \sqrt{s_Xs_Y} \cdot I_{X+Y}$. Then the following are equivalent:
\begin{enumerate}
    \item $D^{-1/2} A D^{-1/2}\preceq S+vv^T$ for some vector $v\in \R^{X+Y}$,
    \item $D^{-1/2} A D^{-1/2}\preceq \bar{S}+ww^T$ for some vector $w \in \R^{X+Y}$,
    \item $\lambda_2(P) \leq \sqrt{s_X s_Y}$,
    \item $\lambda_2(P-S) \leq 0$.
\end{enumerate}
\end{lemma}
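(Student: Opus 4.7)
The plan is to work with the normalized adjacency matrix $\tilde A := D^{-1/2}AD^{-1/2}$, which is symmetric and similar to $P = D^{-1}A$, so $P$ and $\tilde A$ share the same spectrum and in particular $\lambda_2(P)=\lambda_2(\tilde A)$; conjugating by $D^{1/2}$ also shows $P-S$ has the same eigenvalues as $\tilde A - S$. In these terms the four conditions become (1) $\tilde A \preceq S + vv^\top$, (2) $\tilde A \preceq \bar S + ww^\top$, (3) $\lambda_2(\tilde A)\le\sqrt{s_Xs_Y}$, and (4) $\lambda_2(\tilde A - S)\le 0$.

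The key structural observation is a diagonal congruence that makes the bipartiteness do the work. Because $G$ is bipartite, $\tilde A$ is block off-diagonal, $\tilde A = \begin{pmatrix} 0 & B \\ B^\top & 0 \end{pmatrix}$. Let
\[
C \;=\; \begin{pmatrix} c\,I_X & 0 \\ 0 & c^{-1}\,I_Y \end{pmatrix}, \qquad c := (s_Y/s_X)^{1/4}.
\]
A direct computation shows the off-diagonal scalings cancel, so $C\tilde A C^\top = \tilde A$, while on the diagonal $CSC^\top = \bar S$, since $c^2 s_X = s_Y/c^2 = \sqrt{s_Xs_Y}$. Hence $\tilde A - S$ and $\tilde A - \bar S$ are congruent via the invertible $C$, and by Sylvester's law of inertia they have the same number of positive eigenvalues.

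With that lemma in hand, the equivalences fall out quickly. For (1)$\Leftrightarrow$(2), conjugate the inequality $\tilde A \preceq S + vv^\top$ by $C$ to obtain $\tilde A \preceq \bar S + (Cv)(Cv)^\top$, and reverse with $C^{-1}$. For (3)$\Leftrightarrow$(4), note that $\lambda_2(\tilde A - \bar S) \le 0$ is simply $\lambda_2(\tilde A) \le \sqrt{s_Xs_Y}$ because $\bar S$ is a scalar multiple of $I$, and by Sylvester's law this holds iff $\lambda_2(\tilde A - S)\le 0$. Finally (2)$\Leftrightarrow$(3) (and similarly (1)$\Leftrightarrow$(4)) is the standard spectral fact that for any symmetric $M$, the condition $\lambda_2(M)\le 0$ is equivalent to $M \preceq ww^\top$ for some $w$ (take $w=\sqrt{\max(\lambda_1(M),0)}\,u_1$ in one direction and Cauchy interlacing, as already cited in \cref{lem:AB}, in the other).

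The only nontrivial step is spotting that diagonal rescaling by $C$ leaves $\tilde A$ invariant; this is exactly where bipartiteness is used and is the source of the ``$\sqrt{s_Xs_Y}$'' instead of $\max(s_X,s_Y)$ that naive Cauchy interlacing on (1) would give. I do not foresee further obstacles — everything else is a one-line application of spectral theorem or Sylvester's law.
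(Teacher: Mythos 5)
Your proposal is correct and takes essentially the same route as the paper: the diagonal congruence $C$ (the paper's matrix $Z$ with $z_Xz_Y=1$) that leaves $D^{-1/2}AD^{-1/2}$ invariant by bipartiteness while mapping $S$ to $\bar S$ is exactly the paper's central step. Your remaining bookkeeping via Sylvester's law of inertia and the equivalence $\lambda_2(M)\le 0 \Leftrightarrow M\preceq ww^\top$ is just a repackaging of the paper's Cauchy-interlacing arguments (\cref{lem:AB}), so no substantive difference.
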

\begin{proof}
$(1) \to (2)$: Let $Z\in \R^{(X+Y)\times (X+Y)}$ be a diagonal matrix with $Z(x,x)=z_X$ and $Z(y,y)=z_Y$ where $z_X,z_Y\geq 0$ and $z_X\cdot z_Y=1$. Assuming the first condition, we have
$$ D^{-1/2}AD^{-1/2} = Z(D^{-1/2} A D^{-1/2})Z 
    \preceq ZSZ + vv^T,$$
for some $v\in \R^{X+Y}$.
The first identity uses that $D^{-1/2}AD^{-1/2}$ has only two non-zero blocks corresponding to transitions from $X\to Y, Y\to X$ and that $z_X\cdot z_Y=1$.
Taking $z_X=\sqrt[4]{s_Y/s_X}, z_Y=1/z_X$, we get that $ZSZ=\sqrt{s_X s_Y}I = \bar{S}$.

$(2) \to (1)$: The same proof as for $(1) \to (2)$ works, but instead we take $z_X =\sqrt[4]{s_X/s_Y}$ and $z_Y=1/z_X$.

$(2)\to (3)$: By \cref{lem:AB}, $$\lambda_2(P)=\lambda_2(D^{-1}A)=\lambda_2(D^{-1/2}AD^{-1/2}) \leq \lambda_1(\bar{S}) = \sqrt{s_Xs_Y}.$$ 

$(1)\to (4)$: By \cref{lem:AB}, 
$$ \lambda_2(P-S)=\lambda_2(D^{-1}A-S) = \lambda_2(D^{-1/2}AD^{-1/2}-S)\leq 0.$$

$(3)\to (2)$: First, $\lambda_2(P- \bar{S})=\lambda_2(D^{-1/2}AD^{-1/2} - \bar{S}) \leq 0$, so $P-\bar{S}$ has at most one positive eigenvalue. Thus $D^{-1/2}AD^{-1/2} - \bar{S} \preceq ww^T$ where $w$ is the first eigenvector of $D^{-1/2}AD^{-1/2} - \bar{S}$. $(4)\to (1)$ can be shown similarly. 
\end{proof}

\begin{lemma}\label{lem:dpartiteeigenvalues}
  Let $G=(V,E,w)$ be a weighted $d$-partite graph with parts $T_1,\dots,T_d$ such that for $i,j\in [d]$ with $i\neq j$, and any $x\in T_i$, we have $w(x,T_j)=\sum_{y\in T_j} w(x,y)=\frac{d_w(x)}{d-1}$. Here, $d_w(x)$ is the weighted degree of $x$. Let $P$ be the simple random walk on $G$ with stationary distribution $\mu$.
  Then, $\mu(T_i)=1/d$ for all $i$.
   For every $i,j\in [d]$, let $G_{i,j}$ be the induced bipartite graph on parts $T_i,T_j$ with corresponding random walk matrix $P_{i,j}$. Suppose $P_{i,j} -M_{i,j}$ has at most one positive eigenvalue, where $M_{i, j}$ is the diagonal matrix with entries given by $m_i(j)$ for vertices in $T_i$ and $m_j(i)$ for vertices in $T_j$ such that $0\leq m_i(j)< 1$ for all $i\neq j$. Then,
     $$ \lambda_2(P) \leq 1- \min_{i\in [d]}\E_{j\in [d], j\neq i} 1-m_i(j) = \max_{i\in [d]} \E_{j\in [d],j\neq i} m_i(j)$$
 \end{lemma}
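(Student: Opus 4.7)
The plan is to combine the Rayleigh characterization of $\lambda_2(P)$ (\cref{lem:rayleigh}) with the pair-wise spectral assumption via \cref{lem:oneposeigval}. First I would observe that the hypothesis implies the total weight between $T_i$ and $T_j$ equals both $\vol(T_i)/(d-1)$ and $\vol(T_j)/(d-1)$, so all parts have equal volume and $\mu(T_i)=1/d$. I then identify enough ``trivial'' eigenfunctions: besides $\bone$ (eigenvalue $1$), each $e_i := (d-1)\bone_{T_i} - \bone_{V\setminus T_i}$ satisfies $Pe_i = -\tfrac{1}{d-1}e_i$ using $P(x,T_i) = 1/(d-1)$ whenever $x \notin T_i$. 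Since $\mathrm{span}\{\bone, e_1,\ldots,e_d\} = \mathrm{span}\{\bone_{T_1},\ldots,\bone_{T_d}\}$, any non-trivial eigenfunction $g$ of $P$ is $\mu$-orthogonal to every $\bone_{T_i}$; equivalently, $g|_{T_i\cup T_j}$ is orthogonal to both $\bone_{T_i}$ and $\bone_{T_j}$ in the $\mu_{G_{i,j}}$ inner product. The degree-balance hypothesis also forces $\mu_{G_{i,j}}(x) = \tfrac{d}{2}\mu(x)$ and $P_{i,j}(x,y) = (d-1)P(x,y)$ on $T_i \cup T_j$, so a direct calculation gives the decomposition
\[
    \cE_P(g,g) = \sum_{i<j} \frac{2}{d(d-1)} \cE_{P_{i,j}}(g,g).
\]

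Next, for each unordered pair $\{i,j\}$, I apply \cref{lem:oneposeigval} to the self-adjoint operator $A := P_{i,j} - M_{i,j}$ on $T_i \cup T_j$ with $v = \bone$. The critical observation is that $A\bone = (1-m_i(j))\bone_{T_i} + (1-m_j(i))\bone_{T_j}$, so $\langle \bone, A\bone\rangle_{\mu_{G_{i,j}}} = 1 - \tfrac12(m_i(j)+m_j(i)) > 0$ (since $m_i(j),m_j(i)<1$), while the numerator $\langle g, A\bone\rangle_{\mu_{G_{i,j}}}$ \emph{vanishes} by orthogonality of $g|_{T_i\cup T_j}$ to $\bone_{T_i}$ and $\bone_{T_j}$. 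Hence $\langle (P_{i,j} - M_{i,j})g, g\rangle_{\mu_{G_{i,j}}} \leq 0$, which rearranges to
\[
    \cE_{P_{i,j}}(g,g) \geq \frac{d}{2}\left[(1-m_i(j)) \sum_{x\in T_i}\mu(x)g(x)^2 + (1-m_j(i)) \sum_{x\in T_j}\mu(x)g(x)^2\right].
\]

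Summing over pairs and setting $E_i := \sum_{x\in T_i}\mu(x)g(x)^2$ yields
\[
    \cE_P(g,g) \geq \frac{1}{d-1}\sum_i E_i \sum_{j\neq i}(1-m_i(j)) \geq \min_i \E_{j\neq i}(1-m_i(j)) \cdot \langle g, g\rangle_\mu,
\]
so every non-trivial eigenvalue is at most $\max_i \E_{j\neq i} m_i(j)$; since that bound is non-negative, it dominates the trivial eigenvalue $-\tfrac{1}{d-1}$ and we conclude $\lambda_2(P) \leq \max_i \E_{j\neq i} m_i(j)$. The main non-routine step — and the crux of the argument — is the use of \cref{lem:oneposeigval} with $v=\bone$: the fact that $A\bone$ lies precisely in $\mathrm{span}\{\bone_{T_i},\bone_{T_j}\}$, which is exactly the span that a non-trivial $g$ is orthogonal to, is what collapses the quadratic upper bound to $\langle g, Ag\rangle \leq 0$ and converts a ``one-positive-eigenvalue'' hypothesis into a clean Rayleigh-style lower bound on $\cE_{P_{i,j}}(g,g)$.
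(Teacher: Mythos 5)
Your proposal is correct and follows essentially the same route as the paper: trivial eigenfunctions forcing $\mu$-orthogonality of the second eigenfunction to each $\bone_{T_i}$, the pairwise decomposition of the Dirichlet form, and the application of \cref{lem:oneposeigval} with $v=\bone$ (noting $(P_{i,j}-M_{i,j})\bone$ lies in $\mathrm{span}\{\bone_{T_i},\bone_{T_j}\}$) to get $\langle g,(P_{i,j}-M_{i,j})g\rangle\le 0$ and hence the per-pair lower bound on $\cE_{P_{i,j}}(g,g)$. Your handling of the trivial eigenvalue $-\tfrac{1}{d-1}$ (the bound is non-negative, so it dominates) matches the paper's reduction to the case $\lambda_2(P)\ge 0$.
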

 \begin{proof}
 Recall that $P$ is self-adjoint with respect to the inner product $\langle f,g\rangle_\mu=\E_{x\sim \mu} f(x)g(x)$. We then have $\bone$ is an eigenfunction with eigenvalue 1. For  $i\in [d]$, define 
 $$e_i(x) =\begin{cases} d-1 & \text{if } x\in T_i\\ -1& \text{otherwise.}\end{cases}$$ 
 Then since for any $i\neq j$ and $x\in T_i$, $P(x,T_j)=\frac{1}{d-1}$ we get that  $e_i$ is also an eigenfunction of $P$ with eigenvalue $\frac{-1}{d-1}$, i.e, $Pe_i=\frac{-e_i}{d-1}$. We call $\bone$ and $e_i$'s trivial eigenfunctions of $P$. 

 Let $f$ be a non-trivial eigenfunction of $P$; for any $i\in [d]$  we have 
 $$0=\langle f,\bone+e_i\rangle_\mu= d\cdot \langle f,\bone_i\rangle_\mu.$$ 
 where $\bone_i$ is the indicator vector of the set $T_i$.

Note that if $\lambda_2(P)\leq 0$ we are already done since $m_i(j)\geq 0$ for all $i,j$. So we assume $\lambda_2(P)\geq 0$.
It follows by \cref{lem:rayleigh} that the second largest eigenvalue of  $P$ is
    $$ 1-\min_{f:\langle f,\bone_i\rangle_\mu =0,\forall i} \frac{\cE(f,f)}{\E_x f(x)^2}. $$
    
     Let $g$ be the second eigenfunction of $P$. Consider the bipartite graph $G_{i,j}$ for some $i\neq j$. Let $\mu_{G_{i,j}}$ be the stationary distribution of the random in the graph $G_{i,j}$. Notice that since $d_{G_{i,j}}(x) = \frac{1}{d-1}d_w(x)$ for all $x\in T_i\cup T_j$ we have $\mu_{G_{i,j}}(x) = \frac{d}{2}\mu(x)$ for any $x\in T_i\cup T_j$. Therefore, 
     \begin{equation}\label{eq:g1i=g1j}
         \langle g,\bone_i\rangle_{\mu_{G_{i,j}}} =\langle g,\bone_j\rangle_{\mu_{G_{i,j}}}=0.
     \end{equation}

Let 
$v = (P_{i, j} - M_{i, j})\bone$.
    Using the above observation for the 2-partite graph $G_{i,j}$
    \begin{align}\label{eq:gPijquadratic}
        \langle g, v\rangle_{\mu_{G_{i,j}}}& = \langle g, \bone\rangle_{\mu_{G_{i, j}}} -\langle g,  M_{i, j}\bone\rangle_{\mu_{G_{i, j}}} \underset{\eqref{eq:g1i=g1j}}{=}  -\langle g,  M_{i, j}\bone\rangle_{\mu_{G_{i, j}}} \nonumber \\&= -\langle g,  \bone\rangle_{\mu_{G_{i, j}}}- \langle g,  \bone\rangle_{\mu_{G_{i, j}}} \underset{\eqref{eq:g1i=g1j}}{=} 0.
    \end{align}

    Recall that $P_{i,j}$ is self-adjoint w.r.t. $\langle.,.\rangle_{\mu_{G_{i,j}}}$ and so is $M_{i, j}$ as it is diagonal. Furthermore, observe that since $P_{i,j}$ is stochastic,
    $$\langle \bone, (P_{i,j}-M_{i,j})\bone\rangle_{\mu_{G_{i,j}}} = \frac12(1-m_i(j)  + \frac12(1-m_j(i))\underset{m_i(j),m_j(i)< 1}{\geq} 0.$$
    Therefore, by \cref{lem:oneposeigval} we get
    \begin{equation}\label{eq:PMvPSD}
        \langle g,(P_{i, j} - M_{i, j})g\rangle_{\mu_{G_{i,j}}} \leq \frac{\langle g,v\rangle^2_{\mu_{G_{i,j}}}}{\langle \bone,v\rangle_{\mu_{G_{i,j}}}}\underset{\eqref{eq:gPijquadratic}}{=} 0. 
    \end{equation}
    As $\cE_{P_{i, j}}(g, g)$ is the same as the quadratic form of $I - P_{i, j}$ on $g$ under the $\mu_{G_{i,j}}$ inner product, we write
\begin{align}\begin{split}\label{eq:lambdai,jint}
         \cE_{P_{i, j}}(g, g) &= \langle g, (I - P_{i, j})g\rangle_{\mu_{G_{i, j}}} 
         \\&\underset{\eqref{eq:PMvPSD}}{\geq} \langle g, \left(I - M_{i, j} \right)g\rangle_{\mu_{G_{i, j}}}
         \\&=
         (1-m_i(j))\sum_{x \in T_i} \mu_{G_{i,j}}(x)g(x)^2 + (1-m_{j}(i))\sum_{x \in T_j}\mu_{G_{i,j}}(x)g(x)^2
     \end{split}
     \end{align}
     Using the fact that for $x\in T_i,y\in T_j$, $\mu(x)=\frac{2}{d}\mu_{G_{i,j}}(x)$ and that $(d-1)P(x,y)=P_{i,j}(x,y)$ it follows that,
     \begin{align*}
         \cE_P(g,g) &=  \sum_{i<j\in [d]} \frac{2}{d}\cdot \frac1{d-1} \cE_{P_{i,j}}(g,g)\\
         &\underset{\eqref{eq:lambdai,jint}}{\geq} \sum_{i<j\in [d]}  \frac{2}{d}\cdot \frac{1}{d-1}\cdot \left((1-m_i(j))\sum_{x\in T_i} \mu_{G_{i,j}}(x)g(x)^2+(1-m_j(i))\sum_{x\in T_j} \mu_{G_{i,j}}(x)g(x)^2\right)\\
         &=\sum_{i<j\in [d]}  \frac{1}{d-1}\cdot \left((1-m_i(j))\sum_{x\in T_i} \mu(x)g(x)^2+(1-m_j(i))\sum_{x\in T_j} \mu(x)g(x)^2\right)\\
         &=\sum_i \left(\sum_{x\in T_i}\mu(x)g(x)^2\right)\cdot \left(\sum_{j\in[d], j\neq i} \frac{1-m_i(j)}{d-1}\right)\\
         &\geq \min_i\left\{\frac1{d-1}\sum_{j\neq i} 1-m_i(j)\right\}\cdot \sum_x \mu(x) g(x)^2 
     \end{align*}
     Therefore, 
     $$\lambda_2(P)=1-\frac{\cE_P(g,g)}{\E_{x\sim\mu} g(x)^2}\leq 1-\min_{i\in [d]} \sum_{j\neq i} \frac{1-m_i(j)}{d-1}$$
     as desired.
 \end{proof}

\subsection{Basic Properties of (Homogeneous) Polynomials}
A polynomial $p\in \R[t_1,\dots,t_n]$ is $d$-homogeneous, if for any $\lambda\in \R$,
$$ p(\lambda t_1,\dots,\lambda t_n)=\lambda^d p(t_1,\dots,t_n).$$

\begin{definition}[Irreducible Matrices]A matrix $A \in \R^{n\times n}$  is called {\bf irreducible} if for all distinct $i,j\in [n]$ there is a sequence $i = i_0,i_1,i_2,\dots,i_\ell = j$ such that $i_{k-1} \neq  i_k$ for all $1 \leq k \leq \ell$, and $A(i_0,i_1)\cdot A(i_1,i_2)\dots A(i_{\ell-1},i_\ell) \neq 0$.
\end{definition}

For a vector $v\in \R^n$, we write
$$ \nabla_{\bm{v}} p(\bm{t}) :=\sum_{i=1}^n v_i \cdot \partial_{t_i}p(\bm{t}).$$

\begin{lemma}[Euler's formula] \label{lem:Eulers-formula}
    If $p \in \R[t_1,\ldots,t_n]$ is a $d$-homogeneous polynomial, then we have that 
    $$d \cdot p(\bm{t}) = \sum_{i=1}^n t_i \cdot \partial_{t_i} p(\bm{t}).$$
\end{lemma}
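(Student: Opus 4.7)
The plan is to give the standard two-line proof via differentiation in the scaling parameter, with the monomial-by-monomial verification as a backup. The statement is classical, so the only job is to be careful about which variable we differentiate in.

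First I would start from the defining identity of $d$-homogeneity, namely
\[
    p(\lambda t_1, \ldots, \lambda t_n) = \lambda^d \, p(t_1, \ldots, t_n),
\]
which is an equality of polynomials in $\R[t_1,\ldots,t_n,\lambda]$ (and in particular holds as an identity in $\lambda$ for each fixed $\bm{t}$). Next I would differentiate both sides with respect to $\lambda$. The right-hand side is immediate: $\frac{d}{d\lambda}[\lambda^d p(\bm{t})] = d\, \lambda^{d-1} p(\bm{t})$. For the left-hand side I would apply the multivariate chain rule, obtaining
\[
    \frac{d}{d\lambda} p(\lambda t_1, \ldots, \lambda t_n) = \sum_{i=1}^n t_i \cdot (\partial_{t_i} p)(\lambda t_1,\ldots, \lambda t_n).
\]
Finally, setting $\lambda = 1$ equates the two expressions and yields exactly $d \cdot p(\bm{t}) = \sum_{i=1}^n t_i \cdot \partial_{t_i} p(\bm{t})$, as required.

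As a sanity check (and an alternative proof if one prefers to avoid differentiating in an auxiliary variable), I would verify the identity monomial by monomial. By linearity it suffices to consider a single monomial $\bm{t}^\alpha = t_1^{\alpha_1}\cdots t_n^{\alpha_n}$ with $|\alpha| = \alpha_1 + \cdots + \alpha_n = d$, since the $d$-homogeneous assumption means $p$ is a linear combination of such monomials. For each such monomial, $t_i \cdot \partial_{t_i} \bm{t}^\alpha = \alpha_i \bm{t}^\alpha$, so summing over $i$ gives $\sum_i t_i \partial_{t_i} \bm{t}^\alpha = |\alpha| \bm{t}^\alpha = d \bm{t}^\alpha$, and summing over monomials of $p$ gives the claim.

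There is no real obstacle here; the only subtlety worth flagging is that one should view the homogeneity identity as a polynomial identity (valid for all $\lambda$, not only for $\lambda \neq 0$), which legitimately allows us to differentiate in $\lambda$ and then specialize at $\lambda = 1$. Either proof occupies at most a few lines.
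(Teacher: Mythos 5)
Your proof is correct. The paper states this lemma as a classical fact and gives no proof of its own, so there is nothing to compare against: both of your arguments — differentiating the homogeneity identity $p(\lambda t_1,\ldots,\lambda t_n)=\lambda^d p(\bm{t})$ in $\lambda$ and specializing at $\lambda=1$, and the monomial-by-monomial verification using $\sum_i t_i\,\partial_{t_i}\bm{t}^\alpha = |\alpha|\,\bm{t}^\alpha$ — are complete and standard, and your remark about treating the homogeneity relation as a polynomial identity in $\lambda$ is the right way to justify the differentiation step.
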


\begin{lemma}[Chain rule] \label{lem:chain-rule}
    If $f \in \R[t_1,\ldots,t_n]$ is a polynomial, $L: \R^m \to \R^n$ is a linear map, and $\bm{v} \in \R^m$ then
    \[
        \nabla_{\bm{v}}[f(L(\bm{t}))] = [\nabla_{L(\bm{v})}f](L(\bm{t})).
    \]
\end{lemma}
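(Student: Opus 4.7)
The plan is to unwind both sides of the claimed identity using the standard multivariate chain rule for polynomials, which holds because everything in sight is polynomial (so there are no convergence or regularity subtleties).

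First I would write $L$ as a matrix: since $L:\R^m\to\R^n$ is linear, there exists $A\in\R^{n\times m}$ with $L(\bm{t})=A\bm{t}$, i.e.\ $L(\bm{t})_i=\sum_{j=1}^m A_{ij}t_j$. Then by the ordinary multivariate chain rule applied to the composition of the polynomial $f$ with the affine (in fact linear) map $L$, for each coordinate $j\in[m]$ we have
\[
\partial_{t_j}\bigl[f(L(\bm{t}))\bigr] \;=\; \sum_{i=1}^n (\partial_{t_i}f)(L(\bm{t}))\cdot \partial_{t_j}L(\bm{t})_i \;=\; \sum_{i=1}^n (\partial_{t_i}f)(L(\bm{t}))\cdot A_{ij}.
\]

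Second, I would multiply by $v_j$ and sum over $j$, which is the definition of $\nabla_{\bm{v}}$:
\[
\nabla_{\bm{v}}\bigl[f(L(\bm{t}))\bigr] \;=\; \sum_{j=1}^m v_j\,\partial_{t_j}\bigl[f(L(\bm{t}))\bigr] \;=\; \sum_{i=1}^n (\partial_{t_i}f)(L(\bm{t}))\,\sum_{j=1}^m A_{ij}v_j.
\]
The inner sum is exactly $(A\bm{v})_i=L(\bm{v})_i$, so swapping the order of summation gives
\[
\nabla_{\bm{v}}\bigl[f(L(\bm{t}))\bigr] \;=\; \sum_{i=1}^n L(\bm{v})_i\,(\partial_{t_i}f)(L(\bm{t})) \;=\; \bigl[\nabla_{L(\bm{v})}f\bigr](L(\bm{t})),
\]
which is the desired equality.

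There is no real obstacle: the only thing to be careful about is distinguishing where derivatives are being taken (in $\bm{t}$, after substitution) from where they are evaluated (at $L(\bm{t})$), and making sure the linearity of $L$ is what pushes $A\bm{v}=L(\bm{v})$ outside as a directional-derivative coefficient. If one preferred, an alternative single-line proof would be to verify the identity on monomials $f=t_{i_1}\cdots t_{i_k}$ (where it reduces to the Leibniz rule together with $\nabla_{\bm{v}}[L(\bm{t})_i]=L(\bm{v})_i$) and then extend by linearity in $f$.
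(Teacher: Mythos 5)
Your proof is correct and follows essentially the same route as the paper: write $L$ as a matrix, apply the ordinary multivariate chain rule coordinatewise, then multiply by $v_j$ and sum, using linearity to identify the resulting coefficients with $L(\bm{v})$. No gaps.
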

\begin{proof}
    Considering $L$ as an $n \times m$ matrix, let $\ell_{ij}$ be the entries of $L$. By the chain rule for multivariate functions, we have
    \[
        \partial_{t_j}[f(L(\bm{t}))] = \partial_{t_j}[f(L(\bm{t})_1, L(\bm{t})_2, \ldots, L(\bm{t})_n)] = \sum_{i=1}^n \ell_{ij} [\partial_{t_i} f](L(\bm{t})) = [\nabla_{L(\bm{e}_j)} f](L(\bm{t})).
    \]
    Therefore,
    \[
        \nabla_{\bm{v}}[f(L(\bm{t}))] = \sum_{j=1}^m v_j \partial_{t_j}[f(L(\bm{t}))] = \sum_{j=1}^m v_j [\nabla_{L(\bm{e}_j)} f](L(\bm{t})) = [\nabla_{L(\bm{v})} f](L(\bm{t}))
    \]
    as desired.
\end{proof}

\begin{lemma}[Lem.~3.2 of \cite{BL21}] \label{lem:Euler-expression}
    Suppose $p \in \R[t_1,\ldots,t_n]$ is a $d$-homogeneous polynomial and
    \[
        d \cdot p(\bm{t}) = \sum_{i=1}^n t_i \cdot Q_i(\bm{t})
    \]
    for some $(d-1)$-homogeneous polynomials $Q_1,\ldots,Q_n$. If $\partial_{t_i} Q_j(\bm{t}) = \partial_{t_j} Q_i(\bm{t})$ for all $i,j$, then $Q_i = \partial_{t_i} p(\bm{t})$ for all $i$.
\end{lemma}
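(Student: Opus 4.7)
The plan is to reduce to showing that a certain difference vanishes, via two applications of Euler's formula combined with the symmetry hypothesis.

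First I would define $R_i(\bm{t}) := Q_i(\bm{t}) - \partial_{t_i} p(\bm{t})$ for each $i$. By Euler's formula (\cref{lem:Eulers-formula}), we have $d \cdot p(\bm{t}) = \sum_i t_i \partial_{t_i} p(\bm{t})$, and comparing this with the hypothesis $d \cdot p(\bm{t}) = \sum_i t_i Q_i(\bm{t})$ yields the single identity
\[
    \sum_{i=1}^n t_i R_i(\bm{t}) = 0.
\]
Note that each $R_i$ is $(d-1)$-homogeneous, and crucially it satisfies the same symmetry condition as the $Q_i$: using the assumed $\partial_{t_i} Q_j = \partial_{t_j} Q_i$ together with the commutativity of mixed partials $\partial_{t_i}\partial_{t_j} p = \partial_{t_j}\partial_{t_i} p$, one immediately gets $\partial_{t_i} R_j = \partial_{t_j} R_i$ for all $i,j$.

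Next I would differentiate the identity $\sum_j t_j R_j(\bm{t}) = 0$ with respect to $t_i$, yielding
\[
    R_i(\bm{t}) + \sum_{j=1}^n t_j \, \partial_{t_i} R_j(\bm{t}) = 0.
\]
Applying the symmetry $\partial_{t_i} R_j = \partial_{t_j} R_i$, the sum becomes $\sum_j t_j \partial_{t_j} R_i(\bm{t})$, which by Euler's formula applied to the $(d-1)$-homogeneous polynomial $R_i$ equals $(d-1) R_i(\bm{t})$. Therefore
\[
    R_i(\bm{t}) + (d-1) R_i(\bm{t}) = d \cdot R_i(\bm{t}) = 0,
\]
so $R_i = 0$, i.e., $Q_i = \partial_{t_i} p$ as desired.

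There is no real obstacle here; the whole argument is essentially two uses of Euler's formula joined by the symmetry hypothesis, which is precisely what is needed to swap the roles of $\partial_{t_i}$ and $\partial_{t_j}$ when differentiating the vanishing linear combination.
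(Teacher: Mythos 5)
Your proof is correct: defining $R_i = Q_i - \partial_{t_i} p$, differentiating the vanishing identity $\sum_j t_j R_j = 0$, and applying the symmetry hypothesis together with Euler's formula (\cref{lem:Eulers-formula}) for the $(d-1)$-homogeneous $R_i$ gives $d\cdot R_i = 0$, hence $R_i = 0$ since $d \geq 1$. The paper does not reproduce a proof but defers to Lem.~3.2 of \cite{BL21}, and your argument is essentially that standard one, so there is nothing further to flag.
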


\subsection{Log-concave Polynomials}

\begin{definition}[$\cC$-Lorentzian polynomials; Defn. 2.1 of \cite{BL23}] \label{def:C-Lorentzian}
    Let $p \in \R[t_1,\ldots,t_n]$ be a $d$-homogeneous polynomial for $d \geq 2$, and let $\cC \subset \R^n$ be a non-empty open convex cone. We say that $p$ is \textbf{$\cC$-Lorentzian} if
    \begin{enumerate}
        \item[(P)] $\nabla_{\bm{v}_1} \cdots \nabla_{\bm{v}_d} p(\bm{t}) > 0$ for all $\bm{v}_1,\ldots,\bm{v}_d \in \cC$, and
        \item[(Q)] the Hessian of $\nabla_{\bm{v}_1} \cdots \nabla_{\bm{v}_{d-2}} p(\bm{t})$ has at most one positive eigenvalue for all $\bm{v}_1,\ldots,\bm{v}_{d-2} \in \cC$.
    \end{enumerate}
    Note that (P) implies any Hessian in (Q) also has at least one positive eigenvalue.
    
    Additionally, any linear form for which (P) holds is defined to be $\cC$-Lorentzian of degree $d=1$, and any positive constant is defined to be $\cC$-Lorentzian of degree $d=0$. 
\end{definition}



The following theorem is the main tool that we use to prove that the polynomials that we construct by our $\bm{\pi}$-maps are $\cC$-Lorentzian. 
\begin{theorem}[{\cite[Thm. 2.9]{BL23}}] \label{thm:Lor-from-derivs}
    Let $p \in \R[t_1,\ldots,t_n]$ be a $d$-homogeneous polynomial for $d \geq 3$, and let $\cC$ be a non-empty open convex cone in $\R_{>0}^n$. If
    \begin{enumerate}
        \item $\nabla_{\bm{v}_1} \cdots \nabla_{\bm{v}_d} p > 0$ for all $\bm{v}_1,\ldots,\bm{v}_d \in \cC$, and
        \item the Hessian of $\nabla_{\bm{v}_1} \cdots \nabla_{\bm{v}_{d-2}} p$ is irreducible and its off-diagonal entries are non-negative for all $\bm{v}_1,\ldots,\bm{v}_{d-2} \in \cC$, and
        \item $\partial_{t_i} p$ is $\cC$-Lorentzian for all $i$,
    \end{enumerate}
    then $p$ is $\cC$-Lorentzian.
\end{theorem}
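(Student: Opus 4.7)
The plan is to verify properties (P) and (Q) of \cref{def:C-Lorentzian} for $p$. Condition (P) is exactly hypothesis~(1) of the theorem, so the substance is (Q): for all $\bm{v}_1, \ldots, \bm{v}_{d-2} \in \cC$, the Hessian of $\nabla_{\bm{v}_1}\cdots\nabla_{\bm{v}_{d-2}} p$ has at most one positive eigenvalue. The first step is to reduce to a 3-homogeneous problem by setting $r := \nabla_{\bm{v}_2}\cdots\nabla_{\bm{v}_{d-2}} p$ (for $d=3$ this is simply $r = p$); then $r$ is 3-homogeneous, its Hessian $H_r(\bm{t})$ is linear in $\bm{t}$, and a direct computation using \cref{lem:chain-rule} shows that the Hessian of $\nabla_{\bm{v}_1}\cdots\nabla_{\bm{v}_{d-2}} p$ equals $H_r(\bm{v}_1)$.

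The hypotheses then yield three properties of $r$: (a) each $H_r(\bm{e}_j)$ has at most one positive eigenvalue, since it coincides with the Hessian of $\nabla_{\bm{v}_2}\cdots\nabla_{\bm{v}_{d-2}}\partial_{t_j} p$ and (Q) for the $\cC$-Lorentzian polynomial $\partial_{t_j} p$ (hypothesis~(3)) gives this; (b) $H_r(\bm{v})$ has non-negative off-diagonal entries and is irreducible for every $\bm{v} \in \cC$, by hypothesis~(2); and (c) $\bm{w}_1^T H_r(\bm{v}) \bm{w}_2 = \nabla_{\bm{w}_1}\nabla_{\bm{w}_2}\nabla_{\bm{v}}\nabla_{\bm{v}_2}\cdots\nabla_{\bm{v}_{d-2}} p > 0$ for all $\bm{w}_1, \bm{w}_2, \bm{v} \in \cC$, by hypothesis~(1). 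From (b) plus Perron--Frobenius applied to $H_r(\bm{v}_1) + cI$ for $c$ large enough that the matrix becomes non-negative entrywise, the top eigenvalue of $H_r(\bm{v}_1)$ is simple with a strictly positive Perron eigenvector; and by (c) with $\bm{w}_1 = \bm{w}_2 = \bm{v} = \bm{v}_1$, this top eigenvalue is strictly positive. It therefore suffices to prove $\lambda_2(H_r(\bm{v}_1)) \leq 0$, which by \cref{lem:oneposeigval} is equivalent to the reverse Cauchy--Schwarz inequality
\[
    \bm{x}^T H_r(\bm{v}_1) \bm{x} \cdot \bm{v}_1^T H_r(\bm{v}_1) \bm{v}_1 \leq (\bm{x}^T H_r(\bm{v}_1) \bm{v}_1)^2 \qquad \text{for every } \bm{x} \in \R^n.
\]

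This reverse Cauchy--Schwarz is the main obstacle. Decomposing $H_r(\bm{v}_1) = \sum_j (\bm{v}_1)_j H_r(\bm{e}_j)$ and applying \cref{lem:oneposeigval} summand by summand via (a) and (c) fails to close the argument: Titu's form of Cauchy--Schwarz sends the resulting bounds in the wrong direction. My plan is instead to exploit the linearity of the map $\bm{v} \mapsto H_r(\bm{v})$ via a deformation argument inside the convex cone $\cC$. As $\bm{v}$ varies continuously over $\cC$, property (b) keeps the top eigenvalue of $H_r(\bm{v})$ simple and strictly separated from the rest, so $\lambda_2(H_r(\bm{v}))$ is a continuous function on $\cC$; I would combine property (a) (which controls $\lambda_2$ in the ``boundary'' directions $\bm{e}_j$) with the strict positivity in (c) (which prevents the singular locus $\det H_r(\bm{v}) = 0$ from separating $\cC$ in a harmful way) to conclude that $\lambda_2(H_r(\bm{v}))$ cannot cross $0$ on $\cC$, and hence remains non-positive throughout. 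Equivalently, this step amounts to showing that the cubic $r$ is hyperbolic with respect to $\bm{v}_1$ and then invoking G{\aa}rding's propagation-of-hyperbolicity theorem on the connected cone $\cC$. This continuity/propagation step is where all the real subtlety of the theorem lies.
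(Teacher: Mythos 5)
First, note that the paper does not actually prove \cref{thm:Lor-from-derivs}; it is imported verbatim from \cite[Thm.~2.9]{BL23}, so the comparison is against that source rather than an in-paper argument. Your reduction is the standard one and is carried out correctly: fixing $\bm{v}_2,\ldots,\bm{v}_{d-2}$, passing to the cubic $r=\nabla_{\bm{v}_2}\cdots\nabla_{\bm{v}_{d-2}}p$, identifying the relevant Hessian with $H_r(\bm{v}_1)$, and extracting properties (a), (b), (c) from hypotheses (3), (2), (1) respectively are all fine, as is the Perron--Frobenius observation that the top eigenvalue of $H_r(\bm{v}_1)$ is simple and positive. (Minor point: \cref{lem:oneposeigval} gives only one direction of the ``equivalence'' you invoke; the converse is easy but should be argued.)

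The genuine gap is the last step, which is the entire content of the theorem, and the mechanisms you propose for it do not work as described. (i) The anchor points for your continuity argument, the directions $\bm{e}_j$ from (a), need not lie in $\cC$ or even in $\overline{\cC}$: $\cC$ is an arbitrary open convex cone inside $\R_{>0}^n$, while $\bm{e}_j$ sits on the boundary of the orthant, and at such degenerate directions irreducibility and the strict positivity (c) are lost; so ``$\lambda_2\leq 0$ somewhere on $\cC$'' is never established, and continuity of $\lambda_2$ alone cannot rule out a crossing. (ii) The claim that (c) prevents the singular locus $\det H_r(\bm{v})=0$ from meeting $\cC$ is false: take $r=(t_1+\cdots+t_n)^3$, which satisfies all three hypotheses on the positive orthant, yet $H_r(\bm{v})$ is a positive multiple of the all-ones matrix and hence singular for \emph{every} $\bm{v}\in\cC$. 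Zero eigenvalues of the ``lower'' part of the spectrum are fully compatible with the hypotheses (condition (Q) only bounds the number of positive eigenvalues), so any eigenvalue-crossing argument must confront eigenvalues sitting at $0$, typically via a perturbation (e.g.\ adding $\eps(\sum_i t_i)^3$ or similar) followed by a limit. (iii) The appeal to G{\aa}rding is circular: establishing that the cubic $r$ is hyperbolic with respect to $\bm{v}_1$ is essentially the inequality $\lambda_2(H_r(\bm{v}_1))\leq 0$ you are trying to prove, and moreover $\cC$-Lorentzian cubics need not be hyperbolic in the degenerate cases just described, so propagation-of-hyperbolicity cannot be invoked off the shelf. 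What is missing is precisely the kernel/nondegeneracy analysis that \cite{BL23} (following Br\"and\'en--Huh) performs: perturb so that the relevant Hessians are nondegenerate on the appropriate subspace, use the Perron eigenvector together with hypothesis (1) to show an eigenvalue cannot cross zero along a path in the connected cone, and then pass to the limit. Since you explicitly defer exactly this step, the proposal does not yet constitute a proof.
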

We remark that the original statement is for ``effective'' cones as defined in \cite{BL23}, but note that any cone contained in $\R_{>0}^n$ is automatically effective. We also note that if $p$ in the previous theorem does not depend on some variable $t_i$, then we can apply the theorem to $p$ restricted to all variables except $t_i$.


Once we prove our polynomial is $\cC$-Lorentzian, we will use the following theorem part (1) to prove that the path complex $(X,\mu)$ is a local spectral expander, and we will use part (2) to get  our log-concavity statements (see proof of \cref{thm:log-concavity-colored-complex}).

\begin{lemma} \label{lem:main-purpose-lemma}
    Let $p$ be a $d$-homogeneous polynomial on $\R^n$, and let $\cC \subset \R^n$ be a non-empty open convex cone. If $p$ is $\cC$-Lorentzian and $\bm{u},\bm{w},\bm{v}_1,\ldots,\bm{v}_{d-2} \in \overline{\cC}$ (the closure of $\cC$), then:
    \begin{enumerate}
        \item the Hessian of $\nabla_{\bm{v}_1} \cdots \nabla_{\bm{v}_{d-2}} p$ has at most one positive eigenvalue, and
        \item the coefficients of $f(x) = p(\bm{u} \cdot x + \bm{w})$ form an ultra log-concave sequence.
    \end{enumerate}
\end{lemma}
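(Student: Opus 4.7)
\smallskip\noindent\textbf{Proof plan.} The plan is to handle (1) by a direct continuity argument and then to reduce (2) to a quadratic (``reverse Cauchy-Schwarz'') inequality that follows from (1). For (1), observe that ``having at most one positive eigenvalue'' is the closed condition $\lambda_2 \leq 0$ on symmetric matrices, since eigenvalues depend continuously on matrix entries. The entries of the Hessian of $\nabla_{\bm{v}_1}\cdots\nabla_{\bm{v}_{d-2}} p$ depend polynomially on $(\bm{v}_1,\ldots,\bm{v}_{d-2})$, so the set of tuples in $\R^{n(d-2)}$ for which the Hessian satisfies this eigenvalue condition is closed. By property (Q) of $\cC$-Lorentzian polynomials this set contains $\cC^{d-2}$, whose closure equals $\overline{\cC}^{\,d-2}$, proving (1).

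For (2), since $p$ is $d$-homogeneous, $q(x,y) := p(x\bm{u}+y\bm{w})$ is $d$-homogeneous in $(x,y)$; using \cref{lem:chain-rule} and the fact that the $d$-th directional derivative of a $d$-homogeneous polynomial is a constant, one obtains
\[
q(x,y) \;=\; \sum_{k=0}^d \frac{x^{d-k} y^k}{(d-k)!\,k!}\; \nabla_{\bm{u}}^{d-k}\nabla_{\bm{w}}^{k} p.
\]
Setting $y=1$ and letting $c_k := \tfrac{1}{(d-k)!\,k!} \nabla_{\bm{u}}^{d-k}\nabla_{\bm{w}}^{k} p$, ultra log-concavity of the coefficient sequence of $f(x)=q(x,1)$ reduces, after clearing common factors of $d!$, to the single-index inequalities
\[
\bigl(\nabla_{\bm{u}}^{d-k}\nabla_{\bm{w}}^k p\bigr)^2 \;\geq\; \bigl(\nabla_{\bm{u}}^{d-k+1}\nabla_{\bm{w}}^{k-1} p\bigr)\bigl(\nabla_{\bm{u}}^{d-k-1}\nabla_{\bm{w}}^{k+1} p\bigr) \qquad (1\leq k\leq d-1),
\]
together with the nonnegativity $c_k \geq 0$ for all $k$; the latter follows from property (P) of $\cC$-Lorentzian combined with the same continuity argument as in (1).

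To prove the key quadratic inequality, set $h := \nabla_{\bm{u}}^{d-k-1}\nabla_{\bm{w}}^{k-1} p$, a $2$-homogeneous polynomial with symmetric Hessian $H$; the inequality rewrites as $(\bm{u}^{\!\top} H \bm{w})^2 \geq (\bm{u}^{\!\top} H \bm{u})(\bm{w}^{\!\top} H \bm{w})$. Part (1) gives that $H$ has at most one positive eigenvalue (as $h$ is a $(d-2)$-fold derivative of $p$ along vectors in $\overline{\cC}$), and by the nonnegativity established above, both diagonal entries of the $2\times 2$ matrix $M := B^{\!\top} H B$ with $B=[\bm{u}\ \bm{w}]$ are nonnegative. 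Writing $H = \bm{v}\bm{v}^{\!\top} - Q$ with $Q \succeq 0$ shows that $M$ also has at most one positive eigenvalue, forcing $\det M \leq 0$, which is the claim. Equivalently, when $\bm{u}^{\!\top} H \bm{u} > 0$ this is precisely \cref{lem:oneposeigval} with $v=\bm{u},x=\bm{w}$. The main point needing care is the boundary case $\bm{u}^{\!\top} H \bm{u} = 0$, which is not covered directly by that lemma; the $2\times 2$ formulation above handles it uniformly, or alternatively one can perturb $\bm{u} \leadsto \bm{u}+\epsilon \bm{v}_0$ for some $\bm{v}_0 \in \cC$ and pass to the limit $\epsilon \to 0$ using polynomial continuity of both sides.
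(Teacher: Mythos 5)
Your part (1) is essentially the paper's own argument: the paper approximates $(\bm{v}_1,\ldots,\bm{v}_{d-2})$ by tuples in $\cC$ and passes the one-positive-eigenvalue property to the limit of the Hessians, which is the same continuity/closedness observation you make. For part (2), however, the paper simply cites Remark 2.4 of \cite{BL23}, whereas you give a self-contained derivation: expanding $p(x\bm{u}+\bm{w})$ via the (correct) identity $a_k=\tfrac{1}{k!(d-k)!}\nabla_{\bm{u}}^{d-k}\nabla_{\bm{w}}^{k}p$, noting that ultra log-concavity then reduces exactly to $N_k^2\ge N_{k-1}N_{k+1}$ with $N_k=\nabla_{\bm{u}}^{d-k}\nabla_{\bm{w}}^{k}p$, and obtaining this from part (1) applied to the quadratic $h=\nabla_{\bm{u}}^{d-k-1}\nabla_{\bm{w}}^{k-1}p$ together with nonnegativity from (P) and continuity. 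This buys a proof that does not rely on the external remark and makes transparent that (2) is really a consequence of (1) plus positivity; the paper's route is shorter but opaque. Your argument is correct; the only step I would tighten is the final $2\times2$ computation: ``at most one positive eigenvalue'' alone does not force $\det M\le 0$ (e.g.\ $M=-I$), so you should say explicitly that the nonnegative diagonal gives $\operatorname{tr} M\ge 0$, hence $\lambda_1(M)\ge 0\ge\lambda_2(M)$ and $\det M=\lambda_1\lambda_2\le 0$; you do record the diagonal nonnegativity just before, so this is a matter of phrasing rather than a gap, and your fallback via \cref{lem:oneposeigval} plus a perturbation $\bm{u}\leadsto\bm{u}+\epsilon\bm{v}_0$ also works.
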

\begin{proof}
    For (1), let $(\bm{v}_1(i),\ldots,\bm{v}_{d-2}(i))_{i=1}^\infty$ be a sequence of vectors in $\cC$ which approach $(\bm{v}_1,\ldots,\bm{v}_{d-2})$. Thus the Hessian of $\nabla_{\bm{v}_1(i)} \cdots \nabla_{\bm{v}_{d-2}(i)} p$ has at most one positive eigenvalue for all $i \geq 1$. Therefore the limit of these Hessian matrices also has at most one positive eigenvalue.

    The statement of (2) follows from Remark 2.4 of \cite{BL23}.
\end{proof}

\def\cM{{\cal M}}
\subsection{Markov Chains and Mixing 
Time}
\label{sec:markovmixing}



Recall that for a Markov chain with transition probability matrix $P^\vee_d$ and stationary distribution $\mu$ 
the {\bf total variation mixing time} started at a state $\tau$ is defined as follows:
	\[ t_{\tau}(\epsilon)=\min\left\{t\in \Z_{\geqslant 0}: \norm{{P^\vee_d}^t(\tau,\cdot)-\mu}_1 \leqslant \epsilon\right\},\]

For a function $f:X(d)\to\R$ let
\begin{equation}\label{eq:var} \Var_\mu(f) :=\E_{\sigma\sim \mu}(f(\sigma)-\E_\mu f)^2 = \langle f,f\rangle_\mu-\langle f,\b1\rangle_\mu^2. 
\end{equation}
be the variance of $f$ w.r.t. $\mu$.

The Poincar\'e constant/spectral gap of $P^\vee_d$ is defined as,
$$ \lambda(P^\vee_d):=\inf_{f:X(d)\to\R} \frac{\cE_{P^\vee_d}(f,f)}{\Var_\mu(f)}.$$
The following is classical:
\begin{theorem}
    For any $\tau\in X(d)$,
    $$ t_\tau(\eps) \leq \frac{\log(\eps^{-1}\cdot \frac{1}{\mu(\tau)})}{\lambda(P^\vee_d)}.$$
\end{theorem}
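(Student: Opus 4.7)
My plan is to follow the standard spectral argument relating $L^1$ mixing of a reversible Markov chain to its Poincar\'e constant. First I would observe that $P^\vee_d$ is reversible with respect to $\mu$: this is a direct verification from the definition of the down-up walk, since swapping $F_i$ to $F'_i$ at position $i$ has symmetric transition weights proportional to $\mu(\sigma)\mu(\sigma')$ against $\mu(\sigma)$ times the uniform choice of $i$. Consequently, $P^\vee_d$ is self-adjoint with respect to $\langle\cdot,\cdot\rangle_\mu$, its top eigenvalue is $1$ (realized by $\b1$), and the Poincar\'e constant satisfies $\lambda(P^\vee_d) = 1 - \lambda_2(P^\vee_d)$. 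Writing $\lambda := \lambda(P^\vee_d)$, this gives the $L^2$-contraction $\|P^\vee_d(f - \E_\mu f)\|_{L^2(\mu)} \leq (1-\lambda)\|f - \E_\mu f\|_{L^2(\mu)}$ for every $f:X(d)\to\R$.

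Next, I would track the density $h_t(\sigma) := {P^\vee_d}^t(\tau,\sigma)/\mu(\sigma)$ of the chain started at $\tau$ and run for $t$ steps. By reversibility one has $h_{t+1} = P^\vee_d h_t$ and $\E_\mu h_t = 1$ for every $t \geq 0$. Iterating the contraction bound above yields
\[
    \Var_\mu(h_t) \leq (1-\lambda)^{2t}\,\Var_\mu(h_0) = (1-\lambda)^{2t}\left(\frac{1}{\mu(\tau)} - 1\right) \leq \frac{(1-\lambda)^{2t}}{\mu(\tau)},
\]
where I used that $h_0(\sigma) = \mathbbm{1}[\sigma=\tau]/\mu(\tau)$, so $\|h_0\|_{L^2(\mu)}^2 = 1/\mu(\tau)$ and $\Var_\mu(h_0) = \|h_0\|^2 - 1$.

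Finally, Cauchy--Schwarz converts the $L^2$ bound to an $L^1$ bound:
\[
    \bigl\|{P^\vee_d}^t(\tau,\cdot) - \mu\bigr\|_1 \;=\; \E_{\sigma\sim\mu}|h_t(\sigma)-1| \;\leq\; \sqrt{\Var_\mu(h_t)} \;\leq\; \frac{(1-\lambda)^t}{\sqrt{\mu(\tau)}}.
\]
Setting the right-hand side to $\eps$ and using the elementary inequality $\log\bigl(\tfrac{1}{1-\lambda}\bigr)\geq \lambda$ yields the bound $t \leq \lambda^{-1}\log\bigl(\eps^{-1}/\sqrt{\mu(\tau)}\bigr)$, which is dominated by the stated bound $\lambda^{-1}\log\bigl(\eps^{-1}/\mu(\tau)\bigr)$ since $\mu(\tau)\leq 1$. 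There is no real obstacle here; the only point worth checking is reversibility of $P^\vee_d$ (a standard one-line computation), after which everything reduces to spectral theory of self-adjoint contractions on $L^2(\mu)$.
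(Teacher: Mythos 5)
The paper itself does not prove this statement (it is quoted as classical), so the comparison is with the standard argument; your proposal follows exactly that route (reversibility, $L^2$-contraction of the density $h_t$, Cauchy--Schwarz to pass to the $\ell_1$ norm), and all the bookkeeping steps ($h_{t+1}=P^\vee_d h_t$, $\Var_\mu(h_0)=\tfrac{1}{\mu(\tau)}-1$, $\log\tfrac{1}{1-\lambda}\geq\lambda$) are fine.

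However, there is one genuine gap: the inequality $\|P^\vee_d(f-\E_\mu f)\|_{L^2(\mu)}\leq(1-\lambda)\|f-\E_\mu f\|_{L^2(\mu)}$ does \emph{not} follow from the Poincar\'e constant alone. The Poincar\'e constant $\lambda=\inf_f \cE(f,f)/\Var_\mu(f)$ only controls the top of the spectrum, i.e.\ $\lambda_2(P^\vee_d)\leq 1-\lambda$; the operator norm of $P^\vee_d$ on mean-zero functions is $\max\{\lambda_2,|\lambda_{\min}|\}$, and for a general reversible chain $\lambda_{\min}$ can be close to $-1$ (e.g.\ the simple random walk on a connected bipartite graph has positive Poincar\'e constant but eigenvalue $-1$ and never mixes), in which case your iterated contraction bound is false. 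The step is rescued here because $P^\vee_d$ is positive semidefinite: for a partite complex it is the average $\frac1d\sum_{i}\pi_i$ of the conditional-expectation operators $\pi_i f(\sigma)=\E_\mu[f\mid \sigma_{-i}]$, each of which is an orthogonal projection in $L^2(\mu)$ (equivalently, $P^\vee_d=D U$ with the down and up operators adjoint to each other), so all eigenvalues lie in $[0,1]$ and the norm on mean-zero functions is indeed $1-\lambda$. Adding this one observation closes the gap; everything else in your argument is correct, and the factor you lose by bounding $\log(\eps^{-1}/\sqrt{\mu(\tau)})$ by $\log(\eps^{-1}/\mu(\tau))$ is harmless.
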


The following theorem is re-proved multiple times (see e.g., \cite{CLV21}). Here, we include the proof for completeness.
\begin{theorem}\label{thm:eigPemptytoVar}
    Suppose that for a $d$-dimensional path complex $(X,\mu)$ we have $\lambda_2(P_\varnothing)\leq \alpha$ for some $\alpha\geq 0$. Then, for any function $f:X(d)\to\R$, we have
        \begin{equation}\label{eq:totalvargoal1} \Var_\mu(f) \leq \frac{d}{d-1}\cdot \frac{1}{1-\alpha} \cdot \E_{i\sim [d]} \E_{F_i\sim\mu^i} \Var_{\mu_{F_i}}(f) \end{equation}
\end{theorem}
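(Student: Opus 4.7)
My approach is to reduce the claim to a Rayleigh-quotient bound for an averaging operator between $L^2(X(d),\mu)$ and $L^2(X(1),\mu_1)$. I will introduce the ``down'' operator $D: L^2(X(d),\mu) \to L^2(X(1),\mu_1)$ defined by $(Df)(F) := \E_{\sigma \sim \mu}[f(\sigma) \mid F \in \sigma]$. Using the partite structure together with the fact that $\mu_1(F) = \mu^i(F)/d$ for $F \in T_i$ (which holds because $w(F) = (d-1)\mu^i(F)$), a direct adjointness calculation shows that $U := D^*$ is the ``up'' operator $(Uh)(\sigma) = \tfrac{1}{d}\sum_{i=1}^d h(\sigma_i)$. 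The self-adjoint positive semidefinite operator $UD = D^*D$ on $L^2(X(d),\mu)$ will govern exactly the quantity $\|D\tilde f\|_{\mu_1}^2$ for $\tilde f := f - \E_\mu f$.

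The main step is the identity
\[
    DU \;=\; \tfrac{1}{d}\, I + \tfrac{d-1}{d}\, P_\varnothing
\]
on $L^2(X(1),\mu_1)$. For $F \in T_{i_0}$ and $h : X(1) \to \R$, I expand $(DUh)(F) = \E_{\sigma \ni F}\bigl[\tfrac{1}{d}\sum_i h(\sigma_i)\bigr]$: the $i = i_0$ summand contributes $\tfrac{1}{d} h(F)$, while for each $j \neq i_0$ one has $\P[F_j = G \mid F_{i_0} = F] = (d-1)P_\varnothing(F,G)$ by definition of the $1$-skeleton walk, yielding the $\tfrac{d-1}{d} P_\varnothing$ term. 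Since $I$ and $P_\varnothing$ commute and $\lambda_2(P_\varnothing) \leq \alpha$, we get $\lambda_2(DU) \leq \tfrac{1+(d-1)\alpha}{d}$. Because $UD$ shares its nonzero spectrum with $DU$ and has top eigenvalue $1$ on $\bone$ (as $UD\bone = \bone$), the Rayleigh quotient on $\bone^\perp$ gives
\[
    \|D\tilde f\|_{\mu_1}^2 \;=\; \langle \tilde f,\, UD\tilde f\rangle_\mu \;\leq\; \frac{1+(d-1)\alpha}{d}\,\Var_\mu(f).
\]

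To conclude, I identify both sides with quantities in the statement. Writing $g_i(F) := \E_\sigma[f \mid F_i = F]$, a direct expansion of $\|D\tilde f\|_{\mu_1}^2$ using $\mu_1(F) = \mu^i(F)/d$ gives
\[
    \|D\tilde f\|_{\mu_1}^2 \;=\; \tfrac{1}{d}\sum_{i=1}^d \Var_{\mu^i}(g_i) \;=\; \E_{i \sim [d]} \Var_{\mu^i}(g_i),
\]
and the law of total variance applied coordinate-wise and averaged over $i$ gives $\E_i \Var_{\mu^i}(g_i) = \Var_\mu(f) - \E_i \E_{F_i \sim \mu^i} \Var_{\mu_{F_i}}(f)$. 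Substituting the inequality above and rearranging yields $\E_i \E_{F_i}\Var_{\mu_{F_i}}(f) \geq \tfrac{(d-1)(1-\alpha)}{d}\Var_\mu(f)$, which is exactly \cref{eq:totalvargoal1}. The only delicate piece is the computation of $DU$; the rest is routine Hilbert-space linear algebra. I also note that the path-complex (conditional independence) hypothesis is not actually used in this argument—the $d$-partite structure alone suffices.
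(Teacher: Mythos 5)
Your proposal is correct and takes essentially the same route as the paper's proof: the paper's stochastic operator $P=UR$ is exactly your up-down composition $UD$, its key identity $RU=\tfrac{1}{d}I+\tfrac{d-1}{d}P_\varnothing$ is exactly your computation of $DU$, and both arguments finish by combining this spectral bound with the law of total variance over the coordinates. Your closing remark is also consistent with the paper, whose proof of this theorem likewise uses only the $d$-partite structure and not the conditional independence property.
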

\begin{proof}
   For a function $f:X(d)\to\R$, let $f^1:[d]\times X(1)\to\R$ defined as
$$f^1(i,F) = \E_{\sigma\sim\mu}[f(\sigma) | \sigma_i=F].$$
We also let $\mu^1(i,F):=\frac1{d}\P[\sigma_i=F].$ Note that $\mu^1$ defines a probability distribution as $\sum_{i,F} \mu^1(i,F)=1$.
 \begin{fact}[Law of Total Variance]
    $\Var_\mu(f) = \Var_{\mu^1}(f^1) + \E_{i\sim[d]}\E_{F_i\sim\mu^i} \Var_{\mu_{F_i}}(f) $
 \end{fact}
 Having this, to prove the theorem it is enough to show that 
 \begin{equation}\label{eq:totalvargoal2}
\forall f:X(d)\to \R:\quad     \frac{(d-1)\alpha+1}{d}\cdot \Var_\mu(f) \geq \Var_{\mu^1}(f^1).
 \end{equation}
 In particular, this (together with the law of total variance)  implies that 
 $$\frac{(d-1)\alpha+1}{(d-1)(1-\alpha)}\cdot \E_{i\sim[d]}\E_{F_i\sim\mu^i} \Var_{\mu_{F_i}}(f) \geq \Var_{\mu^1}(f^1) $$
Adding $\E_{i\sim[d]}\E_{F_i\sim\mu^i}\Var_{\mu_{F_i}}(f)$ to both sides proves \eqref{eq:totalvargoal1}.

So, in the rest of the proof we show \eqref{eq:totalvargoal2}.

For any $i\in [d]$ and $F\in F_i$, define a function $\mu_{i,F}: X(d) \to\R$ where 
$$\mu_{i,F}(\sigma)=\begin{cases} \frac{\mu(\sigma)}{\P_\tau[\tau_i=F]} &\text{if } \sigma_i=F\\ 0 &\text{otherwise}\end{cases}.$$ 
Similarly, let $\b1_{i,F} = \I[\sigma_i=F]$.
Then, observe that
\begin{align*} \Var_{\mu^1}(f^1) &= \sum_{i,F} \mu^1(i,F) f^1(i,F)^2 - \langle f^1,\b1\rangle_{\mu^1}^2\\
&=	f^\top\left(\sum_{i,s} \mu^1(i,s) \mu_{i,s}\mu_{i,s}^\top \right)f - \langle f,\b1\rangle^2_{\mu}=:\langle Pf,f\rangle_\mu - \langle f,\bone\rangle_\mu^2,
\end{align*}
where in the second identity  we used that $$\langle f^1,\b1\rangle_{\mu^1}=\sum_{i,F}\frac1d \P[\sigma_i=F]\cdot \E_{\sigma\sim\mu}[f(\sigma)|\sigma_i=F]=\E_{\sigma\sim\mu]}[f(\sigma)]=\langle f,\b1\rangle_\mu.$$ 
Furthermore, the matrix  $P$ on the RHS is defined as $P:=\sum_{i,F} \frac1d \b1_{i,F}\mu_{i,F}^\top$. Note that $P$ is a stochastic matrix. 
Therefore, by \eqref{eq:var} we have
\begin{align*}
\max_f \frac{\Var_{\mu^1}(f^1)}{\Var(f)} = \max_f \frac{\langle Pf,f\rangle_\mu - \langle f,\b1\rangle_\mu^2}{\langle f,f\rangle_\mu - \langle f,\b1\rangle_\mu^2}	=\lambda_2(P)
\end{align*}
The second identity uses the Rayleigh quotient and that $P$ is stochastic and self-adjoint w.r.t. $\mu(.)$. (so its first eigenvalue is 1).
So, to prove \eqref{eq:totalvargoal2} it is enough to show that $\lambda_2(P)\leq \frac{(d-1)\alpha+1}{d}$.
The observation is that $P$ is a low-rank matrix. In particular, let $U$ be the matrix with columns $\frac1d \b1_{i,F}$ and $R$ be the matrix with rows $\mu_{i,F}$ (for $i\in [d]$). Then, $P=UR$. But, by \cref{lem:AB}, $\lambda_2(P)=\lambda_2(UR)=\max\{0,\lambda_2(RU)\}$.
First, notice $RU\in \R^{X(1)\times X(1)}$. In particular, for $i,j\in [d]$ and $F\in T_i,G\in T_j$, 
\begin{align*} RU(F,G) &= \frac1d \sum_\sigma \frac{\mu(\sigma)}{\P_{\sigma\sim\mu}[\sigma_i=F]} \I[\sigma_i=F,\sigma_j=G] \\
&= \frac{\P[\sigma_i=F,\sigma_j=G]}{d\cdot \P[\sigma_i=F]} = \frac{1}{d}\cdot \P_{\sigma\sim\mu}[\sigma_j=G | \sigma_i=F]
\end{align*}
But this is exactly the (off-diagonal) entries of the simple random walk matrix on the link of the empty set (up to a normalization). Note that $RU(F,F)=1/d$ for any $F\in X(1)$, and thus $RU=\frac{d-1}{d} P_\varnothing + \frac{1}{d} I$. Therefore, $$\lambda_2(P)=\lambda_2(UR)\leq \max\{0,\lambda_2(RU)\}\leq \max\left\{0,\frac{d-1}{d}\lambda_2(P_\varnothing)+\frac{1}{d}\right\}\leq \frac{(d-1)\alpha+1}{d}$$
as desired.
\end{proof}

We end this section by the following factorization of variance lemma for independent distribution (See \cite[Lemma 4.1]{Cap22}  for a proof).
\begin{lemma}[Factorization of Variance]\label{lem:varfactorization}Let $\nu_1, \nu_2$ be probability measures on (finite) state spaces $A_1,A_2$ respectively. Let
     $\mu=\nu_1 \times \nu_2$ be the  product distribution on $A_1 \otimes A_2$ defined as $\mu(a_1,a_2)=\nu_1(a_1)\cdot \nu_2(a_2)$ for $a_1\in A_1,a_2\in A_2$.
     For a function $f:A_1\times A_2$ and $a_1\in A_1$, let $f^{a_1}(.)=f(a_1,.)$ and $f^{a_2}=f(.,a_2)$. Then, for any function $f:A_1 \otimes A_2\to\R$ we have
    $$ \Var_\mu(f) \leq \E_{a_1\sim \nu_1}\Var(f^{a_1}) +  \E_{a_2\sim \nu_2}\Var(f^{a_2}).$$
\end{lemma}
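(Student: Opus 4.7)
The plan is to deduce this factorization of variance from the classical law of total variance combined with a single application of the Cauchy--Schwarz (or Jensen) inequality. Since $\mu = \nu_1 \times \nu_2$ is a product measure, conditioning on the first coordinate behaves nicely and one obtains an exact decomposition of $\Var_\mu(f)$.

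First I would write down the law of total variance in the product form: conditioning on $a_1 \sim \nu_1$ and using that, conditional on $a_1$, the distribution of $a_2$ is exactly $\nu_2$, gives the identity
\[
\Var_\mu(f) \;=\; \E_{a_1 \sim \nu_1}\bigl[\Var_{\nu_2}(f^{a_1})\bigr] \;+\; \Var_{\nu_1}\bigl(\E_{\nu_2}[f^{a_1}]\bigr).
\]
The first term on the right is already the desired $\E_{a_1 \sim \nu_1}\Var(f^{a_1})$, so it only remains to bound the second term by $\E_{a_2 \sim \nu_2}\Var(f^{a_2})$.

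For this, I would rewrite variance as an expected squared difference over an independent copy: letting $g(a_1) := \E_{\nu_2}[f(a_1,\cdot)]$, one has
\[
\Var_{\nu_1}(g) \;=\; \tfrac{1}{2}\,\E_{a_1, a_1' \sim \nu_1 \otimes \nu_1}\bigl[(g(a_1) - g(a_1'))^2\bigr] \;=\; \tfrac{1}{2}\,\E_{a_1, a_1'}\bigl[(\E_{a_2 \sim \nu_2}[f(a_1, a_2) - f(a_1', a_2)])^2\bigr].
\]
Applying Cauchy--Schwarz (equivalently, Jensen's inequality for $x \mapsto x^2$) to pull the square inside the inner expectation over $a_2$, and then swapping the order of expectations, gives
\[
\Var_{\nu_1}(g) \;\leq\; \tfrac{1}{2}\,\E_{a_2 \sim \nu_2}\,\E_{a_1, a_1' \sim \nu_1 \otimes \nu_1}\bigl[(f(a_1, a_2) - f(a_1', a_2))^2\bigr] \;=\; \E_{a_2 \sim \nu_2}\bigl[\Var_{\nu_1}(f^{a_2})\bigr].
\]
Adding this to the first term from the law of total variance yields the lemma.

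There is no real obstacle here; the only subtle point is choosing to condition on $a_1$ for the exact decomposition but to bound the residual $\Var_{\nu_1}(\E_{\nu_2} f)$ by the ``symmetric'' quantity involving the other coordinate, which is where product structure is used (to identify the conditional on $a_2$ as $\nu_1$). The whole argument is two lines once the right form of variance (as an expected squared difference) is invoked.
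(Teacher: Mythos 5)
Your proof is correct. Note that the paper does not actually prove this lemma; it simply cites \cite[Lemma 4.1]{Cap22}, so there is no in-paper argument to compare against. Your route — the exact law-of-total-variance decomposition $\Var_\mu(f) = \E_{a_1\sim\nu_1}\Var_{\nu_2}(f^{a_1}) + \Var_{\nu_1}(\E_{\nu_2}f)$ (valid because the conditional law of $a_2$ given $a_1$ under the product measure is $\nu_2$), followed by bounding the second term via the symmetrized representation $\Var(g)=\tfrac12\E(g(a_1)-g(a_1'))^2$ and Jensen's inequality to get $\Var_{\nu_1}(\E_{\nu_2}f)\le \E_{a_2\sim\nu_2}\Var_{\nu_1}(f^{a_2})$ — is the standard two-coordinate tensorization argument, and every step (linearity inside the squared difference, pulling the square inside the expectation over $a_2$, exchanging finite expectations) is valid. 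This gives a clean, self-contained substitute for the external citation; nothing is missing.
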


\section{Local-to-Global Theorems for Path Partite Complexes}\label{sec:localtoglobalpath}
In this section we prove the following theorem.
\begin{theorem}\label{thm:localtoglobal}
    Given a $d$-dimensional $d$-partite path complex $(X,\mu)$ that is an $\alpha$-local spectral expander for some $\alpha\leq 1/2$. 
    Then, the spectral gap of the down-up walk is at least $\frac{4}{d(d+1)^2}$. 
\end{theorem}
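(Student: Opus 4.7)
The plan is a local-to-global spectral-gap argument with one crucial preparatory step. The hypothesis $\lambda_2(P_\tau)\le 1/2$ uniformly over links is too weak on its own: a naive iteration of \cref{thm:eigPemptytoVar} collects factors of $(1-1/2)^d$ and yields an exponentially small spectral gap. The idea is first to use the path-complex structure to upgrade the bound at each link $\tau$ of codimension $n$ to $\lambda_2(P_\tau)=O(1/n)$, after which the iteration telescopes to a polynomial bound.

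For the upgrade, fix $\tau$ with $\codim(\tau)=n\ge 2$ and label the types remaining in $X_\tau$ in their inherited $[d]$-order as $i_1<\dots<i_n$. Since $X_\tau$ is again a path complex (and inherits the $1/2$-local expansion from $X$), \cref{lem:dpartiteeigproduct} applied inside $X_\tau$ and chained across adjacent types gives $\lambda_2(P^\tau_{i_\ell,i_m})\le(1/2)^{m-\ell}$; the adjacent-type bound $\lambda_2(P^\tau_{i_p,i_{p+1}})\le 1/2$ itself comes from reading it as $\lambda_2(P_\sigma)$ for the codimension-$2$ face $\sigma=\tau\cup\{i_q : q\ne p,p+1\}$ of $X$. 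Setting $m_{i_\ell}(i_m):=(1/2)^{|m-\ell|}$, the hypothesis of \cref{lem:dpartiteeigenvalues} is verified via \cref{lem:eigenvaluebipartite}, and its conclusion yields
\[
\lambda_2(P_\tau)\le \max_\ell \frac{1}{n-1}\sum_{m\ne\ell}\bigl(\tfrac12\bigr)^{|m-\ell|}\le \frac{2}{n-1}.
\]
Combined with the uniform $1/2$-bound we obtain the codimension-dependent bound $\gamma(n):=\min\{1/2,\,2/(n-1)\}$.

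Now I would iterate \cref{thm:eigPemptytoVar} through every codimension from $d$ down to $2$, peeling one coordinate per step. At a link of codimension $n$ the prefactor contributed is $\frac{n/(n-1)}{1-\gamma(n)}$; the combinatorial pieces telescope to $\prod_{n=2}^d n/(n-1)=d$, while the nested conditional variances at the bottom collapse to $\E_{P^\vee_d}(f,f)$ because, for a uniformly random ordering $\pi$ of $[d]$, the last coordinate $\pi(d)$ is uniform in $[d]$. Thus $\Var_\mu(f)\le d\prod_{n=2}^d(1-\gamma(n))^{-1}\cdot\E_{P^\vee_d}(f,f)$, and with $\gamma(n)\le\min\{1/2,2/(n-1)\}$ the product telescopes to $\Theta(1/d^2)$, giving spectral gap $\Omega(1/d^3)$. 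The main obstacle is pinning down the precise constant $4/(d(d+1)^2)$: the crude substitution yields $\Omega(1/d^3)$ but with a slightly worse leading constant, and tightening it should use either the sharper expression $\gamma(n)\le \frac{1}{n-1}\bigl(2-2^{-\lceil(n-1)/2\rceil}-2^{-\lfloor(n-1)/2\rfloor}\bigr)$ coming directly from \cref{lem:dpartiteeigenvalues}, or a supplementary variance-factorization step via \cref{lem:varfactorization} applied to the conditional-independence decomposition $\mu_{F_i}=\mu^L_{F_i}\otimes\mu^R_{F_i}$ that holds in every path complex.
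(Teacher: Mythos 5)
There is a genuine gap, and it sits in your preparatory ``upgrade'' step. You claim the adjacent-type bound $\lambda_2(P^\tau_{i_p,i_{p+1}})\le 1/2$ by ``reading it as $\lambda_2(P_\sigma)$ for a codimension-$2$ face,'' but the bipartite graph between two adjacent types inside the link $X_\tau$ is \emph{not} a codimension-$2$ link: its edge weights are $\P_{\mu}[F,G\in\sigma'\mid \tau\subset\sigma']$, i.e.\ a convex combination of the adjacency matrices of many codimension-$2$ links $\sigma\supset\tau$, and second-eigenvalue bounds do not pass through such averaging (the degree normalizations and the rank-one corrections differ link by link). In fact the claim is false, not merely unjustified: take the flag complex of the Boolean lattice $2^{[d+1]}$ (an antichain poset, so a distributive lattice, hence by \cref{thm:distributivelattice} and \cref{thm:main} a $1/2$-local spectral expander). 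There the bipartite walk between levels $k$ and $k+1$ is the up-down walk on $k$-subsets, with $\lambda_2(P_{k,k+1})=\sqrt{\tfrac{k(d-k)}{(k+1)(d-k+1)}}\approx 1-O(1/d)$ for middle $k$ — matching the bound of \cref{thm:eig-bounds-main-v1}, which is tight here — so it is nowhere near $1/2$. Consequently the chained product $(1/2)^{m-\ell}$ from \cref{lem:dpartiteeigproduct} and the resulting upgrade $\lambda_2(P_\tau)\le 2/(\codim(\tau)-1)$ collapse; such an $O(1/\codim)$ self-improvement cannot hold, since for these complexes $\lambda_2(P_\varnothing)$ genuinely stays at the constant $1/2$ (this is precisely the point the paper stresses: one only has constant, not $O(1/d)$, spectral independence). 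Your subsequent telescoping iteration therefore has no valid input, and the exponential loss you correctly identified in the naive iteration is not actually avoided by your plan.

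The paper's proof avoids the exponential loss by a different mechanism, which you mention only as an optional ``constant-tightening'' device at the very end: after a single application of \cref{thm:eigPemptytoVar} with $\alpha\le 1/2$, the path-complex conditional independence makes each pinned measure $\mu_{F_i}$ a product of its left and right parts, so \cref{lem:varfactorization} splits the conditional variance into two lower-dimensional path complexes; one then proves by induction the weighted inequality $\Var_\mu(f)\le\sum_{i\in[d]} i(d+1-i)\,\E_{\mu^{-i}}\Var_{\mu_{F_1,\dots,F_{i-1},F_{i+1},\dots,F_d}}(f)$, where the weights $i(d+1-i)$ (not improved link eigenvalues) are what make the recursion close exactly at $\alpha=1/2$; the right-hand side averaged over $i$ is $d\cdot\cE_{P^\vee_d}(f,f)$, giving the gap $\frac{4}{d(d+1)^2}$. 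If you restructure your argument around that variance factorization as the core step rather than an afterthought, you recover the paper's proof; as written, the central step fails.
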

Before proving this theorem we use it to prove \cref{thm:mainalg}.
\begin{proof}[Proof of \cref{thm:mainalg}]
If the path complex $(X,\mu)$ is a $1/2$-top-link expander then by \cref{thm:main}, $X$ is a $1/2$-local spectral expander. Therefore, by the above theorem the Poincar\'e constant of the down-up walk is at least $\frac{4}{d(d+1)^2}$
 as desired.
\end{proof}
 Given a function $f:X(d)\to \R$, we prove the following inequality
    \begin{equation}\label{eq:localtoglobal1} \Var_\mu(f) \leq \sum_{i\in[d]} i\cdot (d+1-i)\E_{F_1,\dots,F_{i-1},F_{i+1},\dots,F_d\sim \mu^{-i}} \Var_{\mu_{F_1,\dots,F_{i-1},F_{i+1},\dots,F_d}}(f)
    \end{equation}
Here, we write $\mu^{-i}$ as a shorthand for $\mu^{1,2,\dots,i-1,i+1,\dots,d}$. On the other hand, it is not hard to see that indeed, 
$$ \E_{i\sim d}\E_{F_1,\dots,F_{i-1},F_{i+1},\dots,F_d\sim \mu^{-i}} \Var_{\mu_{F_1,\dots,F_{i-1},F_{i+1},\dots,F_d}}(f) = \cE_{P^\vee_d}(f,f).$$
Therefore, \eqref{eq:localtoglobal1}  implies 
$$ \Var_\mu(f) \leq d\cdot \max\{i\cdot (d+1-i): i\in [d]\}\cdot  \cE_{P^\vee_d}(f,f)\Rightarrow \lambda(P^\vee_d)\geq \frac{4}{d(d+1)^2}$$

In the rest of the section we prove 
 \eqref{eq:localtoglobal1}. We prove this by induction.
 Suppose the claim holds for any $\ell$-partite path complex for $\ell<d$.
 The base case is when $d=1$ which holds trivially.
 
 Given an arbitrary function $f:X(d)\to\R$.
First, by \cref{thm:eigPemptytoVar}, we have
\begin{equation}\label{eq:useofvar1} \Var_\mu(f) \leq \beta \cdot \E_{i\sim [d]} \E_{F_i\sim\mu^i} \Var_{\mu_{F_i}}(f) 
\end{equation}
where $\beta=\frac{d}{d-1}\cdot \frac{1}{1-\alpha}$.
Now, by the conditional independence property, $\mu_{F_i}$ is a product distribution. Namely, let $\nu_1=\mu_{F_i}^{1,\dots,{i-1}}$ and $\nu_2=\mu_{F_i}^{i+1,\dots,d}.$ Then, $\mu=\nu_1\otimes\nu_2$. Therefore, by \cref{lem:varfactorization},  we have
$$ \Var_{\mu_{F_i}}(f) \leq \E_{F_{i+1},\dots,F_d\sim \mu_{F_i}^{i+1,\dots,d}}\Var_{\mu_{F_{i+1},\dots,F_d}}(f) + \E_{F_{1},\dots,F_{i-1}\sim \mu_{F_i}^{1,\dots,i-1}}\Var_{\mu_{F_{1},\dots,F_{i-1}}}(f)$$
Each of terms on the RHS is a partite path complex. So, by IH we have,
\begin{align*}
  \Var_{\mu_{F_i}}(f)  \leq&  \E_{F_{i+1},\dots,F_d\sim \mu_{F_i}^{i+1,\dots,d}} \sum_{j\in [i-1]} j\cdot (i-j) \E_{F_1,\dots,F_{i-1}\sim \mu^{-j}_{F_i,\dots,F_d}} \Var_{\mu_{F_1,\dots,F_{j-1},F_{j+1},\dots,F_d}}(f)  +\\
  & \E_{F_{1},\dots,F_{i-1}\sim \mu_{F_i}^{1,\dots,i-1}} \sum_{j\in i+[d-i]} (j-i)\cdot (d+1-j)\E_{F_{i+1},\dots,F_{d}\sim \mu^{-j}_{F_1,\dots,F_i}} \Var_{\mu_{F_1,\dots,F_{j-1},F_{j+1},\dots,F_d}}(f)
\end{align*}
Summing this up over all $i$ and using \cref{eq:useofvar1} we have
\begin{align*}
    \Var_\mu(f) \leq &\beta \cdot\E_{i\sim [d]} \E_{F_i\sim \mu^i} \Var_{\mu_{F_i}}(f)\\
    &\leq \beta \cdot \E_{i\sim [d]} \Big( \sum_{j\in [i-1]}j\cdot (i-j) \E_{F_1,\dots,F_{j-1},F_{j+1},\dots,F_d\sim \mu^{-j}} \Var_{\mu_{F_1,\dots,F_{j-1},F_{j+1},\dots,F_d}}(f)\\
    &+\E_{j\in i+[d-i]} (j-i)\cdot (d+1-j)\E_{F_1,\dots,F_{j-1},F_{j+1},\dots,F_d} \Var_{\mu_{F_1,\dots,F_{j-1},F_{j+1},\dots,F_d}}(f)\Big)
\end{align*}
Now, fix some arbitrary $j\in [d]$. The coefficient of $\E_{F_1,\dots,F_{j-1},F_{j+1},\dots,F_d\sim \mu^{-j}} \Var_{\mu_{F_1,\dots,F_{j-1},F_{j+1},\dots,F_d}}(f)$ is 
\begin{align*}
    &\beta \left(\sum_{i=1}^{j-1} \frac{(j-i)(d+1-j)}{d} + \sum_{i=j+1}^d \frac{j\cdot (i-j)}{d}\right) \\
    &= \frac{1}{(1-\alpha)(d-1)}\left(\frac{(d+1-j)j(j-1) }{2} + \frac{j(d-j)(d+1-j)}{2}\right)\\
    &\underset{\alpha=1/2}{=} j\cdot (d+1-j)
\end{align*}
as desired. This finishes the proof of
\autoref{thm:localtoglobal}.

\section{Conic Lorentzian Polynomials}
\label{sec:lorentzianpolynomials}



Given a (path) complex $(X,\mu)$, the main object we want to study (for analysis of down-up Markov chains) is the multilinear polynomial 
$$ q(\{t_F\}_{F\in X(1)})=\sum_{\sigma\in X(d)}\mu(\sigma)\prod_{F\in \sigma} t_F.$$ 
This polynomial perfectly encodes all facets of $X$. It is shown in \cite{ALOV24} that this polynomial is log-concave if $X$ is the independence complex of the matroid, i.e., the facets correspond to bases. In other words, $\nabla^2\log q(x)\preceq 0$ at any $x\in \R^{X(1)}_{\geq 0}$. 
Equivalently, in such a case $X$ is a $0$-local spectral expander.

Unfortunately, it is also shown in \cite{BH20} that a homogeneous polynomial $q$ is log-concave in the positive orthant only if the support of $q$ corresponds to the bases of a matroid which is not necessarily the case for general complexes.

So, we follow in the footsteps of \cite{BL23} (see also \cite{BL21}), and we deviate from this ideal case in two ways i) We construct a polynomial $p$ such that the multi-linear part of $p$ corresponds to $q$; so in principal one can still use $p$ to argue about properties of $q$. ii) Similar to \cite{BL23} our polynomial $p$ is no longer log-concave w.r.t. all vectors in the positive orthant. Instead, $p$ is log-concave only w.r.t. a cone $\cC$. We prove log-concavity of $p$ (w.r.t. $\cC$) inductively. The main challenge is that the cone of log-concavity changes as we take partial derivatives of $p$, i.e., as we induct. Because of that, we cannot concretely describe all vectors in the cone $\cC$.

In \cite{BL23}, the polynomial $p$ is defined via a certain ``lineality space'' $V$ which is compatible with $(X,\mu)$. This lineality space then uniquely defines (a family of) {\em linear} maps $\pi$ which map the cone of $p$ to the cone of its partial derivatives and they use to inductively prove $p$ is $\cC$-Lorentzian. In other words, $V$ is used both to define $p$ and to prove that it is $\cC$-Lorentzian. But, the use of such a lineality space limits the class of polynomials that can be studied/derived using this method.


Our main observation which was the starting point of this project is that in order to define the polynomial we just need to have (a family of) linear maps $\pi$. And, as long as these maps satisfy a certain  commutativity property (see \cref{def:pi-maps}), the same machinery will work. This enables us to develop a new, more general way to construct commutative $\pi$ maps (see \cref{def:alpha-beta} and \cref{prop:pi-maps-from-alpha-beta}),
completely eliminating any need of a lineality space.
With this, we can
define a much larger class of $\cC$-Lorentzian polynomials
and use them for sampling tasks or to prove log-concavity statements.

Throughout this section, we fix a pure $d$-dimensional simplicial complex $X$ and a probability distribution $\mu$ on its facets $X(d)$ such that the support of $\mu$ is $X(d)$, i.e., $\mu(\sigma)>0$ for all facets $\sigma$.
In the rest of this section we define $\cC$-Lorentzian polynomials for commutative $\pi$ maps and prove a classification theorem: If $X$ is connected and all quadratic partial derivatives of $p$ are log-concave then $p$ is $\cC$-Lorentzian; see \cref{thm:conn+quad=Lor}.





\begin{definition}[Commutative $\pi$ maps] \label{def:pi-maps}
    Let $\bm\pi = \{\pi_{\sigma+\tau}: \R^{X_\sigma(1)} \to \R^{X_{\sigma \cup \tau}(1)}\}_{\sigma \in X,\tau \in X_\sigma}$ be a family of linear maps having the following property:
    \[
        \text{$\pi_{\sigma+\tau}(\bm{t})_H = t_H - L\big((t_F)_{F \in \tau}\big)$, for all  $H \in X_{\sigma \cup \tau}(1)$, where $L = L_{\sigma,\tau,H}$ is some linear functional.}
    \]
    We will also use the shorthands $\pi_{\sigma+F} := \pi_{\sigma+\{F\}}$ and $\pi_\sigma := \pi_{\varnothing+\sigma}$.
    
    We say that $\bm\pi$ is \textbf{commutative} if for all $\sigma \in X$, $\tau \in X_\sigma$, $\omega \in X_{\sigma \cup \tau}$, we have:
    \[
        \text{$\pi_{\sigma+(\tau \cup \omega)}(\bm{t}) = \pi_{(\sigma \cup \tau) + \omega}(\pi_{\sigma+\tau}(\bm{t}))$.}
    \]
    
\end{definition}

For the rest of this section, we fix a commutative family of linear maps $\bm\pi = \{\pi_{\sigma+\tau}\}_{\sigma \in X,\tau \in X_\sigma}$.

\begin{definition}[Polynomials associated to $X$] \label{def:polys-X}
    We inductively define a family of polynomials $p_\sigma(\bm{t})$ on $\R^{X_\sigma(1)}$ of degree $d-|\sigma|$ associated to every $\sigma \in X$. If $\sigma$ is a facet of $X$ (so that $|\sigma| = d$) we define $p_\sigma(\bm{t}) = \mu(\sigma)$, and if $|\sigma| < d$ we define $p_\sigma(\bm{t})$ via
    \[
        (d-|\sigma|) \cdot p_\sigma(\bm{t}) = \sum_{F \in X_\sigma(1)} t_F \cdot p_{\sigma \cup \{F\}}(\pi_{\sigma+F}(\bm{t})).
    \]
    In particular, if $|\sigma| = d-1$ then $p_\sigma$ is the linear form given by
    \begin{equation}\label{eq:linearpcase}
        p_\sigma(\bm{t}) = \sum_{F \in X_\sigma(1)} t_F \cdot \mu(\sigma \cup \{F\}).
    \end{equation}
\end{definition}
\noindent See \cref{sec:coloredpathcomplexes} for examples of $\bm\pi$ maps and the associated polynomials.

We now prove that the definition of $p_\sigma(\bm{t})$ is precisely what one would obtain from Euler's formula for homogeneous polynomials (\cref{lem:Eulers-formula}) by showing that the partial derivatives of $p_\sigma(\bm{t})$ have a nice description.

\begin{lemma}[cf. Lem.~3.3 of \cite{BL21}] \label{lem:derivs-expression}
    Given $\sigma \in X$ and $F \in X_\sigma(1)$, we have
    \[
        \partial_{t_F} p_\sigma(\bm{t}) = p_{\sigma \cup \{F\}}(\pi_{\sigma+F}(\bm{t})).
    \]
    Further, for $F,G \in X_\sigma(1)$ with $F \neq G$, we have
    \[
        \partial_{t_G} \partial_{t_F} p_\sigma(\bm{t}) = \begin{cases}
            p_{\sigma \cup \{F,G\}}(\pi_{\sigma+\{F,G\}}(\bm{t})), & \{F,G\} \in X_\sigma(2) \\
            0, & \text{otherwise}.
        \end{cases}
    \]
\end{lemma}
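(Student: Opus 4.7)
I plan to prove both statements simultaneously by downward induction on $|\sigma|$, i.e., induction on the co-dimension $d-|\sigma|$. The base case $|\sigma|=d$ is vacuous since $X_\sigma(1)=\varnothing$. The case $|\sigma|=d-1$ follows directly from \eqref{eq:linearpcase}: since $p_{\sigma\cup\{F\}}$ is the constant $\mu(\sigma\cup\{F\})$, we get $\partial_{t_F} p_\sigma = \mu(\sigma\cup\{F\}) = p_{\sigma\cup\{F\}}(\pi_{\sigma+F}(\bm{t}))$, and the second-order statement holds trivially because $X_\sigma(2)=\varnothing$.

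For the inductive step, I set
\[
    Q_F(\bm{t}) \,:=\, p_{\sigma\cup\{F\}}(\pi_{\sigma+F}(\bm{t})),
\]
so that by \cref{def:polys-X} we have $(d-|\sigma|)\,p_\sigma(\bm{t}) = \sum_{F\in X_\sigma(1)} t_F\cdot Q_F(\bm{t})$. Since $\pi_{\sigma+F}$ is linear and $p_{\sigma\cup\{F\}}$ is $(d-|\sigma|-1)$-homogeneous (a straightforward induction), each $Q_F$ is $(d-|\sigma|-1)$-homogeneous. By \cref{lem:Euler-expression}, to identify $Q_F = \partial_{t_F} p_\sigma$ it then suffices to verify the symmetry $\partial_{t_G} Q_F = \partial_{t_F} Q_G$ for all $F\neq G$.

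The key computation uses the chain rule (\cref{lem:chain-rule}):
\[
    \partial_{t_G} Q_F(\bm{t}) \;=\; \bigl[\nabla_{\pi_{\sigma+F}(\bm{e}_G)} p_{\sigma\cup\{F\}}\bigr](\pi_{\sigma+F}(\bm{t})).
\]
Unpacking \cref{def:pi-maps}, the $H$-th coordinate of $\pi_{\sigma+F}(\bm{e}_G)\in\R^{X_{\sigma\cup\{F\}}(1)}$ equals $(\bm{e}_G)_H - c\cdot(\bm{e}_G)_F$ for some scalar $c$; since $F\neq G$ the second term vanishes, giving $\pi_{\sigma+F}(\bm{e}_G)=\bm{e}_G$ whenever $G\in X_{\sigma\cup\{F\}}(1)$ (equivalently $\{F,G\}\in X_\sigma(2)$) and $\pi_{\sigma+F}(\bm{e}_G)=0$ otherwise. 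In the vanishing case, both $\partial_{t_G}Q_F$ and $\partial_{t_F}Q_G$ are zero. In the non-vanishing case, the induction hypothesis applied to $\sigma\cup\{F\}$ gives $\partial_{s_G} p_{\sigma\cup\{F\}}(\bm{s}) = p_{\sigma\cup\{F,G\}}(\pi_{(\sigma\cup\{F\})+G}(\bm{s}))$, and invoking the commutativity $\pi_{(\sigma\cup\{F\})+G}\circ\pi_{\sigma+F} = \pi_{\sigma+\{F,G\}}$ yields
\[
    \partial_{t_G} Q_F(\bm{t}) \;=\; p_{\sigma\cup\{F,G\}}\bigl(\pi_{\sigma+\{F,G\}}(\bm{t})\bigr),
\]
which is manifestly symmetric in $F$ and $G$. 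Applying \cref{lem:Euler-expression} now gives the first claim, and the explicit symmetric formula above simultaneously yields the second claim.

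The only mild subtlety is the unpacking of $\pi_{\sigma+F}(\bm{e}_G)$, which hinges on the fact that $L_{\sigma,\{F\},H}$ depends only on the $F$-coordinate; everything else is bookkeeping with the chain rule plus the single application of commutativity of $\bm\pi$.
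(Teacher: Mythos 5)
Your proof is correct and follows essentially the same route as the paper's: induction on co-dimension, reduction via \cref{lem:Euler-expression} to the symmetry $\partial_{t_G}Q_F=\partial_{t_F}Q_G$, and then the chain rule plus commutativity of $\bm\pi$ to identify both mixed derivatives with $p_{\sigma\cup\{F,G\}}(\pi_{\sigma+\{F,G\}}(\bm{t}))$ (with the non-face case handled by the observation that $\pi_{\sigma+F}(\bm{e}_G)$ vanishes, which is the same fact the paper phrases as "$p_{\sigma\cup\{F\}}(\pi_{\sigma+F}(\bm{t}))$ does not depend on $t_G$"). No gaps; the bookkeeping on $\pi_{\sigma+F}(\bm{e}_G)$ is handled correctly.
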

\begin{proof}
    We prove the first statement by induction on $\codim(\sigma)$, noting that the result is clear for $\codim(\sigma) = 1 \iff |\sigma| = d-1$. By \cref{def:polys-X} and \cref{lem:Euler-expression}, we now just need to show that 
    $$\partial_{t_G} p_{\sigma \cup \{F\}}(\pi_{\sigma+F}(\bm{t})) = \partial_{t_F} p_{\sigma \cup \{G\}}(\pi_{\sigma+G}(\bm{t}))$$
     for all $F,G \in X_\sigma(1)$. 
     
    First, consider the case that $\{F,G\} \not\in X_\sigma(2)$. Then by the definition of the $\pi$ maps, the polynomial $p_{\sigma\cup \{F\}}(\pi_{\sigma+F}(\bm{t}))$ does not depend on $t_G$ (recall that the linear map $\pi_{\sigma+F}$ only depends on $\sigma, F$ and $H\in X_{\sigma\cup\{F\}}(1)$). Similarly, $ p_{\sigma \cup \{G\}}(\pi_{\sigma+G}(\bm{t}))$ does not depend on $t_F$. So, both sides of the above equation are 0 and we are done.
    
    Now, suppose $\{F,G\} \in X_\sigma(2)$. Then by induction 
    we have
    \begin{align*}
        \partial_{t_F} p_{\sigma \cup \{G\}}(\pi_{\sigma+G}(\bm{t})) &\underset {\text{\cref{lem:chain-rule}}}{=} \left[\nabla_{\pi_{\sigma+G}(\bm{1}_F)} p_{\sigma \cup \{G\}}\right](\pi_{\sigma+G}(\bm{t})) \\
        &\underset{\text{def. of } \bm\pi}{=}\left[\partial_{t_F} p_{\sigma \cup \{G\}}\right](\pi_{\sigma+G}(\bm{t})) \\
        &\underset{\text{induction}}{=} p_{\sigma \cup \{F,G\}}(\pi_{(\sigma \cup \{G\})+F}(\pi_{\sigma+G}(\bm{t}))) 
        \underset{\text{ commutativity of } \bm\pi}{=} p_{\sigma \cup \{F,G\}}(\pi_{\sigma+\{F,G\}}(\bm{t}))
    \end{align*}
    Similarly, $
        \partial_{t_G} p_{\sigma \cup \{F\}}(\pi_{\sigma+F}(\bm{t}))= p_{\sigma \cup \{F,G\}}(\pi_{\sigma+\{F,G\}}(\bm{t})),$
    and thus they are equal. 
    
    For the second statement, note that the first statement and the above arguments imply
    \[
        \partial_{t_F} \partial_{t_G} p_\sigma(\bm{t}) = \partial_{t_F} p_{\sigma \cup \{G\}}(\pi_{\sigma+G}(\bm{t})) = p_{\sigma \cup \{F,G\}}(\pi_{\sigma+\{F,G\}}(\bm{t}))
    \]
    whenever $\{F,G\} \in X_\sigma(2)$, and $\partial_{t_F} \partial_{t_G} p_\sigma(\bm{t}) = 0$ otherwise.
\end{proof}


We now construct a family of cones for the polynomials defined above.

\begin{definition}[Cone of positive vectors] \label{def:pos-vector}
    Given $\sigma \in X$, we say that a vector $\bm{v} \in \R^{X_\sigma(1)}_{>0}$ is \textbf{$\bm\pi$-positive}  if $\pi_{\sigma+\tau}(\bm{v}) \in \R^{X_{\sigma \cup \tau}(1)}_{>0}$ for all $\tau \in X_\sigma$. We also say $\bm{v}$ is \textbf{$\bm\pi$-non-negative} if the inequalities are not strict.  We further define $\cC_\sigma$ to be the set of all $\bm\pi$-positive vectors. By definition of $\bm\pi$, it follows that $\cC_\sigma$ is an open convex (possibly empty) cone.
\end{definition}

\begin{proposition}[Properties of the cones] \label{prop:cones}
    For all $\sigma \in X$ and $\tau \in X_\sigma$, we have:
    \begin{enumerate}
        \item $\pi_{\sigma+\tau}(\cC_\sigma) \subseteq \cC_{\sigma \cup \tau}$,
        \item $\nabla_{\bm{v}_1} \nabla_{\bm{v}_2} \cdots \nabla_{\bm{v}_{d-|\sigma|}} p_\sigma > 0$ for all $\bm{v}_1,\bm{v}_2,\ldots,\bm{v}_{d-|\sigma|} \in \cC_\sigma$, and
        \item if $\cC_\sigma$ is non-empty, then the closure $\overline{\cC}_\sigma$ is the set of all $\bm\pi$-non-negative vectors in $\R^{X_\sigma(1)}$.
    \end{enumerate}
\end{proposition}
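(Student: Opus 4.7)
The three statements are naturally proved in order, since parts (2) and (3) both rely on part (1).

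For part (1), fix $\bm{v} \in \cC_\sigma$ and $\tau \in X_\sigma$. Unfolding the definition of $\cC_{\sigma \cup \tau}$, it suffices to verify $\pi_{(\sigma \cup \tau)+\omega}(\pi_{\sigma+\tau}(\bm{v})) > 0$ entrywise for every $\omega \in X_{\sigma \cup \tau}$. The commutativity hypothesis on $\bm\pi$ rewrites this as $\pi_{\sigma+(\tau \cup \omega)}(\bm{v})$, which is positive because $\tau \cup \omega \in X_\sigma$ and $\bm{v}$ is $\bm\pi$-positive.

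For part (2), I would induct on the codimension $k = d - |\sigma|$. The case $k = 0$ is immediate since $p_\sigma = \mu(\sigma) > 0$. For the inductive step, apply \cref{lem:derivs-expression} to obtain $\partial_{t_F} p_\sigma(\bm{t}) = p_{\sigma \cup \{F\}}(\pi_{\sigma+F}(\bm{t}))$, take one directional derivative along $\bm{v}_1$, and then iterate \cref{lem:chain-rule} through the linear map $\pi_{\sigma+F}$ to commute the remaining directional derivatives inside. Since $p_{\sigma \cup \{F\}}$ has degree $k-1$ and we apply exactly $k-1$ directional derivatives, the evaluation point drops out, yielding
\[
    \nabla_{\bm{v}_1} \cdots \nabla_{\bm{v}_k} p_\sigma \;=\; \sum_{F \in X_\sigma(1)} (\bm{v}_1)_F \cdot \left[\nabla_{\pi_{\sigma+F}(\bm{v}_2)} \cdots \nabla_{\pi_{\sigma+F}(\bm{v}_k)} p_{\sigma \cup \{F\}}\right].
\]
Every $(\bm{v}_1)_F$ is strictly positive because $\cC_\sigma \subset \R^{X_\sigma(1)}_{>0}$ by definition, and by part (1) each $\pi_{\sigma+F}(\bm{v}_j)$ lies in $\cC_{\sigma \cup \{F\}}$, so the inductive hypothesis applied at codimension $k-1$ makes each bracketed term strictly positive. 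Since $|\sigma| < d$ and $\mu$ is supported on all of $X(d)$, the index set $X_\sigma(1)$ is non-empty, so the sum contains at least one positive term.

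For part (3), the inclusion $\overline{\cC}_\sigma \subseteq \{\bm\pi\text{-non-negative vectors}\}$ follows from continuity of each linear map $\pi_{\sigma+\tau}$: the entrywise inequality $\pi_{\sigma+\tau}(\bm{v}_n) > 0$ passes to $\pi_{\sigma+\tau}(\bm{v}) \geq 0$ in the limit. For the reverse, given any $\bm\pi$-non-negative $\bm{v}$, fix some $\bm{u} \in \cC_\sigma$ (which exists by hypothesis). Then for every $\epsilon > 0$ and every $\tau \in X_\sigma$, linearity gives $\pi_{\sigma+\tau}(\bm{v} + \epsilon\bm{u}) = \pi_{\sigma+\tau}(\bm{v}) + \epsilon\,\pi_{\sigma+\tau}(\bm{u}) > 0$ entrywise, so $\bm{v} + \epsilon \bm{u} \in \cC_\sigma$, and sending $\epsilon \to 0^+$ places $\bm{v}$ in $\overline{\cC}_\sigma$.

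The only mildly delicate step is the iterated chain-rule manipulation in part (2); parts (1) and (3) are essentially bookkeeping once commutativity and continuity are invoked. The subtlety to watch for is ensuring $X_\sigma(1) \neq \varnothing$ whenever a directional derivative sum appears, which is guaranteed by purity of $X$ together with the codimension bound.
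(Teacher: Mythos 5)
Your proposal is correct and follows essentially the same route as the paper: part (1) from commutativity of $\bm\pi$ and the definitions, part (2) by induction on $\codim(\sigma)$ using \cref{lem:derivs-expression}, \cref{lem:chain-rule}, and part (1), and part (3) by perturbing a $\bm\pi$-non-negative vector with a small multiple of a vector in $\cC_\sigma$. The only cosmetic difference is that you expand along $\bm{v}_1$ while the paper peels off $\bm{v}_{d-|\sigma|}$, and you spell out the trivial forward inclusion in (3) that the paper leaves implicit.
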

\begin{proof}
    First, $(1)$ is a straightforward computation from the definitions. We next prove $(2)$ by induction on $d-|\sigma|$. Note that if $d-|\sigma| = 0$ (so that $\sigma$ is a facet), we have $p_\sigma = \mu(\sigma) > 0$. For $|\sigma| < d$, let $\bm{u} = \bm{v}_{d-|\sigma|}$. By induction we have
    \begin{align*}
        \nabla_{\bm{v}_1} \cdots \nabla_{\bm{v}_{d-|\sigma|}} p_\sigma &\underset{\text{\cref{lem:derivs-expression}}}{=} \nabla_{\bm{v}_1} \cdots \nabla_{\bm{v}_{d-|\sigma|-1}} \sum_{F \in X_\sigma(1)} u_F \cdot p_{\sigma \cup \{F\}}(\pi_{\sigma+F}(\bm{t})) \\
            &= \sum_{F \in X_\sigma(1)} u_F \cdot \nabla_{\bm{v}_1} \cdots \nabla_{\bm{v}_{d-|\sigma|-1}} p_{\sigma \cup \{F\}}(\pi_{\sigma+F}(\bm{t})) \\
            &\underset{\text{\cref{lem:chain-rule}}}{=} \sum_{F \in X_\sigma(1)} u_F \cdot \left[\nabla_{\pi_{\sigma+F}(\bm{v}_1)} \cdots \nabla_{\pi_{\sigma+F}(\bm{v}_{d-|\sigma|-1})} p_{\sigma \cup \{F\}}\right](\pi_{\sigma+F}(\bm{t})) \\
            &\underset{\text{induction}}{>} 0
    \end{align*}
    where in the last inequality we used that for all $1\leq i\leq d-|\sigma|-1$, the vector $\pi_{\sigma+F}(\bm{v}_i)$ is in $\cC_{\sigma \cup \{F\}}$ by $(1)$ and that $\bm{v}_i\in \cC_{\sigma}$.
    
    For $(3)$, note that if $\bm{u}$ is $\bm\pi$-non-negative and $\bm{v} \in \cC_\sigma$, then $\bm{u} + \epsilon \cdot \bm{v} \in \cC_\sigma$ for all $\epsilon > 0$.
\end{proof}

With the polynomials and associated cones defined, we can now state the main result of this section. It is a standard type of result in the Lorentzian polynomials literature, which says that a connectivity condition along with Lorentzianness of the quadratic $p_\sigma(\bm{t})$, for $\sigma\in X$ of co-dimension 2, implies all $p_\sigma(\bm{t})$ are Lorentzian. This result follows from the same proof as \cite[Thm. 3.8]{BL23}. First we need a lemma.

\begin{lemma} \label{lem:induct-Lorentzian}
    Fix $\sigma \in X$ and $\tau \in X_\sigma$. If $p_{\sigma \cup \tau}(\bm{t})$ is $\cC_{\sigma \cup \tau}$-Lorentzian, then $p_{\sigma \cup \tau}(\pi_{\sigma+\tau}(\bm{t}))$ is $\cC_\sigma$-Lorentzian.
\end{lemma}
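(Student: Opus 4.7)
The plan is to reduce everything to applying the chain rule (\cref{lem:chain-rule}) and then invoking properties of $\pi_{\sigma+\tau}$ together with the already-proven $\cC_{\sigma \cup \tau}$-Lorentzianness of $p_{\sigma \cup \tau}$. Let $q(\bm{t}) := p_{\sigma \cup \tau}(\pi_{\sigma+\tau}(\bm{t}))$, which is a homogeneous polynomial on $\R^{X_\sigma(1)}$ of the same degree $k := d - |\sigma \cup \tau|$ as $p_{\sigma \cup \tau}$. We must verify properties (P) and (Q) of \cref{def:C-Lorentzian} for $q$ with respect to the cone $\cC_\sigma$.

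For (P), let $\bm{v}_1,\dots,\bm{v}_k \in \cC_\sigma$. By iterated application of \cref{lem:chain-rule},
\[
    \nabla_{\bm{v}_1} \cdots \nabla_{\bm{v}_k} q(\bm{t}) = \bigl[\nabla_{\pi_{\sigma+\tau}(\bm{v}_1)} \cdots \nabla_{\pi_{\sigma+\tau}(\bm{v}_k)} p_{\sigma \cup \tau}\bigr](\pi_{\sigma+\tau}(\bm{t})).
\]
By \cref{prop:cones}(1), each $\pi_{\sigma+\tau}(\bm{v}_i)$ lies in $\cC_{\sigma \cup \tau}$, so the right-hand side is strictly positive because $p_{\sigma \cup \tau}$ is $\cC_{\sigma \cup \tau}$-Lorentzian. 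This handles (P), which also gives the degree-$0$ and degree-$1$ cases in one stroke.

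For (Q), assume $k \geq 2$, and fix $\bm{v}_1,\dots,\bm{v}_{k-2} \in \cC_\sigma$. Set
\[
    g(\bm{t}) := \nabla_{\bm{v}_1} \cdots \nabla_{\bm{v}_{k-2}} q(\bm{t}) = \bigl[\nabla_{\pi_{\sigma+\tau}(\bm{v}_1)} \cdots \nabla_{\pi_{\sigma+\tau}(\bm{v}_{k-2})} p_{\sigma \cup \tau}\bigr](\pi_{\sigma+\tau}(\bm{t})) =: h(\pi_{\sigma+\tau}(\bm{t})),
\]
again by \cref{lem:chain-rule}. Viewing $\pi_{\sigma+\tau}$ as a matrix $\Pi$, the standard chain rule for Hessians of compositions with linear maps yields
\[
    \nabla^2 g(\bm{t}) = \Pi^\top \cdot [\nabla^2 h](\pi_{\sigma+\tau}(\bm{t})) \cdot \Pi.
\]
Since each $\pi_{\sigma+\tau}(\bm{v}_i) \in \cC_{\sigma \cup \tau}$ by \cref{prop:cones}(1), the $\cC_{\sigma \cup \tau}$-Lorentzianness of $p_{\sigma \cup \tau}$ implies the inner Hessian $[\nabla^2 h](\pi_{\sigma+\tau}(\bm{t}))$ has at most one positive eigenvalue. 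Then by \cref{lem:AB} (item stating that $P A P^\top$ has at most one positive eigenvalue whenever $A$ does), the matrix $\nabla^2 g(\bm{t}) = \Pi^\top A \Pi$ also has at most one positive eigenvalue, which is exactly property (Q) for $q$.

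The argument is essentially mechanical once the right invariants are in place; the ``hard work'' has already been packaged into \cref{prop:cones}(1) (the push-forward of cones under $\bm\pi$-maps) and into the Cauchy-interlacing-style fact from \cref{lem:AB}. No separate treatment of edge cases is needed beyond noting that (Q) is vacuous for $k \leq 1$, and commutativity of $\bm\pi$ is not even used at this stage — it was already absorbed when defining $p_{\sigma \cup \tau}$ and its cone.
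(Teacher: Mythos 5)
Your proposal is correct and follows essentially the same route as the paper's proof: apply the chain rule to transfer directional derivatives through the linear map, use \cref{prop:cones}(1) to land the pushed-forward directions in $\cC_{\sigma\cup\tau}$ for (P), and write the Hessian of the composition as $\Pi^\top H \Pi$ so that the one-positive-eigenvalue property is preserved via \cref{lem:AB} for (Q). No gaps; the explicit remark that (Q) is vacuous for degree $\leq 1$ is a harmless addition the paper leaves implicit.
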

\begin{proof}
    For any $\bm{v}_1,\ldots,\bm{v}_k \in \cC_\sigma$, the chain rule (\cref{lem:chain-rule}) implies
    \[
        \nabla_{\bm{v}_1} \cdots \nabla_{\bm{v}_k} p_{\sigma\cup\tau}(\pi_{\sigma+\tau}(\bm{t})) = \left[\nabla_{\pi_{\sigma+\tau}(\bm{v}_1)} \cdots \nabla_{\pi_{\sigma+\tau}(\bm{v}_k)} p_{\sigma\cup\tau}\right](\pi_{\sigma+\tau}(\bm{t})).
    \]
    We now use this to prove (P) and (Q) in \cref{def:C-Lorentzian}. Let $d$ be the homogeneous degree of $p_{\sigma \cup \tau}(\bm{t})$. Setting $k=d$, since $\pi_{\sigma+\tau}(\bm{v}_i) \in \cC_{\sigma\cup\tau}$ for all $i$ by \cref{prop:cones} (1), the fact that $p$ is $\cC_{\sigma\cup\tau}$-Lorentzian implies
    \[
        \left[\nabla_{\pi_{\sigma+\tau}(\bm{v}_1)} \cdots \nabla_{\pi_{\sigma+\tau}(\bm{v}_d)} p_{\sigma\cup\tau}\right](\pi_{\sigma+\tau}(\bm{t})) > 0.
    \]
    Thus (P) is satisfied. Setting $k=d-2$, since $\pi_{\sigma+\tau}(\bm{v}_i) \in \cC_{\sigma\cup\tau}$ for all $i$, the fact that $p$ is $\cC_{\sigma\cup\tau}$-Lorentzian implies the Hessian of
    \[
        \nabla_{\pi_{\sigma+\tau}(\bm{v}_1)} \cdots \nabla_{\pi_{\sigma+\tau}(\bm{v}_{d-2})} p_{\sigma\cup\tau}(\bm{t})
    \]
    has at exactly one positive eigenvalue. Denote this Hessian by $H$. Now since $\pi_{\sigma+\tau}$ is a linear map, the Hessian of $\left[\nabla_{\pi_{\sigma+\tau}(\bm{v}_1)} \cdots \nabla_{\pi_{\sigma+\tau}(\bm{v}_{d-2})} p_{\sigma\cup\tau}\right](\pi_{\sigma+\tau}(\bm{t}))$ is equal to $M^\top H M$, where $M$ is the matrix associated to $\pi$ (by \cref{lem:chain-rule}). Thus by \cref{lem:AB}, the Hessian of $\left[\nabla_{\pi_{\sigma+\tau}(\bm{v}_1)} \cdots \nabla_{\pi_{\sigma+\tau}(\bm{v}_{d-2})} p_{\sigma\cup\tau}\right](\pi_{\sigma+\tau}(\bm{t}))$ has at most one positive eigenvalue. Thus (Q) is satisfied, and therefore $p_{\sigma\cup\tau}(\pi_{\sigma+\tau}(\bm{t}))$ is $\cC_\sigma$-Lorentzian.
\end{proof}

\begin{theorem}[Connected + Quadratics $\implies$ Lorentzian] \label{thm:conn+quad=Lor}
    Suppose $(X,\mu)$ is a $d$-dimensional connected simplicial complex and $\cC_\varnothing$ is non-empty. If the Hessian of $p_\sigma$ has at most one positive eigenvalue for all $\sigma \in X(d-2)$\footnote{Note that $p_\sigma$ is a quadratic, so $\nabla^2 p_\sigma$ is just a matrix}, then $p_\sigma$ is $\cC_\sigma$-Lorentzian for all $\sigma \in X$.
\end{theorem}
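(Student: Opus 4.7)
The plan is to prove the theorem by strong induction on $\codim(\sigma)$, running from facets $(\codim(\sigma) = 0)$ up to $\sigma = \varnothing$. First observe that $\cC_\sigma$ is non-empty for every $\sigma \in X$: by \cref{prop:cones}(1) we have $\pi_\sigma(\cC_\varnothing) \subseteq \cC_\sigma$, and $\cC_\varnothing$ is non-empty by hypothesis. So it makes sense to ask whether $p_\sigma$ is $\cC_\sigma$-Lorentzian.

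The three base cases $\codim(\sigma) \in \{0,1,2\}$ are handled directly. For $\codim(\sigma) = 0$, $p_\sigma = \mu(\sigma)$ is a positive constant, hence $\cC_\sigma$-Lorentzian by definition. For $\codim(\sigma) = 1$, $p_\sigma$ is the linear form of \cref{eq:linearpcase}, and condition (P) of \cref{def:C-Lorentzian} is exactly \cref{prop:cones}(2). For $\codim(\sigma) = 2$, $p_\sigma$ is a quadratic; condition (Q) is the hypothesis of the theorem on the Hessian, and (P) is again \cref{prop:cones}(2).

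For the inductive step, fix $\sigma$ with $k := \codim(\sigma) \geq 3$ and assume $p_{\sigma'}$ is $\cC_{\sigma'}$-Lorentzian for every $\sigma'$ with $\codim(\sigma') < k$. I plan to apply \cref{thm:Lor-from-derivs} to the degree-$k$ polynomial $p_\sigma$, so the three hypotheses of that theorem must be verified. Hypothesis (1), strict positivity of all $k$-fold derivatives along vectors in $\cC_\sigma$, is precisely \cref{prop:cones}(2). For hypothesis (3), \cref{lem:derivs-expression} gives $\partial_{t_F} p_\sigma(\bm{t}) = p_{\sigma \cup \{F\}}(\pi_{\sigma+F}(\bm{t}))$; by the inductive hypothesis $p_{\sigma \cup \{F\}}$ is $\cC_{\sigma \cup \{F\}}$-Lorentzian, and then \cref{lem:induct-Lorentzian} shows that $\partial_{t_F} p_\sigma$ is $\cC_\sigma$-Lorentzian.

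The main step, and the part I expect to require the most care, is hypothesis (2): for any $\bm{v}_1,\ldots,\bm{v}_{k-2} \in \cC_\sigma$, the Hessian $H$ of $\nabla_{\bm{v}_1} \cdots \nabla_{\bm{v}_{k-2}} p_\sigma$ has non-negative off-diagonal entries and is irreducible. For $F \neq G$ in $X_\sigma(1)$, the $(F,G)$-entry of $H$ equals $\nabla_{\bm{v}_1} \cdots \nabla_{\bm{v}_{k-2}} \partial_{t_F} \partial_{t_G} p_\sigma$. By \cref{lem:derivs-expression} this is zero when $\{F,G\} \notin X_\sigma(2)$; otherwise it equals $\nabla_{\bm{v}_1} \cdots \nabla_{\bm{v}_{k-2}} [\, p_{\sigma \cup \{F,G\}}(\pi_{\sigma+\{F,G\}}(\bm{t})) \,]$, which by \cref{lem:chain-rule} is a $(k-2)$-fold directional derivative of $p_{\sigma \cup \{F,G\}}$ along the vectors $\pi_{\sigma+\{F,G\}}(\bm{v}_i)$. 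These images lie in $\cC_{\sigma \cup \{F,G\}}$ by \cref{prop:cones}(1), so the derivative is strictly positive by \cref{prop:cones}(2). Thus $H$ has non-negative off-diagonal entries, and the support of those entries is exactly the edge set $X_\sigma(2)$ of the $1$-skeleton of the link $X_\sigma$; irreducibility of $H$ therefore amounts to connectivity of this $1$-skeleton, which holds since $\codim(\sigma) \geq 2$ and $X$ is connected. With all three hypotheses verified, \cref{thm:Lor-from-derivs} yields that $p_\sigma$ is $\cC_\sigma$-Lorentzian, closing the induction.
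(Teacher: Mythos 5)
Your proof is correct and follows essentially the same route as the paper: induction on $\codim(\sigma)$ with base cases at co-dimension $\leq 2$, then verifying the three hypotheses of \cref{thm:Lor-from-derivs} via \cref{prop:cones}, \cref{lem:derivs-expression}, \cref{lem:induct-Lorentzian}, and connectivity of the $1$-skeleton of $X_\sigma$. The only cosmetic difference is that for the off-diagonal positivity in hypothesis (2) you invoke \cref{prop:cones}(1)--(2) on $p_{\sigma\cup\{F,G\}}$ directly, whereas the paper routes it through the $\cC_\sigma$-Lorentzianness of $\partial_{t_G}\partial_{t_F}p_\sigma$; both are valid.
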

\begin{proof}
    Let $\sigma\in X$. First note that by \cref{prop:cones} (2), the (P) condition of the definition of $\cC_\sigma$-Lorentzian (\cref{def:C-Lorentzian}) is satisfied, and thus we only need to prove the (Q) condition.
    
    Note that if co-dimension of $\sigma$ is 1 then the polynomial is linear, and if it is 2 then the result follows immediately from the Hessian condition of the theorem.

    Now we assume $\codim(\sigma)>2$, and we prove the result by induction.
    We will apply \cref{thm:Lor-from-derivs}. First note that condition (1) of \cref{thm:Lor-from-derivs} is already verified. Next, we check condition (3). By \cref{lem:derivs-expression} we have
    \[
        \partial_{t_F} p_\sigma(\bm{t}) = p_{\sigma \cup \{F\}}(\pi_{\sigma+F}(\bm{t})),
    \]
    and by induction $p_{\sigma \cup \{F\}}$ is $\cC_{\sigma \cup \{F\}}$-Lorentzian. Thus $\partial_{t_F} p_\sigma(\bm{t}) = p_{\sigma \cup \{F\}}(\pi_{\sigma+F}(\bm{t}))$ is $\cC_\sigma$-Lorentzian by \cref{lem:induct-Lorentzian}. Thus we have verified condition (3) of \cref{thm:Lor-from-derivs}.

    It remains to check for condition (2). Fix $F,G \in X_\sigma(1)$ with $F \neq G$. By \cref{lem:derivs-expression} we have
    \[
        \partial_{t_G} \partial_{t_F} p_\sigma(\bm{t}) = \begin{cases}
            p_{\sigma \cup \{F,G\}}(\pi_{\sigma+\{F,G\}}(\bm{t})), & \{F,G\} \in X_\sigma(2) \\
            0, & \text{otherwise}.
        \end{cases}
    \]
    If $\{F,G\} \in X_\sigma(2)$, then by induction $p_{\sigma \cup \{F,G\}}$ is $\cC_{\sigma \cup \{F,G\}}$-Lorentzian, and thus $\partial_{t_G} \partial_{t_F} p_\sigma(\bm{t})$ is $\cC_\sigma$-Lorentzian by \cref{lem:induct-Lorentzian}.
    Thus for $\bm{v}_1,\ldots,\bm{v}_{d-|\sigma|-2} \in \cC_\sigma$, we have
    \[
        \nabla_{\bm{v}_1} \cdots \nabla_{\bm{v}_{d-|\sigma|-2}} \partial_{t_G} \partial_{t_F} p_\sigma(\bm{t}) > 0.
    \]
    Therefore the Hessian $H$ of $\nabla_{\bm{v}_1} \cdots \nabla_{\bm{v}_{d-|\sigma|-2}} p_\sigma(\bm{t})$ has non-negative off-diagonal entries, with $H(F,G)$ positive whenever $\{F,G\} \in X_\sigma(2)$. Connectivity of $X$ implies connectivity of the $1$-skeleton of $X_\sigma$, which implies $H$ is irreducible. Thus we have verified condition (2) of \cref{thm:Lor-from-derivs}.
    %
\end{proof}

Combining the following lemma with the above theorem, to check that our polynomials $p_\sigma$ are $\cC$-Lorentzian it is enough to check that the matrix $A+D$ for a link of co-dimension 2 has at most one positive eigenvalue. 
\begin{lemma} \label{lem:quadratic-entries}
    Fix $\sigma \in X(d-2)$, and let $A$ be the adjacency matrix of the $1$-skeleton of $X_\sigma$, weighted according to $\mu$. That is, for all $\{F,G\} \in X_\sigma(2)$ the entry $A(F,G)$ is $\mu(\sigma \cup \{F,G\})$. Further, let $D$ be the diagonal matrix with entries given by
    \[
        D(F,F) = \sum_{G \in X_{\sigma \cup \{F\}}(1)} A(F,G) \cdot \partial_{t_F} \pi_{\sigma+F}(\bm{t})_G = \nabla\left[A \cdot \pi_{\sigma+F}(\bm{t})\right]_F.
    \]
    Then the Hessian of $p_\sigma$ is $A+D$.
\end{lemma}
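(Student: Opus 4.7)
The plan is to compute the two types of Hessian entries separately and observe that they match $A$ and $D$ respectively. Off-diagonal entries come directly from \cref{lem:derivs-expression}; diagonal entries require one extra step using the explicit formula \eqref{eq:linearpcase} for the linear polynomials $p_{\sigma \cup \{F\}}$.

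For the off-diagonal entries, fix $F \neq G$ in $X_\sigma(1)$. By the second part of \cref{lem:derivs-expression}, we have $\partial_{t_G}\partial_{t_F} p_\sigma = p_{\sigma \cup \{F,G\}}(\pi_{\sigma+\{F,G\}}(\bm{t}))$ when $\{F,G\} \in X_\sigma(2)$ and $0$ otherwise. In the former case, $\sigma \cup \{F,G\}$ is a facet, so by \cref{def:polys-X} the polynomial $p_{\sigma \cup \{F,G\}}$ is the constant $\mu(\sigma \cup \{F,G\}) = A(F,G)$. In the latter case, $A(F,G) = 0$ by definition of the weighted adjacency matrix. Either way, the $(F,G)$-entry of the Hessian of $p_\sigma$ equals $A(F,G)$, matching the off-diagonal entries of $A + D$ (since $D$ is diagonal).

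For the diagonal entries, fix $F \in X_\sigma(1)$. By the first part of \cref{lem:derivs-expression}, $\partial_{t_F} p_\sigma(\bm{t}) = p_{\sigma \cup \{F\}}(\pi_{\sigma+F}(\bm{t}))$. Since $|\sigma \cup \{F\}| = d-1$, \eqref{eq:linearpcase} gives the explicit expression
\[
    p_{\sigma \cup \{F\}}(\bm{s}) = \sum_{G \in X_{\sigma \cup \{F\}}(1)} s_G \cdot \mu(\sigma \cup \{F,G\}) = \sum_{G \in X_{\sigma \cup \{F\}}(1)} A(F,G) \cdot s_G.
\]
Substituting $\bm{s} = \pi_{\sigma+F}(\bm{t})$ and applying $\partial_{t_F}$ once more, using linearity of $\pi_{\sigma+F}$, yields
\[
    \partial_{t_F}^2 p_\sigma(\bm{t}) = \sum_{G \in X_{\sigma \cup \{F\}}(1)} A(F,G) \cdot \partial_{t_F}\pi_{\sigma+F}(\bm{t})_G = D(F,F).
\]
This is a constant (as it must be, since $p_\sigma$ is a quadratic), which is why the alternative $\nabla[A \cdot \pi_{\sigma+F}(\bm{t})]_F$ expression in the statement coincides with the coordinate sum.

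Combining the two computations, the Hessian of $p_\sigma$ agrees with $A$ off the diagonal and with $D$ on the diagonal, so it equals $A + D$. There is no real obstacle here; the only subtlety is bookkeeping the two-step differentiation in the diagonal case (first apply \cref{lem:derivs-expression}, then expand the linear form using \eqref{eq:linearpcase} and differentiate via the chain rule for the linear map $\pi_{\sigma+F}$).
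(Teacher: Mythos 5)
Your proposal is correct and follows essentially the same route as the paper: off-diagonal entries via the second part of \cref{lem:derivs-expression} together with the fact that $p_{\sigma\cup\{F,G\}}$ is the constant $\mu(\sigma\cup\{F,G\})$, and diagonal entries by differentiating $p_{\sigma\cup\{F\}}(\pi_{\sigma+F}(\bm{t}))$ once more using the explicit linear form \eqref{eq:linearpcase} and linearity of $\pi_{\sigma+F}$ (the paper phrases this via \cref{lem:chain-rule}, you via direct substitution, which is the same computation).
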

\begin{proof}
    By \cref{lem:derivs-expression} we have
    \[
        \partial_{t_G} \partial_{t_F} p_\sigma(\bm{t}) = \begin{cases}
            p_{\sigma \cup \{F,G\}}(\pi_{\sigma+\{F,G\}}(\bm{t})), & \{F,G\} \in X_\sigma(2) \\
            0, & \text{otherwise},
        \end{cases}
    \]
    and if $\{F,G\} \in X_\sigma(2)$, then we further have
    \[
        \partial_{t_G} \partial_{t_F} p_\sigma(\bm{t}) = p_{\sigma \cup \{F,G\}}(\pi_{\sigma+\{F,G\}}(\bm{t})) = \mu(\sigma \cup \{F,G\}).
    \]
    Thus the off-diagonal entries of the Hessian of $p_\sigma$ and of $A$ coincide. For the diagonal entries, we have
    \begin{align*}
        \partial_{t_F}^2 p_\sigma(\bm{t}) = \partial_{t_F} p_{\sigma \cup \{F\}}(\pi_{\sigma+F}(\bm{t})) &\underset{\text{\cref{lem:chain-rule}}}{=} \left[\nabla_{\pi_{\sigma+F}(\bm{1}_F)} p_{\sigma\cup\{F\}}\right](\pi_{\sigma+F}(\bm{t})) \\
        &\underset{\text{linearity of $\pi_{\sigma+F}$}, \eqref{eq:linearpcase}}{=} \left[\sum_{G \in X_{\sigma \cup \{F\}}(1)} (\partial_{t_F} \pi_{\sigma+F}(\bm{t})_G) \cdot \mu(\sigma\cup\{F,G\})\right](\pi_{\sigma+F}(\bm{t})) \\
        &= \sum_{G \in X_{\sigma \cup \{F\}}(1)} (\partial_{t_F} \pi_{\sigma+F}(\bm{t})_G) \cdot A(F,G).
    \end{align*}
\end{proof}

\section{Conic Lorentzian Polynomials for Top-Link Path Complexes}
\label{sec:lorentzianpolynomials-path}

In the rest of the paper we mainly focus on $d$-dimensional top-link path complexes $(X,\mu)$ with parts $T_1,\ldots,T_d$ and we employ \cref{thm:conn+quad=Lor} to study their properties. 
In this section, we give a method for determining a family of $\pi$ maps with the commutativity property.
Armed with such a family of $\pi$ maps, we invoke \cref{thm:conn+quad=Lor} to conclude that $p_\varnothing$ is $\cC_\varnothing$-Lorentzian.
We will conclude the section by showing that this implies a log-concavity inequality associated to $X$ and an eigenvalue bound on the induced weighted subgraph $G_{k,k+1}$ of the 1-skeleton of $X_\varnothing$ for all $1\leq k\leq d-1$.

As alluded to in the introduction, one of our main examples of a top-link path  complex is the set of chains of a lattice, and thus we will use some suggestive notation. Specifically, fix $\{K,L\} \in X(2)$ with $K \in T_i$ and $L \in T_j$ for $i < j$. Since $i < j$ we write $K < L$, and if $j = i+1$ then we write $K \prec L$. We also write $(K,L)$ for 
$$ (K,L):=\{F\in X(1): K<F<L\}.$$
and we write $[K,L]$ for the set $(K,L) \cup \{K,L\}$.


Finally, we abuse notation by defining $T_0 = \{\hat{0}\}$ and $T_{d+1} = \{\hat{1}\}$. The elements $\hat{0}$ and $\hat{1}$ will not actually be a part of any face in $X$, but we will sometimes allow them to replace elements of $X(1)$ in certain contexts. For example, we will denote $(\hat{0},\hat{1}) = X(1)$ and denote $(\hat{0},F)$ to be the set of all $K \in X_{\{F\}}(1)$ such that $K < F$.

\begin{definition} \label{def:alpha-beta}
    Let $\bm\alpha = \{\alpha_K^L: (K,L) \to \R_{\geq 0}\}_{K<L \text{ in } X(1)}$ and $\bm\beta = \{\beta_K^L: (K,L) \to \R_{\geq 0}\}_{K<L \text{ in } X(1)}$ be families of maps, where we allow $K = \hat{0}$ and/or $L = \hat{1}$. We say that these maps are \textbf{commutative} if for all $K < F < G < L$ in $X(1)$, we have:
    \begin{enumerate}
        \item $\alpha_K^L(F) = \alpha_K^G(F) \cdot \alpha_K^L(G)$,
        \item $\beta_K^L(G) = \beta_F^L(G) \cdot \beta_K^L(F)$,
        \item $\alpha_F^L(G) = \alpha_K^L(G) - \beta_F^L(G) \cdot \alpha_K^L(F)$,
        \item $\beta_K^G(F) = \beta_K^L(F) - \alpha_K^G(F) \cdot \beta_K^L(G)$.
    \end{enumerate}
    We will sometimes abuse notation and consider $\alpha_K^L$ and $\beta_K^L$ to be vectors in $\R^{X(1)}$ which are defined to be $0$ outside of $(K,L)$. Finally, we will often suppress $\hat{0}$ and/or $\hat{1}$ in the notation; e.g. $\alpha := \alpha_{\hat{0}}^{\hat{1}}$, $\alpha_F := \alpha_F^{\hat{1}}$, $\beta^F := \beta_{\hat{0}}^F$, and so on.
\end{definition}

\begin{proposition} \label{prop:pi-maps-from-alpha-beta}
    Given a commutative family of maps $\bm\alpha$ and $\bm\beta$, we define a family of maps $\bm\pi$ as follows. Given $\sigma = \{F_1,F_2,\ldots,F_{i-1},F_{i+1},\ldots,F_m\} \in X$ and $F_i \in X_\sigma(1)$ such that $F_1 < F_2 < \cdots < F_m$, define
    \begin{equation} \label{eq:pi-def-from-alpha-beta}
        \pi_{\sigma+F_i}(\bm{t})_H = \begin{cases}
            t_H - \alpha_{F_{i-1}}^{F_i}(H) \cdot t_{F_i}, & H \in (F_{i-1},F_i) \\
            t_H - \beta_{F_i}^{F_{i+1}}(H) \cdot t_{F_i}, & H \in (F_i,F_{i+1}) \\
            t_H, & \text{otherwise},
        \end{cases}
    \end{equation}
    where we denote $F_0 = \hat{0}$ and $F_{m+1} = \hat{1}$.
    More generally, for $\tau = \{G_1,\ldots,G_k\} \in X_\sigma$, we define
    \[
        \pi_{\sigma+\tau} = \pi_{(\sigma\cup\{G_1,\ldots,G_{k-1}\})+G_k} \circ \cdots \circ \pi_{(\sigma\cup\{G_1,G_2\})+G_3} \circ \pi_{(\sigma\cup\{G_1\})+G_2} \circ \pi_{\sigma+G_1}
    \]
    for any ordering of $\tau$. Then $\bm\pi$ is commutative.
\end{proposition}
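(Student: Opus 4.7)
The plan is to deduce the commutativity of $\bm\pi$ from the four identities on $\bm\alpha, \bm\beta$ in \cref{def:alpha-beta}. Since $\pi_{\sigma+\tau}$ is defined as a composition over some ordering of $\tau$, the first step is to check that this composition is independent of the chosen ordering; once this is established, the required commutativity $\pi_{\sigma+(\tau\cup\omega)} = \pi_{(\sigma\cup\tau)+\omega}\circ\pi_{\sigma+\tau}$ is immediate by concatenating any ordering of $\tau$ with any ordering of $\omega$ to obtain an ordering of $\tau\cup\omega$.

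For well-definedness, any two orderings of $\tau$ differ by a sequence of adjacent transpositions, so the problem reduces to verifying, for every $\sigma \in X$ and every $\{G,H\} \in X_\sigma(2)$, the single swap identity
$$\pi_{(\sigma\cup\{G\})+H}\circ\pi_{\sigma+G} = \pi_{(\sigma\cup\{H\})+G}\circ\pi_{\sigma+H}.$$
Write $\sigma = \{F_1 < \dots < F_m\}$ (with $F_0 = \hat 0$, $F_{m+1} = \hat 1$) and assume $G < H$. Let $F_a$ be the largest element of $\sigma \cup \{\hat 0\}$ below $G$, with successor $F_{a+1}$ in $\sigma \cup \{\hat 1\}$. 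If $F_{a+1} \leq H$, then some element of $\sigma$ separates $G$ from $H$; the two single-element maps then modify coordinates in disjoint intervals of the chain, and in particular neither touches the other's $t_G$ or $t_H$, so both compositions trivially agree.

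The interesting case is $G, H \in (F_a, F_{a+1})$. Here I would substitute \eqref{eq:pi-def-from-alpha-beta} into itself and compare the two orderings coordinate-by-coordinate. For each $K \in X_{\sigma\cup\{G,H\}}(1)$ the output of either composition is of the form $t_K - c_G(K)\, t_G - c_H(K)\, t_H$, and matching the two orderings produces exactly one nontrivial equality in each of four ranges of $K$: for $K \in (F_a, G)$, cancelling the spurious $t_H$ term requires $\alpha_{F_a}^H(K) = \alpha_{F_a}^G(K)\,\alpha_{F_a}^H(G)$, which is condition~(1) of \cref{def:alpha-beta}; for $K \in (G,H)$, matching the $t_G$ and $t_H$ coefficients uses conditions~(4) and~(3) respectively; for $K \in (H, F_{a+1})$, cancelling the spurious $t_G$ term uses condition~(2); and for $K$ outside $(F_a,F_{a+1})$ both compositions act as the identity on $t_K$.

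This interval computation is the main obstacle --- a short but finicky bookkeeping step in which each of the four hypotheses on $(\bm\alpha, \bm\beta)$ appears exactly once, confirming that \cref{def:alpha-beta} is the precise condition needed. With the two-element swap in hand, well-definedness of $\pi_{\sigma+\tau}$ for arbitrary $\tau$ follows because the symmetric group on $\tau$ is generated by adjacent transpositions, and then the commutativity of $\bm\pi$ follows by picking any ordering of $\tau \cup \omega$ that places all of $\tau$ before all of $\omega$ and reading off both sides of the commutativity identity from the same composition.
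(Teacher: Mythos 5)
Your proposal is correct and follows essentially the same route as the paper's proof: reduce commutativity to the adjacent-swap identity $\pi_{(\sigma\cup G)+H}\circ\pi_{\sigma+G}=\pi_{(\sigma\cup H)+G}\circ\pi_{\sigma+H}$, dispose of the case where an element of $\sigma$ separates $G$ and $H$ by disjointness of the affected coordinates, and in the same-gap case verify the coefficients coordinate-by-coordinate, using condition (1) on $(F_a,G)$, condition (2) on $(H,F_{a+1})$, and conditions (3) and (4) on $(G,H)$, exactly as in the paper. The only cosmetic slip is the phrase ``exactly one nontrivial equality in each of four ranges,'' since the middle range $(G,H)$ yields two coefficient identities, which you in fact immediately account for.
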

\begin{proof}
    To show that $\bm\pi$ is commutative, we only need to show that the definition of $\pi_{\sigma+\tau}$ given above is independent of the choice of ordering of $\tau$. We will do this by showing
    \begin{equation} \label{eq:commutativity-proof-condition}
        \pi_{(\sigma \cup F_i) + F_j}(\pi_{\sigma+F_i}(\bm{t})) = \pi_{(\sigma \cup F_j) + F_i}(\pi_{\sigma + F_j}(\bm{t}))
    \end{equation}
    for any $\sigma \in X$ and $\{F_i,F_j\} \in X_\sigma(2)$.
    
    Fix $\sigma = \{F_1,F_2,\ldots,F_{i-1},F_{i+1},\ldots,F_{j-1},F_{j+1},\ldots,F_m\} \in X$ and $\{F_i,F_j\} \in X_\sigma(2)$ such that $F_k < F_{k+1}$ for all $k$, where $j > i$ and possibly $j = i+1$. Note that \eqref{eq:commutativity-proof-condition} is clear whenever $j > i+1$, since the individual $\pi$ maps $\pi_{\sigma+F_i}(\bm{t})$ and $\pi_{\sigma+F_j}(\bm{t})$ affect disjoint parts of the vector $\bm{t}$.
    
    For $j = i+1$, we define $K = F_{i-1}$, $F = F_i$, $G = F_{i+1}$, and $L = F_{i+2}$, where $F_0 = \hat{0}$ and $F_{m+1} = \hat{1}$. For $H \not\in (K,L)$, \eqref{eq:commutativity-proof-condition} follows from $\pi_{(\sigma \cup F) + G}(\pi_{\sigma+F}(\bm{t}))_H = t_H = \pi_{(\sigma \cup G) + F}(\pi_{\sigma + G}(\bm{t}))_H$. 
    
    Case 1: $H \in (K,F)$ we have
    \[
    \begin{split}
        \pi_{(\sigma \cup G)+F}(\pi_{\sigma+G}(\bm{t}))_H &= \pi_{\sigma+G}(\bm{t})_H - \alpha_K^F(H) \cdot \pi_{\sigma+G}(\bm{t})_F \\
            &= \left(t_H - \alpha_K^G(H) \cdot t_G\right) - \alpha_K^F(H) \cdot \left(t_F - \alpha_K^G(F) \cdot t_G\right) \\
            &= \pi_{\sigma+F}(\bm{t})_H - t_G \cdot \left(\alpha_K^G(H) - \alpha_K^F(H) \cdot \alpha_K^G(F)\right)
            \\
            &\underset{\text{(1) of \cref{def:alpha-beta}}}{=} \pi_{\sigma+F}(\bm{t})_H=
            \pi_{(\sigma \cup F) + G}(\pi_{\sigma+F}(\bm{t}))_H.
    \end{split}
    \]
    Note that in the last identity we use the fact that $\pi_{(\sigma \cup F) + G}(\bm{s})_H = s_H$ since $H < F < G$. 
    
    Case 2: $H \in (G,L)$:
    \[
    \begin{split}
        \pi_{(\sigma \cup F)+G}(\pi_{\sigma+F}(\bm{t}))_H &= \pi_{\sigma+F}(\bm{t})_H - \beta_G^L(H) \cdot \pi_{\sigma+F}(\bm{t})_G \\
            &= \left(t_H - \beta_F^L(H) \cdot t_F\right) - \beta_G^L(H) \cdot \left(t_G - \beta_F^L(G) \cdot t_F\right) \\
            &= \pi_{\sigma+G}(\bm{t})_H - t_F \cdot \left(\beta_F^L(H) - \beta_G^L(H) \cdot \beta_F^L(G)\right) \\
            &\underset{\text{(2) of \cref{def:alpha-beta}}}{=} \pi_{\sigma+G}(\bm{t})_H=\pi_{(\sigma \cup G)+F}(\pi_{\sigma+G}(\bm{t}))_H.
    \end{split}
    \]
    Note that in the last identity we use the fact that $\pi_{(\sigma \cup G) + F}(\bm{s})_H = s_H$ since $F < G < H$.
    
    Case 3: $H \in (F,G)$:
    \[
    \begin{split}
            \pi_{(\sigma \cup F)+G}(\pi_{\sigma+F}(\bm{t}))_H &= \pi_{\sigma+F}(\bm{t})_H - \alpha_F^G(H) \cdot \pi_{\sigma+F}(\bm{t})_G \\
            &= \left(t_H - \beta_F^L(H) \cdot t_F\right) - \alpha_F^G(H) \cdot \left(t_G - \beta_F^L(G) \cdot t_F\right) \\
            &= t_H - t_F \cdot \left(\beta_F^L(H) - \alpha_F^G(H) \cdot \beta_F^L(G)\right) - t_G \cdot \left(\alpha_F^G(H)\right)
    \end{split}
    \]
    and
    \[
    \begin{split}
        \pi_{(\sigma \cup G)+F}(\pi_{\sigma+G}(\bm{t}))_H &= \pi_{\sigma+G}(\bm{t})_H - \beta_F^G(H) \cdot \pi_{\sigma+G}(\bm{t})_F \\
            &= \left(t_H - \alpha_K^G(H) \cdot t_G\right) - \beta_F^G(H) \cdot \left(t_F - \alpha_K^G(F) \cdot t_G\right) \\
            &= t_H - t_F \cdot \left(\beta_F^G(H)\right) - t_G \cdot \left(\alpha_K^G(H) - \beta_F^G(H) \cdot \alpha_K^G(F)\right).
    \end{split}
    \]
    The coefficient of $t_G$ in the above two equations are the same by (3) of \cref{def:alpha-beta} and the coefficients of $t_F$ are the same by (4) of \cref{def:alpha-beta}.
\end{proof}

For the rest of this section,  fix a commutative $\bm\alpha$ and $\bm\beta$, and $\bm\pi$ as defined in \cref{prop:pi-maps-from-alpha-beta}.

\begin{lemma}[Interaction of $\bm\pi$, $\bm\alpha$, $\bm\beta$] \label{lem:pi-alpha-beta-relations}
    Fix $\sigma = \{F_1,F_2,\ldots,F_{i-1},F_{i+1},\ldots,F_m\} \in X$ and $F_i \in X_\sigma(1)$ such that $F_1 < F_2 < \cdots < F_m$, where we denote $F_0 = \hat{0}$ and $F_{m+1} = \hat{1}$. Letting $K = F_{i-1}$, $F = F_i$, and $L = F_{i+1}$ we have
    \[
        \pi_{\sigma+F}\left(\alpha_K^L\right) = \alpha_F^L \qquad \text{and} \qquad \pi_{\sigma+F}\left(\beta_K^L\right) = \beta_K^F,
    \]
    where $\alpha_K^L$ and $\beta_K^L$ are considered to be $0$ outside of $(K,L)$ as discussed in \cref{def:alpha-beta}.
\end{lemma}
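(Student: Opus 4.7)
The plan is to verify both identities by direct case analysis on the coordinates of the vectors, using exactly the four commutativity axioms from \cref{def:alpha-beta}. Fix any $H \in X_{\sigma \cup \{F\}}(1)$; we need to show $\pi_{\sigma+F}(\alpha_K^L)_H = \alpha_F^L(H)$ and $\pi_{\sigma+F}(\beta_K^L)_H = \beta_K^F(H)$. Recall that $H$ cannot equal any of $F_1,\ldots,F_m$ (in particular $H \notin \{K,F,L\}$), so $H$ is comparable with $F$ or incomparable, and in the comparable case falls into one of the intervals determined by $K,F,L$.

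For the first identity, I would split on the location of $H$. The only potentially nonzero coordinates of $\alpha_K^L$ lie in $(K,L)$, so in particular $(\alpha_K^L)_F = \alpha_K^L(F)$ enters the formula \eqref{eq:pi-def-from-alpha-beta} for $\pi_{\sigma+F}$. If $H \in (K,F)$, then \eqref{eq:pi-def-from-alpha-beta} gives $\pi_{\sigma+F}(\alpha_K^L)_H = \alpha_K^L(H) - \alpha_K^F(H) \cdot \alpha_K^L(F)$, which vanishes by axiom (1), matching $\alpha_F^L(H) = 0$. If $H \in (F,L)$, then $\pi_{\sigma+F}(\alpha_K^L)_H = \alpha_K^L(H) - \beta_F^L(H) \cdot \alpha_K^L(F)$, which equals $\alpha_F^L(H)$ by axiom (3). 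For any $H$ outside $(K,L) \cup \{F\}$, the rule in \eqref{eq:pi-def-from-alpha-beta} leaves the coordinate unchanged, giving $(\alpha_K^L)_H = 0 = \alpha_F^L(H)$ since $(F,L) \subseteq (K,L)$.

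The second identity is completely symmetric and uses axioms (2) and (4) in the mirrored roles. For $H \in (F,L)$, $\pi_{\sigma+F}(\beta_K^L)_H = \beta_K^L(H) - \beta_F^L(H) \cdot \beta_K^L(F)$, which is $0$ by axiom (2), matching $\beta_K^F(H) = 0$. For $H \in (K,F)$, $\pi_{\sigma+F}(\beta_K^L)_H = \beta_K^L(H) - \alpha_K^F(H) \cdot \beta_K^L(F) = \beta_K^F(H)$ by axiom (4). Outside $(K,L)$, both sides are zero.

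There is no real obstacle here: the statement is a bookkeeping lemma that essentially rewrites axioms (1)--(4) of \cref{def:alpha-beta} in terms of the $\pi$-maps constructed in \cref{prop:pi-maps-from-alpha-beta}. The only thing to be careful about is that the output lives on $X_{\sigma \cup \{F\}}(1)$, so $H \neq F$ and $H \notin \sigma$, which rules out the would-be boundary cases $H \in \{K,F,L\}$ and lets the three cases above exhaust all possibilities.
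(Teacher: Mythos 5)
Your proposal is correct and follows essentially the same route as the paper's proof: a coordinate-wise case analysis on $H \in (K,F)$, $H \in (F,L)$, and $H \notin (K,L)$, applying axioms (1) and (3) of \cref{def:alpha-beta} for the $\alpha$ identity and (4) and (2) for the $\beta$ identity. Nothing further is needed.
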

\begin{proof}
    For $H \in (K,F)$ we compute
    \[
        \pi_{\sigma+F}(\alpha_K^L)_H = \alpha_K^L(H) - \alpha_K^F(H) \cdot \alpha_K^L(F) \underset{\text{(1) of \cref{def:alpha-beta}}}{=} 0
    \]
    and
    \[
        \pi_{\sigma+F}(\beta_K^L)_H = \beta_K^L(H) - \alpha_K^F(H) \cdot \beta_K^L(F) \underset{\text{(4) of \cref{def:alpha-beta}}}{=} \beta_K^F(H),
    \]
    and for $H \in (F,L)$ we compute
    \[
        \pi_{\sigma+F}(\alpha_K^L)_H = \alpha_K^L(H) - \beta_F^L(H) \cdot \alpha_K^L(F) \underset{\text{(3) of \cref{def:alpha-beta}}}{=} \alpha_F^L(H)
    \]
    and
    \[
        \pi_{\sigma+F}(\beta_K^L)_H = \beta_K^L(H) - \beta_F^L(H) \cdot \beta_K^L(F) \underset{\text{(2) of \cref{def:alpha-beta}}}{=} 0.
    \]
    For $H \not\in (K,L)$ we compute
    \[
        \pi_{\sigma+F}(\alpha_K^L)_H = \alpha_K^L(H) = 0
    \]
    and
    \[
        \pi_{\sigma+F}(\beta_K^L)_H = \beta_K^L(H) = 0
    \]
    since we consider $\alpha_K^L$ and $\beta_K^L$ to be $0$ outside of $(K,L)$.
\end{proof}

\begin{corollary} \label{cor:alpha-beta-in-cone}
    If $\cC_\varnothing$ is non-empty, then the vectors $\alpha,\beta$ are elements of $\overline{\cC}_\varnothing$.
\end{corollary}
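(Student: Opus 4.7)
The plan is to use the characterization of $\overline{\cC}_\varnothing$ from \cref{prop:cones} (3): since $\cC_\varnothing$ is assumed non-empty, $\overline{\cC}_\varnothing$ is precisely the set of $\bm\pi$-non-negative vectors. So it suffices to show that for every $\tau \in X$ the vectors $\pi_{\varnothing+\tau}(\alpha)$ and $\pi_{\varnothing+\tau}(\beta)$ lie in $\R^{X_\tau(1)}_{\geq 0}$.

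To evaluate $\pi_{\varnothing+\tau}(\alpha)$ for $\tau = \{F_1 < F_2 < \cdots < F_k\}$, I would pick the specific factorization
\[
    \pi_{\varnothing+\tau} \;=\; \pi_{\{F_1,\ldots,F_{k-1}\}+F_k} \circ \cdots \circ \pi_{\{F_1\}+F_2} \circ \pi_{\varnothing+F_1},
\]
whose validity follows from the commutativity of $\bm\pi$ proved in \cref{prop:pi-maps-from-alpha-beta}. Then I argue by induction on $i$ that after the first $i$ factors have been applied, the current vector equals $\alpha_{F_i}^{\hat{1}}$ (viewed in $\R^{X_{\{F_1,\ldots,F_i\}}(1)}$), with $F_0 := \hat{0}$. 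The base case $i=0$ is just $\alpha = \alpha_{\hat{0}}^{\hat{1}}$. For the step, the ordering of the current $\sigma = \{F_1,\ldots,F_{i-1}\}$ together with the new vertex $F_i$ puts $F_i$ in the top position, so the bracketing elements in the statement of \cref{lem:pi-alpha-beta-relations} are $K = F_{i-1}$ and $L = \hat{1}$; applying the lemma to the inductive vector $\alpha_{F_{i-1}}^{\hat{1}}$ gives exactly $\alpha_{F_i}^{\hat{1}}$. Iterating to $i=k$ yields $\pi_{\varnothing+\tau}(\alpha) = \alpha_{F_k}^{\hat{1}}$ on $X_\tau(1)$, which is non-negative because the maps in $\bm\alpha$ take values in $\R_{\geq 0}$.

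The argument for $\beta$ is symmetric: I would instead factor $\pi_{\varnothing+\tau}$ using the reverse ordering $F_k, F_{k-1}, \ldots, F_1$, so that each time $F_i$ is added it sits at the \emph{bottom} of the current ordered $\sigma$. Then the bracketing elements are $K = \hat{0}$ and $L = F_{i+1}$, and the second half of \cref{lem:pi-alpha-beta-relations} replaces $\beta_{\hat{0}}^{F_{i+1}}$ by $\beta_{\hat{0}}^{F_i}$. After all $k$ applications the result is $\beta_{\hat{0}}^{F_1}$, again non-negative.

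The step is essentially bookkeeping; the only thing to watch is that at each stage of the induction the current vector is really of the form $\alpha_K^L$ (resp.\ $\beta_K^L$) with $K,L$ equal to the immediate neighbors of the vertex being added in the ordered $\sigma$. Processing the $F_j$'s in increasing (resp.\ decreasing) order makes this automatic, so no further obstacle arises.
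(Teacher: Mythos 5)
Your proposal is correct and follows essentially the same route as the paper: the paper's proof also inductively applies \cref{lem:pi-alpha-beta-relations} to conclude $\pi_\sigma(\alpha),\pi_\sigma(\beta) \in \R^{X_\sigma(1)}_{\geq 0}$ for all $\sigma \in X$, and then invokes (3) of \cref{prop:cones}. You merely make explicit the bookkeeping the paper leaves implicit, namely the choice of increasing (for $\alpha$) and decreasing (for $\beta$) processing order so that each application of the lemma is to a vector of the form $\alpha_K^{\hat{1}}$ or $\beta_{\hat{0}}^L$ with $K,L$ bracketing the newly added vertex.
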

\begin{proof}
    By inductively applying \cref{lem:pi-alpha-beta-relations}, we have that $\pi_\sigma(\alpha), \pi_\sigma(\beta) \in \R^{X_\sigma(1)}_{\geq 0}$ for all $\sigma \in X$. Thus $\alpha$ and $\beta$ are $\bm\pi$-non-negative. Therefore $\alpha,\beta \in \overline{\cC}_\varnothing$ by (3) of \cref{prop:cones}.
\end{proof}


\begin{definition}[Contiguous face]
    We say that $\sigma \in X$ is \textbf{contiguous} if there exist $i \leq j$ such that $\sigma \cap T_k \neq \varnothing$ if and only if $i \leq k \leq j$. Note in particular that $\varnothing$ is not contiguous. In such a case, we also say $\sigma$ is $[i,j]$-contiguous. 
\end{definition}

\begin{definition}[Contiguous link]
We say that $\sigma\in X$ of co-dimension (at least) 2 is \textbf{$[i,j]$-link-contiguous} if every facet $\tau\in X_\sigma$ is $[i,j]$-contiguous. 
\end{definition}

We now prove a version of and \cref{lem:quadratic-entries} applied to top-link path complexes. In particular this shows that to prove a  $p_\varnothing$ corresponding to a connected top-link path complex $(X,\mu)$ is $\cC_\varnothing$-Lorentzian it is enough to check Hessians of the contiguous links of co-dimension 2 have one positive eigenvalue. 

\begin{theorem}\label{thm:quadraticcheck}
    Suppose $(X,\mu)$ is a $d$-dimensional connected top-link path complex and $\cC_\varnothing$ is non-empty. Suppose for any $1 \leq i \leq d-1$ and any $[i,i+1]$-link-contiguous $\sigma\in X(d-2)$ the following holds:

    Let $A$ be the adjacency matrix of the 1-skeleton of $X_\sigma$ weighted according to $\mu$, i.e., $A(F,G)=\mu(\sigma\cup \{F,G\})$ and let $D$ be the diagonal matrix, given by 
    $$ D(H,H)=\begin{cases}-\sum_{G\in X_\sigma(1):H\prec G} \beta_H^{\sigma_{i+2}}(G) \cdot A(H,G) &\text{if } H\in T_i\cap X_\sigma(1)\\ -\sum_{F\in X_\sigma(1): F \prec H} \alpha_{\sigma_{i-1}}^H(F) \cdot A(F,H)&\text{if } H\in T_{i+1}\cap X_\sigma(1),\end{cases} $$
    where $\sigma_0 = \hat{0}$ and $\sigma_{d+1} = \hat{1}$.
    The matrix $A+D$ has at most one positive eigenvalue.

    Then $p_\sigma$ is $\cC_\sigma$-Lorentzian for all $\sigma \in X$.
\end{theorem}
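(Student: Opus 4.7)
The plan is to invoke Theorem \ref{thm:conn+quad=Lor} directly: connectivity and non-emptiness of $\cC_\varnothing$ are already assumed, so the only remaining task is to verify that the Hessian of $p_\sigma$ has at most one positive eigenvalue for every $\sigma \in X(d-2)$. Since $X$ is $d$-partite, any such $\sigma$ has $|\sigma|=d-2$ and therefore misses exactly two parts $T_i, T_j$ with $i<j$. I would split the verification into two cases: (a) $j = i+1$, i.e.\ $\sigma$ is $[i,i+1]$-link-contiguous (this is the regime the hypothesis addresses), and (b) $j > i+1$, which must be handled separately using the top-link path structure.

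For case (a) I would apply Lemma \ref{lem:quadratic-entries} to write the Hessian as $A+D$, and then check that the diagonal $D$ from that lemma coincides with the one stated in this theorem. The key computation is the following: for $H \in T_i \cap X_\sigma(1)$, the set $X_{\sigma \cup \{H\}}(1)$ sits inside $T_{i+1}$, and for any $G$ in that set the chain $\sigma \cup \{H\}$ has $K=\sigma_{i-1}$ and $L=\sigma_{i+2}$ as the immediate neighbors of $H$, with $H \prec G \prec L$. Hence the middle branch of \eqref{eq:pi-def-from-alpha-beta} fires, giving $\partial_{t_H}\pi_{\sigma+H}(\bm{t})_G = -\beta_H^{\sigma_{i+2}}(G)$, which matches the $T_i$-row of $D$ exactly. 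The symmetric computation for $H \in T_{i+1}\cap X_\sigma(1)$ produces $-\alpha_{\sigma_{i-1}}^H(F)$, matching the $T_{i+1}$-row. The stated hypothesis then gives the required spectral bound.

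For case (b) the argument is a bit different. First, the diagonal vanishes: for $H \in T_i$, every neighbor $G \in X_{\sigma \cup \{H\}}(1)$ lies in $T_j$ with $j \geq i+2$, so $G$ is strictly above both of $H$'s neighbors $\sigma_{i-1}$ and $\sigma_{i+1}$ in the chain order, neither branch of \eqref{eq:pi-def-from-alpha-beta} applies, and $\partial_{t_H}\pi_{\sigma+H}(\bm{t})_G = 0$; the same holds for $H \in T_j$, so $D=0$. Second, the adjacency matrix $A$ has at most one positive eigenvalue: since $\sigma$ meets $T_{i+1},\dots,T_{j-1}$, the top-link path condition applied with $\tau=\sigma$ factorizes $\mu(\sigma\cup\{F,G\}) = \mu(\sigma)\cdot \P[F\in\cdot\mid \sigma]\cdot \P[G\in\cdot\mid \sigma]$ for $F\in T_i, G\in T_j$. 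Writing $A = \mu(\sigma)(\bm{u}\bm{v}^\top + \bm{v}\bm{u}^\top)$ with $\bm{u}, \bm{v}$ the marginals supported on the disjoint parts $T_i, T_j$, one sees $A$ has rank two with eigenvalues $\pm\mu(\sigma)\|\bm{u}\|\|\bm{v}\|$.

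Combining the two cases yields the hypothesis of Theorem \ref{thm:conn+quad=Lor}, completing the proof. The main obstacle I anticipate is the careful bookkeeping in case~(b): one has to track which branch (if any) of the piecewise definition of $\pi_{\sigma+H}$ is triggered, and verify that the top-link path condition is genuinely strong enough to collapse $A$ to a rank-two matrix. The case-(a) verification, by contrast, is essentially a mechanical unfolding of the definitions.
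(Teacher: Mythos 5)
Your proposal is correct and follows essentially the same route as the paper: verify via Lemma \ref{lem:quadratic-entries} that the stated $A+D$ is the Hessian for $[i,i+1]$-link-contiguous $\sigma$, and for the non-contiguous links use the top-link conditional independence to see the Hessian reduces to a rank-two bipartite adjacency matrix (with vanishing diagonal correction) having exactly one positive eigenvalue, then conclude by Theorem \ref{thm:conn+quad=Lor}. The only cosmetic difference is that you compute the $\pm\mu(\sigma)\lVert\bm{u}\rVert\lVert\bm{v}\rVert$ eigenvalues directly, whereas the paper cites the bipartite-spectrum fact from Fact \ref{lem:AB}.
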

\begin{proof}
    Fix $1 \leq i \leq d-1$ and any $[i,i+1]$-link-contiguous $\sigma \in X(d-2)$. For any $F \prec G$ in $X_\sigma$, we have
    \[
        \partial_{t_F} \pi_{\sigma+F}(\bm{t})_G \underset{\text{\eqref{eq:pi-def-from-alpha-beta}}}{=} -\beta_F^{\sigma_{i+2}}(G) \qquad \text{and} \qquad \partial_{t_G} \pi_{\sigma+G}(\bm{t})_F \underset{\text{\eqref{eq:pi-def-from-alpha-beta}}}{=} -\alpha_{\sigma_{i-1}}^G(F).
    \]
    Thus by \cref{lem:quadratic-entries}, $A+D$ is equal to the Hessian of $p_\sigma$.

    Now by \cref{thm:conn+quad=Lor}, we only need to check the Hessians of $p_\sigma$ for $\sigma \in X(d-2)$ which are not link-contiguous. Fix such a $\sigma$, and let $i < j$ be such that $\sigma_i = \sigma_j = \varnothing$. Now given $\{F,G\} \in X_\sigma(2)$ such that $F < G$, we compute
    \begin{align*}
        A(F,G) = \mu(\sigma\cup \{F,G\}) &= \P[\sigma\cup\{F,G\} | \sigma]\cdot \P[\sigma]\\
        &=\P[F,G | \sigma]\cdot \P[\sigma]\\
        &\underset{\text{cond Indep}}{=}\P[F | \sigma]\cdot \P[G | \sigma]\cdot \P[\sigma]
    \end{align*}
    The last equality uses that $\sigma$ is not link-contiguous which implies $j>i+1$. Now, let $v\in \R^{X_\sigma(1)\cap T_i}, w\in \R^{X_\sigma(1)\cap T_j}$, where $v_F=\P[F | \sigma]$ and $w_G=\P[G | \sigma]\cdot \P[\sigma]$. Then, by the above equation,
    $$ A=\begin{bmatrix} 0 & vw^\top \\ wv^\top & 0 \end{bmatrix}$$
    $A$ is a symmetric rank 2 matrix so it has two non-zero eigenvalues. Further, since it is adjacency matrix of a bipartite graph, \cref{lem:AB} implies it has one positive and one negative eigenvalue. 
    Finally, since all entries of $A$ are non-negative, and the $\bm{\alpha},\bm{\beta}$-vectors are non-negative the $D$-matrix defined in \cref{lem:quadratic-entries} has non-positive entries (in fact one can see $D=0$). Therefore, the Hessian of $p_\sigma$, i.e., $A+D$, has at most one positive eigenvalue.
\end{proof}

In the rest of this section, we prove two consequences of constructing $\cC$-Lorentzian polynomials: Namely we first give an eigenvalue bound on the induced subgraphs of the 1-skeleton of $p_\varnothing$, see \cref{cor:eig-bound-alpha-beta}; and then in \cref{cor:log-concavity-alpha-beta} we prove log-concavity inequalities for some sequences of numbers which we will eventually use in \cref{sec:coloredpathcomplexes} to re-derive Heron-Rota-Welsh conjecture and to prove generalizations of Stanley's theorem. 

\begin{definition} \label{def:mu-alpha-beta}
    We define a function $\mu_{\alpha,\beta}: \{\sigma \in X : \sigma \text{ contiguous}\} \to \R_{\geq 0}$ as follows.
    For $[i,j]$-contiguous $\sigma$, we define
    \[ \mu_{\alpha,\beta}(\sigma) := \sum_{\tau\in X(d): \sigma\subseteq \tau}\left[\prod_{k=1}^{i-1} \beta^{\tau_{k+1}}(\tau_k)\right] \cdot \mu(\tau) \cdot \left[\prod_{k=j+1}^d \alpha_{\tau_{k-1}}(\tau_k)\right], \]
    where we recall that $\tau_k$ is defined to be $\tau \cap T_k$.
\end{definition}

\begin{lemma}[General mixed derivatives lemma] \label{lem:general-mixed-derivs-expression}
    We have that
    \[
        \nabla_\beta^{i-1} \nabla_\alpha^{d-j} p_\sigma(\pi_\sigma(\bm{t})) = \begin{cases}
            \mu_{\alpha,\beta}(\sigma), & \text{if $\sigma$ is $[i,j]$-contiguous} \\
            0, & \text{otherwise}.
        \end{cases}
    \]
\end{lemma}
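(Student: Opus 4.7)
The plan is to induct on $d - |\sigma|$, the degree of $p_\sigma$. The base case $d - |\sigma| = 0$ means $\sigma \in X(d)$ is a facet and is automatically $[1,d]$-contiguous, so both sides equal $\mu(\sigma)$ (the products in $\mu_{\alpha,\beta}(\sigma)$ are empty). A key preliminary step I would first establish is that for any non-empty $\sigma \in X$, $\pi_\sigma(\beta) = \beta^{F}$ where $F = \min \sigma$ (and symmetrically $\pi_\sigma(\alpha) = \alpha_{F'}$ where $F' = \max \sigma$). This follows by induction on $|\sigma|$: the base case $|\sigma|=1$ is precisely \cref{lem:pi-alpha-beta-relations} with $K=\hat 0, L=\hat 1$; for the inductive step I order the elements of $\sigma$ as $F = F_1 < F_2 < \cdots < F_m$ and add $F_1 = F$ first, then inspect \eqref{eq:pi-def-from-alpha-beta} to see that subsequent applications $\pi_{\cdot + F_\ell}$ (with $F_\ell > F$) leave entries indexed by $H < F$ unchanged, so $\beta^F$ is preserved.

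For the inductive step of the main lemma, fix $i,j$ with $(i-1)+(d-j)=d-|\sigma|$, and assume $i > 1$ (the case $j < d$ is symmetric via $\nabla_\alpha$). Applying $\nabla_\beta$, the chain rule (\cref{lem:chain-rule}), the preliminary claim $\pi_\sigma(\beta) = \beta^F$, \cref{lem:derivs-expression}, and commutativity of $\bm\pi$ yield
\begin{align*}
\nabla_\beta[p_\sigma(\pi_\sigma(\bm t))] &= [\nabla_{\beta^F} p_\sigma](\pi_\sigma(\bm t)) = \sum_{\substack{H \in X_\sigma(1) \\ H < F}} \beta^F(H)\, p_{\sigma \cup \{H\}}(\pi_{\sigma \cup \{H\}}(\bm t)).
\end{align*}
Applying the remaining $\nabla_\beta^{i-2}\nabla_\alpha^{d-j}$ term-by-term and invoking the induction hypothesis on each $\sigma \cup \{H\}$ (which has $d - |\sigma \cup \{H\}| = (i-2)+(d-j)$, matching the $(i', j') = (i-1, j)$ case), I obtain
\[
\nabla_\beta^{i-1}\nabla_\alpha^{d-j} p_\sigma(\pi_\sigma(\bm t)) = \sum_{\substack{H \in X_\sigma(1),\, H<F \\ \sigma \cup \{H\} \text{ is } [i-1,j]\text{-contiguous}}} \beta^F(H)\, \mu_{\alpha,\beta}(\sigma\cup\{H\}).
\]

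Finally I analyze this sum. The constraints $H < F = \min\sigma$ together with $\sigma \cup \{H\}$ being $[i-1,j]$-contiguous force $H \in T_{i-1}$ and force $\sigma$ itself to be $[i,j]$-contiguous: if $\sigma$ already had an element in $T_{i-1}$ then $\min\sigma \in T_{i-1}$, contradicting that the $T_k$-index of $H$ is $\geq i-1$ while $H < \min\sigma$; and removing the unique $T_{i-1}$-element from a $[i-1,j]$-contiguous set must leave $\sigma$ $[i,j]$-contiguous. This immediately handles the non-contiguous branch, as the sum is empty, giving $0$. When $\sigma$ is $[i,j]$-contiguous, substituting the definition of $\mu_{\alpha,\beta}(\sigma \cup \{H\})$ and swapping sums forces $H = \tau_{i-1}$ for each extension $\tau \supseteq \sigma$ in $X(d)$; then $\beta^F(H) = \beta^{\tau_i}(\tau_{i-1})$ is exactly the $k=i-1$ factor needed to extend $\prod_{k=1}^{i-2}\beta^{\tau_{k+1}}(\tau_k)$ to $\prod_{k=1}^{i-1}\beta^{\tau_{k+1}}(\tau_k)$, producing $\mu_{\alpha,\beta}(\sigma)$. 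The main technical hurdle is this contiguity/support argument, which simultaneously rules out contributions from non-$T_{i-1}$ indices and zeroes out the entire sum for non-contiguous $\sigma$.
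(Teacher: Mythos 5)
Your proposal is correct and follows essentially the same route as the paper: induct on the codimension, peel off one $\nabla_\beta$ (or $\nabla_\alpha$) using the chain rule together with $\pi_\sigma(\beta)=\beta^{\min\sigma}$ (resp.\ $\pi_\sigma(\alpha)=\alpha_{\max\sigma}$), rewrite via \cref{lem:derivs-expression} and commutativity of $\bm\pi$, and then use the contiguity bookkeeping to show the sum is empty unless $\sigma$ is $[i,j]$-contiguous, in which case the $\beta^{\tau_i}(\tau_{i-1})$ factor extends the product in \cref{def:mu-alpha-beta} to give $\mu_{\alpha,\beta}(\sigma)$. The only cosmetic differences are that you make explicit the iterated form of \cref{lem:pi-alpha-beta-relations} and the degree-matching assumption $(i-1)+(d-j)=d-|\sigma|$, both of which the paper uses implicitly.
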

\begin{proof}Fix $\sigma\in X$ and $i,j \in [d]$. We compute $\nabla_\beta^{i-1} \nabla_\alpha^{d-j} p_\sigma(\pi_\sigma(\bm{t}))$ by induction on $\codim(\sigma)$.
    Let $a\in [d]$ be the smallest number such that $\sigma_a \neq \varnothing$, and let $b\in [d]$ be the largest number such that $\sigma_b \neq \varnothing$.
    
    First, if $\codim(\sigma) = 0$ then $\sigma$ is a facet of $X$ which implies $a = 1$ and $b=d$, and the result immediately follows since $p_\sigma(\bm{t}) = \mu(\sigma)$.
    
    For the inductive step, assume that $\codim(\sigma) > 0$. For ease of notation, we denote $K = \sigma_a$ and $L = \sigma_b$. Suppose first that $d-j > 0$. Then we have
    \[
    \begin{split}
        \nabla_{\beta}^{i-1} \nabla_{\alpha}^{d-j} \left[p_\sigma(\pi_\sigma(\bm{t}))\right] &\underset{\text{\cref{lem:chain-rule}}}{=} \nabla_{\beta}^{i-1} \nabla_{\alpha}^{d-j-1} \left[\left[\nabla_{\pi_\sigma(\alpha)} p_\sigma\right](\pi_\sigma(\bm{t}))\right] \\
            &\underset{\text{\cref{lem:pi-alpha-beta-relations}}}{=} \nabla_{\beta}^{i-1} \nabla_{\alpha}^{d-j-1} \left[\left[\nabla_{\alpha_L} p_\sigma\right](\pi_\sigma(\bm{t}))\right] \\
            &\underset{\text{\cref{lem:derivs-expression}}}{=} \nabla_{\beta}^{i-1} \nabla_{\alpha}^{d-j-1} \left[\left[\sum_{H: L < H} \alpha_L(H) \cdot p_{\sigma \cup \{H\}}(\pi_{\sigma+H}(\cdot))\right](\pi_\sigma(\bm{t}))\right] \\
            &\underset{\text{commutativity of $\bm\pi$}}{=} \sum_{H: L < H} \alpha_L(H) \cdot \nabla_{\beta}^{i-1} \nabla_{\alpha}^{d-j-1} \left[p_{\sigma \cup \{H\}}(\pi_{\sigma \cup \{H\}}(\bm{t}))\right]
    \end{split}
    \]
    First, if $b = d$ (which implies $b > j$) then the above expression is $0$ as desired, since there is no $H$ such that $L < H$. Otherwise by induction, each term of the right-hand side can only be non-zero if $\sigma \cup \{H\}$ is $[i,j+1]$-contiguous. Thus either this expression is $0$, or else $\sigma$ is $[i,j]$-contiguous and $H \in T_{j+1}$ and by induction we have
    \[
    \begin{split}
        \nabla_{\beta}^{i-1} \nabla_{\alpha}^{d-j} \left[p_\sigma(\pi_\sigma(\bm{t}))\right] &\underset{\text{induction}}{=} \sum_{H: L \prec H} \alpha_L(H) \cdot \mu_{\alpha,\beta}(\sigma \cup \{H\}) \\
            &\underset{\text{\cref{def:mu-alpha-beta}}}{=} \sum_{H: L \prec H} \alpha_L(H) \sum_{\tau\in X(d): \sigma \cup \{H\}\subseteq \tau}\left[\prod_{k=1}^{i-1} \beta^{\tau_{k+1}}(\tau_k)\right] \cdot \mu(\tau) \cdot \left[\prod_{k=j+2}^d \alpha_{\tau_{k-1}}(\tau_k)\right] \\
            &= \sum_{\tau\in X(d): \sigma \subseteq \tau}\left[\prod_{k=1}^{i-1} \beta^{\tau_{k+1}}(\tau_k)\right] \cdot \mu(\tau) \cdot \left[\prod_{k=j+1}^d \alpha_{\tau_{k-1}}(\tau_k)\right] \\
            &\underset{\text{\cref{def:mu-alpha-beta}}}{=} \mu_{\alpha,\beta}(\sigma)
    \end{split}
    \]
    as desired.
    
    In the case that $i > 1$, essentially the same proof works by first applying the computation above to $\nabla_\beta$ instead of $\nabla_\alpha$. First we would obtain
    \[
        \nabla_{\beta}^{i-1} \nabla_{\alpha}^{d-j} \left[p_\sigma(\pi_\sigma(\bm{t}))\right] = \sum_{H: H < K} \beta^K(H) \cdot \nabla_{\beta}^{i-2} \nabla_{\alpha}^{d-j} \left[p_{\sigma \cup \{H\}}(\pi_{\sigma \cup \{H\}}(\bm{t}))\right].
    \]
    If $a = 1$ (which implies $a < i$) then the above expression is $0$ as above. Otherwise by induction, each term of the right-hand side can only be non-zero if $\sigma \cup \{H\}$ is $[i-1,j]$-contiguous. Thus either this expression is $0$, or else $\sigma$ is $[i,j]$-contiguous and $H \in T_{i-1}$ and a similar argument as given above gives
    \[
        \nabla_{\beta}^{i-1} \nabla_{\alpha}^{d-j} \left[p_\sigma(\pi_\sigma(\bm{t}))\right] = \mu_{\alpha,\beta}(\sigma)
    \]
    as desired.
\end{proof}

\begin{corollary}[Eigenvalue bound for the $\alpha$-$\beta$ Hessians] \label{cor:eig-bound-alpha-beta}
    Given $1 \leq k \leq d-1$, let $B_k$ be the bipartite graph on $T_k \cup T_{k+1}$ with edge weights given by $\mu_{\alpha,\beta}(\{F,G\})$ for $F \prec G$, let $A_k$ be the weighted adjacency matrix of $B_k$, and let $D_k$ be the diagonal weighted degree matrix of $B_k$. Further, let $M_k$ be the diagonal matrix with entries given by
    \[
        m_k(F,F) = \frac{\sum_{G:F \prec G} \beta_F(G) \cdot \mu_{\alpha,\beta}(\{F,G\})}{\sum_{G:F \prec G} \mu_{\alpha,\beta}(\{F,G\})} \qquad \text{for} \quad F \in T_k
    \]
    and
    \[
        m_k(G,G) = \frac{\sum_{F:F \prec G} \alpha^G(F) \cdot \mu_{\alpha,\beta}(\{F,G\})}{\sum_{F:F \prec G} \mu_{\alpha,\beta}(\{F,G\})} \qquad \text{for} \quad G \in T_{k+1}.
    \]
    If $\cC_\varnothing$ is non-empty and $p_\varnothing$ is $\cC_\varnothing$-Lorentzian, then
    \[
        D_k^{-1/2} A_k D_k^{-1/2} \preceq M_k + \bm{w}\bm{w}^\top
    \]
    for some real vector $\bm{w}$.
\end{corollary}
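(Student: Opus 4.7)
The plan is to apply \cref{lem:main-purpose-lemma}~(1) to the degree-$2$ polynomial $q := \nabla_\beta^{k-1} \nabla_\alpha^{d-k-1} p_\varnothing$. Since $p_\varnothing$ is $\cC_\varnothing$-Lorentzian and $\alpha, \beta \in \overline{\cC}_\varnothing$ by \cref{cor:alpha-beta-in-cone}, the Hessian of $q$ has at most one positive eigenvalue, and by Cauchy interlacing (\cref{lem:AB}) so does its principal submatrix $H_k$ indexed by $T_k \cup T_{k+1}$. The strategy is to identify $H_k$ explicitly as $A_k - M_k D_k$ and then extract the desired inequality via a rank-one spectral bound followed by a conjugation by $D_k^{-1/2}$.

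For the off-diagonal entries, given $F \neq G$ in $T_k \cup T_{k+1}$, \cref{lem:derivs-expression} gives $\partial_{t_F} \partial_{t_G} p_\varnothing = p_{\{F,G\}}(\pi_{\{F,G\}}(\bm{t}))$ when $\{F,G\} \in X(2)$ and $0$ otherwise. Applying $\nabla_\beta^{k-1} \nabla_\alpha^{d-k-1}$ and invoking \cref{lem:general-mixed-derivs-expression} produces $\mu_{\alpha,\beta}(\{F,G\})$ precisely when $\{F,G\}$ is $[k,k+1]$-contiguous; by the partite structure this matches the off-diagonal entries of $A_k$. For the diagonal entries, I would expand $\partial_{t_F}^2 p_\varnothing = [\nabla_{\pi_F(\bm{1}_F)} p_{\{F\}}](\pi_F(\bm{t}))$ via the chain rule. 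The definition of $\bm\pi$ yields $\pi_F(\bm{1}_F)_H = -\alpha^F(H)$ for $H < F$ and $-\beta_F(H)$ for $H > F$; then using \cref{lem:derivs-expression} and commutativity $\pi_{\{F\}+G} \circ \pi_F = \pi_{\{F,G\}}$, we rewrite $\partial_{t_F}^2 p_\varnothing$ as a signed sum of $p_{\{F,G\}}(\pi_{\{F,G\}}(\bm{t}))$ terms with coefficients $-\alpha^F(G)$ or $-\beta_F(G)$. Applying $\nabla_\beta^{k-1} \nabla_\alpha^{d-k-1}$ and \cref{lem:general-mixed-derivs-expression}, only $[k,k+1]$-contiguous pairs $\{F,G\}$ contribute: for $F \in T_k$ these are exactly the $G \succ F$ in $T_{k+1}$, giving $H_k(F,F) = -\sum_{G:\, F \prec G} \beta_F(G)\, \mu_{\alpha,\beta}(\{F,G\}) = -M_k(F,F)\, D_k(F,F)$, and symmetrically (using the $-\alpha^G$ part) for $F \in T_{k+1}$. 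Thus $H_k = A_k - M_k D_k$.

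Since $H_k$ has at most one positive eigenvalue, spectral decomposition yields $H_k \preceq \bm{u}\bm{u}^\top$ for some real vector $\bm{u}$. Rearranging gives $A_k \preceq M_k D_k + \bm{u}\bm{u}^\top$, and conjugating by $D_k^{-1/2}$ (noting that $D_k^{-1/2} M_k D_k D_k^{-1/2} = M_k$ since diagonal matrices commute) yields
\[
    D_k^{-1/2} A_k D_k^{-1/2} \preceq M_k + \bm{w}\bm{w}^\top
\]
with $\bm{w} := D_k^{-1/2}\bm{u}$, as desired. The main step requiring care is the diagonal computation: correctly tracking the $-\alpha^F$ and $-\beta_F$ contributions from differentiating through $\pi_F$, and identifying which surviving mixed derivatives reassemble into the prescribed entries of $M_k D_k$.
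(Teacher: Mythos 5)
Your proposal is correct and follows essentially the same route as the paper: apply the directional derivatives $\nabla_\beta^{k-1}\nabla_\alpha^{d-k-1}$ to $p_\varnothing$, use \cref{lem:derivs-expression}, \cref{lem:chain-rule} and \cref{lem:general-mixed-derivs-expression} to identify the resulting Hessian as $A_k - M_kD_k$ on $T_k\cup T_{k+1}$, then invoke \cref{cor:alpha-beta-in-cone} and \cref{lem:main-purpose-lemma}(1) to get the one-positive-eigenvalue bound and conjugate by $D_k^{-1/2}$. The only cosmetic difference is that you pass to the principal submatrix via Cauchy interlacing, whereas the paper observes the full Hessian is already supported on $T_k\cup T_{k+1}$; both are fine.
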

\begin{proof}
    Note first that for $\{F,G\} \in X(2)$ with $F < G$, by \cref{lem:derivs-expression} we have
    \[
        \partial_{t_F} \partial_{t_G} p_\varnothing(\bm{t}) = p_{\{F,G\}}(\pi_{\{F,G\}}(\bm{t})).
    \]
    By \cref{lem:general-mixed-derivs-expression}, we then have
    \[
        \nabla_{\beta}^{k-1} \nabla_{\alpha}^{d-k-1} \partial_{t_F} \partial_{t_G} p_\varnothing(\bm{t}) = \begin{cases}
            \mu_{\alpha,\beta}(\sigma) > 0, & F \in T_k, G \in T_{k+1} \\
            0, & \text{otherwise}.
        \end{cases}
    \]
    Next for any $H \in X(1)$ we compute
    \[
    \begin{split}
        \partial_{t_H}^2 p_\varnothing(\bm{t}) &\underset{\text{\cref{lem:derivs-expression}}}{=} \partial_{t_H} p_{\{H\}}(\pi_{\{H\}}(\bm{t})) \\
            &\underset{\text{\cref{lem:chain-rule}}}{=} \left[\nabla_{\pi_{\{H\}}(\bm{1}_H)} p_{\{H\}}\right](\pi_{\{H\}}(\bm{t})) \\
            &\underset{\text{\cref{eq:pi-def-from-alpha-beta}}}{=} -\sum_{F: F < H} \alpha^H(F) \cdot p_{\{F,H\}}(\pi_{\{F,H\}}(\bm{t})) - \sum_{G: H < G} \beta_H(G) \cdot p_{\{H,G\}}(\pi_{\{H,G\}}(\bm{t})),
    \end{split}
    \]
    and by \cref{lem:general-mixed-derivs-expression} we have
    \[
        \nabla_{\beta}^{k-1} \nabla_{\alpha}^{d-k-1} \partial_{t_H}^2 p_\varnothing(\bm{t}) = \begin{cases}
            -\sum_{G: H \prec G} \beta_H(F) \cdot \mu_{\alpha,\beta}(\{H,G\}), & H \in T_k \\
            -\sum_{F: F \prec H} \alpha^H(F) \cdot \mu_{\alpha,\beta}(\{F,H\}), & H \in T_{k+1} \\
            0, & \text{otherwise}
        \end{cases}.
    \]
    Thus the Hessian $\cH$ of $\nabla_{\beta}^{k-1} \nabla_{\alpha}^{d-k-1} p_\varnothing(\bm{t})$ is supported only on $T_k \cup T_{k+1}$, and $\cH = A_k - D_k^{1/2}M_kD_k^{1/2}$. Since $p_\varnothing$ is $\cC_\varnothing$-Lorentzian and $\alpha,\beta \in \overline{\cC}_\varnothing$ by \cref{cor:alpha-beta-in-cone}, we thus have
    \[
        A_k - D_k^{1/2}M_kD_k^{1/2} = \cH \preceq \bm{v}\bm{v}^\top
    \]
    by \cref{lem:main-purpose-lemma} for some real vector $\bm{v}$, and this implies the desired result.
\end{proof}

\begin{corollary}[Log-concavity for the $\alpha$-$\beta$ coefficients] \label{cor:log-concavity-alpha-beta}
    Given $1 \leq k \leq d-1$ we have
    \[
        \sum_{F \in T_k} \beta(F) \cdot \mu_{\alpha,\beta}(\{F\}) = \sum_{G \in T_{k+1}} \alpha(G) \cdot \mu_{\alpha,\beta}(\{G\}),
    \]
    and we denote this quantity by $c_k$. Further, define
    \[
        c_0 = \sum_{G \in T_1} \alpha(G) \cdot \mu_{\alpha,\beta}(\{G\}) \qquad \text{and} \qquad c_d = \sum_{F \in T_d} \beta(F) \cdot \mu_{\alpha,\beta}(\{F\}).
    \]
    If $\cC_\varnothing$ is non-empty and $p_\varnothing$ is $\cC_\varnothing$-Lorentzian, then $(c_0,c_1,\ldots,c_d)$ forms a log-concave sequence.
\end{corollary}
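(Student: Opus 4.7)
The plan is to recognize $c_k$ as the constant mixed derivative $\nabla_\beta^k \nabla_\alpha^{d-k} p_\varnothing$ and then derive both the cross-type equality of the two expressions and the log-concavity by specializing $p_\varnothing$ along the two-parameter family $\beta x + \alpha$ and invoking \cref{lem:main-purpose-lemma}(2).

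First I will verify that for every $1 \leq k \leq d$,
\[
    \nabla_\beta^{k-1} \nabla_\alpha^{d-k} p_\varnothing(\bm{t}) = \sum_{F \in T_k} \mu_{\alpha,\beta}(\{F\}) \cdot t_F.
\]
By \cref{lem:derivs-expression}, $\partial_{t_F} p_\varnothing(\bm{t}) = p_{\{F\}}(\pi_F(\bm{t}))$, and applying \cref{lem:general-mixed-derivs-expression} to the singleton $\sigma = \{F\}$ (which is $[k,k]$-contiguous iff $F \in T_k$) yields
\[
    \nabla_\beta^{k-1} \nabla_\alpha^{d-k} \partial_{t_F} p_\varnothing = \begin{cases} \mu_{\alpha,\beta}(\{F\}) & F \in T_k, \\ 0 & F \notin T_k. \end{cases}
\]
Since $\nabla_\beta^{k-1} \nabla_\alpha^{d-k} p_\varnothing(\bm{t})$ is homogeneous of degree $1$, it is determined by its partial derivatives through \cref{lem:Eulers-formula}, which gives the displayed identity. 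The same argument, with the roles of $\alpha$ and $\beta$ exchanged, yields $\nabla_\beta^k \nabla_\alpha^{d-k-1} p_\varnothing(\bm{t}) = \sum_{G \in T_{k+1}} \mu_{\alpha,\beta}(\{G\}) \cdot t_G$ for $0 \leq k \leq d-1$. Applying $\nabla_\beta$ to the first identity and $\nabla_\alpha$ to the second produces the same constant $\nabla_\beta^k \nabla_\alpha^{d-k} p_\varnothing$ (since these differential operators commute), proving that the two formulas for $c_k$ agree when $1 \leq k \leq d-1$. The boundary values $c_0 = \nabla_\alpha^d p_\varnothing$ and $c_d = \nabla_\beta^d p_\varnothing$ are obtained by the same method applied at the endpoints.

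With $c_k = \nabla_\beta^k \nabla_\alpha^{d-k} p_\varnothing$ for all $0 \leq k \leq d$, Taylor expansion combined with $d$-homogeneity of $p_\varnothing$ gives
\[
    f(x) := p_\varnothing(\beta x + \alpha) = \sum_{k=0}^d \frac{c_k}{k!\,(d-k)!} \cdot x^k.
\]
By \cref{cor:alpha-beta-in-cone}, both $\alpha$ and $\beta$ lie in $\overline{\cC}_\varnothing$; since $p_\varnothing$ is $\cC_\varnothing$-Lorentzian by hypothesis, \cref{lem:main-purpose-lemma}(2) certifies that the coefficient sequence $a_k := c_k/(k!(d-k)!)$ of $f$ is ultra log-concave. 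As $a_k/\binom{d}{k} = c_k/d!$, ultra log-concavity of $(a_k)$ is equivalent to ordinary log-concavity of $(c_k/d!)$, hence of $(c_0,\ldots,c_d)$ as claimed.

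The main obstacle is the identification $c_k = \nabla_\beta^k \nabla_\alpha^{d-k} p_\varnothing$; once this is in hand, the cross-type equality is automatic from the commutativity of partial derivatives and the log-concavity is an immediate consequence of the Lorentzian machinery already developed. The only mildly non-obvious step inside that identification is upgrading the pattern of partial derivatives to an exact formula for the intermediate degree-$1$ polynomials via Euler's formula, but this is standard and uses no additional structure of $(X,\mu)$ or the $\bm\alpha,\bm\beta$ maps beyond what was used to set up \cref{lem:general-mixed-derivs-expression}.
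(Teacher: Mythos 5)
Your proposal is correct and follows essentially the same route as the paper: identify $c_k$ with the constant $\nabla_\beta^k\nabla_\alpha^{d-k}p_\varnothing$ via \cref{lem:derivs-expression} and \cref{lem:general-mixed-derivs-expression}, expand $p_\varnothing(\beta x+\alpha)$, and conclude by \cref{cor:alpha-beta-in-cone} and \cref{lem:main-purpose-lemma}(2). The intermediate use of Euler's formula to pin down the degree-one polynomials is a harmless minor variation of the paper's direct peeling of one directional derivative.
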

\begin{proof}
    For $1 \leq k \leq d$ we have
    \[
        \nabla_{\beta}^k \nabla_{\alpha}^{d-k} p_\varnothing(\bm{t}) \underset{\text{\cref{lem:derivs-expression}}}{=} \nabla_{\beta}^{k-1} \nabla_{\alpha}^{d-k} \sum_{F \in T_k} \beta(F) \cdot p_F(\pi_F(\bm{t})) \underset{\text{\cref{lem:general-mixed-derivs-expression}}}{=} \sum_{F \in T_k} \beta(F) \cdot \mu_{\alpha,\beta}(\{F\}),
    \]
    and for $0 \leq k \leq d-1$ we have
    \[
        \nabla_{\beta}^k \nabla_{\alpha}^{d-k} p_\varnothing(\bm{t}) \underset{\text{\cref{lem:derivs-expression}}}{=} \nabla_{\beta}^k \nabla_{\alpha}^{d-k-1} \sum_{G \in T_{k+1}} \alpha(G) \cdot p_G(\pi_G(\bm{t})) \underset{\text{\cref{lem:general-mixed-derivs-expression}}}{=} \sum_{G \in T_{k+1}} \alpha(G) \cdot \mu_{\alpha,\beta}(\{G\}).
    \]
    Thus we have
    \[
        d! \cdot p_\varnothing(\beta \cdot x + \alpha) = \sum_{k=0}^d \binom{d}{k} c_k \cdot x^k,
    \]
    and therefore $(c_0,c_1,\ldots,c_d)$ forms a log-concave sequence by \cref{cor:alpha-beta-in-cone} and \cref{lem:main-purpose-lemma}.
\end{proof}

\section{Conic Lorentzian Polynomials for Colored Top-Link Path Complexes}\label{sec:coloredpathcomplexes}
In this section, we give concrete examples of $\bm{\alpha},\bm{\beta}$ vectors and the corresponding $\bm{\pi}$-maps. We then prove \cref{thm:log-concavity-colored-complex}.

Throughout the section we fix a colored top-link path complex $(X,\mu,\phi)$ (see \cref{sec:introcolor} for the definition).

\begin{lemma} \label{lem:alpha-beta-gen}
    Let $\phi: X(2) \to \R_{>0}$ be such that for all $K < F < G < L$ in $X(1)$ we have
    \[
        \phi(K,G) \cdot \phi(F,L) - \phi(K,L) \cdot \phi(F,G) = \phi(K,F) \cdot \phi(G,L).
    \]
    If for $K < F < L$ in $X(1)$ we define
    \[
        \alpha_K^L(F) = \frac{\phi(K,F)}{\phi(K,L)} \qquad \text{and} \qquad \beta_K^L(F) = \frac{\phi(F,L)}{\phi(K,L)},
    \]
    then $\bm\alpha$ and $\bm\beta$ are commutative and give rise to a commutative family of maps $\bm\pi$ and their associated polynomials $p_\sigma$ for $\sigma \in X$.
\end{lemma}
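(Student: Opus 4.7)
The plan is to directly verify that the four commutativity axioms from \cref{def:alpha-beta} are satisfied by the given $\bm\alpha$ and $\bm\beta$, and then invoke \cref{prop:pi-maps-from-alpha-beta} to conclude the existence of the commutative $\bm\pi$ family and the associated polynomials $p_\sigma$. So the entire content of the lemma reduces to checking these four identities for $K < F < G < L$ in $X(1)$.

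The first two axioms are ``free'' — they involve only the ratios on a single chain and follow by straight telescoping:
\[
    \alpha_K^G(F) \cdot \alpha_K^L(G) = \frac{\phi(K,F)}{\phi(K,G)} \cdot \frac{\phi(K,G)}{\phi(K,L)} = \frac{\phi(K,F)}{\phi(K,L)} = \alpha_K^L(F),
\]
and similarly $\beta_F^L(G) \cdot \beta_K^L(F) = \beta_K^L(G)$. No hypothesis on $\phi$ is needed here.

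The substantive content is in axioms (3) and (4), where the assumed Plücker-type identity
\[
    \phi(K,G)\phi(F,L) - \phi(K,L)\phi(F,G) = \phi(K,F)\phi(G,L)
\]
must be used. I would verify (3) by bringing $\alpha_K^L(G) - \beta_F^L(G)\alpha_K^L(F)$ over a common denominator $\phi(K,L)\phi(F,L)$, recognizing the numerator as exactly the LHS of the identity, and then substituting $\phi(K,F)\phi(G,L)$ to produce $\phi(F,G)/\phi(F,L) = \alpha_F^L(G)$. Axiom (4) is symmetric: combining terms over $\phi(K,L)\phi(K,G)$ converts the numerator into the same identity, which collapses to $\phi(F,G)/\phi(K,G) = \beta_K^G(F)$.

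With all four axioms verified, $\bm\alpha$ and $\bm\beta$ are commutative in the sense of \cref{def:alpha-beta}, and \cref{prop:pi-maps-from-alpha-beta} then produces the commutative $\bm\pi$ family and, via \cref{def:polys-X}, the polynomials $p_\sigma$. I do not expect any real obstacle: the only nontrivial step is the algebraic cross-check that the given $\phi$-identity is precisely the ``determinantal'' relation needed to reconcile the two descriptions of $\alpha$ and $\beta$ obtained by taking the interval $[K,L]$ apart either ``from the right'' (fixing $G$) or ``from the left'' (fixing $F$). The symmetric shape of the identity — interchanging $K \leftrightarrow L$ and $F \leftrightarrow G$ leaves it invariant — explains why axioms (3) and (4) both reduce to the same single identity, making the verification clean.
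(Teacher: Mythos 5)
Your proposal is correct and follows essentially the same route as the paper: conditions (1) and (2) of \cref{def:alpha-beta} by telescoping, conditions (3) and (4) by clearing denominators ($\phi(K,L)\phi(F,L)$ and $\phi(K,L)\phi(K,G)$ respectively) and substituting the assumed identity, then invoking \cref{prop:pi-maps-from-alpha-beta}. The algebra you sketch for (3) and (4) matches the paper's computation up to a trivial rearrangement of which side of the identity is isolated.
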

\begin{proof}
    By \cref{prop:pi-maps-from-alpha-beta}, we just need to show that the conditions of \cref{def:alpha-beta} hold for $\bm\alpha$ and $\bm\beta$. The first two conditions follow from
    \[
        \alpha_K^G(F) \cdot \alpha_K^L(G) = \frac{\phi(K,F)}{\phi(K,G)} \cdot \frac{\phi(K,G)}{\phi(K,L)} = \frac{\phi(K,F)}{\phi(K,L)} = \alpha_K^L(F)
    \]
    and
    \[
        \beta_F^L(G) \cdot \beta_K^L(F) = \frac{\phi(G,L)}{\phi(F,L)} \cdot \frac{\phi(F,L)}{\phi(K,L)} = \frac{\phi(G,L)}{\phi(K,L)} = \beta_K^L(G).
    \]
    The last two conditions then follow from
    \begin{align*}
        \alpha_K^L(G) - \alpha_F^L(G) &= \frac{\phi(K,G)}{\phi(K,L)} - \frac{\phi(F,G)}{\phi(F,L)} 
        = \frac{\phi(K,G) \cdot \phi(F,L) - \phi(F,G) \cdot \phi(K,L)}{\phi(K,L) \cdot \phi(F,L)} \\
        &\underset{\text{Lem's assumption}}{=} \frac{\phi(K,F) \cdot \phi(G,L)}{\phi(K,L) \cdot \phi(F,L)} = \beta_F^L(G) \cdot \alpha_K^L(F)
    \end{align*}
    and
    \begin{align*}
        \beta_K^L(F) - \beta_K^G(F) &= \frac{\phi(F,L)}{\phi(K,L)} - \frac{\phi(F,G)}{\phi(K,G)} 
        = \frac{\phi(F,L) \cdot \phi(K,G) - \phi(F,G) \cdot \phi(K,L)}{\phi(K,L) \cdot \phi(K,G)}\\
        &\underset{\text{Lem's assumption}}{=} \frac{\phi(K,F) \cdot \phi(G,L)}{\phi(K,L) \cdot \phi(K,G)} = \alpha_K^G(F) \cdot \beta_K^L(G).
    \end{align*}
\end{proof}

Defining $\phi(F,G) = \phi(G)-\phi(F)$ for $F < G$, we obtain the following.

\begin{corollary} \label{cor:alpha-beta-v1}
    If for $K < F < L$ in $X(1)$ we define
    \begin{equation}\label{eq:alphaphidef}
        \alpha_K^L(F) = \frac{\phi(F)-\phi(K)}{\phi(L)-\phi(K)} \qquad \text{and} \qquad \beta_K^L(F) = \frac{\phi(L)-\phi(F)}{\phi(L)-\phi(K)},
    \end{equation}
    then $\bm\alpha$ and $\bm\beta$ are commutative and give rise to a commutative family of maps $\bm\pi$ and their associated polynomials $p_\sigma$ for $\sigma \in X$.
\end{corollary}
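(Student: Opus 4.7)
The plan is to reduce this directly to \cref{lem:alpha-beta-gen} by defining a bivariate function $\phi(F,G) := \phi(G) - \phi(F)$ for $F < G$ in $X(1) \cup \{\hat{0}, \hat{1}\}$. Since $\phi$ is strictly order-preserving, this gives $\phi(F,G) > 0$ as required. With this definition, the expressions for $\alpha_K^L(F)$ and $\beta_K^L(F)$ in the corollary become exactly the $\alpha, \beta$ from \cref{lem:alpha-beta-gen}, so the entire task reduces to verifying the single algebraic identity
\[
    \phi(K,G) \cdot \phi(F,L) - \phi(K,L) \cdot \phi(F,G) = \phi(K,F) \cdot \phi(G,L)
\]
for all $K < F < G < L$.

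The verification is a one-line expansion. Writing $a = \phi(K)$, $b = \phi(F)$, $c = \phi(G)$, $d = \phi(L)$, the left-hand side equals
\[
    (c-a)(d-b) - (d-a)(c-b) = cd - cb - ad + ab - (dc - db - ac + ab) = -cb - ad + db + ac = (b-a)(d-c),
\]
which is precisely $\phi(K,F)\cdot\phi(G,L)$. This identity is simply the Plücker-like relation for differences.

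Once this identity is established, \cref{lem:alpha-beta-gen} immediately gives that $\bm\alpha$ and $\bm\beta$ as defined in \eqref{eq:alphaphidef} are commutative in the sense of \cref{def:alpha-beta}, and then \cref{prop:pi-maps-from-alpha-beta} produces the commutative family of $\bm\pi$ maps, from which \cref{def:polys-X} yields the associated polynomials $p_\sigma$. There is no real obstacle here; the only thing to check is the algebraic identity above, and the rest is a direct invocation of results already proved.
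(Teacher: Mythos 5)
Your proposal is correct and matches the paper's own proof exactly: the paper also defines $\phi(F,G) = \phi(G)-\phi(F)$ and deduces the corollary from \cref{lem:alpha-beta-gen}, with the difference-of-products identity $(c-a)(d-b)-(d-a)(c-b)=(b-a)(d-c)$ being the only thing to check. Your explicit verification of that identity is fine and complete.
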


To utilize the results of the previous sections, we need to show that the cone of positive vectors is non-empty.

\begin{lemma} \label{lem:cone-non-empty-v1}
    The cone $\cC_\sigma$ is non-empty for all $\sigma \in X$.
\end{lemma}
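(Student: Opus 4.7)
The plan is to exhibit an explicit vector in $\cC_\sigma$ and verify $\bm\pi$-positivity by induction on $\codim(\sigma)$. Given a face $\sigma = \{G_1 < G_2 < \cdots < G_m\} \in X$, with the conventions $G_0 = \hat{0}$ and $G_{m+1} = \hat{1}$, and given $H \in X_\sigma(1)$ lying in the interval $(G_i, G_{i+1})$ for the appropriate $i$, I define the candidate vector
\[
v^\sigma_H := (\phi(H) - \phi(G_i))(\phi(G_{i+1}) - \phi(H)).
\]
Since $\phi$ is strictly order-preserving on $X(1) \cup \{\hat{0},\hat{1}\}$, we have $\phi(G_i) < \phi(H) < \phi(G_{i+1})$, so $v^\sigma \in \R^{X_\sigma(1)}_{>0}$.

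The core computation is to show that $\pi_{\sigma+F}(v^\sigma) = v^{\sigma \cup \{F\}}$ for every $F \in X_\sigma(1)$. Fix $F \in (G_j, G_{j+1}) \cap X_\sigma(1)$. For $H \notin (G_j, G_{j+1})$, the map $\pi_{\sigma+F}$ leaves $v^\sigma_H$ unchanged, and $H$ sits in the same subinterval in $X_{\sigma\cup\{F\}}$, so the equality is immediate. For $H \in (G_j, F)$, using the formulas in \eqref{eq:alphaphidef} I would compute
\[
\pi_{\sigma+F}(v^\sigma)_H = v^\sigma_H - \tfrac{\phi(H)-\phi(G_j)}{\phi(F)-\phi(G_j)} \cdot v^\sigma_F = (\phi(H)-\phi(G_j))\bigl[(\phi(G_{j+1})-\phi(H)) - (\phi(G_{j+1})-\phi(F))\bigr],
\]
which simplifies to $(\phi(H)-\phi(G_j))(\phi(F)-\phi(H)) = v^{\sigma \cup \{F\}}_H$. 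The case $H \in (F, G_{j+1})$ is completely analogous, using $\beta_F^{G_{j+1}}(H)$ instead of $\alpha_{G_j}^F(H)$, and produces $(\phi(G_{j+1})-\phi(H))(\phi(H)-\phi(F)) = v^{\sigma \cup \{F\}}_H$.

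With this single-step statement in hand, the rest is straightforward: for any $\tau \in X_\sigma$, applying \cref{prop:pi-maps-from-alpha-beta} and iterating the one-step identity along any ordering of $\tau$ yields $\pi_{\sigma+\tau}(v^\sigma) = v^{\sigma \cup \tau}$, which is strictly positive by the same observation as in the base case. Therefore $v^\sigma$ is $\bm\pi$-positive, so $v^\sigma \in \cC_\sigma$ and $\cC_\sigma$ is non-empty.

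The main (minor) obstacle is just bookkeeping: the intervals $(G_i, G_{i+1})$ shift when we pass from $\sigma$ to $\sigma \cup \{F\}$, so one must be careful to identify the correct new endpoints for each $H$. This is precisely what the algebraic simplification above verifies. No deeper argument is needed because the identity $\pi_{\sigma+F}(v^\sigma) = v^{\sigma\cup\{F\}}$ makes the induction self-contained.
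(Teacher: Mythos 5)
Your proposal is correct and is essentially the paper's own argument: the paper's candidate vector $c_i\cdot\alpha_{F_i}^{F_{i+1}}\cdot\beta_{F_i}^{F_{i+1}}$ on each interval is, up to the positive per-interval constants $c_i$, exactly your $(\phi(H)-\phi(G_i))(\phi(G_{i+1})-\phi(H))$, and both proofs verify the same one-step identity under $\pi_{\sigma+F}$ (yours holds exactly, the paper's up to positive factors $\alpha_K^L(F)^2,\beta_K^L(F)^2$) before iterating via commutativity of $\bm\pi$. The only difference is your normalization choice, which slightly streamlines the bookkeeping but does not change the argument.
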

\begin{proof}
    Given $\sigma = \{F_1,F_2,\ldots,F_k\} \in X$ where $F_1 < F_2 < \cdots < F_k$, let
    \[
        \bm{t}=(c_0 \cdot \alpha^{F_1} \cdot \beta^{F_1}) \oplus (c_1 \cdot \alpha_{F_1}^{F_2} \cdot \beta_{F_1}^{F_2}) \oplus (c_2 \cdot \alpha_{F_2}^{F_3} \cdot \beta_{F_2}^{F_3}) \oplus \cdots \oplus (c_k \cdot \alpha_{F_k} \cdot \beta_{F_k}) \in \cC_\sigma
    \] 
     In other words,  for any $F_i<H<F_{i+1}$ (where assume $F_0=\hat{0},F_{k+1}=\hat{1}$), $$t_H=c_i\cdot\alpha_{F_i}^{F_{i+1}}(H)\cdot\beta_{F_i}^{F_{i+1}}(H).$$
    We show that $\bm{t}\in C_\sigma$ for all $c_0,c_1,\ldots,c_k > 0$.
    
    To that end,  let $F \in X_\sigma(1)$ be such that $F_i < F < F_{i+1}$ (for some $0\leq i\leq k$). For ease of notation, let $K=F_i,L=F_{i+1}$. We will show that $\pi_{\sigma+F}(\alpha_K^L \cdot \beta_K^L) = (c \cdot \alpha_K^F \cdot \beta_K^F) \oplus (c' \cdot \alpha_F^L \cdot \beta_F^L)$ for some $c,c' > 0$. By induction on $|\sigma|$ and the fact that $\bm\pi$ is commutative, this will prove the vector given above lies in $\cC_\sigma$.

    For $H \in (K,F)$, we have 
    \[
    \begin{split}
        \pi_{\sigma+F}(\alpha_K^L \cdot \beta_K^L)_H &\underset{\eqref{eq:pi-def-from-alpha-beta}}{=} \alpha_K^L(H) \cdot \beta_K^L(H) - \alpha_K^F(H) \cdot \alpha_K^L(F) \cdot \beta_K^L(F) \\
            &\underset{\text{(1) of \cref{def:alpha-beta}}}{=} \alpha_K^L(H) \cdot \beta_K^L(H) - \alpha_K^L(H) \cdot \beta_K^L(F) \\
            &\underset{\eqref{eq:alphaphidef}}{=} \alpha_K^L(H) \cdot \left(\frac{\phi(L)-\phi(H)}{\phi(L)-\phi(K)} - \frac{\phi(L)-\phi(F)}{\phi(L)-\phi(K)}\right) \\
            &= \frac{\phi(H)-\phi(K)}{\phi(L)-\phi(K)} \cdot \frac{\phi(F)-\phi(H)}{\phi(L)-\phi(K)} \\
            &\underset{\eqref{eq:alphaphidef}}{=} \alpha_K^L(F)^2 \cdot \left[\alpha_K^F(H) \cdot \beta_K^F(H)\right].
    \end{split}
    \]
    Now, fix $H \in (F,L)$, and we can similarly write 
    \[
    \begin{split}
        \pi_{\sigma+F}(\alpha_K^L \cdot \beta_K^L)_H &\underset{\eqref{eq:pi-def-from-alpha-beta}}{=} \alpha_K^L(H) \cdot \beta_K^L(H) - \beta_F^L(H) \cdot \alpha_K^L(F) \cdot \beta_K^L(F) \\
                &\underset{\text{(2) of \cref{def:alpha-beta}}}{=} \alpha_K^L(H) \cdot \beta_K^L(H) - \beta_K^L(H) \cdot \alpha_K^L(F) \\
        &\underset{\eqref{eq:alphaphidef}}{=}\beta_K^L(H) \cdot \left(\frac{\phi(H)-\phi(K)}{\phi(L)-\phi(K)} - \frac{\phi(F)-\phi(K)}{\phi(L)-\phi(K)}\right) \\
            &= \frac{\phi(L)-\phi(H)}{\phi(L)-\phi(K)} \cdot \frac{\phi(H)-\phi(F)}{\phi(L)-\phi(K)} \\
      &\underset{\eqref{eq:alphaphidef}}{=} \beta_K^L(F)^2 \cdot \left[\alpha_F^L(H) \cdot \beta_F^L(H)\right].
    \end{split}
    \]
\end{proof}

The rest of this section is organized as follows: First, we prove \cref{thm:log-concavity-colored-complex}. 
Then, we utilize it to demonstrate various log-concavity results based on different colorings. 

\begin{proof}[Proof of \cref{thm:log-concavity-colored-complex}]
    By \cref{thm:quadraticcheck}, to prove $p_\varnothing$ is $\cC_\varnothing$, it suffices to show that $\cC_\varnothing$ is non-empty and that  for all link-contiguous $\sigma \in X(d-2)$ the corresponding matrix $A+D$ has at most one positive eigenvalue. The latter condition is equivalent to $(X,\mu,\phi)$ being a top-link expander, and the former condition follows from \cref{lem:cone-non-empty-v1}.

    Now using \cref{def:mu-alpha-beta}, for any $k \in [d]$ and $F \in T_k$ we have
    \[
    \begin{split}
        \mu_{\alpha,\beta}(\{F\}) &= \sum_{\tau \in X(d): F \in \tau} \left[\prod_{i=1}^{k-1} \beta^{\tau_{i+1}}(\tau_i)\right] \cdot \mu(\tau) \cdot \left[\prod_{i=k+1}^d \alpha_{\tau_{i-1}}(\tau_i)\right] \\
            &= \sum_{\tau \in X(d): F \in \tau} \left[\prod_{i=1}^{k-1} \frac{\phi(\tau_{i+1}) - \phi(\tau_i)}{\phi(\tau_{i+1}) - \phi(\hat{0})}\right] \cdot \mu(\tau) \cdot \left[\prod_{i=k+1}^d \frac{\phi(\tau_i) - \phi(\tau_{i-1})}{\phi(\hat{1}) - \phi(\tau_{i-1})}\right] \\
            &= \sum_{\tau \in X(d): F \in \tau} \mu(\tau) \cdot \frac{\prod_{i=1}^{d-1} (\phi(\tau_{i+1}) - \phi(\tau_i))}{\prod_{i=2}^k (\phi(\tau_i) - \phi(\hat{0})) \prod_{i=k}^{d-1} (\phi(\hat{1}) - \phi(\tau_i))}.
    \end{split}
    \]
    For a facet $\tau\in X(d)$, let  $\tau_0 = \hat{0}$ and $\tau_{d+1} = \hat{1}$. Therefore, for   by \cref{cor:log-concavity-alpha-beta} we obtain a log-concave sequence $(c_0,c_1,\ldots,c_d)$  where for $1 \leq k \leq d$,
    \[
    \begin{split}
        c_k &= \sum_{F \in T_k} \beta(F) \cdot \mu_{\alpha,\beta}(\{F\}) \\
            &= \sum_{F \in T_k} \frac{\phi(\hat{1}) - \phi(F)}{\phi(\hat{1}) - \phi(\hat{0})} \sum_{\tau \in X(d): F \in \tau} \mu(\tau) \cdot \frac{\prod_{i=1}^{d-1} (\phi(\tau_{i+1}) - \phi(\tau_i))}{\prod_{i=2}^k (\phi(\tau_i) - \phi(\hat{0})) \prod_{i=k}^{d-1} (\phi(\hat{1}) - \phi(\tau_i))} \\
            &= \sum_{\tau \in X(d)} \mu(\tau) \cdot \frac{\phi(\hat{1}) - \phi(\tau_k)}{\phi(\hat{1}) - \phi(\hat{0})} \cdot \frac{\prod_{i=1}^{d-1} (\phi(\tau_{i+1}) - \phi(\tau_i))}{\prod_{i=2}^k (\phi(\tau_i) - \phi(\hat{0})) \prod_{i=k}^{d-1} (\phi(\hat{1}) - \phi(\tau_i))} \\
            &= \frac{1}{\phi(\hat{1}) - \phi(\hat{0})} \sum_{\tau \in X(d)} \mu(\tau) \cdot \frac{\prod_{i=0}^{d} (\phi(\tau_{i+1}) - \phi(\tau_i))}{\prod_{i=1}^k (\phi(\tau_i) - \phi(\hat{0})) \prod_{i={k+1}}^{d} (\phi(\hat{1}) - \phi(\tau_i))},
    \end{split}
    \]
    In the special case that $k=0$ it can be similarly checked that
    \[
    \begin{split}
        c_0 &= \sum_{F \in T_{1}} \alpha(F) \cdot \mu_{\alpha,\beta}(\{F\}) \\
            &= \sum_{F \in T_{1}} \frac{\phi(F) - \phi(\hat{0})}{\phi(\hat{1}) - \phi(\hat{0})} \sum_{\tau \in X(d): F \in \tau} \mu(\tau) \cdot \frac{\prod_{i=1}^{d-1} (\phi(\tau_{i+1}) - \phi(\tau_i))}{
            \prod_{i=1}^{d-1} (\phi(\hat{1}) - \phi(\tau_i))} \\
            &= 
            \frac{1}{\phi(\hat{1}) - \phi(\hat{0})} \sum_{\tau \in X(d)} \mu(\tau) \cdot \frac{\prod_{i=0}^{d} (\phi(\tau_{i+1}) - \phi(\tau_i))}{
            \prod_{i=k}^{d} (\phi(\hat{1}) - \phi(\tau_i))}
    \end{split}
    \]
as desired.
\end{proof}

\subsection{The Heron-Rota-Welsh conjecture} \label{sec:matroid-volume}

Let $\cL$ be the lattice of flats of a matroid $M$ of rank $r = d+1$ on a ground set $[n]$, let $T_k$ be the set of rank-$k$ flats in $\cL$ for all $1 \leq k \leq d$, and let $X$ be the simplicial complex with facets given by the maximal chains of proper flats in $\cL$. Note that $\hat{0}$ is the set of loops of $M$ and $\hat{1} = [n]$.

We now define $\phi: X(1) \to \R_{>0}$ by setting $\phi(F)$ to be equal to the cardinality of the flat $F$. (Note that this is a strictly order-preserving map.) This implies
\[
    \alpha_K^L(F) = \frac{|F \setminus K|}{|L \setminus K|} \qquad \text{and} \qquad \beta_K^L(F) = \frac{|L \setminus F|}{|L \setminus K|},
\]
and the $\bm\pi$ maps become precisely those defined in \cite{BL21}. Letting $\mu$ be the uniform distribution over all flag of flats of $M$, by \cref{lem:toplinkexpansionmatroid} the colored path complex $(X,\mu,\phi)$ is a top-link expander. 
Then,
the polynomial $p_\varnothing$ is then the matroid volume polynomial of $M$ up to scalar and by \cref{thm:conn+quad=Lor} it is $\cC_\varnothing$-Lorentzian. Next, we show that \cref{thm:log-concavity-colored-complex} implies the Heron-Rota-Welsh conjecture. 

Heron-Rota-Welsh conjecture says that the characteristic polynomial of a matroid has log-concave coefficients. This conjecture was originally proven in \cite{AHK18}, and the proof we present here is a rephrasing of the proof found in \cite{BL21}. First, we recall the definition of the M\"obius function.

\begin{definition}[M\"obius function]
    Given a lattice $\cL$, the \textbf{M\"obius function} of $\cL$ is a function $\tilde\mu:\cL \times \cL \to \Z$ defined via $\tilde\mu(F,F) = 1$ and inductively via
    \[
        \tilde\mu(K,L) = -\sum_{F: K \leq F < L} \tilde\mu(K,F).
    \]
\end{definition}

\begin{theorem}[Heron-Rota-Welsh \cite{Wel76,AHK18}] \label{thm:Heron-Rota-Welsh}
    For all non-loops $i \in [n]$, the coefficients of the reduced characteristic polynomial of $M$,
    \[
        \overline{\chi}_M(x) = \sum_{F: i \not\in F} \tilde\mu(\hat{0},F) \cdot x^{\crk(F)-1},
    \]
    form a log-concave sequence. (Note that this implies the coefficients of the characteristic polynomial of $M$ form a log-concave sequence.)
\end{theorem}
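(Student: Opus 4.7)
The plan is to apply \cref{thm:log-concavity-colored-complex} to the flag-of-flats complex $X$ of $M$ with the uniform distribution $\mu$ on $X(d)$ and the coloring $\phi(F) = |F|$. This coloring is strictly order-preserving since flats of higher rank strictly contain those of lower rank, and by \cref{lem:toplinkexpansionmatroid} the colored path complex $(X, \mu, \phi)$ is a top-link expander. Consequently \cref{thm:log-concavity-colored-complex} immediately yields log-concavity of the associated sequence $c_0, c_1, \ldots, c_d$.

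The remaining task is to identify $(c_0, \ldots, c_d)$, up to a positive common scalar, with the absolute values of the coefficients of $\overline{\chi}_M$. Unpacking the formula for $c_k$ with $\phi(F) = |F|$ and grouping the sum over maximal flags by the rank-$(k{+}1)$ flat $F = \tau_{k+1}$, the sum factors (by a Fubini-type argument) into a lower sum over chains in $[\hat{0}, F]$ and an upper sum over chains in $[F, \hat{1}]$. Applying Philip Hall's chain-expansion formula for the M\"obius function to each factor, these inner sums telescope and collapse to positive multiples of $|\tilde\mu(\hat{0}, F)|$ and $|\tilde\mu(F, \hat{1})|$ respectively. Summing over rank-$(k{+}1)$ flats then yields $c_k$ proportional to $|w_k|$, the absolute value of the coefficient of $x^{d-k}$ in $\overline{\chi}_M$. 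The restriction $i \notin F$ in the definition of $\overline{\chi}_M$ is handled either by working with the deletion $M \setminus i$, whose flats not containing $i$ coincide with those of $M$, or by an appropriate variable substitution in $p_\varnothing$, following the reduction in \cite{BL21}.

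The main obstacle is the M\"obius-inversion step identifying $c_k$ with $|w_k|$, since the denominators $\prod_i(\phi(\tau_i)-\phi(\hat 0))\prod_i(\phi(\hat 1)-\phi(\tau_i))$ appearing in the statement of \cref{thm:log-concavity-colored-complex} must be collapsed via careful telescoping in the chain sums. Once that combinatorial identification is in place, the log-concavity of the reduced characteristic polynomial coefficients is delivered for free by the $\cC$-Lorentzian machinery developed in the preceding sections.
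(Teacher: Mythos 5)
Your setup is exactly the paper's: take the flag-of-flats complex with uniform $\mu$ and the coloring $\phi(F)=|F|$, invoke \cref{lem:toplinkexpansionmatroid} and \cref{thm:log-concavity-colored-complex}, and get log-concavity of $(c_0,\ldots,c_d)$ for free. The gap is in the combinatorial identification of $c_k$ with the unsigned coefficients of $\overline{\chi}_M$, and the mechanisms you name would not carry it out. After grouping the flags at a pivot flat, the upper chain sum $\sum\prod_i\frac{|\tau_{i+1}|-|\tau_i|}{n-|\tau_i|}$ does not collapse to a multiple of $|\tilde\mu(F,\hat{1})|$: it collapses to exactly $1$, because at each cover step the sets $G\setminus F$, over covers $G$ of $F$, partition the remaining ground set (the matroid partition property). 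Steering toward the upper-interval M\"obius values heads to the wrong quantity anyway, since the coefficients of $\overline{\chi}_M$ involve only the lower values $\sum_{F\in T_k}\tilde\mu(\hat{0},F)$, not products $\tilde\mu(\hat{0},F)\tilde\mu(F,\hat{1})$. Likewise, the lower chain sum collapses to $|\tilde\mu(\hat{0},F)|$ not via Philip Hall's unweighted alternating chain expansion, but via the cardinality-weighted recursion $\tilde\mu(K,L)=-\sum_{G:K<G\prec L}\frac{|L|-|G|}{|L|-|K|}\tilde\mu(K,G)$, i.e.\ the averaged form of Weisner's theorem (\cref{lem:Weisner}, \cref{cor:Weisner-cor}); this is exactly where the geometric-lattice structure enters, and Hall's formula does not match these weights. (The paper also pivots at the rank-$k$ flat $\tau_k$, which lines up directly with the exponent $\crk(F)-1=d-k$ in $\overline{\chi}_M$.)

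The most serious issue is your handling of the restriction $i\notin F$. The deletion route is wrong: the flats of $M$ avoiding $i$ are only a sub-collection of the flats of $M\setminus i$, the lattice of flats of $M\setminus i$ is not the induced subposet of $\cL$ on flats avoiding $i$, and $\overline{\chi}_M\neq\chi_{M\setminus i}$ in general (their degrees already disagree); and ``an appropriate variable substitution in $p_\varnothing$'' is not an argument. What the paper actually does is an averaging trick: after the telescoping one has $c_k=\sum_{F\in T_k}\frac{n-|F|}{n-n_0}\,|\tilde\mu(\hat{0},F)|$, and since $\gamma_{d-k}=\sum_{F\in T_k:\,i\notin F}|\tilde\mu(\hat{0},F)|$ is the same for every non-loop $i$, averaging over the $n-n_0$ non-loops produces precisely the weight $\frac{n-|F|}{n-n_0}$, giving $\gamma_{d-k}=c_k$ exactly. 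Without this step (or a genuine substitute) the cardinality-dependent weights in $c_k$ are never reconciled with the coefficient sums, so the identification --- which you yourself flag as the main obstacle --- remains open in your write-up.
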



To prove the Heron-Rota-Welsh conjecture, we will utilize a standard result on the lattices of flats of matroids.

\begin{lemma}[Weisner's theorem] \label{lem:Weisner}
    Given $K \prec F < L$ in $X(1)$, we have that
    \[
        \tilde\mu(K,L) = -\sum_{H: \left\{\substack{K < H \prec L \\ F \not< H}\right\}} \tilde\mu(K,H).
    \]
\end{lemma}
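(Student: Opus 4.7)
The plan is to derive Weisner's theorem from a classical Möbius-inversion argument applied to the interval $[K,L]$, which is itself a geometric lattice (as an interval in the lattice of flats of a matroid). Since $K \prec F$, the element $F$ is an atom of this interval, and the key property I will exploit is the atomic one: for any $H \in [K,L]$, the join $F \vee H$ equals $H$ when $F \leq H$, and otherwise strictly covers $H$.

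The central step is the auxiliary identity
\[
    \sum_{\substack{H \in [K,L] \\ F \vee H = L}} \tilde\mu(K,H) \;=\; 0.
\]
To prove this, I would define $\phi(X) := \sum_{H \in [K,L]\,:\,F \vee H = X} \tilde\mu(K,H)$ for $X \in [K,L]$, and examine the partial sum
\[
    G(Y) \;:=\; \sum_{X \leq Y} \phi(X) \;=\; \sum_{H \in [K,L]\,:\,F \vee H \leq Y} \tilde\mu(K,H).
\]
If $F \not\leq Y$ the indexing set is empty, so $G(Y) = 0$. If $F \leq Y$ then the condition $F \vee H \leq Y$ collapses to $H \leq Y$, so $G(Y) = \sum_{K \leq H \leq Y} \tilde\mu(K,H) = [K = Y] = 0$ by the defining property of $\tilde\mu$ together with $Y \geq F > K$. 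Hence $G \equiv 0$, and Möbius inversion forces $\phi \equiv 0$; in particular $\phi(L) = 0$.

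Applying the atomic property described above, I would obtain the decomposition
\[
    \{H \in [K,L] \,:\, F \vee H = L\} \;=\; \{L\} \;\sqcup\; \{H \in [K,L] \,:\, H \prec L,\ F \not\leq H\},
\]
since whenever $F \not\leq H$ the join $F \vee H$ covers $H$, and so equals $L$ exactly when $H \prec L$. Separating the $H = L$ term from $\phi(L) = 0$ gives
\[
    \tilde\mu(K,L) \;=\; -\sum_{\substack{K \leq H \prec L \\ F \not\leq H}} \tilde\mu(K,H).
\]

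Finally, to match the lemma's exact form, I would note that $H = K$ is automatically excluded from this sum: the hypothesis $K \prec F < L$ forces $\mathrm{rk}(L) \geq \mathrm{rk}(K)+2$, so $K \not\prec L$. The condition ``$F \not< H$'' in the lemma's statement should be read as the lattice-theoretic non-containment $F \not\leq H$ (equivalently $F \not\subseteq H$ as flats of $M$), which matches the Weisner-type condition above. No substantial obstacle is anticipated; the argument is a clean combination of Möbius inversion on $[K,L]$ with the atomic structure of geometric lattices.
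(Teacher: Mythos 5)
Your proof is correct. The paper itself offers no proof of \cref{lem:Weisner} — it is invoked as a standard fact about lattices of flats — and your argument is essentially the classical one: the general Weisner identity $\sum_{H \in [K,L]:\, F \vee H = L} \tilde\mu(K,H) = 0$ for $F > K$, proved by summing/Möbius inversion over the interval $[K,L]$ (your function $G$ vanishes identically because either the index set is empty or the defining recursion of $\tilde\mu$ kills it, and $G \equiv 0$ forces $\phi \equiv 0$), combined with semimodularity of the lattice of flats to identify $\{H : F \vee H = L\}$ with $\{L\} \sqcup \{H \prec L : F \not\leq H\}$ when $K \prec F$. All the individual steps check out: $F \wedge H = K$ when $F \not\leq H$ because $F$ is an atom of $[K,L]$, semimodularity then gives $H \prec F \vee H$, and $H = K$ is excluded from the final sum because $F$ lies strictly between $K$ and $L$, so $L$ cannot cover $K$. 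Your interpretive remark is also the right one and worth keeping: the condition ``$F \not< H$'' must be read as $F \not\leq H$ (so $H = F$ is excluded even when $F \prec L$; otherwise the identity fails already in a rank-two geometric lattice), and this reading is exactly how the paper uses the lemma in \cref{cor:Weisner-cor}, where the condition $i \notin G$ rules out $G = F$.
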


\begin{corollary} \label{cor:Weisner-cor}
    Given $K < L$ in $X(1)$, we have that
    \[
        \tilde\mu(K,L) = -\sum_{G: K < G \prec L} \frac{|L|-|G|}{|L|-|K|} \cdot \tilde\mu(K,G).
    \]
\end{corollary}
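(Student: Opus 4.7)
The plan is to apply Weisner's theorem (\cref{lem:Weisner}) once for every atom of the interval $[K,L]$ in $\cL$ and then sum the resulting identities. To index atoms conveniently, I would use the standard matroid bijection: for each $i \in L \setminus K$, let $F_i$ denote the closure of $K \cup \{i\}$ in $M$. Since $K$ is a flat and $i \notin K$, adjoining $i$ raises the rank by exactly one, so $K \prec F_i \leq L$. The key observation used repeatedly is that for every flat $H$ with $K \leq H \leq L$, we have $F_i \leq H$ iff $i \in H$, which is immediate from $H$ being closed and containing $K$.

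Assuming $K \not\prec L$ (otherwise the right-hand side of the corollary is an empty sum, and the claim is only needed in the non-degenerate case), every $F_i$ lies strictly below $L$, so \cref{lem:Weisner} applies with $F = F_i$ and gives
\[
    \tilde\mu(K,L) = -\sum_{H \,:\, K < H \prec L,\ F_i \not< H} \tilde\mu(K,H).
\]
Summing these identities over $i \in L \setminus K$, the left-hand side becomes $(|L|-|K|) \cdot \tilde\mu(K,L)$.

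On the right-hand side, I would interchange the order of summation. The coefficient of $-\tilde\mu(K,H)$ becomes $|\{i \in L \setminus K : F_i \not< H\}|$, which by the correspondence from the first paragraph equals $|\{i \in L \setminus K : i \notin H\}| = |L \setminus H| = |L|-|H|$ (using $K \subseteq H$). This yields
\[
    (|L|-|K|) \cdot \tilde\mu(K,L) = -\sum_{H: K < H \prec L} (|L|-|H|) \cdot \tilde\mu(K,H),
\]
and dividing through by $|L|-|K|$ gives the corollary.

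The only real subtlety is the matroid-theoretic bookkeeping that makes the atom-to-ground-set correspondence and the resulting multiplicity count line up cleanly (in particular, that ``$F_i \not< H$'' in Weisner's statement matches ``$i \notin H$'' in the matroid setting); no genuine obstacle is anticipated beyond this.
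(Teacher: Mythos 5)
Your proposal is correct and is essentially the paper's own argument: the paper likewise applies Weisner's theorem once for each $i \in L \setminus K$ (implicitly via the atom above $K$ containing $i$), averages the resulting identities with weight $\frac{1}{|L \setminus K|}$, and interchanges the order of summation so that each coatom $G$ picks up the multiplicity $|\{i \in L\setminus K : i \notin G\}| = |L|-|G|$. Your extra bookkeeping (the closure $F_i$ of $K\cup\{i\}$, the equivalence $F_i \leq H \iff i \in H$, and setting aside the cover case $K \prec L$) just makes explicit what the paper's one-line computation leaves implicit.
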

\begin{proof}
    Using Weisner's theorem for all $i \in L \setminus K$, we have
    \[
        \tilde\mu(K,L) = -\frac{1}{|L \setminus K|} \sum_{i \in L \setminus K} \sum_{\substack{G: K < G \prec L \\ i \not\in G}} \tilde\mu(K,G) = -\sum_{G: K < G \prec L} \frac{|L|-|G|}{|L|-|K|} \cdot \tilde\mu(K,G).
    \]
\end{proof}

Note that these previous results imply the M\"obius function alternates in sign by induction.

Now we utilize the previous results to prove \cref{thm:Heron-Rota-Welsh}. To do this, compute the numbers $c_k$ from \cref{thm:log-concavity-colored-complex}. Let $\hat{0}$ denote the set of loops of $M$, and set $n_0 = |\hat{0}|$. We have
\[
\begin{split}
    c_k &= \frac{1}{n-n_0} \sum_{\tau \in X(d)} 1 \cdot \frac{\prod_{i=0}^d |\tau_{i+1} \setminus \tau_i|}{\prod_{i=1}^k (|\tau_i| - n_0) \prod_{i=k+1}^d (n - |\tau_i|)} \\
        &= \sum_{\tau \in X(d)} \frac{n-|\tau_k|}{n-n_0} \prod_{i=0}^{k-1} \frac{|\tau_{i+1}|-|\tau_i|}{|\tau_{i+1}|-n_0} \prod_{i=k}^d \frac{|\tau_{i+1}|-|\tau_i|}{n-|\tau_i|} \\
        &= \sum_{\tau_k \in T_k} \frac{n-|\tau_k|}{n-n_0} \sum_{\substack{\tau_1,\ldots,\tau_{k-1} \in X(1) \\ \tau_1 \prec \cdots \tau_{k-1} \prec \tau_k}} \prod_{i=0}^{k-1} \frac{|\tau_{i+1}|-|\tau_i|}{|\tau_{i+1}|-n_0} \sum_{\substack{\tau_{k+1},\ldots,\tau_d \in X(1) \\ \tau_k \prec \tau_{k+1} \prec \cdots \prec \tau_d}} \prod_{i=k+1}^d \frac{|\tau_{i+1}|-|\tau_i|}{n-|\tau_i|}.
\end{split}
\]
Now for any fixed $\tau_k \in T_k$ we compute
\[
    \sum_{\substack{\tau_{k+1},\ldots,\tau_d \in X(1) \\ \tau_k \prec \tau_{k+1} \prec \cdots \prec \tau_d}} \prod_{i=k}^d \frac{|\tau_{i+1}|-|\tau_i|}{n-|\tau_i|} = \sum_{\tau_{k+1} \succ \tau_k} \frac{|\tau_{k+1}|-|\tau_k|}{n-|\tau_k|} \cdots \sum_{\tau_d \succ \tau_{d-1}} \frac{|\tau_d|-|\tau_{d-1}|}{n-|\tau_{d-1}|} = 1,
\]
by successively applying the matroid partition property. And for any fixed $\tau_k \in T_k$ we compute
\[
    \sum_{\substack{\tau_1,\ldots,\tau_{k-1} \in X(1) \\ \tau_1 \prec \cdots \tau_{k-1} \prec \tau_k}} \prod_{i=0}^{k-1} \frac{|\tau_{i+1}|-|\tau_i|}{|\tau_{i+1}|-n_0} = \sum_{\tau_{k-1} \prec \tau_k} \frac{|\tau_k|-|\tau_{k-1}|}{|\tau_k|-n_0} \cdots \sum_{\tau_1 \prec \tau_2} \frac{|\tau_2|-|\tau_1|}{|\tau_2|-n_0} = |\tilde\mu(\hat{0},\tau_k)|,
\]
by successively applying \cref{cor:Weisner-cor} (since the M\"obius function alternates in sign). Thus we have
\[
    c_k = \sum_{\tau_k \in T_k} \frac{n-|\tau_k|}{n-n_0} \cdot |\tilde\mu(\hat{0},\tau_k)|
\]
for all $0 \leq k \leq d$, where $\tau_0 = \hat{0}$.

\begin{proof}[Proof of \cref{thm:Heron-Rota-Welsh}]
    Let $\gamma_k$ denote the absolute value of the $x^k$ coefficient of $\overline\chi_M(x)$. For any non-loop $i \in [n]$, the fact that the M\"obius function alternates in sign implies
    \[
        \gamma_{d-k} = \sum_{\tau_k \in T_k: i \not\in \tau_k} |\tilde\mu(\hat{0},\tau_k)|,
    \]
    and thus
    \[
        \gamma_{d-k} = \frac{1}{n-n_0} \sum_{i \in \hat{1} \setminus \hat{0}} \sum_{\tau_k \in T_k: i \not\in \tau_k} |\tilde\mu(\hat{0},\tau_k)| = \sum_{\tau_k \in T_k} \frac{n-|\tau_k|}{n-n_0} \cdot |\tilde\mu(\hat{0},\tau_k)| = c_k.
    \]
    Therefore $(\gamma_0,\gamma_1,\ldots,\gamma_d)$ forms a log-concave sequence by \cref{thm:log-concavity-colored-complex}.
\end{proof}

\subsection{Colorings of distributive lattices}

Let $P$ be a finite Poset with associated distributive lattice $\cL$ and path complex $X$. Given $F \in X(1)$, we will consider $F$ to be a set of elements of the ground set of $P$. Let $\phi: X(1) \to \R_{>0}$ be any strictly order-preserving map, and let $L(P)$ denote the set of all linear extensions of $P$. We will think of linear extensions of $P$ as functions $\ell: P \to [d+1]$, and we will denote $\ell_k := \ell^{-1}(k) \in P$.

With this, we define $\bm\alpha$, $\bm\beta$ as in \cref{cor:alpha-beta-v1}, and we let the probability distribution $\mu$ be uniform on the facets of $X$.

\subsubsection{Stanley's inequality}


Given some $a \in P$, we define
\[
    \phi_M(i) = \begin{cases}
        M, & i = a \\
        1, & i \neq a
    \end{cases}
\]
for some large constant $M > 1$, and we let $\phi_M(F) = \sum_{i \in F} \phi_M(i)$. We now compute $c_k$ for $0 \leq k \leq d$ as defined in \cref{thm:log-concavity-colored-complex}. As $M \to \infty$, we have
\[
\begin{split}
    c_k &= \frac{1}{\phi_M(\hat{1}) - \phi_M(\hat{0})} \sum_{\tau \in X(d)} \mu(\tau) \cdot \frac{\prod_{i=0}^d (\phi_M(\tau_{i+1}) - \phi_M(\tau_i))}{\prod_{i=1}^k (\phi_M(\tau_i) - \phi_M(\tau_0)) \prod_{i=k+1}^d (\phi_M(\tau_{d+1}) - \phi_M(\tau_i))} \\
        &= \frac{1}{M + d} \sum_{\tau \in X(d)} \frac{M}{\prod_{i=1}^k (\phi_M(\tau_i) - \phi_M(\tau_0)) \prod_{i=k+1}^d (\phi_M(\tau_{d+1}) - \phi_M(\tau_i))} \\
        &\underset{M \to \infty}{\to} \sum_{\substack{\tau \in X(d) \\ \tau_{k+1} \setminus \tau_k = \{a\}}} \frac{1}{k!(d-k)!} \\
        &= \frac{|\{\tau \in X(d): \tau_{k+1} \setminus \tau_k = \{a\}\}|}{k!(d-k)!} \\
        &= \frac{|\{\ell \in L(P): \ell_{k+1} = a\}|}{k!(d-k)!}.
\end{split}
\]
Thus by applying \cref{lem:toplinkexpansioncolorlattice} and \cref{thm:log-concavity-colored-complex} we obtain a weak version of Stanley's inequality, which says that $(c_0,c_1,\ldots,c_d)$ forms a log-concave sequence.

To obtain the full Stanley's inequality, we now append chains of length $N$ below and above all elements of the Poset $P$ to obtain $P'$ and apply the above log-concavity statement. This implies $(c_N',c_{N+1}',\ldots,c_{N+d}')$ forms a log-concave sequence, where
\[
    c_{N+k}' = \frac{|\{\ell \in L(P') : \ell_{N+k+1} = a\}|}{(N+k)!(N+d-k)!} = \frac{|\{\ell \in L(P) : \ell_{k+1} = a\}|}{(N+k)!(N+d-k)!}.
\]
By multiplying by $N!(N+d)!$ and limiting $N \to \infty$, we obtain Stanley's inequality for the original Poset $P$.

\subsubsection{Extended Stanley inequality}

In this section we prove \cref{thm:P-consistent-ext-Stanley}. Fix any $P$-consistent set $A$.

We now define $\phi$ via
\[
    \phi_M(F) = \I[F \cap A \neq \varnothing] \cdot M + |F|.
\]
Given a linear extension $\ell \in L(P)$, let $\ell_{\min}(A)$ denote the smallest $i$ for which $\ell_i \in A$. We now compute the numbers $c_k$ from \cref{thm:log-concavity-colored-complex} via
\[
\begin{split}
    c_k &= \frac{1}{M+d+1} \sum_{\tau \in X(d)} 1 \cdot \frac{M+1}{\prod_{i=1}^k\phi_M(\tau_i) \cdot \prod_{i=k+1}^d(M+d+1-\phi_M(\tau_i))} \\
        &\underset{M \to \infty}{\to} \sum_{\substack{\ell \in L(P) \\ \ell_{\min}(A) = k+1}} \frac{1}{k!(d-k)!} \\
        &= \frac{|\{\ell \in L(P): \ell_{\min}(A) = k+1\}|}{k!(d-k)!}.
\end{split}
\]
Thus by applying \cref{lem:toplinkexpansioncolorlattice} and \cref{thm:log-concavity-colored-complex} we obtain a weak version of the extended Stanley inequality, which says that $(c_0,c_1,\ldots,c_d)$ forms a log-concave sequence.


Using the same argument as above, we finally obtain the extended Stanley inequality for the Poset $P$ and the given $P$-consistent $A$.

\subsection{Generalizing the $\alpha$ and $\beta$ vectors}

In this section, we generalize the $\alpha$ and $\beta$ vectors utilized above to what we call $\ell$ vectors. Here we will define $d$ such $\ell$ vectors in total, one vector associated to $T_i$ for all $i$. We will then analyze the $1$-skeleton of $X$ restricted to $T_i \cup T_j$ for any $i \neq j$ by applying directional derivatives with respect to all other $\ell$ vectors (i.e. for $k \not\in \{i,j\}$).

We will assume throughout this section that we have a strictly order-preserving function $\psi: X(1) \to \R$ and a function $\phi: \{(a,b) \in \R^2 : a < b\} \to \R_{>0}$. We will assume for $k < m < f < l$, that $\phi$ satisfies the condition
\begin{equation} \label{eq:phi2-condition}
    \phi(k,f) \cdot \phi(m,l) - \phi(k,l) \cdot \phi(m,f) = \phi(k,m) \cdot \phi(f,l)
\end{equation}
from \cref{lem:alpha-beta-gen}. Given any $F<G$, we define $\phi(F,G) = \phi(\psi(F),\psi(G))$, and we will make use of the notation $\phi(\psi(F),\psi(G)) = \phi(F,G) = \phi(F,\psi(G)) = \phi(\psi(F),G)$ and $\phi(F,m) = \phi(\psi(F),m)$ and $\phi(m,G) = \phi(m,\psi(G))$.

We now define the $\ell$ vectors. Given any $K,F,L$ with $k = \psi(K), f = \psi(F), l = \psi(L)$ such that $K < L$, and any $m$ such that $k < m < l$, we define
\[
    \ell_K^L(m,F) = \begin{cases}
        \frac{\phi(K,F)}{\phi(K,L)} \cdot \phi(m,L)  & \text{if } F \in (K,L) \text{ and } f \leq m \\
        \frac{\phi(F,L)}{\phi(K,L)} \cdot \phi(K,m) & \text{if } F \in (K,L) \text{ and } f \geq m \\
        0 & \text{if } F \not\in (K,L).
    \end{cases}
\]
We will also let $\ell_K^L(m)$ denote the vector indexed by $X(1)$, which is 0 outside of $(K,L)$.
Given $K<L$, we note that $\ell_K^L(\psi(K)+\epsilon) = \beta_K^L$ and $\ell_K^L(\psi(L)-\epsilon) = \alpha_K^L$ up to positive scalars (for some small $\epsilon > 0$), so that the $\ell$ vectors can be seen as a generalization of $\alpha$ and $\beta$.


\begin{lemma} \label{lem:ell-relations}
    Let $\sigma$ and $K,L \in \sigma$ be such that $F \not\in \sigma$ for all $F \in (K,L)$. Define $k = \psi(K),l=\psi(L)$, and let $m$ be such that $k < m < l$. Then for any $F \in (K,L)$, setting $f = \psi(F)$, we have
    \[
        \pi_{\sigma+F}(\ell_K^L(m))=\begin{cases}\ell_K^F(m)&\text{if } f > m\\ \ell_F^L(m)&\text{if } f < m \\ 0 & f = m.\end{cases}
    \]
\end{lemma}
\begin{proof}
    Fix $H \in (K,L)$ and set $h = \psi(H)$. First we assume $f > m$, so that $k<m<f<l$ in this case. Let $H \in (K,F)$. If $h \leq m$ then
    \begin{align*}
        \pi_{\sigma+F}(\ell_K^L(m))_H &= \ell_K^L(m,H) - \frac{\phi(k,h)}{\phi(k,f)} \cdot \ell_K^L(m,F) \\
            &= \frac{\phi(k,h) \cdot \phi(m,l)}{\phi(k,l)} - \frac{\phi(k,h)}{\phi(k,f)} \cdot \frac{\phi(f,l) \cdot \phi(k,m)}{\phi(k,l)} \\
            &= \frac{\phi(k,h)}{\phi(k,f) \cdot \phi(k,l)} (\phi(m,l) \cdot \phi(k,f) - \phi(f,l) \cdot \phi(k,m)) \\
            &\overset{\text{\eqref{eq:phi2-condition}}}{=} \frac{\phi(k,h)}{\phi(k,f) \cdot \phi(k,l)} \cdot \phi(k,l) \cdot \phi(m,f) \\
            &= \ell_K^F(m,H).
    \end{align*}
    If $h > m$ so that $k < h < f < l$, then
    \begin{align*}
        \pi_{\sigma+F}(\ell_K^L(m))_H &= \ell_K^L(m,H) - \frac{\phi(k,h)}{\phi(k,f)} \cdot \ell_K^L(m,F) \\
            &= \frac{\phi(k,m) \cdot \phi(h,l)}{\phi(k,l)} - \frac{\phi(k,h)}{\phi(k,f)} \cdot \frac{\phi(k,m) \cdot \phi(f,l)}{\phi(k,l)} \\
            &= \frac{\phi(k,m)}{\phi(k,f) \cdot \phi(k,l)} (\phi(h,l) \cdot \phi(k,f) - \phi(k,h) \cdot \phi(f,l)) \\
            &\overset{\text{\eqref{eq:phi2-condition}}}{=}  \frac{\phi(k,m)}{\phi(k,f) \cdot \phi(k,l)} \cdot \phi(k,l) \cdot \phi(h,f) \\
            &= \ell_K^F(m,H).
    \end{align*}
    Finally, if $H \in (F,L)$ then
    \begin{align*}
        \pi_{\sigma+F}(\ell_K^L(m))_H &= \ell_K^L(m,H) - \frac{\phi(h,l)}{\phi(f,l)} \cdot \ell_K^L(m,F) \\
            &= \frac{\phi(k,m) \cdot \phi(h,l)}{\phi(k,l)} - \frac{\phi(h,l)}{\phi(f,l)} \cdot \frac{\phi(k,m) \cdot \phi(f,l)}{\phi(k,l)} \\
            &= 0.
    \end{align*}
    The proof for the case of $f < m$ is similar. Next we assume $f = m$. If $H \in (K,F)$ then
    \begin{align*}
        \pi_{\sigma+F}(\ell_K^L(m))_H &= \ell_K^L(m,H) - \frac{\phi(k,h)}{\phi(k,f)} \cdot \ell_K^L(m,F) \\
            &= \frac{\phi(k,h) \cdot \phi(m,l)}{\phi(k,l)} - \frac{\phi(k,h)}{\phi(k,f)} \cdot \frac{\phi(k,f) \cdot \phi(m,l)}{\phi(k,l)} \\
            &= 0.
    \end{align*}
    If $H \in (F,L)$ then
    \begin{align*}
        \pi_{\sigma+F}(\ell_K^L(m))_H &= \ell_K^L(m,H) - \frac{\phi(h,l)}{\phi(f,l)} \cdot \ell_K^L(m,F) \\
            &= \frac{\phi(k,m) \cdot \phi(h,l)}{\phi(k,l)} - \frac{\phi(h,l)}{\phi(f,l)} \cdot \frac{\phi(k,m) \cdot \phi(f,l)}{\phi(k,l)} \\
            &= 0.
    \end{align*}
\end{proof}

\begin{corollary} \label{cor:ell-in-cone}
    If $\cC_\varnothing$ is non-empty, then the vector $\ell(m)$ is an element of $\overline{\cC}_\varnothing$ for all $m$.
\end{corollary}
\begin{proof}
    By inductively applying \cref{lem:ell-relations}, we have that $\pi_\sigma(\ell(m)) \in \R^{X_\sigma(1)}_{\geq 0}$ for all $m$ and $\sigma \in X$. Thus $\ell(m)$ is $\bm\pi$-non-negative. Therefore $\ell(m) \in \overline{\cC}_\varnothing$ for all $m$ by (3) of \cref{prop:cones}.
\end{proof}

\begin{lemma}[General $\ell$ mixed derivatives lemma] \label{lem:general-ell-mixed-derivs-expression}
    Let $(X,\mu)$ be a $d$-path complex, and let $\psi(F) = f$ whenever $F \in T_f$. Let $\sigma = \{F_{a_1} < F_{a_2} < \cdots < F_{a_m}\}$ where $F_{a_j} \in T_{a_j}$ for all $j$, and define $F_{a_0} = \hat{0}$ and $F_{a_{m+1}} = \hat{1}$. Let $I = [d] \setminus \{a_1,\ldots,a_m\}$, and define $\ell(i) = \ell_{\hat{0}}^{\hat{1}}(i)$ for all $i \in I$. Then we have
    \[
        \left[\prod_{i \in I} \nabla_{\ell(i)}\right] p_\sigma(\pi_\sigma(\bm{t})) = C_\phi(a_1,a_2,\ldots,a_d) \sum_{\tau \in X(d): \sigma \subseteq \tau} \mu(\tau),
    \]
    for some $C_\phi(a_1,a_2,\dots,a_d) > 0$.
\end{lemma}
\begin{proof}
    We first prove by induction on $I$ that
    \begin{equation}\label{eq:productofellis}
        \left[\prod_{i \in I} \nabla_{\ell(i)}\right] p_\sigma(\pi_\sigma(\bm{t})) = \sum_{\tau \in X(d): \sigma \subseteq \tau} \left[\prod_{j=0}^m \prod_{i=a_j+1}^{a_{j+1}-1} \ell_{F_{a_j}}^{\tau_{i+1}}(i,\tau_i)\right] \cdot \mu(\tau).
    \end{equation}
    Note that the statement is trivial whenever $I = \varnothing$. Otherwise, let $j \in \{0,1,\ldots,m\}$ be such that $F_{a_j}$ is not covered by $F_{a_{j+1}}$, i.e., $a_{j+1}>a_j+1$. Further, fix any $i$ such that $a_j < i < a_{j+1}$, and define $K = F_{a_j}$ and $L = F_{a_{j+1}}$. We thus have
    \[
    \begin{split}
        \nabla_{\ell(i)} [p_\sigma(\pi_\sigma(\bm{t}))] &\overset{\text{\cref{lem:chain-rule}}}{=} [\nabla_{\pi_\sigma(\ell(i))} p_\sigma](\pi_\sigma(\bm{t})) \\
            &\overset{\text{\cref{lem:ell-relations}}}{=} [\nabla_{\ell_K^L(i)} p_\sigma](\pi_\sigma(\bm{t})) \\
            &\overset{\text{\cref{lem:derivs-expression}}}{=} \left[\sum_{H: K<H<L} \ell_K^L(i,H) \cdot p_{\sigma \cup \{H\}}(\pi_{\sigma + H}(\cdot))\right](\pi_\sigma(\bm{t})) \\
            &\overset{\text{commutativity $\bm\pi$}}{=} \sum_{H: K<H<L} \ell_K^L(i,H) \cdot p_{\sigma \cup \{H\}}(\pi_{\sigma \cup \{H\}}(\bm{t}))
    \end{split}
    \]
    Note that for $h \in \{a_j+1,a_j+2,\ldots,a_{j+1}-1\}$ and $H \in T_h$, we have $\pi_{\sigma \cup \{H\}}(\ell(h)) = 0$ by \cref{lem:ell-relations} (by our assumption on $\psi$), which implies
    \begin{equation} \label{eq:Hh-zero}
        \nabla_{\ell(h)} [p_{\sigma \cup \{H\}}(\pi_{\sigma \cup \{H\}}(\bm{t}))] \overset{\text{\cref{lem:chain-rule}}}{=} [\nabla_{\pi_{\sigma \cup \{H\}}(\ell(h))} p_{\sigma \cup \{H\}}](\pi_{\sigma \cup \{H\}}(\bm{t})) = 0.
    \end{equation}
    This in particular implies the following
    $$ \left(\prod_{h\neq i:a_j<h<a_{j+1}}\nabla_{\ell(h)}\right)\nabla_{\ell(i)} [p_\sigma(\pi_\sigma(\bm{t}))] =\left(\prod_{h\neq i:a_j<h<a_{j+1}}\nabla_{\ell(h)}\right)\sum_{H\in T_i: K<H<L} \ell_K^L(i,H) \cdot p_{\sigma \cup \{H\}}(\pi_{\sigma \cup \{H\}}(\bm{t}))
    $$
    We now prove the desired expression by applying the derivatives in reverse order from $\nabla_{\ell(a_{j+1}-1)}$ to $\nabla_{\ell(a_j+1)}$.
    \begin{align*}
      \left[\prod_{i=a_j+1}^{a_{j+1}-1} \nabla_{\ell(i)}\right] p_\sigma(\pi_\sigma(\bm{t})) &= \sum_{H: K<H<L} \ell_K^L(a_{j+1}-1,H) \cdot \left[\prod_{i=a_j+1}^{a_{j+1}-2} \nabla_{\ell(i)}\right] p_{\sigma \cup \{H\}}(\pi_{\sigma \cup \{H\}}(\bm{t})) \\
            &\overset{\text{\eqref{eq:Hh-zero}}}{=} \sum_{H: K<H \prec L} \ell_K^L(a_{j+1}-1,H) \cdot \left[\prod_{i=a_j+1}^{a_{j+1}-2} \nabla_{\ell(i)}\right] p_{\sigma \cup \{H\}}(\pi_{\sigma \cup \{H\}}(\bm{t})) \\
            &\overset{\text{IH}}{=} \sum_{H: K<H \prec L} \ell_K^L(a_{j+1}-1,H) \cdot \sum_{\tau \in X(d): \sigma \cup \{H\} \subseteq \tau} \left[\prod_{i=a_j+1}^{a_{j+1}-2} \ell_K^{\tau_{i+1}}(i,\tau_i)\right] \cdot \mu(\tau) \\
            &= \sum_{\tau \in X(d): \sigma \subseteq \tau} \left[\prod_{i=a_j+1}^{a_{j+1}-1} \ell_K^{\tau_{i+1}}(i,\tau_i)\right] \cdot \mu(\tau),
    \end{align*}
    as desired. Running the same argument for all $0\leq j\leq m$ we obtain \eqref{eq:productofellis}. 

    We will next prove that
    \[
        \prod_{j=0}^m \prod_{i=a_j+1}^{a_{j+1}-1} \ell_{F_{a_j}}^{\tau_{i+1}}(i,\tau_i) = \prod_{j=0}^m \frac{\prod_{i=a_j}^{a_{j+1}-1} \phi(i,i+1)}{\phi(a_j,a_{j+1}} =: C_\phi(a_1,a_2,\ldots,a_d).
    \]
    To do this, we will show that for all $j \in \{0,1,\ldots,m\}$ and all $\tau \in X(d)$ we have
    \[
        \prod_{i=a_j+1}^{a_{j+1}-1} \ell_{F_{a_j}}^{\tau_{i+1}}(i,\tau_i) = \frac{\prod_{i=a_j}^{a_{j+1}-1} \phi(i,i+1)}{\phi(a_j,a_{j+1})}.
    \]
    To see this, note that both sides are equal to $1$ whenever $a_{j+1} = a_j+1$. For the general case, by induction on $a_{j+1}-a_j$ we have
    \begin{align*}
        \prod_{i=a_j+1}^{a_{j+1}-1} \ell_{F_{a_j}}^{\tau_{i+1}}(i,\tau_i) &= \frac{\phi(a_j,a_{j+1}-1) \cdot \phi(a_{j+1}-1,a_{j+1})}{\phi(a_j,a_{j+1})} \prod_{i=a_j+1}^{a_{j+1}-2} \ell_{F_{a_j}}^{\tau_{i+1}}(i,\tau_i) \\
            &\overset{\text{IH}}{=} \frac{\phi(a_j,a_{j+1}-1) \cdot \phi(a_{j+1}-1,a_{j+1})}{\phi(a_j,a_{j+1})} \cdot \frac{\prod_{i=a_j}^{a_{j+1}-2} \phi(i,i+1)}{\phi(a_j,a_{j+1}-1)} \\
            &= \frac{\prod_{i=a_j}^{a_{j+1}-1} \phi(i,i+1)}{\phi(a_j,a_{j+1})},
    \end{align*}
    as desired.

    Combining the two identities proven above then completes the proof of the result.
\end{proof}

\begin{definition} \label{def:mu-ell}
    We now define a function $\mu_\ell: X \to \R_{\geq 0}$ via
    \[
        \mu_\ell(\sigma) := C_\phi(i,j) \cdot \mu(\sigma) = C_\phi(i,j) \sum_{\tau \in X(d) : \sigma \subseteq \tau} \mu(\tau),
    \]
    where $C_\phi$ is the constant of \cref{lem:general-ell-mixed-derivs-expression}.
\end{definition}

\begin{corollary}[Eigenvalue bound for the $\ell$ Hessians] \label{cor:eig-bound-ell}
    Let $(X,\mu)$ be a $d$-path complex, and let $\psi(F) = f$ whenever $F \in T_f$. Given $1 \leq i < j \leq d-1$, let $B_{i,j}$ be the bipartite graph on $T_i \cup T_j$ with edge weights given by $\mu_\ell(\{F,G\})$ for $F \in T_i,G \in T_j$ and $F < G$, let $A_{i,j}$ be the weighted adjacency matrix of $B_{i,j}$, and let $D_{i,j}$ be the diagonal weighted degree matrix of $B_{i,j}$. Further, let $M_{i,j}$ be the diagonal matrix with entries given by
    \[
        m_{i,j}(F,F) = \frac{\sum_{G \in T_j:F < G} \beta_F(G) \cdot \mu_\ell(\{F,G\})}{\sum_{G \in T_j:F < G} \mu_\ell(\{F,G\})} \qquad \text{for} \quad F \in T_i
    \]
    and
    \[
        m_{i,j}(G,G) = \frac{\sum_{F \in T_i:F < G} \alpha^G(F) \cdot \mu_\ell(\{F,G\})}{\sum_{F \in T_i:F < G} \mu_\ell(\{F,G\})} \qquad \text{for} \quad G \in T_j.
    \]
    If $\cC_\varnothing$ is non-empty and $p_\varnothing$ is $\cC_\varnothing$-Lorentzian, then
    \[
        D_{i,j}^{-1/2} A_{i,j} D_{i,j}^{-1/2} \preceq M_{i,j} + \bm{w}\bm{w}^\top
    \]
    for some real vector $\bm{w}$.
\end{corollary}
\begin{proof}
    Note first that for $\{F,G\} \in X(2)$ with $F < G$, by \cref{lem:derivs-expression} we have
    \[
        \partial_{t_F} \partial_{t_G} p_\varnothing(\bm{t}) = p_{\{F,G\}}(\pi_{\{F,G\}}(\bm{t})).
    \]
    By \cref{lem:general-ell-mixed-derivs-expression} and the equality case of \cref{lem:ell-relations}, we then have
    \[
        \left[\prod_{k \not\in \{i,j\}} \nabla_{\ell(k)}\right] \partial_{t_F} \partial_{t_G} p_\varnothing(\bm{t}) = \begin{cases}
            \mu_\ell(\{F,G\}) > 0, & F \in T_i, G \in T_j \\
            0, & \text{otherwise}.
        \end{cases}
    \]
    Next for any $H \in X(1)$ we compute
    \[
    \begin{split}
        \partial_{t_H}^2 p_\varnothing(\bm{t}) &\underset{\text{\cref{lem:derivs-expression}}}{=} \partial_{t_H} p_{\{H\}}(\pi_{\{H\}}(\bm{t})) \\
            &\underset{\text{\cref{lem:chain-rule}}}{=} \left[\nabla_{\pi_{\{H\}}(\bm{1}_H)} p_{\{H\}}\right](\pi_{\{H\}}(\bm{t})) \\
            &\underset{\text{\cref{eq:pi-def-from-alpha-beta}}}{=} -\sum_{F: F < H} \alpha^H(F) \cdot p_{\{F,H\}}(\pi_{\{F,H\}}(\bm{t})) - \sum_{G: H < G} \beta_H(G) \cdot p_{\{H,G\}}(\pi_{\{H,G\}}(\bm{t})),
    \end{split}
    \]
    and by \cref{lem:general-ell-mixed-derivs-expression} and the equality case of \cref{lem:ell-relations} we have
    \[
        \left[\prod_{k \not\in \{i,j\}} \nabla_{\ell(k)}\right] \partial_{t_H}^2 p_\varnothing(\bm{t}) = \begin{cases}
            -\sum_{G \in T_j: H < G} \beta_H(F) \cdot \mu_\ell(\{H,G\}), & H \in T_i \\
            -\sum_{F \in T_i: F < H} \alpha^H(F) \cdot \mu_\ell(\{F,H\}), & H \in T_j \\
            0, & \text{otherwise}
        \end{cases}.
    \]
    Thus the Hessian $\cH$ of $\left[\prod_{k \not\in \{i,j\}} \nabla_{\ell(k)}\right] p_\varnothing(\bm{t})$ is supported only on $T_i \cup T_j$, and we further have $\cH = A_{i,j} - D_{i,j}^{1/2} M_{i,j} D_{i,j}^{1/2}$. Since $p_\varnothing$ is $\cC_\varnothing$-Lorentzian and $\ell(k) \in \overline{\cC}_\varnothing$ for all $k$ by \cref{cor:ell-in-cone}, we thus have
    \[
        A_{i,j} - D_{i,j}^{1/2} M_{i,j} D_{i,j}^{1/2} = \cH \preceq \bm{v}\bm{v}^\top
    \]
    by \cref{lem:main-purpose-lemma} for some real vector $\bm{v}$, and this implies the desired result.
\end{proof}

\section{Trickledown Theorems for Top-Link Path Complexes}
\label{sec:polys-v1}
In this section we will prove \cref{thm:main}. Throughout, for $K \in T_i$ and $L \in T_j$ such that $K < L$ we define $r_K^L = j-i$ and a coloring $\psi(K) = i$, where possibly $K = \hat{0}$ and/or $L = \hat{1}$. We further define $\phi(i,j) = j-i$ which satisfies condition \eqref{eq:phi2-condition}.
By \cref{cor:alpha-beta-v1} and \cref{lem:cone-non-empty-v1} we obtain the following:

\begin{corollary} \label{cor:alpha-beta-rank}
    If for $K < F < L$ in $X(1)$ we define
    \[
        \alpha_K^L(F) = \frac{r_K^F}{r_K^L} \qquad \text{and} \qquad \beta_K^L(F) = \frac{r_F^L}{r_K^L},
    \]
    then $\bm\alpha$ and $\bm\beta$ are commmutative and give rise to a commutative family of maps $\bm\pi$ and their associated polynomials $p_\sigma$ for $\sigma \in X$.

    Furthermore, the cone $\cC_\sigma$ is non-empty for all $\sigma \in X$.
\end{corollary}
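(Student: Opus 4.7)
The plan is to recognize this corollary as a direct specialization of \cref{cor:alpha-beta-v1} and \cref{lem:cone-non-empty-v1} applied to the \emph{trivial coloring}. Specifically, I would define $\phi: X(1) \cup \{\hat{0},\hat{1}\} \to \R_{>0}$ by setting $\phi(F) = k$ whenever $F \in T_k$, with $\phi(\hat{0}) = 0$ and $\phi(\hat{1}) = d+1$. The first step is then to verify that this $\phi$ is strictly order-preserving in the sense required by the earlier results. This is immediate from the definition of the ordering on $X(1)$: if $F < G$ with $F \in T_i$ and $G \in T_j$, then by convention $i < j$, whence $\phi(F) = i < j = \phi(G)$.

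Next I would plug this $\phi$ into \cref{cor:alpha-beta-v1}. With the definitions
\[
    \alpha_K^L(F) = \frac{\phi(F) - \phi(K)}{\phi(L) - \phi(K)} \qquad \text{and} \qquad \beta_K^L(F) = \frac{\phi(L) - \phi(F)}{\phi(L) - \phi(K)},
\]
a one-line computation reveals that these coincide with $r_K^F/r_K^L$ and $r_F^L/r_K^L$ respectively, since for $K \in T_a$, $F \in T_b$, $L \in T_c$ with $a < b < c$ (allowing $K = \hat{0}$ or $L = \hat{1}$) we have $\phi(F) - \phi(K) = b - a = r_K^F$ and $\phi(L) - \phi(K) = c - a = r_K^L$, and analogously for $\beta$. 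Thus \cref{cor:alpha-beta-v1} immediately gives the commutativity of $\bm\alpha$ and $\bm\beta$ and yields the associated commutative family $\bm\pi$ and polynomials $p_\sigma$.

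Finally, for the non-emptiness claim, the hypothesis of \cref{lem:cone-non-empty-v1} is exactly that the underlying coloring satisfies \eqref{eq:alphaphidef}, which we have just verified. Therefore \cref{lem:cone-non-empty-v1} applies verbatim and gives $\cC_\sigma \neq \varnothing$ for every $\sigma \in X$. No additional calculation is required, and there is no genuine obstacle in the argument beyond checking that the trivial coloring fits the hypotheses of the two previously established general lemmas.
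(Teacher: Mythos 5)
Your proposal is correct and matches the paper's own argument exactly: the paper derives \cref{cor:alpha-beta-rank} precisely by taking the trivial coloring $\phi(F)=k$ for $F\in T_k$ and invoking \cref{cor:alpha-beta-v1} together with \cref{lem:cone-non-empty-v1}. Your verification that $\phi(F)-\phi(K)=r_K^F$ (with the natural extension to $\hat{0}$, $\hat{1}$) is the only computation needed, just as in the paper.
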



This leads to the main result of this section, which gives eigenvalue bounds on the random walk matrix associated to $X$ based on the machinery of the previous sections.

\begin{theorem} \label{thm:eig-bounds-main-v1}
    Fix $i < j$ in $[d]$, and let $G_{i,j}$ be the bipartite graph on $T_i \cup T_j$ with corresponding random walk matrix $P_{i,j}$ according to stationary distribution $\mu$. Let $A_{i,j}$ be the weighted adjacency matrix of $G_{i,j}$, let $D_{i,j}$ be the diagonal weighted degree matrix of $G_{i,j}$, and let $M_{i,j}$ be the diagonal matrix with entries given by $\frac{d-j+1}{d-i+1}$ for vertices in $T_i$ and $\frac{i}{j}$ for vertices in $T_j$. If $p_\varnothing$ is $\cC_\varnothing$-Lorentzian, then
    \[
        D_{i,j}^{-1/2} A_{i,j} D_{i,j}^{-1/2} \preceq M_{i,j} + \bm{w}\bm{w}^\top
    \]
    for some real vector $\bm{w}$. Moreover, the second eigenvalue of $P_{i,j}$ is at most $\sqrt{\frac{i(d-j+1)}{j(d-i+1)}}$.
\end{theorem}
\begin{proof}
    By \cref{cor:eig-bound-ell} and \cref{cor:alpha-beta-rank}, we have that
    \[
        D_{i,j}^{-1/2} A_{i,j} D_{i,j}^{-1/2} \preceq M_{i,j} + \bm{w}\bm{w}^\top
    \]
    where the entries of $A_{i,j}$ are given by $\mu_\ell(\{F,G\}) = C_\phi(i,j) \cdot \mu(\{F,G\})$, the entries of $D_k$ are given by the row sums of $A_k$, and the entries of $M_k$ are given by
    \[
        m_{i,j}(F,F) = \frac{\sum_{G \in T_j:F < G} \beta_F(G) \cdot \mu_\ell(\{F,G\})}{\sum_{G \in T_j:F < G} \mu_\ell(\{F,G\})} = \frac{d-j+1}{d-i+1} \qquad \text{for} \quad F \in T_i
    \]
    and
    \[
        m_{i,j}(G,G) = \frac{\sum_{F \in T_i:F < G} \alpha^G(F) \cdot \mu_\ell(\{F,G\})}{\sum_{F \in T_i:F < G} \mu_\ell(\{F,G\})} = \frac{i}{j} \qquad \text{for} \quad G \in T_j.
    \]
    Since the entries of $A_{i,j}$ are proportional to the edge weights of $G_{i,j}$ for given $i < j$, this implies the first desired result. The eigenvalue bound for $P_{i,j}$ then immediately follows from \cref{lem:eigenvaluebipartite}.
\end{proof}

\begin{proof}[Proof of \cref{thm:main}]
We prove that $\lambda_2(P_\varnothing)\leq 1/2$.  The bound also extends to the 1-skeleton of all links (of co-dimension at least 2) of $X$. In particular, note that any link of a top-link path complex is also a top-link path complex.

Let $G_\varnothing$ be the 1-skeleton of $X_\varnothing$,
$G_{i,j}$ be the induced bipartite graph between parts $T_i,T_j$ and $P_{i,j}$ be the corresponding simple random walk.
Note by \cref{thm:eig-bounds-main-v1}, for any $1\leq i<j\leq d$ we have
$$ \lambda_2(P_{i,j}) \leq \sqrt{\frac{i (d-j+1)\cdot }{j (d-i+1)}}.$$

For $i < j$, let $M_{i,j}$ be a diagonal matrix with entries given by $m_i(j) := \frac{d-j+1}{d-i+1}$ for vertices in $T_i$ and $m_j(i):= \frac{i}{j}$ for vertices in $T_j$. Note that $0\leq m_i(j),m_j(i)<1$. By \cref{lem:eigenvaluebipartite}, we get that $\lambda_2(P_{i,j} - M_{i, j}) \leq 0$ and therefore has at most one positive eigenvalue. 

Next we want to use \cref{lem:dpartiteeigenvalues} to bound $\lambda_2(P_\varnothing)$.
 For any fixed $i\in [d]$, we have 
 \begin{align*}
     \E_{j \in [d], j \neq i} m_i(j) &= \frac{1}{d-1} \left(\sum_{j=1}^{i-1} \frac{j}{i} +\sum_{j=i+1}^d \frac{d-j+1}{d-i+1} \right)\\
     &=\frac{1}{d-1} \left(\frac{i-1}{2} +\frac{d-i}{2} \right)\\
     &= \frac{1}{2}.
 \end{align*}
 As this holds for all $i \in [d]$, we are done.
\end{proof}

\section{Beyond $1/2$-Top-Link Expansion}\label{sec:scolor}

In this section we prove \cref{thm:main-s}. 
Recall the notation $[k]_s = \frac{s^k - s^{-k}}{s - s^{-1}}$ and notice $[k]_1=k$.
Throughout this section, we fix a parameter $s \geq 1$. For $F<G$ with $F\in T_i,G\in T_j$, we define a coloring $\psi(F) = i$ and define $\phi(i,j) = [j-i]_s$ so that $\phi(F,G) =[r_F^G]_s=[j-i]_s$. Invoking \cref{lem:alpha-beta-gen} we obtain the following, and we note that the same argument here also shows condition \eqref{eq:phi2-condition} holds for $\phi$ in general.

\begin{lemma} \label{lem:alpha-beta-v3}
    If for $K < F < L$ in $X(1)$ we define
    \[
        \alpha_K^L(F) = \frac{[r_K^F]_s}{[r_K^L]_s} \qquad \text{and} \qquad \beta_K^L(F) = \frac{[r_F^L]_s}{[r_K^L]_s},
    \]
    then $\bm\alpha$ and $\bm\beta$ are commutative and give rise to a commutative family of maps $\bm\pi$ and their associated polynomials $p_\sigma$ for $\sigma \in X$.
\end{lemma}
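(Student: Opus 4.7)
The plan is to reduce the claim to a single algebraic identity and then invoke \cref{lem:alpha-beta-gen}. Define $\phi: X(2) \to \R_{>0}$ by $\phi(F,G) = [r_F^G]_s$; then the formulas for $\alpha_K^L(F)$ and $\beta_K^L(F)$ in the statement are precisely the ones produced by \cref{lem:alpha-beta-gen} from this $\phi$. Since the $\bm\pi$-maps in \cref{prop:pi-maps-from-alpha-beta} are built mechanically from $\bm\alpha,\bm\beta$, all three conclusions (commutativity of $\bm\alpha,\bm\beta$, commutativity of $\bm\pi$, and existence of the associated $p_\sigma$) follow at once from \cref{lem:alpha-beta-gen}, \emph{provided} we verify the hypothesis of that lemma, namely the ``Ptolemy-type'' relation
\[
\phi(K,G)\,\phi(F,L) - \phi(K,L)\,\phi(F,G) \;=\; \phi(K,F)\,\phi(G,L)
\]
for all $K < F < G < L$ in $X(1)$.

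Writing $x = r_K^F$, $y = r_F^G$, $z = r_G^L$ (so $r_K^G = x+y$, $r_F^L = y+z$, $r_K^L = x+y+z$), this reduces to the purely numerical identity
\[
[x+y]_s\,[y+z]_s \;-\; [x+y+z]_s\,[y]_s \;=\; [x]_s\,[z]_s,
\]
which I would verify by direct expansion. Using $[n]_s = (s^n - s^{-n})/(s - s^{-1})$ and clearing the common factor $(s-s^{-1})^{-2}$, both sides become polynomials in $s^{\pm 1}$; the left-hand side expands to
\[
\bigl(s^{x+2y+z} - s^{x-z} - s^{-x+z} + s^{-x-2y-z}\bigr) - \bigl(s^{x+2y+z} - s^{x+z} - s^{-x-z} + s^{-x-2y-z}\bigr),
\]
which simplifies to $(s^x - s^{-x})(s^z - s^{-z})$ and hence equals the right-hand side.

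No real obstacle is anticipated: the entire content of the lemma is the Ptolemy-type identity above, and that identity is a one-line computation after clearing denominators. All the structural work (deriving the four conditions in \cref{def:alpha-beta} from this single identity, and then building and verifying commutativity of $\bm\pi$) has already been packaged into \cref{lem:alpha-beta-gen} and \cref{prop:pi-maps-from-alpha-beta}, so the proof is essentially a one-sentence reduction followed by the short algebraic check.
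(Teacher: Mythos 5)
Your proposal is correct and matches the paper's proof essentially verbatim: the paper also defines $\phi(F,G) = [r_F^G]_s$, verifies the identity $\phi(K,G)\phi(F,L) - \phi(K,L)\phi(F,G) = \phi(K,F)\phi(G,L)$ by the same direct expansion (which in the paper simplifies via $\frac{(s^i-s^{-i})(s^k-s^{-k})}{(s-s^{-1})^2} = [i]_s[k]_s$, exactly your computation), and then invokes \cref{lem:alpha-beta-gen} to conclude. Your algebraic check is accurate, so nothing further is needed.
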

\begin{proof}
For $K<F<G<L$ in $X(1)$, let $i=r_K^F, j=r_F^G,k=r_G^L$; we have
    \begin{align*}
        &\phi(K,G)\cdot \phi(F,L)-\phi(K,L)\cdot \phi(F,G) \\
        &=[j+k]_s \cdot [i+j]_s - [j]_s \cdot [i+j+k]_s \\
            &= \frac{(s^{j+k}-s^{-j-k}) \cdot (s^{i+j}-s^{-i-j}) - (s^j-s^{-j}) \cdot (s^{i+j+k}-s^{-i-j-k})}{(s-s^{-1})^2} \\
            &= \frac{s^{i+k}+s^{-i-k}-s^{i-k}-s^{k-i}}{(s-s^{-1})^2} \\
            &= \frac{(s^i - s^{-i}) \cdot (s^k - s^{-k})}{(s-s^{-1})^2} \\
            &= [i]_s \cdot [k]_s = \phi(K,F)\cdot \phi(G,L).
    \end{align*}
    as desired.
\end{proof}
Note that if we set $s=1$, then the definition of $\bm\alpha$ and $\bm\beta$ in \cref{lem:alpha-beta-v3} coincides with that of \cref{cor:alpha-beta-rank}. The value of the parameter $s$ then becomes useful in the way that it makes \cref{thm:eig-bounds-main-v1} more robust, as seen in \cref{thm:eig-bounds-main-v3} below.

One key difference here is that the proof of \cref{lem:cone-non-empty-v1} does not work, and so we will prove \cref{lem:cone-containment} instead which implies the cones $\cC_\sigma$ are non-empty as a corollary. First, we need a technical lemma.

\begin{lemma} \label{lem:k-n-s-decreasing}
    For all $0 \leq k \leq n$, we have that $\frac{[k]_s}{[n]_s}$ is decreasing as a function of $s \in [1,\infty)$.
\end{lemma}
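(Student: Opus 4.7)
The plan is to parametrize $s \in [1, \infty)$ by $s = e^t$ for $t \geq 0$, since the definition of $[k]_s$ behaves cleanly under this substitution. Specifically,
\[
    [k]_{e^t} = \frac{e^{kt} - e^{-kt}}{e^t - e^{-t}} = \frac{\sinh(kt)}{\sinh(t)},
\]
so the ratio in question becomes
\[
    \frac{[k]_s}{[n]_s} = \frac{\sinh(kt)}{\sinh(nt)}.
\]
Since $t \mapsto e^t$ is an increasing bijection between $[0,\infty)$ and $[1,\infty)$, the claim reduces to showing that $t \mapsto \sinh(kt)/\sinh(nt)$ is non-increasing on $[0,\infty)$ for all $0 \leq k \leq n$.

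After handling the trivial boundary cases $k=0$ (ratio is identically $0$) and $k=n$ (ratio is identically $1$), we may assume $0 < k < n$. I would then take the logarithmic derivative:
\[
    \frac{d}{dt} \log \frac{\sinh(kt)}{\sinh(nt)} = k\coth(kt) - n\coth(nt).
\]
Hence it suffices to show that the auxiliary function $g(x) := x\coth(x)$ is non-decreasing on $(0,\infty)$, for then $n\coth(nt) \geq k\coth(kt)$ whenever $n \geq k$ and $t > 0$, making the log-derivative non-positive.

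The remaining step is a direct computation: using $2\sinh(x)\cosh(x) = \sinh(2x)$, one finds
\[
    g'(x) = \coth(x) - \frac{x}{\sinh^2(x)} = \frac{\sinh(2x) - 2x}{2\sinh^2(x)},
\]
and the elementary inequality $\sinh(y) \geq y$ for $y \geq 0$ (which follows immediately from the Taylor series of $\sinh$) gives $g'(x) \geq 0$. This completes the plan. The only potentially delicate point is verifying that the log-derivative argument is valid at the endpoint $t=0$, but this is easily addressed by noting that the ratio is continuous at $t=0$ (where it equals $k/n$) and monotonicity on $(0,\infty)$ extends by continuity; so I do not expect any real obstacle here.
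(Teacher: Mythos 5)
Your proposal is correct, and it takes a genuinely different (and cleaner) route than the paper. The paper works directly with the expression $\frac{s^k-s^{-k}}{s^n-s^{-n}}$: it first establishes the auxiliary bound $\frac{[k]_s}{[n]_s}\leq\frac{k}{n}$ by showing $\partial_s\bigl[k(s^n-s^{-n})-n(s^k-s^{-k})\bigr]=nk(s^{k-1}-s^{-n-1})(s^{n-k}-1)\geq 0$, and then shows the logarithmic derivative is non-positive by reducing it to the inequality $\frac{n-k}{n}\geq\frac{2}{s^k+s^{-k}}\cdot\frac{[n-k]_s}{[n]_s}$, which follows from the auxiliary bound together with $s^k+s^{-k}\geq 2$. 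You instead substitute $s=e^t$, so that $\frac{[k]_s}{[n]_s}=\frac{\sinh(kt)}{\sinh(nt)}$, and reduce the whole statement to the monotonicity of the single auxiliary function $g(x)=x\coth(x)$, which you verify via $g'(x)=\frac{\sinh(2x)-2x}{2\sinh^2(x)}\geq 0$. Both arguments are at heart log-derivative computations, but yours isolates a standard, self-contained fact about $x\coth x$ and avoids the paper's two-step reduction, making the structure more transparent; the paper's version has the minor advantage of staying entirely within the $s$-notation used elsewhere and of recording the intermediate estimate $\frac{[k]_s}{[n]_s}\leq\frac{k}{n}$ explicitly, which it also reuses inside its own proof. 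Your handling of the boundary cases $k=0$, $k=n$ and of the endpoint $s=1$ by continuity is fine, so there is no gap.
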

\begin{proof}
    Since $s \geq 1$, we first compute
    \[
        \partial_s[k(s^n - s^{-n}) - n(s^k - s^{-k})]
            = nk(s^{k-1}-s^{-n-1}) (s^{n-k}-1)
            \geq 0,
    \]
    which implies $\frac{[k]_s}{[n]_s} \leq \frac{k}{n}$ for $0 \leq k \leq n$. Next we prove that $\partial_s \log\left(\frac{s^k-s^{-k}}{s^n-s^{-n}}\right) \leq 0$ for $s \geq 1$. We compute
    \[
        \partial_s \log\left(\frac{s^k-s^{-k}}{s^n-s^{-n}}\right) = \frac{k(s^k+s^{-k})(s^n-s^{-n}) - n(s^n+s^{-n})(s^k-s^{-k})}{s(s^k-s^{-k})(s^n-s^{-n})}.
    \]
    Thus for $s \geq 1$ we want to prove
    \[
    \begin{split}
        k(s^k+s^{-k})(s^n-s^{-n}) &\leq n(s^n+s^{-n})(s^k-s^{-k}) \\
            \iff \frac{k}{n} &\leq \frac{(s^n+s^{-n})(s^k-s^{-k})}{(s^k+s^{-k})(s^n-s^{-n})} = 1 - \frac{2(s^{n-k} - s^{k-n})}{(s^k+s^{-k})(s^n-s^{-n})} \\
            \iff \frac{n-k}{n} &\geq \frac{2}{s^k+s^{-k}} \cdot \frac{[n-k]_s}{[n]_s}.
    \end{split}
    \]
    Since $2 \leq s^k+s^{-k}$ and $\frac{[n-k]_s}{[n]_s} \leq \frac{n-k}{n}$ via the argument given above, we thus have that the above inequality holds. Therefore $\frac{s^k-s^{-k}}{s^n-s^{-n}}$ is decreasing for $s \geq 1$.
\end{proof}

\begin{lemma}[Cone containment for $s \geq 1$] \label{lem:cone-containment}
    Given $\sigma \in X$ and $1 \leq s \leq t$, we have that $\cC_\sigma^s \subseteq \cC_\sigma^t$.
    Moreover, $\cC_\sigma = \cC_\sigma^s \neq \varnothing$ for all $\sigma \in X$ and $s\geq 1$.
\end{lemma}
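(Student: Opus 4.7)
The plan is to establish the containment $\cC_\sigma^s \subseteq \cC_\sigma^t$ for $1 \leq s \leq t$ first; non-emptiness then follows immediately, since \cref{lem:cone-non-empty-v1} applied to the rank coloring $\phi(F) = k$ for $F \in T_k$ yields $\cC_\sigma^1 \neq \varnothing$ via \cref{cor:alpha-beta-rank}, and hence $\cC_\sigma^s \supseteq \cC_\sigma^1 \neq \varnothing$ for every $s \geq 1$.

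For the containment, I would reduce to showing that for every $\bm{v} \in \R^{X_\sigma(1)}_{\geq 0}$, every $\tau \in X_\sigma$, and every $H \in X_{\sigma\cup\tau}(1)$, the map $s \mapsto \pi^s_{\sigma+\tau}(\bm{v})_H$ is non-decreasing on $[1,\infty)$: then $\bm{v} \in \cC_\sigma^s$ gives strict positivity of every $\pi^s_{\sigma+\tau}(\bm{v})$, which transfers to $\pi^t_{\sigma+\tau}(\bm{v})$ by monotonicity, so $\bm{v} \in \cC_\sigma^t$. To obtain the monotonicity I would first establish the following explicit formula, valid for any commutative $\bm\alpha, \bm\beta$: if $F_a < F_b$ are the consecutive elements of $\sigma \cup \tau$ (with $\hat{0}$ and $\hat{1}$ adjoined as formal bounds) that bracket $H$, then
\[
    \pi_{\sigma+\tau}(\bm{t})_H = t_H - \mathbbm{1}[F_a \in \tau] \cdot \beta_{F_a}^{F_b}(H) \cdot t_{F_a} - \mathbbm{1}[F_b \in \tau] \cdot \alpha_{F_a}^{F_b}(H) \cdot t_{F_b}.
\]
Intuitively, any $F \in \tau$ lying outside $\{F_a, F_b\}$ is ``screened'' from $H$ by an intermediate element of $\sigma \cup \tau$, so its coefficient vanishes. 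Granting the formula, every non-vanishing coefficient has the form $[p]_s/[q]_s$ with $0 < p \leq q$, which is non-increasing in $s$ on $[1,\infty)$ by \cref{lem:k-n-s-decreasing}; combined with the minus signs and $\bm{v} \geq 0$, this yields the coordinate-wise monotonicity $\pi^t_{\sigma+\tau}(\bm{v})_H \geq \pi^s_{\sigma+\tau}(\bm{v})_H$.

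The main obstacle is proving the explicit formula. I would proceed by induction on $|\tau|$: the base case $|\tau|=1$ is \eqref{eq:pi-def-from-alpha-beta}, while the inductive step invokes commutativity of $\bm\pi$ (\cref{prop:pi-maps-from-alpha-beta}) to write $\pi_{\sigma+\tau} = \pi_{(\sigma\cup F)+(\tau\setminus F)} \circ \pi_{\sigma+F}$ for a convenient choice of $F \in \tau$, substitutes \eqref{eq:pi-def-from-alpha-beta} together with the induction hypothesis, and collapses the screened coefficients using identities (1)--(4) of \cref{def:alpha-beta}. The bookkeeping requires a case analysis on the relative position of $F$ with respect to $F_a, F_b, H$ and the surrounding elements of $\sigma$; the most delicate case is when $F, F_a, F_b$ all lie in the same gap of $\sigma$, where the vanishing of the coefficient of $t_F$ reduces to a single algebraic consequence of (2)--(4).
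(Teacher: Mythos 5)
Your proposal is correct and follows essentially the same route as the paper: reduce to the entrywise inequality $\pi^s_{\sigma+\tau}(\bm{v}) \leq \pi^t_{\sigma+\tau}(\bm{v})$ using the closed-form expression for a coordinate of the composite map (which the paper simply asserts, with the convention that bracketing elements lying in $\sigma$ contribute $0$), combine it with the monotonicity of $[k]_s/[n]_s$ from \cref{lem:k-n-s-decreasing}, and deduce non-emptiness from $\cC_\sigma^1 \neq \varnothing$ via \cref{lem:cone-non-empty-v1}. Your only addition is an explicit induction proving that closed form (the ``screening'' formula), a detail the paper leaves implicit, and your sketch of it via commutativity and identities (1)--(4) of \cref{def:alpha-beta} is sound.
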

\begin{proof}
    The result follows if we can show for all $\bm{v} \in \cC_\sigma^s$ and $\tau \in X_\sigma$ that $\pi^s_{\sigma+\tau}(\bm{v}) \leq \pi_{\sigma+\tau}^t(\bm{v})$ entrywise. To this end, we first note that $\frac{[k]_s}{[n]_s}$ is decreasing in $s$ for $s \geq 1$ for all $0 \leq k \leq n$ by \cref{lem:k-n-s-decreasing}. Now for $F \in X_{\sigma \cup \tau}$, we have that
    \[
        \pi_{\sigma+\tau}(\bm{v})_F = v_F - \alpha_K^L(F) \cdot v_L - \beta_K^L(F) \cdot v_K
    \]
    for some $K,L \in \sigma \cup \tau \cup \{\hat{0},\hat{1}\}$ for which $F \in (K,L)$, where possibly $v_K = 0$ and/or $v_L = 0$ (if $K \in \sigma$ and/or $L \in \sigma$). With this we have
    \[
        \pi^s_{\sigma+\tau}(\bm{v})_F = v_F - \frac{[r_K^F]_s}{[r_K^L]_s} \cdot v_L - \frac{[r_F^L]_s}{[r_K^L]_s} \cdot v_K \leq v_F - \frac{[r_K^F]_t}{[r_K^L]_t} \cdot v_L - \frac{[r_F^L]_t}{[r_K^L]_t} \cdot v_K = \pi^t_{\sigma+\tau}(\bm{v})_F,
    \]
    and this completes the proof of the first assertion. The second assertion then follows from the fact that $\cC_\sigma^1\neq \varnothing$ by \cref{lem:cone-non-empty-v1}.
\end{proof}


Beyond implying $\cC_\sigma\neq \varnothing$, 
the previous lemma also adds intuition to what is happening when the $s$ parameter deviates from $s=1$. Specifically, since $\cC_\sigma$ is expanding as $s$ increases, this implies the condition of being $\cC_\sigma$-Lorentzian is becoming stronger. This is why one can expect to obtain stronger eigenvalue bounds in this case.

This leads to the main result of this section, which gives eigenvalue bounds on the random walk matrix associated to $X$ based on the machinery of the previous sections.

\begin{theorem} \label{thm:eig-bounds-main-v3}
    Fix $i < j$ in $[d]$, and let $G_{i,j}$ be the bipartite graph on $T_i \cup T_j$ with corresponding random walk matrix $P_{i,j}$ according to stationary distribution $\mu$. Let $A_{i,j}$ be the weighted adjacency matrix of $G_{i,j}$, let $D_{i,j}$ be the diagonal weighted degree matrix of $G_{i,j}$, and let $M_{i,j}$ be the diagonal matrix with entries given by $\frac{[d-j+1]_s}{[d-i+1]_s}$ for vertices in $T_i$ and $\frac{[i]_s}{[j]_s}$ for vertices in $T_j$. If $p_\varnothing$ is $\cC_\varnothing$-Lorentzian, then
    \[
        D_{i,j}^{-1/2} A_{i,j} D_{i,j}^{-1/2} \preceq M_{i,j} + \bm{w}\bm{w}^\top
    \]
    for some real vector $\bm{w}$. Moreover, the second eigenvalue of $P_{i,j}$ is at most $\sqrt{\frac{[i]_s[d-j+1]_s}{[j]_s[d-i+1]_s}}$.
\end{theorem}
\begin{proof}
    By \cref{cor:eig-bound-ell} and \cref{lem:cone-containment}, we have that
    \[
        D_{i,j}^{-1/2} A_{i,j} D_{i,j}^{-1/2} \preceq M_{i,j} + \bm{w}\bm{w}^\top
    \]
    where the entries of $A_{i,j}$ are given by $\mu_\ell(\{F,G\}) = C_\phi(i,j) \cdot \mu(\{F,G\})$,
    the entries of $D_k$ are given by the row sums of $A_k$, and the entries of $M_k$ are given by
    \[
        m_{i,j}(F,F) = \frac{\sum_{G \in T_j:F < G} \beta_F(G) \cdot \mu_\ell(\{F,G\})}{\sum_{G \in T_j:F < G} \mu_\ell(\{F,G\})} = \frac{[d-j+1]_s}{[d-i+1]_s} \qquad \text{for} \quad F \in T_i
    \]
    and
    \[
        m_{i,j}(G,G) = \frac{\sum_{F \in T_i:F < G} \alpha^G(F) \cdot \mu_\ell(\{F,G\})}{\sum_{F \in T_i:F < G} \mu_\ell(\{F,G\})} = \frac{[i]_s}{[j]_s} \qquad \text{for} \quad G \in T_j.
    \]
    Since the entries of $A_{i,j}$ are proportional to the edge weights of $G_{i,j}$ for given $i < j$, this implies the first desired result. The eigenvalue bound for $P_{i,j}$ then immediately follows from \cref{lem:eigenvaluebipartite}.
\end{proof}

\begin{proof}[Proof of \cref{thm:main-s}]
We prove the bounds for the case of $k=d$ (link of $\varnothing$). By similarity, the bound extends to the 1-skeleton of all links (of co-dimension at least 2) of $X$. In particular, note that any link of a top-link path complex is also a top-link path complex.

Let $G_\varnothing$ be the 1-skeleton of $X_\varnothing$,
 $G_{i,j}$ be the induced bipartite graph between parts $T_i,T_j$ and $P_{i,j}$ be the corresponding simple random walk.
Note by \cref{thm:eig-bounds-main-v3}, for any $1 \leq i < j \leq d$ we have
$$ \lambda_2(P_{i,j}) \leq \sqrt{\frac{[i]_s [d-j+1]_s }{[j]_s [d-i+1]_s}}.$$

For $i < j$, let $M_{i,j}$ be a diagonal matrix with entries given by $m_i(j) := \frac{[d-j+1]_s}{[d-i+1]_s}$ for vertices in $T_i$ and $m_j(i):= \frac{[i]_s}{[j]_s}$ for vertices in $T_j$. Note that since $[.]_s$ is increasing function, $0\leq m_i(j),m_j(i)<1$. By \cref{lem:eigenvaluebipartite}, we get that $\lambda_2(P_{i,j} - M_{i, j}) \leq 0$ and therefore has at most one positive eigenvalue. 
Next we want to use \cref{lem:dpartiteeigenvalues} to bound $\lambda_2(P_\varnothing)$.
 For any fixed $i\in [d]$, we have 
 \begin{align*}
     \E_{j \in [d], j \neq i} m_i(j) &= \frac{1}{d-1} \left(\sum_{j=1}^{i-1} \frac{[j]_s}{[i]_s} +\sum_{j=i+1}^d \frac{[d-j+1]_s}{[d-i+1]_s} \right)\\
     &=\frac{1}{(d-1)(s-1)}\left(\frac{s^i-s}{s^i+1} + \frac{s^{d-i+1}-s}{s^{d-i+1}+1}\right) \\
     &= \frac{1}{(d-1)(s-1)}\left(2 - (s+1)\left(\frac{1}{s^i+1} + \frac{1}{s^{d-i+1}+1}\right)\right) \\
     &\leq \frac{2}{(d-1)(s-1)} \left(1 - \frac{s+1}{s^{\frac{d+1}{2}}+1}\right) \\
     &= \frac{2}{(d-1)(s^{\frac{d+1}{2}}+1)} \left(\frac{s^{\frac{d-1}{2}}-1}{s-1}\right) \leq \frac{2}{(d-1)(s-1)}
 \end{align*}
Note that the second to last inequality follows from the fact that $\frac{1}{s^i+1} + \frac{1}{s^{d-i+1}+1}$ is convex for $i\in [0,d+1]$ and minimized at $i = \frac{d+1}{2}$.
This finishes the first assertion of the theorem. 

Now suppose $(X,\mu)$ is a $(1/2-\eps)$-top-link expander (for $\eps>0$). Then, we determine $s>1$ such that $[1]_s/[2]_s=1/2-\eps$:
\[
    \frac{1}{s+s^{-1}} = \frac{[1]_s}{[2]_s} = \frac{1}{2} - \epsilon \iff s + s^{-1} = \frac{2}{1-2\epsilon} \iff s = \frac{1 + 2\sqrt{\epsilon - \epsilon^2}}{1-2\epsilon}.
\]
Finally, we compute
\[
    s - 1 = \frac{2\epsilon + 2\sqrt{\epsilon - \epsilon^2}}{1-2\epsilon} \geq \frac{2\sqrt{\epsilon}}{1-2\epsilon},
\]
which implies the concrete bound.

\end{proof}

\section{Top-link Expander Path Complexes}
\label{sec:quadraticcheck}
Recall that if $\cL$ is a semimodular ranked lattice, then for any $F,G \in \cL$ we have $\rk(F) + \rk(G) \geq \rk(F \vee G) + \rk(F \wedge G)$. Note that all modular lattices are semimodular, and the lattice of flats of a matroid is semimodular. Thus \cref{cor:latticeconnected} applies to all lattices that we will consider in the following sections.

\begin{lemma} \label{lem:empty-set-connected}
    Let $\cL$ be a semimodular ranked lattice with $d+1=\rk(\hat{1}) \geq 3$, and let $X$ be the $d$-partite complex consisting of {\bf all} flags of flats in $\cL$. Then $G_\varnothing$ is  connected.
\end{lemma}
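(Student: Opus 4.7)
The plan is to reduce everything to atoms (rank-$1$ flats) and connect proper flats to atoms and atoms to each other. The key arithmetic fact we will use is that $d+1 \geq 3$ implies $d \geq 2$.

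First, I would show that every proper flat $F$ is either itself an atom or is adjacent in $G_\varnothing$ to some atom. If $\rk(F) = 1$, there is nothing to do. If $\rk(F) \geq 2$, then any maximal chain $\hat{0} \prec F_1 \prec \cdots \prec F_{\rk(F)} = F$ produces a rank-$1$ flat $F_1$ with $F_1 < F$; since both $F_1$ and $F$ are proper and comparable, $\{F_1,F\} \in X(2)$, giving an edge in $G_\varnothing$.

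Next I would show any two distinct atoms $H_1, H_2$ are connected by a length-$2$ path. Since $H_1 \neq H_2$ are atoms, $H_1 \wedge H_2 = \hat{0}$, so by semimodularity
\[
    \rk(H_1 \vee H_2) \leq \rk(H_1) + \rk(H_2) - \rk(H_1 \wedge H_2) = 2.
\]
Because $d \geq 2$, this yields $\rk(H_1 \vee H_2) \leq 2 \leq d < d+1$, so $H_1 \vee H_2$ is a proper flat. Moreover $H_1,H_2 < H_1 \vee H_2$ strictly (otherwise one atom would be below the other, contradicting $H_1 \wedge H_2 = \hat{0}$). Therefore $H_1 - (H_1 \vee H_2) - H_2$ is a path in $G_\varnothing$.

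Combining these two steps, any two proper flats are connected through atoms, proving $G_\varnothing$ is connected. The only place where something could fail is in controlling that the intermediate flats remain proper, and this is precisely why we need both semimodularity (to bound $\rk(H_1 \vee H_2)$) and the hypothesis $d+1 \geq 3$ (to guarantee $H_1 \vee H_2 \neq \hat{1}$). I expect this to be a short direct argument with no serious obstacle.
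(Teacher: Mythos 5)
Your proof is correct, and it takes a genuinely different route from the paper's. The paper argues by induction on $\rk(F)+\rk(G)$: if $F\vee G\neq\hat{1}$ or $F\wedge G\neq\hat{0}$, then the join (resp.\ meet) is a proper flat comparable to both $F$ and $G$, giving a two-edge path; otherwise semimodularity gives $\rk(F)+\rk(G)\geq\rk(\hat{1})\geq 3$, so (say) $\rk(G)\geq 2$, and one replaces $G$ by a proper flat $H\prec G$, lowering the rank sum so that the induction hypothesis connects $F$ to $H$ and hence to $G$. You instead dispense with the induction by routing everything through atoms: every proper flat of rank at least $2$ lies above an atom (via a saturated chain in $[\hat{0},F]$, which exists because the lattice is graded) and is adjacent to it, and any two distinct atoms $H_1\neq H_2$ satisfy $H_1\wedge H_2=\hat{0}$, so semimodularity forces $\rk(H_1\vee H_2)\leq 2\leq d$, making the join a proper flat strictly above both and yielding a two-edge path between the atoms. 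Both arguments invoke semimodularity exactly once and use $\rk(\hat{1})\geq 3$ precisely to keep the connecting flat proper; yours is more direct (no induction) and in fact shows the stronger statement that $G_\varnothing$ has diameter at most $4$, while the paper's case split on $F\vee G$ and $F\wedge G$ is closer in spirit to the interval-based argument of \cref{cor:latticeconnected}. Since that corollary only uses the present lemma as a black box on intervals $[K,L]$, either proof serves equally well there.
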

\begin{proof}
    Given $F,G \in X(1)$. We prove the claim by induction on $\rk(F)+\rk(G)$. 
    If $F \vee G \neq \hat{1}$ or $F \wedge G \neq \hat{0}$, then $F$ and $G$ are connected in the $1$-skeleton. 
    To see that, notice in the former case, $F$ is connected to $F\vee G$, i.e., $\{F,F \vee G\} \in X(2)$, as there is a flag of flats which contains $F, F\vee G$; similarly $G$ is connected to $F\vee G$. The other case can be shown similarly.
    
    Now, suppose $F\vee G=\hat{1}, F\wedge G=\hat{0}.$ Since
    $$ \rk(F)+\rk(G) \geq \rk(F\vee G) + \rk(F \wedge G)=\rk(\hat{1}) \geq 3$$
    it must be that $\max\{\rk(F), \rk(G)\}\geq 2$. 
    
    Say $\rk(G)\geq 2$, without loss of generality. Then there is a flat $H$ such that $H\prec G$ and $\{G,H\}\in X(2)$. Since $\rk(F)+\rk(H)<\rk(F)+\rk(G)$, by IH, $F$ is connected to $H$. Therefore,      $F$ is connected to $G$.
\end{proof}

\begin{corollary}\label{cor:latticeconnected}
    Let $\cL$ be a semimodular ranked lattice, and let $X$ be the $d$-partite complex consisting of {\bf all} flags of flats in $\cL$. Then $G_\sigma$ is  connected for $\sigma \in X$ for which $\codim(\sigma) \geq 2$.
\end{corollary}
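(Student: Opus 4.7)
The plan is to reduce the statement to the already-proved base case $\cref{lem:empty-set-connected}$ by decomposing the link $X_\sigma$ along the intervals cut out by $\sigma$. Write $\sigma=\{F_{i_1}<\dots<F_{i_k}\}$ with $F_{i_0}:=\hat{0}$ and $F_{i_{k+1}}:=\hat{1}$. Every $H\in X_\sigma(1)$ lies in a unique open interval $(F_{i_{j-1}},F_{i_j})$; call its index $j(H)$. Let $r_j:=\rk(F_{i_j})-\rk(F_{i_{j-1}})$, so that $\sum_j(r_j-1)=\rk(\hat{1})-(k+1)=\codim(\sigma)\ge 2$. I will show that any two distinct $F,G\in X_\sigma(1)$ are joined by a path in $G_\sigma$.

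First I would dispatch the easy case $j(F)\ne j(G)$. Assuming $j(F)<j(G)$ without loss of generality, one has $F<F_{i_{j(F)}}\le F_{i_{j(G)-1)}}<G$, so $\sigma\cup\{F,G\}$ is a chain in $\cL$ and thus a face of $X$. Because $\cL$ is ranked, this chain extends to a maximal chain, so $\mu(\sigma\cup\{F,G\})>0$ after normalization and $\{F,G\}$ is a genuine edge of $G_\sigma$.

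Next, for the same-interval case $j(F)=j(G)=:j$, I would apply $\cref{lem:empty-set-connected}$ to the interval sub-lattice $\cL_{[F_{i_{j-1}},F_{i_j}]}$, which inherits semimodularity from $\cL$ and is ranked. If $r_j\ge 3$, the lemma provides a path from $F$ to $G$ in the 1-skeleton of the all-flags complex of that sub-lattice, and each such edge lifts to an edge of $G_\sigma$ by gluing $\sigma$ to the chain and extending to a maximal chain of $\cL$ on each remaining interval (possible since $\cL$ is ranked). If instead $r_j=2$, then every vertex of that interval has the same rank and is pairwise incomparable, so I would find a bridging vertex $H$ in a different interval and apply the first case twice to get $F\!-\!H\!-\!G$ in $G_\sigma$. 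Such an $H$ exists because the interval $(F_{i_{j-1}},F_{i_j})$ contributes only $r_j-1=1$ to $\sum_{j'}(r_{j'}-1)\ge 2$, so some other interval has $r_{j'}\ge 2$ and therefore contains a proper flat.

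I do not anticipate a real obstacle; the only point worth stating carefully is that an edge of the 1-skeleton in the interval sub-lattice lifts to an edge of $G_\sigma$, which follows from the fact that in a ranked lattice one can concatenate maximal chains across the disjoint intervals $(F_{i_{j'-1}},F_{i_{j'}})$ to produce a maximal chain of $\cL$ containing $\sigma$ together with the chosen 2-chain. With these observations the corollary follows immediately from $\cref{lem:empty-set-connected}$.
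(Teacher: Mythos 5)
Your proposal is correct and follows essentially the same route as the paper's proof: comparable pairs (those separated by $\sigma$) are handled as direct edges, same-interval pairs are handled by applying \cref{lem:empty-set-connected} to the interval sub-lattice when its rank is at least $3$, and the rank-$2$ interval case is resolved by bridging through a vertex outside the interval, whose existence follows from $\codim(\sigma)\geq 2$. The only cosmetic difference is that you locate the bridging vertex via the count $\sum_j(r_j-1)=\codim(\sigma)$ inside another interval of rank at least $2$, while the paper takes any vertex of a different rank; these are equivalent.
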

\begin{proof}
    Fix $F,G \in X_\sigma(1)$. If there exists $H \in \sigma$ such that $F < H < G$ or $G < H < F$, then there is a flag of flats in $\cL$ containing $\sigma$ and $F,G$. Thus $F$ and $G$ are connected in the $1$-skeleton of $X_\sigma$.

    Otherwise $F,G$ are incomparable, and thus there exists a minimal $L \in X_\sigma(1) \cup \{\hat{1}\}$ and a maximal $K \in X_\sigma(1) \cup \{\hat{0}\}$ such that $K < F < L$ and $K < G < L$. Note that $[K,L]$ is now a ranked lattice (where $(K,L) \subset X_\sigma(1)$), and thus if $\rk(L) - \rk(K) \geq 3$ then the desired result follows from \cref{lem:empty-set-connected} applied to $[K,L]$. For the case of $\rk(L) - \rk(K) = 2$, note that $\rk(F) = \rk(G)$ in this case. Since $\codim(\sigma) \geq 2$, there must be some $H \in X_\sigma(1)$ such that $\rk(H) \neq \rk(F)$. Therefore either $F \prec L < H$ and $G \prec L < H$, or else $H < K \prec F$ and $H < K \prec G$.
    Either way, there is a flag of flats in $\cL$ containing $\sigma$ and $F,H$ and a flag of flats in $\cL$ containing $\sigma$ and $G,H$.  Thus $F$ and $G$ are connected in the $1$-skeleton of $X_\sigma$.
\end{proof}

\subsection{$1/2$-top-link expansion of Distributive Lattices}\label{sec:distributivetoplink}

In this section we prove \cref{thm:distributivelattice}.
By \cref{cor:latticeconnected}, $(X,\mu)$ is connected. So, we just need to check the $1/2$-top-link expansion. Furthermore, by \cref{thm:quadraticcheck} we just need check $\lambda_2(P_\sigma)$ for link-contiguous $\sigma$ of co-dimension 2. In particular, given a $[i,i+1]$-link contiguous $\sigma$. We need to show $-D_\sigma/2 + A_\sigma$ has (at most) one positive eigenvalue.

Fix such a $\sigma$. Observe that $\cL_{[\sigma_{i-1},\sigma_{i+2}]}$ is distributive lattice of rank 3. We consider all possible cases and prove the above equation in each case. Let $\psi:\{a,b,c\}\to\R_{>0}$ be a order-reversing map.
\begin{figure}[htb]
\centering\begin{tikzpicture}
\begin{scope}[xshift=0cm]
\node [draw,circle]at (0,0) (a) {$a$};
\node [draw,circle]at (0,1) (ab) {\small$ab$} edge (a);
\node [draw,circle] at (0,-1) () {$\varnothing$} edge (a);
\node [draw,circle]at (0,2) () {\small$abc$} edge (ab);
\end{scope}
\begin{scope}[xshift=2cm]
\node [draw,circle]at (0.75,0) (a) {$a$};
\node [draw,circle]at (0,1) (ab) {\small$ab$}edge (a);
\node [draw,circle]at (1.5,1) (ac) {\small$ac$} edge (a) ;
\node [draw,circle] at (0.75,-1) () {$\varnothing$} edge (a);
\node [draw,circle]at (0.75,2) () {\small$abc$} edge (ab) edge (ac);
\end{scope}
\begin{scope}[xshift=4.5cm]
\node [draw,circle]at (0,0) (a) {$a$};
\node [draw,circle]at (1.5,0) (b) {$b$};
\node [draw,circle]at (0.75,1) (ab) {\small$ab$} edge (a) edge (b);
\node [draw,circle] at (0.75,-1) () {$\varnothing$} edge (a) edge (b);
\node [draw,circle]at (0.75,2) () {\small$abc$} edge (ab);
\end{scope}

\begin{scope}[xshift=8cm]
\node [draw,circle] at (2,0) (a) {$a$};
\node [draw,circle] at (0,0) (b) {$b$};
\node [draw,circle] at (1,1) (ab) {\small $ab$}edge (a) edge (b);
\node [draw,circle] at (-1,1) (bc) {\small$bc$} edge(b);
\node [draw,circle] at (0,2) (abc) {\small$abc$} edge(bc) edge (ab);
\node [draw,circle] at (1,-1) (empty)  {$\varnothing$}edge (a) edge (b);
\end{scope}
\begin{scope}[xshift=11.5cm]
\node [draw,circle] at (0,0) (a) {$a$};	
\node [draw,circle] at (1.5,0) (b) {$b$};	
\node [draw,circle] at (3,0) (c) {$c$};	
\node [draw,circle] at (0,1) (ab) {\small$ab$} edge (a) edge (b);	
\node [draw,circle] at (1.5,1) (ac) {\small$ac$} edge (a) edge (c);
\node [draw,circle] at (3,1) (bc) {\small$bc$} edge (b) edge (c);
\node [draw,circle] at (1.5,2) () {\small$abc$} edge (ab) edge (bc) edge (ac);
\node [draw,circle] at (1.5,-1) () {$\varnothing$} edge (a) edge (b) edge (c);
\end{scope}
\end{tikzpicture}
\caption{All possible distributive lattices for which $\rk(\hat{1})=3.$}
\label{fig:distributivelattice}
\end{figure}
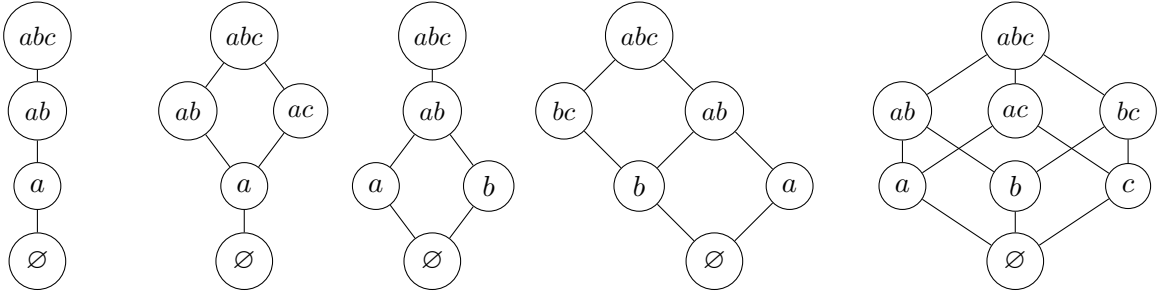

{\bf Case 1,2,3: }
Consider the left 3 lattices in \cref{fig:distributivelattice}: $\{\varnothing,a,ab,abc\}$, $\{\varnothing,a,ab,ac,abc\}$, and $\{\varnothing,a,b,ab,abc\}$. The matrices $-D_\sigma/2+A_\sigma$ are 
\[
    \begin{bmatrix}
        -\frac{a^2b}{2} & a^2b \\
        a^2b & -\frac{a^2b}{2}
    \end{bmatrix} 
\quad\quad     \begin{bmatrix}
        -\frac{a^2(b+c)}{2} & a^2b & a^2c \\
        a^2b & -\frac{a^2b}{2} & 0 \\
        a^2c & 0 & -\frac{a^2c}{2}
    \end{bmatrix} 
\quad\quad
     \begin{bmatrix}
        -\frac{a^2b}{2} & 0 & a^2b \\
        0 & -\frac{ab^2}{2} & ab^2 \\
        a^2b & ab^2 & -\frac{ab(a+b)}{2}
    \end{bmatrix} 
\]
respectively. 
Note that in each case the (unweighted) graph between rank 1 and rank 2 flats is a complete multi-partite graph; so by \cref{lem:AB} its adjacency matrix has one positive eigenvalue. When we set the diagonals to 0, the above matrices can be written as $P^TAP$ where $P$ is a diagonal matrix, so they would have one positive eigenvalue (by another application of \cref{lem:AB}).
Finally, since the diagonal entries are non-positive, $a,b,c>0$, this implies each of these matrices has at most one positive eigenvalue.

{\bf Case 4: } Consider the 4th lattice of \cref{fig:distributivelattice}: $\{\varnothing,a,b,ab,bc,abc\}$.
Let $P$ be the associated Poset; in this case it is given by the single relation $b < c$. 
Notice $\psi$ is order-reversing only means $\psi(b) \geq \psi(c)$. For ease of notation we abuse notation and we denote $\psi(a),\psi(b),\psi(c)$ by $a,b,c$ respectively.
 So, e.g., $\psi(\{a,b\})=ab.$

Note that this lattice has $3$ (maximal) flag of flats, $a < ab$, $b<ab$ and $b<bc$ with respective weights $a^2b, ab^2, b^c$. Therefore, we need to show that
\[
    H = \begin{bmatrix}
        -\frac{a^2b}{2} & a^2b & 0 & 0 \\
        a^2b & -\frac{ab(a+b)}{2} & ab^2 & 0 \\
        0 & ab^2 & -\frac{b^2(a+c)}{2} & b^2c \\
        0 & 0 & b^2c & -\frac{b^2c}{2}
    \end{bmatrix}.
\]
has (at most) one positive eigenvalue. 
If $H$ has exactly one positive eigenvalues, then its determinant will be negative. On the other hand, if the determinant is negative then $H$ has either $1$ or $3$ positive eigenvalues. If it has $3$ positive eigenvalues, then, by the Cauchy's interlacing theorem \ref{lem:AB}, every $2 \times 2$ principal minor of $H$ has at least $1$ positive eigenvalue, which is false for the above matrix. Thus $H$ has exactlye one positive eigenvalue iff $\det(H) < 0$. Note that this also holds for any weighting of the facets, not just those given by such a $\psi$.

Now, we compute the determinant of $H$:
\[
    \det(H) = -\frac{3}{16}a^3b^6c(a^2 + a(b-3c) + bc).
\]
Since $a,b,c > 0$, we have $\det(H) < 0$ iff $a^2 + a(b-3c) + bc > 0$. We compute the roots of this as a function of $a$:
\[
    r = \frac{-(b-3c) \pm \sqrt{(b-3c)^2 - 4bc}}{2} = \frac{-(b-3c) \pm \sqrt{(b-9c)(b-c)}}{2}.
\],
First notice that either $(b-3c)^2 - 4bc < 0$ or else $\sqrt{(b-3c)^2 - 4bc} < |b-3c|$, and so if $b-3c \geq 0$ then all roots $r$ are either negative or imaginary. Further, if $b-3c < 0$ and $b > c$ then $(b-9c)(b-c) < 0$, and all roots $r$ are imaginary. Thus for all $a > 0$ and $b > c$, the matrix $H$ has exactly one positive eigenvalue. Therefore, by taking limits, $H$ has at least one positive eigenvalue for all $a,b,c > 0$ such that $b \geq c$.

{\bf Case 5:} Consider the last lattice of \cref{fig:distributivelattice}  $\{\varnothing, a,b,c,ab,bc,ac,abc\}$: 
The matrix $-D_\sigma/2+A_\sigma$ according to the order $a,b,c,ab,ac,bc$ is
\[
    H = \begin{bmatrix}
       -\frac{a^2(b+c)}{2} & 0 & 0 & a^2b & a^2c &  0\\ 0 & -\frac{b^2(a+c)}{2} & 0 & ab^2 &0 & b^2c\\
       0 & 0 & -\frac{c^2(a+b)}{2} & 0 & ac^2& bc^2\\
   a^2b & ab^2 & 0 & -\frac{ab(a+b)}{2} & 0  & 0\\
   a^2c & 0 & ac^2 & 0 & -\frac{ac(a+c)}{2} & 0\\
   0 & b^2c & bc^2 & 0 & 0 & -\frac{bc(b+c)}{2}
    \end{bmatrix}
\]
First, assume $a,b,c$ are distinct. 
Furthermore, without loss of generality, suppose $a > b > c > 0$.
We compute
\[
    \det(H) = -\frac{3}{64} a^4b^4c^4(a^2b + ab^2 + a^2c + b^2c + ac^2 + bc^2 - 6abc)^2 < 0
\]
and letting $H'$ be the leading $5 \times 5$ principal minor of $H$, we also have
\[
    \det(H') = \frac{1}{32} a^4b^3c^3(3a^2+3ab+3ac-bc)(a^2b + ab^2 + a^2c + b^2c + ac^2 + bc^2 - 6abc).
\]
By the AM-GM inequality, we then have
\[
    \frac{a^2b + ab^2 + a^2c + b^2c + ac^2 + bc^2}{6} \geq \left(a^2b \cdot ab^2 \cdot a^2c \cdot b^2c \cdot ac^2 \cdot bc^2\right)^{1/6} = abc,
\]
with equality if and only if $a^2b = ab^2 = a^2c = b^2c = ac^2 = bc^2 \iff a=b=c$.  Thus $\det(H) < 0$ and $\det(H') > 0$. Since $\det(H) < 0$, the matrix $H$ has either $1$, $3$, or $5$ positive eigenvalues. If $H$ has at least $3$ positive eigenvalues, then, by the Cauchy's interlacing theorem (\cref{lem:AB}), $\det(H') > 0$ implies $H'$ has at least $3$ positive eigenvalues as well. This, by another application of Cauchy's interlacing theorem,  implies every $3 \times 3$ principal minor of $H'$ has at least $1$ positive eigenvalue, which is false. Therefore $H$ must have exactly one positive eigenvalue. Now, by limiting we then have that $H$ has (at most one) positive eigenvalue for any choice of $a\geq b \geq c > 0$.
This concludes the proof of \cref{thm:distributivelattice}.

\subsection{Colored Distributive Lattices}
In this section we prove \cref{lem:toplinkexpansioncolorlattice}.
By \cref{cor:latticeconnected}, $(X,\mu)$ is connected. So, we just need to check the top-link expansion. Given the colorings $\phi$ of \cref{lem:toplinkexpansioncolorlattice}, by \cref{thm:quadraticcheck} we just need check $\lambda_2(-D_{\phi,\sigma} + A_\sigma)$ for link-contiguous $\sigma$ of co-dimension 2. In particular, given a $[i,i+1]$-link contiguous $\sigma$. We need to show $-D_{\phi,\sigma} + A_\sigma$ has (at most) one positive eigenvalue.

We start with the first assertion of \cref{lem:toplinkexpansioncolorlattice}. Let $\phi: \{a,b,c\} \to \R_{>0}$ be any map. For ease of notation, we denote $a = \phi(a)$, $b = \phi(b)$, and $c = \phi(c)$.



{\bf Case 1,2,3: }
Consider the left 3 lattices in \cref{fig:distributivelattice}: $\{\varnothing,a,ab,abc\}$, $\{\varnothing,a,ab,ac,abc\}$, and $\{\varnothing,a,b,ab,abc\}$. The matrices $-D_{\phi,\sigma}+A_\sigma$ are
\[
    \begin{bmatrix}
        -\frac{c}{b+c} & 1 \\ 1 & -\frac{a}{a+b}
    \end{bmatrix}
    \quad\quad
    \begin{bmatrix}
        -1 & 1 & 1 \\
        1 & -\frac{a}{a + b} & 0 \\
        1 & 0 & -\frac{a}{a + c}
    \end{bmatrix}
    \quad\quad
    \begin{bmatrix}
        -\frac{c}{b + c} & 0 & 1 \\
        0 & -\frac{c}{a + c} & 1 \\
        1 & 1 & -1
    \end{bmatrix}
\]
Here, the same matrix but with $0$ diagonal values has at most one positive eigenvalue since it is the adjacency matrix of a complete bipartite graph. Thus each of those three matrices has at most one positive eigenvalue.

{\bf Case 4: } Consider the 4th lattice of \cref{fig:distributivelattice}: $\{\varnothing,a,b,ab,bc,abc\}$.
Let $P$ be the associated Poset; in this case it is given by the single relation $b < c$. The matrix $-D_{\phi,\sigma} + A_\sigma$ is
\[
    \begin{bmatrix}
        -\frac{c}{b+c} & 0 & 1 & 0 \\
        0 & -1 & 1 & 1 \\
        1 & 1 & -1 & 0 \\
        0 & 1 & 0 & -\frac{b}{b + c}
    \end{bmatrix}
\]
Now consider the matrix
\[
    H(t) = \begin{bmatrix}
        -t & 0 & 1 & 0 \\
        0 & -1 & 1 & 1 \\
        1 & 1 & -1 & 0 \\
        0 & 1 & 0 & -1+t
    \end{bmatrix},
\]
for which the characteristic polynomial is given by
\[
    \chi_{H(t)}(x) 
        = x^4 + 3x^3 + x^2 (-t^2 + t) + x (-2t^2 + 2t - 3).
\]
Thus for all $t \in (0,1)$, the sign pattern of the coefficients of the above polynomial is $(+,+,+,-)$. Descartes' rule of signs then implies $H(t)$ has exactly one positive eigenvalue for all $t \in (0,1)$.

{\bf Case 5:} Consider the last lattice of \cref{fig:distributivelattice}  $\{\varnothing, a,b,c,ab,bc,ac,abc\}$: 
The matrix $-D_{\phi,\sigma}+A_\sigma$ according to the order $a,b,c,ab,ac,bc$ is
\[
    \begin{bmatrix}
        -1 & 0 & 0 & 1 & 1 & 0 \\
        0 & -1 & 0 & 1 & 0 & 1 \\
        0 & 0 & -1 & 0 & 1 & 1 \\
        1 & 1 & 0 & -1 & 0 & 0 \\
        1 & 0 & 1 & 0 & -1 & 0 \\
        0 & 1 & 1 & 0 & 0 & -1
    \end{bmatrix},
\]
which has one positive eigenvalue by direct computation.
This concludes the proof of the first assertion of \cref{lem:toplinkexpansioncolorlattice}.

Next, 
let $\phi_M(F) = \I[F \cap A \neq \varnothing] \cdot M + |F|$ for some $M>0$. We now prove the second assertion of \cref{lem:toplinkexpansioncolorlattice}.

{\bf Case 1,2,3:} The proof is exactly the same for cases $(1),(2),(3)$ given above.

{\bf Case 4:} The $P$-consistent property is relevant, and there are four cases to consider: $a \in A$, $b \in A$, $c \in A$, and $a,b \in A$. Note that we do not consider the case $a,c \in A$ since $A$ is $P$-consistent. We then don't need to consider $b,c \in A$ since $b \in A$ and $b,c \in A$ give the same function $\phi$, and we don't need to consider $a,b,c \in A$ since $a,b \in A$ and $a,b,c \in A$ give the same function $\phi$. Next note that the $a \in A$ case, the $b \in A$ case, and the $c \in A$ case are instances of cases already handled in the first assertion, so we can skip them. Finally for the $a,b \in A$ case, the matrix $-D_{\phi,\sigma} + A_\sigma$ is
\[
    \begin{bmatrix}
        -\frac{1}{2} & 0 & 1 & 0 \\
        0 & -1 & 1 & 1 \\
        1 & 1 & -\frac{2(M+1)}{M+2} & 0 \\
        0 & 1 & 0 & -\frac{M+1}{M+2}
    \end{bmatrix},
\]
which has at most one positive eigenvalue for all $M \geq 0$ since for $M=0$ the matrix has at most one positive eigenvalue.

{\bf Case 5:} The $P$-consistent property is irrelevant, and thus we compute the associated Hessian for three cases: where $a \in A$ (and $b,c \not\in A$), where $a,b \in A$ (and $c \not\in A$), and where $a,b,c \in A$. Note that the $a \in A$ case is an instance of cases handled in the first assertion, so we can skip it. For the $a,b \in A$ case and the $a,b,c \in A$ case, the matrices $-D_{\phi,\sigma} + A_\sigma$ are
\[
    \begin{bmatrix}
        -1 & 0 & 0 & 1 & 1 & 0 \\
        0 & -1 & 0 & 1 & 0 & 1 \\
        0 & 0 & -\frac{2}{M+2} & 0 & 1 & 1 \\
        1 & 1 & 0 & -\frac{2(M+1)}{M+2} & 0 & 0 \\
        1 & 0 & 1 & 0 & -1 & 0 \\
        0 & 1 & 1 & 0 & 0 & -1
    \end{bmatrix}, \qquad \begin{bmatrix}
        -1 & 0 & 0 & 1 & 1 & 0 \\
        0 & -1 & 0 & 1 & 0 & 1 \\
        0 & 0 & -1 & 0 & 1 & 1 \\
        1 & 1 & 0 & -\frac{2(M+1)}{M+2} & 0 & 0 \\
        1 & 0 & 1 & 0 & -\frac{2(M+1)}{M+2} & 0 \\
        0 & 1 & 1 & 0 & 0 & -\frac{2(M+1)}{M+2}
    \end{bmatrix}
\]
which has at most one positive eigenvalue for all $M \geq 0$ since for $M=0$ both matrices have at most one positive eigenvalue.

\subsection{$1/2$-top link expansion of Typical Modular Lattices}
In this section we prove \cref{thm:modularlattice}.
\begin{definition}[Unique Neighbor Property]
    Let $G=(X,Y,E)$ be a bipartite graph. We say $G$ has the {\bf unique neighbor property} if
for any pair of vertices $x_i,x_j \in X$ they have exactly one common neighbor in $Y$ and any pair of vertices $y_i, y_j \in Y$ have exactly one common neighbor in $X$.
\end{definition}

\begin{lemma}\label{lem:uniqueneighbor}
    Let $\cL$ be a modular lattice with $\rk(\hat{1})=3$. Then, the bipartite graph defined by the rank 1 and rank 2 flats of $\cL$ has the unique neighbor property.
\end{lemma}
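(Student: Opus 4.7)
The plan is to exploit the rank equation $\rk(F) + \rk(G) = \rk(F\vee G) + \rk(F\wedge G)$ directly. The claim breaks into two symmetric halves, one for each side of the bipartite graph, and each follows from a short rank computation.

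First I would handle the rank-$1$ side. Fix two distinct rank-$1$ flats $F, G$. Since $F \wedge G \leq F$ and $\rk(F) = 1$, the meet $F \wedge G$ has rank $0$ or $1$; rank $1$ would force $F \wedge G = F$ and $F \wedge G = G$, contradicting $F \neq G$. Hence $F \wedge G = \hat{0}$, and modularity gives $\rk(F \vee G) = 2$. Now any common rank-$2$ neighbor $H$ of $F,G$ satisfies $F \vee G \leq H$, and since both have rank $2$, equality holds. Thus $F\vee G$ is the unique common neighbor, and it exists.

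The rank-$2$ side is dual: for distinct rank-$2$ flats $H_1, H_2$, the join $H_1 \vee H_2$ has rank $2$ or $3$, but rank $2$ would force $H_1 = H_1 \vee H_2 = H_2$, so $H_1 \vee H_2 = \hat{1}$. Modularity then yields $\rk(H_1 \wedge H_2) = 1$, and any common rank-$1$ neighbor $F$ of $H_1, H_2$ lies below $H_1 \wedge H_2$, so equals $H_1 \wedge H_2$ by rank. This is essentially a two-line argument; I do not expect any real obstacle. The only subtlety worth flagging is that one uses modularity only in the inequality direction that gives equality (semimodular lattices would give $\leq$, which is not sufficient to conclude $\rk(F\vee G) = 2$ in general, but modularity yields it directly).
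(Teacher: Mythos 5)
Your proposal is correct and follows essentially the same route as the paper: apply the modular rank identity to two rank-1 flats (meet $=\hat{0}$, so the join has rank 2) and dually to two rank-2 flats (join $=\hat{1}$, so the meet has rank 1), with the join/meet being the unique common neighbor. The paper's proof is just a terser version of the same argument, leaving the uniqueness step implicit, which you spell out correctly.
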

\begin{proof}
    The proof uses the modular property of the rank.
    For any two rank 1 flats $F,G\in \cL$ we have $F\wedge G=\hat{0}$, i.e., $\rk(F\wedge G)=0$. Therefore, we must have $rk(F\vee G)=2.$
    Conversely, consider two flats $F,G$ of rank 2. We have $F\vee G=\hat{1}$, i.e., $\rk(F\vee G)=3$. So, we must have $F\wedge G$ has rank 1.
\end{proof}

\begin{theorem}\label{thm:uniqueneighborgraphs}
    Let $G=(X,Y,E)$ be a bipartite graph with the unique neighbor property with adjacency matrix $A$ and the diagonal matrix $D$ of vertex degrees. Then, $-D/2+A$ has exactly one positive eigenvalue iff one of the following holds: 
         Either $X$ has no vertex of degree 1, or $Y$ has no vertex of degree 1, or both $X,Y$ have exactly one vertex of degree 1. 
\end{theorem}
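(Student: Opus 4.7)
I would begin by translating the eigenvalue condition for $M = -D/2 + A$ into a statement about the random walk $P = D^{-1}A$. Via the congruence $M = D^{1/2}(\tilde A - I/2)D^{1/2}$ with $\tilde A = D^{-1/2}AD^{-1/2}$, the number of positive eigenvalues of $M$ equals the number of eigenvalues of $P$ exceeding $1/2$. Since UNP forces connectivity (any two vertices on the same side share a neighbor), $\lambda_1(P) = 1$ is simple, and the theorem reduces to showing: $\lambda_2(P) \leq 1/2$ iff one of (a), (b), (c) holds.

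The core of the proof is a structural lemma: if both $X$ and $Y$ contain a degree-$1$ vertex and $|X|, |Y| \geq 2$, then $G$ must be a \emph{bow-tie} — there is a central edge $x^* y^*$ such that $y^*$ is adjacent to all of $X$, $x^*$ is adjacent to all of $Y$, and every other vertex is a leaf attached to one of these two centers. The proof is a direct UNP chase: starting from a degree-$1$ vertex $x_0 \in X$ with neighbor $y^*$, UNP forces every other $x \in X$ to share $y^*$ as its common neighbor with $x_0$, so $y^* \sim$ all of $X$; dually we obtain $x^*$; then UNP between $y^*$ and any $y \neq y^*$ forces $d(y) = 1$, and similarly $d(x) = 1$ for $x \neq x^*$.

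With this dichotomy in hand, I analyze the bow-tie case by parameterizing it as $(k, m)$, where $k = |X| - 1$ and $m = |Y| - 1$ count the leaves, and computing the Schur complement $S_X = -D_X/2 + 2BD_Y^{-1}B^T$ of $M$. A direct calculation yields the clean form $S_X + \tfrac{1}{2} I = \tfrac{2}{k+1} J + \tfrac{3m}{2} e_{x^*} e_{x^*}^T$, a rank-$\leq 2$ PSD matrix. Hence the positive eigenvalues of $S_X$ come from a $2 \times 2$ restriction to $\mathrm{span}(\bone, e_{x^*})$, whose trace is $1 + 3m/2 > 0$ and whose determinant is proportional to $3mk - m - k - 1$. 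The determinant vanishes exactly when $(k,m) = (1,1)$, yielding $n_+(M) = 1$ (case (c)); it is strictly positive when $\max(k,m) \geq 2$ with $k, m \geq 1$, yielding $n_+(M) = 2$ (the bad case). The degenerate sub-case $|X| = 1$ or $|Y| = 1$ is a star, handled by a direct computation.

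The remaining case — non-bow-tie with $|X|, |Y| \geq 2$ — is by the structural lemma forced to have all degrees $\geq 2$, so conditions (a) and (b) hold simultaneously, and the goal is $\lambda_2(P) \leq 1/2$. This is the main obstacle. One route is classification: combining the primal identity $BB^T = D_X + J - I$ with its dual $B^T B = D_Y + J - I$ in a de Bruijn–Erdős-style counting argument shows that such graphs are either projective planes (of some order $q \geq 1$) or near-pencils. For a projective plane of order $q$, the spectrum of $P$ is $\{\pm 1, \pm \sqrt{q}/(q+1)\}$, and $\sqrt{q}/(q+1) \leq 1/2$ reduces to $(q-1)^2 \geq 0$; for a near-pencil, an explicit symmetric trial vector (constant on the non-central points) gives $\lambda_2(P) = 1/2$ with equality, consistent with $S_X$ having a zero eigenvalue at the boundary. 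Either way the bound holds, completing the proof.
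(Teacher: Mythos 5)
Your proposal is correct in architecture and runs structurally parallel to the paper: the same UNP chase reduces the degree-one case to the double-star/bow-tie (this is exactly \cref{lem:uniqueneighbordeg1}), and the minimum-degree-$\geq 2$ case splits into near-pencils and projective planes, with the projective-plane spectrum handled the same way as \cref{lem:projeplanelorentzian} (minor slip: the relevant inequality is $(\sqrt{q}-1)^2 \geq 0$, not $(q-1)^2\geq 0$). Where you genuinely differ is the bow-tie analysis. The paper proves the ``only if'' direction by grinding through the quadratic form of $-D/2+A$ on explicit test vectors, with separate treatment of the small cases (star, $P_4$, $|X|+|Y|=5$, $|X|+|Y|\geq 6$). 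Your Schur-complement/Haynsworth computation is a cleaner, unified replacement: I checked that $S_X+\tfrac12 I=\tfrac{2}{k+1}J+\tfrac{3m}{2}e_{x^*}e_{x^*}^\top$ is correct, and that the $2\times 2$ Gram determinant on $\mathrm{span}(\mathbf{1},e_{x^*})$ equals $\tfrac{3k}{4(k+1)}\,(3mk-m-k-1)$, so your criterion correctly yields exactly one positive eigenvalue iff $(k,m)=(1,1)$ (or a star when one side is a single vertex) and two positive eigenvalues whenever $k,m\geq 1$ with $\max(k,m)\geq 2$. This handles both directions of the degree-one regime in one stroke and is arguably nicer than the paper's case analysis.

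The one step that does not work as written is the near-pencil case. Exhibiting a trial vector can only certify $\lambda_2(P)\geq 1/2$; it cannot deliver the upper bound $\lambda_2(P)\leq 1/2$, which is precisely what you need there. The claim $\lambda_2(P)=1/2$ is true, but you must prove the upper bound, e.g.\ by computing the full spectrum (the near-pencil gives eigenvalues $\pm 1$ and $\pm\tfrac12$), or by a rank-one argument: with $C=D_X^{-1/2}BD_Y^{-1/2}$ one checks $CC^\top=\tfrac14 I+R$ with $R$ PSD of rank one, so at most one singular value of $C$ exceeds $\tfrac12$, hence at most one eigenvalue of $P$ exceeds $\tfrac12$; the paper instead perturbs $B$ to $B'$ with $B'(B')^\top=I+vv^\top$ and interlaces (\cref{lem:uniqueneighbordeg22}). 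Relatedly, your appeal to a de Bruijn--Erd\H{o}s-style classification of min-degree-$\geq 2$ UNP graphs into projective planes and near-pencils is true but asserted rather than argued; the short local argument of \cref{lem:uniqneighbordeg2} together with \cref{lem:projplane3rdprop} is what fills this in, and you should include its analogue (or a citation to the classification of degenerate projective planes) for completeness.
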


\begin{proof}[Proof of \cref{thm:modularlattice}]
Given a typical modular lattice $\cL$, 
by \cref{cor:latticeconnected}, $(X,\mu)$ is connected. So, we just need to check the $1/2$-top-link expansion. Furthermore, by \cref{thm:quadraticcheck} we just need check $\lambda_2(P_\sigma)$ for link-contiguous $\sigma$ of co-dimension 2. In particular, given a $[i,i+1]$-link contiguous $\sigma$, we need to show $-D_\sigma/2 + A_\sigma$ has (at most) one positive eigenvalue.

Fix such a $\sigma$. 
$\cL_{[\sigma_{i-1},\sigma_{i+2}]}$ is a modular lattice of rank 3.
Let $G=({\cal F}_1,{\cal F}_2,E)$ be the bipartite graph between rank 1 and rank 2 flats of this lattice. Furthermore the adjacency matrix of $G$ is the same as $A_\sigma$ and the degree matrix is the same as $D_\sigma$. Finally, since $\cL$ is a modular lattice, by \cref{lem:uniqueneighbor}, $G$ has the unique neighbor property. Further, since it is typical, either $\cF_1$ has no vertex of degree 1, or $\cF_2$ has no vertex of degree 1, or both have exactly one vertex of degree 1. Therefore, by \cref{thm:uniqueneighborgraphs}, $-D_\sigma/2+A_\sigma$ has one positive eigenvalue. The converse direction simply follows by the if and only if condition in \cref{thm:uniqueneighborgraphs}
\end{proof}
In the rest of this section we prove \cref{thm:uniqueneighborgraphs}.
Note that since $\bm{1}^\top (-D/2+A) \bm{1} > 0$ then implies $-D/2+A$ has at least  one positive eigenvalue. So, we must prove that in the above cases it has {\em at most} one positive eigenvalue.

\begin{lemma}\label{lem:uniqueneighbordeg1}
    Let $G=(X,Y,E)$ be a bipartite graph with the unique neighbor property. Suppose that $G$ has at least 1 vertex of degree 1. 
    Then, $-D/2+A$ has one positive eigenvalue iff $X$ has no vertex of degree 1, or $Y$ has no vertex of degree 1, or both $X,Y$ have exactly one vertex of degree 1. 
\end{lemma}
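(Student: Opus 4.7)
The plan is to first use the unique-neighbor property to obtain a restrictive structure theorem, and then to carry out a direct eigenvalue computation on the very few resulting graphs.

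Structure theorem: any bipartite graph with the unique-neighbor property that has at least one degree-$1$ vertex is either (i) a star (one side consists of a single vertex adjacent to everyone on the other side) or (ii) a ``dumbbell'': there exist hubs $x^*\in X$, $y^*\in Y$ joined by an edge, with $m\ge 1$ leaves of $y^*$ comprising $X\setminus\{x^*\}$ and $\ell\ge 1$ leaves of $x^*$ comprising $Y\setminus\{y^*\}$. The argument is a repeated instance of the same unique-neighbor count. If $x_1\in X$ has $N(x_1)=\{y^*\}$, then for any other $x\in X$ the pair $\{x,x_1\}$ must share exactly one neighbor, forcing it to be $y^*$; hence $y^*\sim X$. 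If $Y$ has no degree-$1$ vertex, then for any $y\ne y^*$ the common neighbors of $\{y,y^*\}$ are exactly $N(y)$ (since $y^*\sim X$), so $|N(y)|=1$, a contradiction; thus $|Y|=1$ and $G$ is a star. If instead $Y$ has a degree-$1$ vertex $y_1$ with neighbor $x^*$, the symmetric argument gives $x^*\sim Y$; then for $x\in X\setminus\{x^*\}$ the common neighbors of $\{x,x^*\}$ lie in $N(x^*)=Y$ and equal $N(x)$, forcing $|N(x)|=1$. So every $X$-vertex other than $x^*$ is a leaf of $y^*$, and symmetrically for $Y$. In the dumbbell the degree-$1$ vertices of $X$ are exactly $x_1,\ldots,x_m$ and of $Y$ are exactly $y_1,\ldots,y_\ell$, so the lemma's three ``good'' conditions correspond (via this structure) precisely to: a star with $X$ or $Y$ central, or the dumbbell with $m=\ell=1$, which is the $P_4$.

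Forward direction (``one positive eigenvalue''): for a star $K_{1,n}$ the random-walk spectrum is $\{1,-1,0,\ldots,0\}$, and for $P_4$ the spectrum is $\{1,1/2,-1/2,-1\}$ by the classical path computation (or direct verification). In both cases $\lambda_2(P)\le 1/2$, so by \cref{lem:eigenvaluebipartite} with $s_X=s_Y=1/2$ we have $D^{-1/2}AD^{-1/2}\preceq (1/2)I+ww^\top$, which rearranges to $-D/2+A\preceq uu^\top$. Hence $M:=-D/2+A$ has at most one positive eigenvalue, and since $\bone^\top M\bone=2|E|-|E|=|E|>0$ it has exactly one.

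Converse (``more than one positive eigenvalue''): I will exhibit a $2$-dimensional subspace on which $M$ is positive definite whenever $G$ is a dumbbell with $\max(m,\ell)\ge 2$. By symmetry assume $m\ge 2$. Set $v_1=\alpha_1 e_{y^*}+\beta_1\sum_{i=1}^m e_{x_i}$ and $v_2=\alpha_2 e_{x^*}+\beta_2\sum_{j=1}^\ell e_{y_j}$. A direct expansion gives $v_1^\top Mv_1=-\tfrac{m+1}{2}\alpha_1^2+2m\alpha_1\beta_1-\tfrac{m}{2}\beta_1^2$, maximized at $\beta_1=2\alpha_1$ with value $\tfrac{3m-1}{2}\alpha_1^2$, and analogously $v_2^\top Mv_2$ is maximized at $\beta_2=2\alpha_2$ with value $\tfrac{3\ell-1}{2}\alpha_2^2$. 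The supports of $v_1$ and $v_2$ are joined by the single edge $\{x^*,y^*\}$, giving the cross term $v_1^\top Mv_2=\alpha_1\alpha_2$. Hence the Gram matrix on $\operatorname{span}\{v_1,v_2\}$ has determinant $\tfrac{\alpha_1^2\alpha_2^2}{4}\bigl((3m-1)(3\ell-1)-4\bigr)$, which is strictly positive precisely when $(3m-1)(3\ell-1)>4$, equivalently $\max(m,\ell)\ge 2$ for integers $m,\ell\ge 1$. By the Rayleigh quotient $M$ has at least two positive eigenvalues in this case, completing the ``only if'' direction. The main obstacle is carefully establishing the structure theorem and handling the degenerate $|X|=1$ or $|Y|=1$ cases; the eigenvalue arithmetic is then essentially mechanical.
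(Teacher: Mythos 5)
Your proof is correct, and while the structural first half is essentially the paper's (the paper likewise shows that a degree-one vertex forces a hub adjacent to all of the opposite side, and splits into the star, the path $P_4$, and the ``dumbbell'' with extra leaves), your spectral arguments take a genuinely different and in places cleaner route. For the positive direction, the paper certifies the star via the complete-multipartite fact in \cref{lem:AB} and simply asserts the $P_4$ case, whereas you route both through $\lambda_2(P)\le 1/2$ and \cref{lem:eigenvaluebipartite}, which is a valid and uniform alternative. The real divergence is in the converse: the paper works with the subspace spanned by $\bm{1}$ and a globally supported sign vector orthogonal to the degree vector, and this forces a case split (an explicit $5\times 5$ determinant for $|X|+|Y|=5$, a general inequality for $|X|+|Y|\ge 6$, plus the further subcase $m=1$, $n\ge 3$); you instead test on two locally supported vectors, one per hub-with-leaves block, whose disjoint supports meet only along the hub--hub edge, so the restricted quadratic form has the closed-form determinant condition $(3m-1)(3\ell-1)>4$, which is exactly equivalent to $\max(m,\ell)\ge 2$ and eliminates all case distinctions (it even pinpoints $P_4$ as the boundary case where the determinant vanishes). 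Your optimization $\beta_i=2\alpha_i$, the cross term $\alpha_1\alpha_2$, and the Sylvester/Courant--Fischer conclusion all check out, and the positivity of $\bm{1}^\top(-D/2+A)\bm{1}=|E|$ correctly upgrades ``at most one'' to ``exactly one'' in the good cases. The only point to state a touch more carefully is the degenerate single-edge graph (where the degree-one vertex in $Y$ has the degree-one vertex of $X$ as its hub): it should be filed under the star case rather than the dumbbell, which your classification does but your dumbbell derivation implicitly assumes $x^*\ne x_1$; this is a cosmetic remark, not a gap.
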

\begin{proof}
To see this, let $x_1$ be a degree 1 vertex in $X$ (the case $x_1\in Y$ can be dealt with similarly) with $y_0\in Y$ its unique neighbor. Then, by the unique neighbor property every vertex in $X$ must be adjacent to $y_0$. This immediately implies that any vertex $y\neq y_0$ in $Y$ must have degree 1; otherwise a pair of vertices in $X$ will have two common neighbors. 

{\bf Case 1:} $|Y|=1$, i.e., $Y=\{y_0\}$ and $Y$ has no vertex of degree 1. 
 $G$ corresponds to a star, i.e., it is a complete multi-partite graph and by \cref{lem:AB} the adjacency matrix of  $G$ has only one positive eigenvalue. Therefore, $-D/2+A$ also has one positive eigenvalue and we are done.

 {\bf Case 2:} $|Y|\geq 2$.  So, the $Y$ side also has a degree 1 vertex, say $y_1$. So, we must also have $|X|\geq 2$; because all vertices of $Y$ are connected to a vertex in $X$ which cannot be $x_1$ as it has degree 1. Further, with an argument similar to above, there is a vertex $x_0\in X$ adjacent to every vertex in $Y$ and all other vertices of $X$ should have degree 1.

{\bf Case 2.1:} $|X|=|Y|=2$, i.e., both $X,Y$ have exactly one degree one vertex. Then, $G$ must be a path of length 3 as shown below.
    \begin{figure}[htb]\centering\begin{tikzpicture}
        \node [draw,circle] at (0,1) (a){};
        \node [draw,circle] at (1,0) (b){} edge (a);
        \node [draw,circle] at (2,1) (c) {} edge (b);
        \node [draw,circle] at (3,0) (d) {} edge (c);
    \end{tikzpicture}
    \end{figure}
    For this graph $-D/2+A$ has one positive eigenvalue. 
    
{\bf Case 2.2:} $|X|+|Y|\geq 5$. So each side has at least one degree one vertex and one side has at least two such vertices. In this case, we show that $B=-D/2+A$ has at least 2 positive eigenvalue. 
First we observe if $|X|+|Y|=5$, then 
$$ -D/2+A = \begin{bmatrix}-1 &0& 0 &1& 1 \\ 0 &-0.5 &0 &1& 0\\ 0 &0& -0.5& 1&0\\ 1&1&1&-1.5&0\\1 & 0 &0 &0 &-0.5\end{bmatrix}$$
and it has two positive eigenvalue.
Now, assume $|X|+|Y|\geq 6$.
First notice $B\bone =D\bone/2$.  
So, assuming $|X|+|Y|\geq 6$. Further, let $|X|-1=m,|Y|-1=n$.
Consider the vector 
$$v_x=\begin{cases}
-1 &\text{if }x=x_0\\
\frac{2n+1}{2m+1} &\text{otherwise}.
\end{cases},  \quad \quad v_y=\begin{cases}\frac{2n+1}{2m+1} & \text{if }y=y_0\\ -1 &\text{otherwise}\end{cases}$$
It follows that
\begin{align*}&v^\top (-D/2+A)v \\
&=  \frac{2m(2n+1)^2}{(2m+1)^2} + 2n -\frac{2(2n+1)}{2m+1} -\frac{m(2n+1)^2}{2(2m+1)^2}- \frac{n}{2} - \frac{n+1}{2}-\frac{(m+1)(2n+1)^2}{2(2m+1)^2}\\
&=\frac{4m(2n+1)^2+4n(2m+1)^2-4(2m+1)(2n+1)-(2m+1)(2n+1)^2 - (2n+1)(2m+1)^2}{2(2m+1)^2}\\
&= \frac{(2m-1)(2n+1)^2+(2n-1)(2m+1)^2-4(2n+1)(2m+1)}{2(2m+1)^2}\\
&\geq \frac{(2m-3)(2n+1)^2 + (2n-3)(2m+1)^2} {2(2m+1)^2}\underset{\text{if }m,n\geq 2}{>}0
\end{align*}
On the other hand, say $m=1,n\geq 3$, we have
$$ v^\top (-D/2+A)v \underset{m=1}{=} \frac{(2n+1)^2 +9(2n-1) -12(2n+1)}{2(2m+1)^2} \geq \frac{4n^2 -2n -20}{2(2m+1)^2}\underset{n\geq 3}{>}0$$

Let $d$ be the vector of degrees, i.e., $d_x$ is the degree of $x$.  Now, we claim that for any $u=\alpha \bone+\beta v$, the quadratic form $u^T B u>0$. This immediately implies that $\lambda_2(B)>0$ by the Rayleigh quotient. To see that notice
\begin{align*}
    \langle Bu,u\rangle &= \langle B(\alpha \bone + \beta v),u\rangle = \alpha^2 \langle B1,1\rangle + \beta^2 \langle Bv,v\rangle + 2\alpha\beta \langle B\bone,u\rangle \\
    &=\alpha^2 \langle B1,1\rangle + \beta^2 \langle Bv,v\rangle + 2\alpha\beta\langle d/2,v\rangle>0,
\end{align*}
where in the inequality we used that by definition  by definition of $v$, $\langle v,d\rangle=0$. 
\end{proof}

\begin{lemma}\label{lem:uniqneighbordeg2}
    Let $G=(X,Y,E)$ be a bipartite graph with unique neighbor property and  minimum degree  2. Then, every vertex of $G$  has degree $2$ except (possibly) for one from each side of the graph, $x_0$ and $y_0$, where $y_0$ is adjacent to every vertex on the other side except for $x_0$ and similarly $x_0$ is adjacent to every vertex on the other side except for $y_0$.
\end{lemma}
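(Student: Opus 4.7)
My plan is to exploit the hypothesis that some vertex has degree exactly $2$ (which I read as the meaning of ``minimum degree $2$''). Without loss of generality, take $y^* \in Y$ with $\deg(y^*) = 2$ and $N(y^*) = \{x_1, x_2\}$. For any $y \in Y \setminus \{y^*\}$, the unique common neighbor of $y$ and $y^*$ lies in $\{x_1, x_2\}$, so each such $y$ is adjacent to exactly one of $x_1, x_2$; this partitions $Y \setminus \{y^*\} = Y_1 \sqcup Y_2$ where $Y_i = \{y : y \sim x_i\} \setminus \{y^*\}$.

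I would next show every $x \in X \setminus \{x_1, x_2\}$ has degree exactly $2$. Since $N(y^*) = \{x_1, x_2\}$, $x$ is not adjacent to $y^*$. The unique common neighbor of $x$ and $x_1$ must lie in $N(x_1) \cap N(x) \subseteq (\{y^*\} \cup Y_1) \cap N(x) = N(x) \cap Y_1$, forcing $|N(x) \cap Y_1| = 1$; symmetrically $|N(x) \cap Y_2| = 1$, so $\deg(x) = 2$. Then for each pair $(y, y') \in Y_1 \times Y_2$, the unique common neighbor must lie in $X \setminus \{x_1, x_2\}$ (since $x_1 \not\sim y'$ and $x_2 \not\sim y$), giving a map $\phi: Y_1 \times Y_2 \to X \setminus \{x_1, x_2\}$. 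By the previous degree-$2$ calculation, $\phi$ is a bijection with inverse $x \mapsto (N(x) \cap Y_1, N(x) \cap Y_2)$, so $|Y_1| \cdot |Y_2| = |X| - 2$.

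The crucial step is to force $|Y_1| = 1$ or $|Y_2| = 1$: if both sides were $\geq 2$, pick $y_1 \neq \tilde y_1$ in $Y_1$ and $y_2 \neq \tilde y_2$ in $Y_2$, and consider $x := \phi(y_1, y_2)$ and $x' := \phi(\tilde y_1, \tilde y_2)$. Then $N(x) = \{y_1, y_2\}$ and $N(x') = \{\tilde y_1, \tilde y_2\}$ are disjoint, so $x, x'$ share no common neighbor in $Y$, contradicting the unique-neighbor property. WLOG $|Y_1| = 1$, say $Y_1 = \{y_0\}$. Then the bijectivity of $\phi$ makes $y_0$ adjacent to every $x \in X \setminus \{x_1, x_2\}$, hence $y_0$ is adjacent to $X \setminus \{x_2\}$ (via $x_1$ as well) with $\deg(y_0) = |X| - 1$. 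Setting $x_0 := x_2$, its neighbors are $\{y^*\} \cup Y_2 = Y \setminus \{y_0\}$, so $\deg(x_0) = |Y| - 1$ and $x_0 \not\sim y_0$. All remaining vertices ($x_1$, the elements of $X \setminus \{x_1, x_2\}$, $y^*$, and the elements of $Y_2$) have degree $2$ by the analysis above. The main obstacle is the grid-matching argument pinning down $|Y_1| = 1$ or $|Y_2| = 1$, but once the bijection $\phi$ is in hand it drops out immediately from the unique-common-neighbor property applied to pairs in $X \setminus \{x_1, x_2\}$.
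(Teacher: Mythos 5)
Your proof is correct and follows essentially the same route as the paper's: fix a degree-$2$ vertex $y^*$, partition $Y\setminus\{y^*\}$ according to which of its two neighbors $x_1,x_2$ each vertex sees, show every $x\notin\{x_1,x_2\}$ has exactly one neighbor in each part, and rule out $|Y_1|,|Y_2|\geq 2$ by producing two vertices with disjoint neighborhoods, contradicting the unique-neighbor property. The explicit bijection $\phi$ and the count $|Y_1|\cdot|Y_2|=|X|-2$ are a harmless (and slightly tidier) formalization of the same argument.
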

\begin{proof}
    Fix a vertex say $y^*$ of degree 2 in $G$. Wlog assume $y^*\in Y$.
Say, $x_1, x_2\in X$ are neighbors of $y^*$. Then, by the unique neighbor property, neighbors of $x_1,x_2$ partition $Y-\{y_1\}$. To see this notice that every vertex $y\neq y^*$ must be adjacent to exactly one of $x_1,x_2$ since it must have exactly one common neighbor with $y^*$.
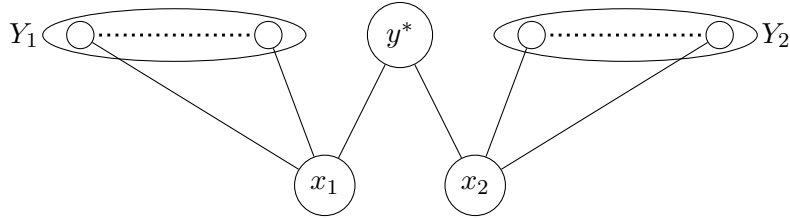
\begin{figure}[htb]
\centering
\begin{tikzpicture}
    \node [draw,circle] at (4,2) (ys) {$y^*$};
    \node [draw,circle] at (3,0) (x1) {$x_1$} edge (ys);
    \node [draw,circle]at (5,0) (x2) {$x_2$} edge (ys);
    ds
    \draw (1,2) ellipse (1.75 and 0.35);
    \draw (7,2) ellipse (1.75 and 0.35);
    \node [draw,circle] at (-0.25,2) () {} edge (x1);
    \node [draw,circle] at (2.25,2) () {} edge (x1);
    \draw [dotted,line width=1.1pt] (0,2)-- (2,2);
    \node [draw,circle] at (5.75,2) () {} edge (x2);
    \node [draw,circle] at (8.25,2) () {} edge (x2);
    \draw [dotted,line width=1.1pt] (6,2)-- (8,2);
    \node at (-1,2) ()  {$Y_1$};
    \node at (9,2) () {$Y_2$};
\end{tikzpicture}
\caption{$y^*$ has degree 2; neighbors of $x_1,x_2$ partition the vertices of $Y-\{y^*\}$}
\label{fig:deg2uniqueneighbor}
\end{figure}
Say $N(x_1)=Y_1\cup \{y^*\}, N(x_2)=Y_2\cup \{y^*\}$, where $N(x)$ is the set of neighbors of $x$  (see \cref{fig:deg2uniqueneighbor}). Now, by another application of the unique neighbor property every vertex $x\neq x_1,x_2 \in X$ cannot be adjacent to two vertices in $Y_1$ (or two vertices in $Y_2$). This is because every pair of vertices in $Y_1$ already have $x_1$ as the common neighbor.
So, every vertex $x\in X$ must have $\leq 1$  neighbor in $Y_1$ and $\leq 1$ in $Y_2$. Now if $|Y_1|, |Y_2| \geq 2$ we get a contradiction. 
To see this, suppose for contradiction that $y_{1,1},y_{1,2}\in Y_1, y_{2,1},y_{2,2}\in Y_2$. Then, $y_{1,1},y_{2,1}$ must have a common neighbor say $x$ and $y_{1,2},y_{2,2}$ must have a common neighbor say $x'$. But $x,x'$ cannot have a common neighbor.

Therefore, the only possible case is when, wlog, $Y_1=\{y_0\}$ and every vertex $x\neq x_2 \in X$ is adjacent to $y_0$ and a vertex in $Y_2\cup \{y^*\}$. 
\begin{figure}[htb]\centering
\begin{tikzpicture}
    \node [draw,circle] at (0,2) (y0) {\tiny{$y_0$}};
    \node [draw,circle] at (2,2) (ys){\tiny{$y^*$}};
    \node [draw,circle,inner sep=0.22cm] at (3,2) (y2) {};
    \node [draw,circle,inner sep=0.22cm]
    at (5,2) (y3) {};
    \node [draw,circle] at (2,0) () {\tiny{$x_1$}} edge (y0) edge (ys);
    \node [draw,circle,inner sep=0.22cm] at (3,0) () {} edge (y2) edge (y0);
    \node [draw,circle,inner sep=0.22 cm] at (5,0) () {} edge (y3) edge (y0);
    \draw [dotted,line width=1.1pt] (3.5,0) -- (4.5,0) (3.5,2) -- (4.5,2);
    \node [draw,circle] at (7,0) () {\tiny{$x_2$}} edge (ys) edge (y2) edge (y3);
\end{tikzpicture}
\end{figure}
So, every vertex in $X$ except $x_2$ has degree 2. It also follows that every vertex in $Y$ except $y_0$ has degree 2 (so $x_2$ is the $x_0$ vertex in lemma's statement).
\end{proof}

\begin{lemma}\label{lem:uniqueneighbordeg22}
    Let $G=(X,Y,E)$ be a bipartite graph  with the unique neighbor property such that the minimum degree of $G$ is 2. 
     Then, $-\frac12 D + A$ has  one positive eigenvalue.
\end{lemma}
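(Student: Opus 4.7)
First, I will apply \cref{lem:uniqneighbordeg2} to pin down the structure of $G$: there is a distinguished pair $x_0 \in X$ and $y_0 \in Y$ such that every other vertex has degree exactly $2$, $x_0$ is adjacent to all of $Y \setminus \{y_0\}$, and $y_0$ is adjacent to all of $X \setminus \{x_0\}$. Since each $x_i \in X \setminus \{x_0\}$ has degree $2$ with one neighbor being $y_0$, it has a unique second neighbor in $Y \setminus \{y_0\}$; after relabeling so that this second neighbor is $y_i$, the edge set becomes
\[
    E(G) \;=\; \{x_0 y_j : 1 \leq j \leq n\} \;\cup\; \{y_0 x_i : 1 \leq i \leq n\} \;\cup\; \{x_i y_i : 1 \leq i \leq n\},
\]
with $n := |X|-1 = |Y|-1$. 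The min-degree-$2$ hypothesis forces $\deg(x_0) = \deg(y_0) = n \geq 2$, ruling out the degenerate cases $n \in \{0,1\}$.

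Second, I will reduce the claim to a spectral bound on $P := D^{-1}A$. Since $D$ is positive definite, $D^{-1/2}(-D/2+A)D^{-1/2} = M - \tfrac12 I$ (with $M := D^{-1/2}AD^{-1/2}$) has the same inertia as $-D/2+A$ by Sylvester's law; thus counting positive eigenvalues of $-D/2+A$ is the same as counting eigenvalues of $M$ strictly exceeding $\tfrac12$. Bipartiteness of $G$ gives $\lambda_1(M) = 1$ and a $\pm$-symmetric spectrum (via \cref{lem:AB}), so the number of eigenvalues of $M$ exceeding $\tfrac12$ equals the number of squared singular values of $\tilde B := D_X^{-1/2} B D_Y^{-1/2}$ exceeding $\tfrac14$. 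These are exactly the eigenvalues of the diagonal block $P^2_X$ of $P^2$ indexed by $X$, so it suffices to show $\lambda_2(P^2_X) \leq \tfrac14$.

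Third, I will compute $P^2_X$ directly from the edge list. A short calculation using $P(x_i,y_0) = P(x_i,y_i) = \tfrac12$, $P(y_j, x_0) = P(y_j, x_j) = \tfrac12$, $P(x_0, y_j) = \tfrac1n$, and $P(y_0, x_i) = \tfrac1n$ yields
\[
    (P^2_X)(x_0,x_0) = \tfrac12, \quad (P^2_X)(x_0,x_i) = \tfrac{1}{2n}, \quad (P^2_X)(x_i,x_0) = \tfrac14, \quad (P^2_X)(x_i,x_k) = \tfrac{1}{2n} + \tfrac14 \I[i=k]
\]
for all $i,k \geq 1$. One then verifies
\[
    P^2_X \;=\; \tfrac14 I \;+\; \bm u \bm v^\top, \qquad \bm u = \tfrac12 \bm 1, \qquad \bm v = (\tfrac12, \tfrac1n, \ldots, \tfrac1n).
\]
This is a rank-one perturbation of $\tfrac14 I$, so its spectrum is $\tfrac14$ with multiplicity $n$ together with the single eigenvalue $\tfrac14 + \langle \bm v, \bm u\rangle = \tfrac14 + \tfrac34 = 1$ (with eigenvector $\bm u$). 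Thus $\lambda_2(P^2_X) = \tfrac14$, giving at most one positive eigenvalue of $-D/2+A$. Combined with the trivial lower bound $\bm 1^\top(-D/2+A)\bm 1 = |E| > 0$, the count is exactly one.

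\textbf{Main obstacle.} The heavy combinatorial lifting is already done by \cref{lem:uniqneighbordeg2}; the remaining work is analytic. The only real subtleties are (i) justifying the reduction from an inertia count for $-D/2+A$ down to the one-sided block $P^2_X$ (which uses both Sylvester's law and bipartite symmetry, and must be executed with care because eigenvalue multiplicities in $M^2$ double up under the $\pm$-pairing), and (ii) recognizing $P^2_X - \tfrac14 I$ as a rank-one outer product with the asymmetric choice $\bm u \neq \bm v$ caused by the singular role of $x_0$. Both are finite checks once the structural description is in hand.
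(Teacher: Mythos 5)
Your proposal is correct, and it reaches the conclusion by a route that is related to but genuinely different from the paper's. The paper stays on the unnormalized side: it perturbs the biadjacency block $B$ to $B'$ (replacing the $(1,1)$ entry by $\frac{n-2}{2}$), observes $B'(B')^\top = I + \bm{w}\bm{w}^\top$, reads off the spectrum of $A'=\bigl[\begin{smallmatrix}0 & B'\\ (B')^\top & 0\end{smallmatrix}\bigr]$ via bipartite symmetry, and then writes $-\tfrac12 D + A = A' - I - \tfrac{n-2}{2}(\bm{e}_1+\bm{e}_{n+2})(\bm{e}_1+\bm{e}_{n+2})^\top$, so that subtracting a PSD rank-one matrix from a matrix with one positive eigenvalue finishes the job. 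You instead normalize by the degrees and count inertia: Sylvester's law turns the question into counting eigenvalues of $P=D^{-1}A$ above $\tfrac12$, the bipartite $\pm$-pairing turns that into counting eigenvalues of the two-step block $P^2_X$ above $\tfrac14$, and your computation $P^2_X = \tfrac14 I + \bm{u}\bm{v}^\top$ with $\bm{v}^\top\bm{u}=\tfrac34$ shows the spectrum is exactly $\{1,\tfrac14,\dots,\tfrac14\}$. Both arguments hinge on the same structural input (\cref{lem:uniqneighbordeg2}) and the same ``identity plus rank one'' phenomenon for a squared/two-step matrix; the paper's version avoids normalization at the cost of the ad hoc correction $B\to B'$ and the final rank-one subtraction, while yours is arguably cleaner and additionally pins down $\lambda_2(P)=\tfrac12$ exactly (showing the bound in the lemma is tight), at the cost of the inertia-plus-singular-value bookkeeping you flag, which must indeed be done with the multiplicity pairing in mind. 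Two small points you should make explicit in a final write-up, though they are at the same level of detail as the paper: the relabeling of second neighbors as a perfect matching (hence $|X|=|Y|=n+1$) follows because each $y_j\neq y_0$ has degree exactly $2$ with $x_0$ as one neighbor, and the degenerate possibility that $x_0$ or $y_0$ is itself of degree $2$ (the $6$-cycle, $n=2$) is covered by the same formulas.
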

\begin{proof}    
    Notice $A$ is a block-diagonal matrix, say $A=\begin{bmatrix} 0 & B \\ B^\top & 0\end{bmatrix}$. 
By \cref{lem:uniqneighbordeg2}, we can write
\[
    B = \begin{bmatrix}
        0 & 1 & 1 & 1 & \cdots & 1 \\
        1 & 1 & 0 & 0 & \cdots & 0 \\
        1 & 0 & 1 & 0 & \cdots & 0 \\
        1 & 0 & 0 & 1 & \cdots & 0 \\
        \vdots & \vdots & \vdots & \vdots & \ddots & \vdots \\
        1 & 0 & 0 & 0 & \cdots & 1
    \end{bmatrix}
\]
Here, the first row and column of $B$ corresponds to the large degree vertices in $X,Y$. Let $n+1=|X|=|Y|$.
Consider the matrix
\[
    B' = \begin{bmatrix}
        \frac{n-2}{2} & 1 & 1 & 1 & \cdots & 1 \\
        1 & 1 & 0 & 0 & \cdots & 0 \\
        1 & 0 & 1 & 0 & \cdots & 0 \\
        1 & 0 & 0 & 1 & \cdots & 0 \\
        \vdots & \vdots & \vdots & \vdots & \ddots & \vdots \\
        1 & 0 & 0 & 0 & \cdots & 1
    \end{bmatrix}.
\]
We have
\[
    B'(B')^\top = (B')^\top B' = \begin{bmatrix}
        \frac{n^2}{4} + 1 & \frac{n}{2} & \frac{n}{2} & \cdots & \frac{n}{2} \\
        \frac{n}{2} & 2 & 1 & \cdots & 1 \\
        \frac{n}{2} & 1 & 2 & \cdots & 1 \\
        \vdots & \vdots & \vdots & \ddots & \vdots \\
        \frac{n}{2} & 1 & 1 & \cdots & 2
    \end{bmatrix}
    = I + \left(\frac{n}{2} \bm{e}_1 + \sum_{i=2}^{n+1} \bm{e}_i\right) \left(\frac{n}{2} \bm{e}_1 + \sum_{i=2}^{n+1} \bm{e}_i\right)^\top,
\]
where $\bm{e}_i$ is the indicator vector of the $i$-th coordinate.
Thus $B'(B')^\top = (B')^\top B'$ has exactly $n$ eigenvalues equal to $1$ and exactly one eigenvalue greater than $1$. Let $A'=\begin{bmatrix} 0 & B'\\ {B'}^\top & 0\end{bmatrix}$. Therefore the matrix
\[
    (A')^2 = 
    \begin{bmatrix}
        B'(B')^\top & 0 \\ 0 & (B')^\top B'
    \end{bmatrix}
\]
has exactly $2n$ eigenvalues equal to $1$ and the other $2$ greater than $1$. Since $A'$ is the adjacency matrix of a bipartite graph, using \cref{lem:AB}, $A'$ has  $n$ many $+1$ eigenvalues, $n$  many $-1$ eigenvalues, one greater than $+1$ and one less than $-1$. Therefore,  $A' - I$ has exactly one positive eigenvalue. Since
\[
    -\frac12 D+A = A' - I - \frac{n-2}{2}(\bm{e}_1 + \bm{e}_{n+2})(\bm{e}_1 + \bm{e}_{n+2})^\top,
\]
this implies $-D/2+A$ has at most one positive eigenvalue. 
\end{proof}

\begin{lemma}\label{lem:projplane3rdprop}
   Let $G=(X,Y,E)$ be a non-empty bipartite graph with the unique neighbor property 
    such that the minimum degree of $G$ is $\geq 3$. Then, there are 4 vertices $x_1,\dots,x_4\in X$ such that any $y\in Y$ is adjacent to at most two of them (similarly there are $4$ vertices in $Y$ that any vertex in $X$ is adjacent to at most two of them).
\end{lemma}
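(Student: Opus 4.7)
The plan is to produce four points $x_1, x_2, x_3, x_4 \in X$ whose six pairwise common neighbors $y_{ij}$ (the unique vertex in $Y$ adjacent to both $x_i$ and $x_j$) are all distinct. This is equivalent to the conclusion of the lemma, since a $y \in Y$ adjacent to three of the $x_i$'s, say to $x_i, x_j, x_k$, would force $y = y_{ij} = y_{ik} = y_{jk}$ by the uniqueness of the common neighbor. So the task becomes: exhibit four $x_i$'s in ``general position'' (a quadrangle, in the language of projective planes). The construction proceeds in two stages: first build a triangle, then extend it to a quadrangle.

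For the triangle, pick any $x_1 \in X$ and use $\deg(x_1) \geq 3$ to select three distinct neighbors $a_1, a_2, a_3 \in N(x_1)$. Using $\deg(a_1), \deg(a_2) \geq 3$, choose $x_2 \in N(a_1) \setminus \{x_1\}$ and $x_3 \in N(a_2) \setminus \{x_1\}$. The usual exchange arguments show $x_3 \neq x_2$ (else $x_2$ would witness two common neighbors $a_1, a_2$ of itself and $x_1$), $x_3 \not\sim a_1$, and $x_2 \not\sim a_2$; hence $y_{12} = a_1$, $y_{13} = a_2$, and $y_{23} =: b$ are three elements of $Y$. We verify $b \neq a_1, a_2, a_3$ by the same type of argument: for instance, $b = a_3$ would yield $a_3 \sim x_2$, making $a_3$ a common neighbor of $x_1, x_2$ distinct from $a_1$, which is impossible.

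For the quadrangle vertex $x_4$, I will pick a neighbor of $a_3$. Apply the unique-neighbor property on the $Y$ side to produce the unique $x^* \in X$ adjacent to both $a_3$ and $b$. If $x^* = x_1$ then $x_1 \sim b$, forcing $b$ to be a common neighbor of $x_1, x_2$ distinct from $a_1$, a contradiction; hence $x^* \neq x_1$. Since $\deg(a_3) \geq 3$, the set $N(a_3) \setminus \{x_1, x^*\}$ is non-empty, and we pick $x_4$ from it. Standard arguments show $x_4 \neq x_2, x_3$ and $x_4 \not\sim a_1, a_2$ (if $x_4 \sim a_i$ for $i \in \{1, 2\}$, then $x_1, x_4$ would share the two common neighbors $a_3$ and $a_i$), while $x_4 \not\sim b$ holds by construction. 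It remains to check that the remaining three common neighbors $y_{14} = a_3$, $y_{24}$, and $y_{34}$ are distinct from $a_1, a_2, a_3, b$ and from each other; all these follow from the non-adjacencies just established, except $y_{24} \neq y_{34}$, which follows by noting that $y_{24} = y_{34}$ would make this vertex a common neighbor of $x_2, x_3$, hence equal to $b$, contradicting $x_4 \not\sim b$.

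The $Y$-side conclusion is immediate by the symmetry of the hypotheses under swapping $X$ and $Y$. The only delicate step is the selection of $x_4$: we must dodge the three ``bad'' vertices $a_1, a_2, b$ simultaneously, and what makes this possible is not a crude counting bound on $|X|$ but the unique-neighbor property applied to the pair $(a_3, b)$, which guarantees that $|N(a_3) \cap N(b)| = 1$ and so most neighbors of $a_3$ miss $b$.
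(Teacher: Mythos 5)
Your proof is correct, but it takes a different route from the paper's. You build a projective-plane quadrangle: a triangle $x_1,x_2,x_3$ with distinct pairwise common neighbors $a_1,a_2,b$, then a fourth point $x_4$ on the ``line'' $a_3$ chosen (via the unique common neighbor $x^*$ of $a_3$ and $b$) so that all six pairwise common neighbors of $x_1,\dots,x_4$ are distinct, which is equivalent to (in fact slightly stronger than) the lemma's conclusion; the individual non-adjacency checks you label ``standard'' are indeed the routine exchange arguments and all go through, with the minimum degree $3$ used for the three neighbors of $x_1$ and for dodging $\{x_1,x^*\}$ in $N(a_3)$. The paper instead argues dually and much more briefly: fix two distinct $y_1,y_2\in Y$; since they have exactly one common neighbor and degree at least $3$, each has two ``private'' neighbors, giving $x_1,x_2\in N(y_1)\setminus N(y_2)$ and $x_3,x_4\in N(y_2)\setminus N(y_1)$; any $y$ adjacent to three of these four must (by pigeonhole) contain a full private pair and hence share two neighbors with $y_1$ or $y_2$, contradicting the unique neighbor property. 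The paper's choice buys a two-line verification with no case analysis, while yours buys the stronger structural statement that the graph contains a full quadrangle (no three of the four points collinear), which matches the projective-plane axiom verbatim; either suffices for the way the lemma is used.
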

\begin{proof}
    Let $y_1,y_2\in Y$ be two distinct vertices (note that $|Y|\geq 3$ by lemma's assumption). Since degree of $y_1,y_2$ is at least 3, by the unique neighbor property, there are two vertices $x_1,x_2$ adjacent to $y_1$ and two vertices $x_3,x_4$ adjacent to $y_2$ such that $y_1$ is not connected to $x_3,x_4$ and $y_2$ is not connected to $x_1,x_2$. Now, we claim that any vertex $y\in Y$ is connected to at most two of $\{x_1,\dots,x_4\}$. For contradiction, suppose there is a vertex $y$ that is connected to, say wlog, $x_1,x_2,x_3$. Then, $y\neq y_1,y_2$ by the above assumption. But then $y$ has two common neighbors to $y_1$ and that is a contradiction with the unique neighbor property. 
\end{proof}

\begin{definition}[Projective Plane]A projective plane is an incidence structure of "points" and "lines" with the following properties.
\begin{itemize}
    \item 
Given any two distinct points, there is exactly one line incident with both of them.
\item 
Given any two distinct lines, there is exactly one point incident with both of them.
\item 
There are four points such that no line is incident with more than two of them.
\end{itemize}
\end{definition}
The above lemma implies that our bipartite graph with the unique neighbor property and min degree 3 corresponds to a projective plane.
\begin{lemma}\label{lem:projplane}
    Every point in a projective plane is incident with a constant $d + 1$ lines and every line is incident with $d + 1$ points. Such a projective plane has  $d^2 + d + 1$ points and lines.
\end{lemma}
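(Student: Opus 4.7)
The plan is to exploit the duality between ``points'' and ``lines'' and the standard pigeonhole arguments used for finite projective planes. First I would establish the key bijection: for any line $\ell$ and any point $p$ not incident to $\ell$, there is a bijection between the points on $\ell$ and the lines through $p$. Each point $q$ on $\ell$ determines, by the first axiom, the unique line through $p$ and $q$; and each line through $p$ meets $\ell$ in a unique point by the second axiom. The existence of four points no three of which are collinear (the third axiom) guarantees that for every line $\ell$ there is at least one point not on $\ell$, and dually that for every point $p$ there is at least one line not through $p$, so the bijection applies non-vacuously.

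Next, I would show that any two lines have the same number of incident points. Given lines $\ell_1$ and $\ell_2$, I claim there is a point $p$ incident with neither: among the four points of the configuration at most two can lie on $\ell_1$ and at most two on $\ell_2$ (since no three are collinear), but there are only four points total, so if any of them are omitted from both lines we are done; if not, the intersection point $\ell_1 \cap \ell_2$ (which exists by the second axiom) together with the four-point configuration lets us find such a $p$ by a short case analysis. Applying the bijection twice gives $|\ell_1| = (\text{lines through } p) = |\ell_2|$. The same argument dualized shows that any two points lie on the same number of lines, and combining the two statements shows this common value equals $d+1$ for some integer $d \geq 1$.

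For the counting, I would then double-count incidence pairs: if $n$ is the number of points and $m$ is the number of lines, then $n(d+1) = m(d+1)$, so $n = m$. To compute $n$, fix a point $p$; the $d+1$ lines through $p$ partition the set of remaining points (each other point $q$ lies on the unique line $pq$, and lies on exactly one such line since two lines through $p$ meet only at $p$). Each line through $p$ contains $d$ points other than $p$, giving $n = 1 + (d+1)d = d^2+d+1$.

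The main obstacle is the transfer step: rigorously extracting a point off of any two given lines (and dually, a line missing any two given points) from the rather weak four-point axiom. This requires a careful case analysis depending on how the four distinguished points sit relative to $\ell_1, \ell_2$, but is otherwise elementary. Everything else is bookkeeping via the two incidence axioms.
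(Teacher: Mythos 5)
Your proof is correct, and it is the standard textbook argument. Note that the paper itself does not prove this lemma at all: it simply defers to the cited reference \cite{BW11}, so there is no internal proof to compare against, and your write-up would in fact supply the missing argument. The skeleton you give (the bijection between points of a line $\ell$ and lines through a point $p \notin \ell$, the uniformity of line sizes and point degrees, and the count $1 + (d+1)d = d^2+d+1$ via the pencil through a fixed point) is exactly the classical route. The only place you gesture rather than argue is the ``transfer step'' of producing a point off two given lines $\ell_1,\ell_2$: the clean way is to note that if none of the four distinguished points avoids both lines, then (after the easy subcase where $\ell_1 \cap \ell_2$ is one of the four) two of them, say $a,b$, lie on $\ell_1$ only and the other two, $c,d$, on $\ell_2$ only, and then the point $ac \cap bd$ lies on neither line, since e.g.\ $ac \cap \ell_1 = \{a\}$ and $a \in bd$ would force $a,b,d$ collinear. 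With that case analysis made explicit (and its dual for a line missing two given points), the argument is complete; the parenthetical ``$d \geq 1$'' is a harmless slip, since the four-point axiom actually forces $d \geq 2$, but nothing in your proof depends on it.
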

We refer interested reader to \cite{BW11} for background on projective plane and proof of the above lemma. 

\begin{lemma}\label{lem:projeplanelorentzian}
    Given a $(d+1)$-regular bipartite graph $G=(X,Y,E)$ with $d\geq 1$, $|X|=|Y|=d^2+d+1$, and the unique neighbor property, $\frac{-\sqrt{d}}{d+1}D + A$ has one positive eigenvalue. Therefore, $-D/2+A$ also has one positive eigenvalue.
\end{lemma}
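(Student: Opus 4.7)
The plan is to exploit the fact that a $(d+1)$-regular bipartite graph with the unique neighbor property and $|X|=|Y|=d^2+d+1$ is exactly the point-line incidence graph of a projective plane of order $d$. This forces $A$ to have a highly constrained spectrum, which I will compute directly.

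First I would note that since the graph is $(d+1)$-regular, $D = (d+1)I$, so $\frac{-\sqrt{d}}{d+1}D + A = A - \sqrt{d}\,I$. Write $A = \begin{bmatrix} 0 & B \\ B^\top & 0\end{bmatrix}$ where $B$ is the $X\times Y$ biadjacency matrix. The unique-neighbor property says any two rows of $B$ share exactly one common non-zero column, and regularity says every row has exactly $d+1$ ones. Therefore
\[
    BB^\top = d\,I + J,
\]
where $J$ is the $(d^2+d+1)\times(d^2+d+1)$ all-ones matrix. The eigenvalues of $dI+J$ are $d + (d^2+d+1) = (d+1)^2$ with multiplicity $1$ (eigenvector $\bone$) and $d$ with multiplicity $d^2+d$. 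By symmetry the same holds for $B^\top B$, so the singular values of $B$ are $d+1$ (multiplicity $1$) and $\sqrt{d}$ (multiplicity $d^2+d$).

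Next I invoke \cref{lem:AB}: eigenvalues of a bipartite adjacency matrix come in pairs $\pm\lambda$ given by the singular values of $B$. Hence the spectrum of $A$ consists of $\pm(d+1)$ each with multiplicity $1$, and $\pm\sqrt{d}$ each with multiplicity $d^2+d$. Shifting by $-\sqrt{d}\,I$ yields one positive eigenvalue $(d+1)-\sqrt{d} > 0$, one very negative eigenvalue $-(d+1)-\sqrt{d}$, together with $d^2+d$ zeros and $d^2+d$ copies of $-2\sqrt{d}$. This gives exactly one positive eigenvalue for $\frac{-\sqrt{d}}{d+1}D+A$.

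For the second assertion, observe that by AM-GM we have $\tfrac{d+1}{2} \geq \sqrt{d}$ for all $d\geq 1$, so
\[
    -\tfrac{1}{2}D + A \;=\; A - \tfrac{d+1}{2}I \;\preceq\; A - \sqrt{d}\,I.
\]
By Weyl's monotonicity (or \cref{lem:AB}), the number of positive eigenvalues cannot increase when one shifts by a non-negative multiple of the identity, so $-D/2+A$ has at most one positive eigenvalue; combined with the earlier observation that $\bone^\top(-D/2+A)\bone > 0$ (which is immediate when the graph is non-empty and regular of degree $\geq 1$), this gives exactly one. The main conceptual step is identifying $BB^\top = dI + J$ from the unique neighbor property; everything else is a short spectral computation.
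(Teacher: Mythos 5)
Your proposal is correct and follows essentially the same route as the paper: the paper computes $A^2 = \mathrm{diag}(J+dI,\,J+dI)$ from the unique-neighbor property, which is exactly your identity $BB^\top = dI+J$, then reads off the spectrum $\pm(d+1),\ \pm\sqrt{d}$ via the bipartite pairing and concludes by the shift $-\tfrac12 \le -\tfrac{\sqrt{d}}{d+1}$. Your added remark about $\bone^\top(-D/2+A)\bone>0$ for the "exactly one" claim matches the observation the paper makes just before this lemma.
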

\begin{proof}
Since $G$ is $(d+1)$-regular  we have that $D = (d+1)I$. Furthermore, by the unique neighbor property and the $(d+1)$-regularity, we have
$$ A^2 = \begin{bmatrix}J+dI & 0 \\ 0 & J+dI\end{bmatrix}$$
This is because by the unique neighbor property for every pair of vertices $x,x'\in X$ there is exactly one path of length 2 from $x$ to $x'$ (and similarly for the $y$-vertices).
where $J$ is a $|X|\times |X|$ all-ones matrix. It follows that $A^2$ has two eigenvalues equal to $d^2+d+1+d=(d+1)^2$ and $2|X|-2$ many eigenvalues equal to $d$.
Therefore, since $G$ is a bipartite graph, by \cref{lem:AB}, $A$ has the following spectrum
\[
    \big(d+1, \underbrace{\sqrt{d}, \dots, \sqrt{d}}_{d^2+d\text{ many}}, \underbrace{-\sqrt{d}, \ldots, -\sqrt{d}}_{d^2+d\text{ many}}, -(d+1)\big).
\]
Therefore,
$$ \frac{-\sqrt{d}}{d+1}D + A = -\sqrt{d}I + A$$
has only one positive eigenvalue. Consequently, since $d\geq 1$, $-D/2+A$ has one positive eigenvalue since $-1/2\leq  \frac{-\sqrt{d}}{d+1}$.
\end{proof}

\begin{proof}[Proof of \cref{thm:uniqueneighborgraphs}]
If $G$ has a degree 1 vertex, then by \cref{lem:uniqueneighbordeg1} we are done.
Otherwise, if the minimum degree of $G$ is 2, then by \cref{lem:uniqueneighbordeg22} we are done.
Otherwise, the minimum degree of $G$ is at least 3. Then, by \cref{lem:projplane3rdprop} $G$ corresponds to a projective plane.
So, we are done by \cref{lem:projplane} and \cref{lem:projeplanelorentzian}.
\end{proof}

\subsection{Matroid}

In this section we prove \cref{lem:toplinkexpansionmatroid}. Fix any $\tau \in X(d-2)$ which is $[k,k+1]$-link-contiguous. Let $K = \tau_{k-1}$ and $L = \tau_{k+2}$. Note that $A_\tau(F,G) = 1$ (up to normalization) if $K \prec F \prec G \prec L$ and $A(F,G) = 0$ otherwise. We now compute the entries of $D_{\phi,\tau}$. First, for any $F \in T_k$, the $(F,F)$ entry is
\[
    \sum_{G: F < G < L} \frac{|L \setminus G|}{|L \setminus F|} = d_\tau(F)-1,
\]
where $d_\tau(F)=|\{G: F< G < L\}|$ is the degree of $F$ in the biparite graph $G_\tau$. The equality follows by the matroid partition property; in particular the sets $G\setminus F$ (over $G$'s which are $F<G<L$) partition elements of $L\setminus F$.
For any $G \in T_{k+1}$, the $(G,G)$ entry is
\[
    \sum_{F:K<F < G} \frac{|F \setminus K|}{|G \setminus K|} = 1,
\]
by the matroid partition property.
The main observation is that 
$$ -D_{\phi,\tau} + A_\tau = J_k  - \sum_{G \in X_\tau(1) \cap T_{k+1}} \left(\bm{e}_G - \sum_{F:K<F<G}\bm{e}_F\right)\left(\bm{e}_G - \sum_{F:K<F<G}\bm{e}_F\right)^\top$$
where $J_k$ is the all-ones matrix restricted to $X_\tau(1) \cap T_k$.  Therefore, it has one positive eigenvalue.

\section{Lower bounds}
\label{sec:lowerbound}


\begin{proof}[Proof of \cref{thm:lower-bound}]
We construct a $d$-dimensional distributive lattice such that $\lambda_2(P_\varnothing)\geq 1-\frac{4}{\eps(1+\eps)d}$ for $d$ {\bf even}. 
Let $P$ be a Poset on $[d+1]$ with relation $2<3<\dots<d$ and $1$ (a free element). Let $\cL$ be the corresponding lattice with flats:
$$ 1\leq i\leq d: \quad  F_i = \{2, 3,\dots,i+1\},\text{ and }   G_i=\{1,2,\dots,i\}$$


\begin{figure}[htb]\centering
\begin{tikzpicture}[every node/.style={minimum size=1cm},scale=0.9]

\node[draw, circle] (t) at (-1, 3) {\footnotesize$\hat{1}$};
\node[draw, circle] (l5) at (-2, 2) {{$F_d$}} edge (t);
\node[circle] (l4) at (-1, 1) {}; 
\node[draw, circle] (r5) at (0, 2) {\small $G_d$} edge (l4) edge (t);

\node [circle] at (1,1) (r4) {};
\node[draw, circle] (l3) at (0, 0) {\small$F_{2}$} edge (r4) edge [line width=1.2pt,dotted] (l5);
\node[draw, circle] (r3) at (2, 0) {\small$G_{2}$}  edge [dotted,line width=1.2pt](r5);
\node[draw, circle] (l2) at (1, -1) {\small$F_{1}$} edge (r3) edge (l3);
\node[draw, circle] (r2) at (3, -1) {\small$G_{1}$}  edge (r3);
\node[draw, circle] (b) at (2, -2) {\small$\hat{0}$} edge (l2) edge (r2);
\end{tikzpicture}
\end{figure}
Observe that this lattice has exactly $d+1$ flag of flats where each one is of the form:
$$ \hat{0} < F_1 < \dots < F_i < G_{i+1} < \dots <G_d < \hat{1}$$
In particular each maximal flag of flat
 can be identified by the rank at which the chain moves from the left to right side of the lattice. So, for $0\leq i\leq d$ let $\tau(i)$ be the flag
which has $F_i$ but not $F_{i+1}$. We think of $\hat{0}$ as $F_0$.

Let
$$ \psi(F_i) = \begin{cases} 1 &\text{if } i\leq d/2\\ 1+\eps&\text{otherwise.}\end{cases} \quad\quad \psi(G_i) = \begin{cases} 1+\eps &\text{if } i\leq d/2\\ 1&\text{otherwise.}\end{cases}.  $$
Then, let $\mu(\tau)\propto \prod_{F\in \tau} \psi(F).
$ 

It follows that for any $0\leq i\leq d$ we have 
$$ \mu(\tau(i)) \propto (1+\epsilon)^{|d/2-i|}.$$

Let $X$ be the $d$-partite complex corresponding to all flags of flats of $\cL$; then $(X,\mu)$ is a path complex.
First, we claim that $X$ is a $\frac{1+\eps}{2+\eps}$-top link expander. To see that it turns out that the worst link of co-dimension 2 is attained at $\sigma=\{F_1,\dots,F_{d/2-1},G_{d/2+2},\dots,G_d\}.$ Then, $X_\sigma(1)=\{F_{d/2},G_{d/2},F_{d/2+1},G_{d/2+1}\}$ and $P_\sigma=D_\sigma^{-1} A_\sigma$ w.r.t order $(G_{d/2}, F_{d/2}, G_{d/2+1}, F_{d/2+1})$ is
$$ A_\sigma=\begin{bmatrix}
    0 & 0 & 1+\eps & 0 \\ 0& 0 &1 & 1+\eps \\ 1+\eps & 1 & 0 & 0 \\ 0 & 1+\eps & 0 & 0
\end{bmatrix}\quad \Rightarrow\quad  P_\sigma=\begin{bmatrix}0 & 0 & 1 & 0 \\ 0 & 0 & \frac{1}{2+\eps} & \frac{1+\eps}{2+\eps} \\ \frac{1+\eps}{2+\eps} & \frac{1}{2+\eps} & 0 & 0\\ 0 & 1 & 0 & 0 \end{bmatrix}$$

So, $\det(P_\sigma) = \left(\frac{1+\eps}{2+\eps}\right)^2$. Since $G_\sigma$ is a bipartite graph, by \cref{lem:AB} it follows that $\lambda_2(P_\sigma)=\frac{1+\eps}{2+\eps}$.

We show that $\lambda_2(P_{\varnothing}) \geq 1-\frac{4}{\eps(1+\eps) d}$. To do that we use Cheeger's inequality, \cref{thm:Cheeger}.

Let $S=\{F_1,\dots,F_d\}$ and $\overline{S}=\{G_1,\dots,G_d\}$. By symmetry, we have $\vol(S)=\vol(\overline{S}).$
We show that
$$ \phi(S) \leq \frac{2}{\eps(1+\eps) d}$$
which then completes the proof using \cref{thm:Cheeger}.

The main observation is that the maximal flag $\tau(i)$ has exactly $i\cdot (d-i)$ pairs across the cut, i.e., for all $1\leq j\leq i$ and $i+1\leq k\leq d$ it contributes to the edge  $\{F_j, G_k\}$. So, it contributes $\mu(\tau(i))\cdot i\cdot (d-i)$ to $A_\varnothing(S,\overline{S})$ and $\mu(\tau(i))\cdot 2\binom{d}{2}$ to $\vol(S\cup \overline{S})$. Therefore, we can write
\begin{align*}
    \phi(S) = \frac{A_{\varnothing}(S,\overline{S})}{\vol(S)}& \underset{\vol(S)=\vol(\overline{S})}{=} \frac{\sum_{i=0}^d \mu(\tau(i))\cdot i (d-i)}{\frac12 \vol(S\cup \overline{S})}\\
    &= \frac{\sum_{i=1}^{d-1} (1+\eps)^{|d/2-i|}\cdot i(d-i)}{ \sum_{i=0}^d \mu(\tau(i))\binom{d}{2}}\\
    &\underset{\max\{i,d-i\}\leq d-1}{\leq} 2 \frac{\sum_{i=1}^{d-1} \min\{i,d-i\}(1+\eps)^{|d/2-i|}}{d\sum_{i=0}^d (1+\eps)^{|d/2-i|}}\\
    &\underset{\text{\cref{fact:sum1+epsd-i}}}{\leq} 2\frac{\frac2{\eps}\sum_{i=1}^{d/2}(1+\eps)^{d/2-i}}{2d \sum_{i=0}^{d/2-1} (1+\eps)^{d/2-i}} 
    = \frac{2}{\eps(1+\eps) d}
\end{align*}
as desired.
\end{proof}
\begin{fact}\label{fact:sum1+epsd-i}
    We have the following bounds
    $$ \sum_{i=0}^{d} \min\{i,d-i\}(1+\epsilon)^{|d/2 - i|}\leq \frac{2}{\epsilon}\sum_{i=1}^{d/2} (1+\epsilon)^{d/2 - i}$$
\end{fact}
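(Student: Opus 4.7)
The plan is to exploit the $i \leftrightarrow d-i$ symmetry of the summand, reduce both sides to closed forms in $q := 1+\epsilon$ and $m := d/2$, and then verify a polynomial inequality in $q$ and $m$. Since both $\min\{i,d-i\}$ and $|d/2-i|$ are invariant under $i \mapsto d-i$, and since $d$ is even, the $i=0$ and $i=d$ terms vanish (as $\min = 0$) while $i = d/2$ is a fixed point contributing $d/2$; pairing off the remaining indices gives
\[
  \sum_{i=0}^d \min\{i,d-i\}(1+\epsilon)^{|d/2-i|}
  \;=\; 2\sum_{j=1}^{m-1} j\,q^{m-j} + m,
\]
while the RHS is the bare geometric series $\frac{2}{q-1}\sum_{k=0}^{m-1} q^k = \frac{2(q^m-1)}{(q-1)^2}$.

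The next step is a closed form for $\sum_{j=1}^{m-1} j\,q^{m-j}$. I would use Fubini: writing $j = \sum_{\ell=1}^{j} 1$ and swapping the order of summation yields
\[
  \sum_{j=1}^{m-1} j\,q^{m-j}
  \;=\; \sum_{\ell=1}^{m-1}\sum_{j=\ell}^{m-1} q^{m-j}
  \;=\; \sum_{\ell=1}^{m-1}\frac{q^{m-\ell+1}-q}{q-1}
  \;=\; \frac{q\bigl(q^m - mq + (m-1)\bigr)}{(q-1)^2},
\]
where the last equality is one more application of the geometric-series formula. (Equivalently, this is the standard identity $\sum_{j=1}^{n} j\,x^j = \frac{x - (n+1)x^{n+1} + n x^{n+2}}{(1-x)^2}$ applied with $n = m-1$ after reindexing.)

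Substituting this into the folded LHS, the inequality reduces (after clearing the denominator $(q-1)^2$) to a polynomial relation in $q$ and $m$, which I would verify either directly by collecting coefficients, or by induction on $m$ after checking a small base case. The main obstacle is that any naive per-term bound such as $j(q-1) \le q^j - 1$ loses a factor proportional to $(m-1)q^m$, which is not absorbed by the $\tfrac{2q^m}{(q-1)^2}$ on the RHS; so one must rely on the exact cancellation between $\sum_j j\,q^{m-j}$ and the full geometric series captured in the closed form above, rather than estimating term by term.
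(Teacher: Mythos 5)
Your algebraic reductions are all correct: the folding gives $2\sum_{j=1}^{m-1} j\,q^{m-j}+m$ with $m=d/2$, $q=1+\epsilon$, the Fubini closed form $\sum_{j=1}^{m-1} j\,q^{m-j}=\frac{q(q^m-mq+m-1)}{(q-1)^2}$ is right, and the right-hand side is indeed $\frac{2(q^m-1)}{(q-1)^2}$. The genuine gap is the step you defer: ``a polynomial relation in $q$ and $m$, which I would verify by collecting coefficients or by induction.'' Carrying out the substitution, clearing $(q-1)^2$ and dividing by $q-1>0$, the relation you would have to verify is exactly
\[
2q^{m}\;\le\; m(q+1)+2 ,
\]
and this is false in general: for $d=4$, $\epsilon=2$ it reads $18\le 10$, and indeed the original statement then reads $8\le 4$; more generally, for any fixed $\epsilon>0$ the left side grows geometrically in $d$ while the right side is linear, so the reduced inequality fails for all large $d$. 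Since every step of your reduction is an identity, there is no slack to recover elsewhere; the verification you promise cannot be done, so the proposal does not yield a proof. (It only goes through in the regime $2(1+\epsilon)^{d/2}\le d+2+\tfrac{d\epsilon}{2}$, roughly $\epsilon=O(\log d/d)$.)

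For comparison, the paper's own proof extends the folded sum to $2\sum_{i=0}^{d/2} i(1+\epsilon)^{d/2-i}$ and then invokes the geometric series $\sum_{i\ge 1}(1+\epsilon)^{-i}=1/\epsilon$ to replace each coefficient $i$ by $1/\epsilon$ --- precisely the kind of per-term estimate you explicitly (and rightly) reject, since $i(1+\epsilon)^{-i}\le (1+\epsilon)^{-i}/\epsilon$ only holds for $i\le 1/\epsilon$. What your exact computation actually shows is that the fact as stated, for arbitrary $\epsilon>0$, is not correct; your closed form does, however, give for all $\epsilon>0$ the slightly weaker bound with an extra factor $(1+\epsilon)$ on the right-hand side, since then the difference of the two sides equals exactly $m\cdot\frac{q+1}{q-1}\ge 0$. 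So, pushed to completion, your approach produces a counterexample to the stated inequality together with a corrected version of it, rather than the claimed proof; to salvage the statement itself you would need an additional restriction on $\epsilon$.
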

\begin{proof}
We write,
    \begin{align*}
\sum_{i=0}^{d} \min\{i,d-i\}(1+\epsilon)^{|d/2 - i|}&\leq 2\sum_{i=0}^{d/2} i(1+\eps)^{d/2-i}\\
&= 2(1+\epsilon)^{d/2} (1\cdot(1+\epsilon)^{-1} + 2\cdot(1+\epsilon)^{-2} + \dots + \frac{d}{2}\cdot (1+\eps)^{-d/2}) \\
&\leq2 (1+\epsilon)^{d/2} \left(\frac{(1+\eps)^{-1}}{\epsilon} + \frac{(1+\eps)^{-2}}{\epsilon} + \dots\right) = \frac{2}{\epsilon}\sum_{i=1}^{d/2} (1+\epsilon)^{d/2 - i}.
\end{align*}
In the last inequality we used $\sum_{i=0}^\infty (1+\eps)^{-i}=\frac{1}{\eps}.$
\end{proof}

\printbibliography
\end{document}